\definecolor{refkeybis}{gray}{.65}%     per avere le labels stampate chiare
\definecolor{labelkeybis}{gray}{.65}%   basta modificare .50:= intensita grigio
{\makeatletter
\def\SK@refcolor{\color{refkeybis}}%
\def\SK@labelcolor{\color{labelkeybis}}}
\numberwithin{equation}{section}  % per cambiare la numerazione
\newtheorem{theorem}{Theorem}[section]  % per cambiare la numerazione
\newtheorem{lemma}[theorem]{Lemma}
\newtheorem{corollary}[theorem]{Corollary}
\newtheorem{proposition}[theorem]{Proposition}
\newtheorem{remark}[theorem]{Remark}
\newtheorem{definition}[theorem]{Definition}
\newcommand{\D}{\mathbb{D}}
\newcommand{\R}{\mathbb{R}}
\newcommand{\Q}{\mathbb{Q}}
\newcommand{\N}{\mathbb{N}}
\newcommand{\X}{\mathbb{X}}
\newcommand{\DD}{\mathscr{D}}
\newcommand{\mm}{{\mbox{\boldmath$m$}}}
\renewcommand{\tt}{{\mbox{\boldmath$t$}}}
\newcommand{\tauV}{{\kern-3pt\tau}}
\newcommand{\mmu}{{\mbox{\boldmath$\mu$}}}
\newcommand{\sfd}{{\sf d}}
\newcommand{\sfv}{{\sf v}}
\newcommand{\sfx}{{\sf x}}
\newcommand{\sfy}{{\sf y}}
\newcommand{\sfS}{{\sf S}}
\newcommand{\sfU}{{\sf U}}
\newcommand{\rmc}{{\mathrm c}}
\newcommand{\rme}{{\mathrm e}}
\newcommand{\rmC}{{\mathrm C}}
\newcommand{\rmJ}{{\mathrm J}}
\newcommand{\Kliminf}{K\kern-3pt-\kern-2pt\mathop{\rm lim\,inf}\limits}  % Kuratowski liminf di insiemi
\newcommand{\argmin}{\mathop{\rm argmin}\limits}   % argmin
\newcommand{\restr}[1]{\lower3pt\hbox{$|_{#1}$}}
\newcommand{\Restriction}[1]{\lower3pt\hbox{$|_{#1}$}}  %\Restriction{arg}  restrizione ad arg
\newcommand{\Leb}[1]{{\mathscr L}^{#1}}      % Misura di Lebesgue
\newcommand{\down}{\downarrow}              %frecce in su e in giu nei limiti
\newcommand{\up}{\uparrow}
\newcommand{\eps}{\varepsilon}  
\newcommand{\Probabilities}[1]{\mathscr P(#1)}          % misure di probabilita'
\newcommand{\ProbabilitiesTwo}[1]{\mathscr P_2(#1)}     % misure di probabilita' con momento quadratico finito
\newcommand{\DomainSlope}[1]{\mathrm{Dom}(|\partial#1|)}                 % Dominio della pendenza
\newcommand{\MetricSlope}[2]{|\partial#1|(#2)}              %Pendenza
\newcommand{\lMetricSlope}[2]{|\partial#1|_\ell(#2)}              %Pendenza di #1 valutata in #2                                
\newcommand{\MetricSlopeSquare}[2]{|\partial#1|^2(#2)}      %Pendenza
\newcommand{\lMetricSlopeSquare}[2]{|\partial#1|_\ell^2(#2)}      %Pendenza al quadrato                                
\newcommand{\PlainSlope}[1]{|\partial#1|}                   %Slope
\newcommand{\lPlainSlope}[1]{|\partial#1|_\ell}                                                
\newcommand{\Pc}[2]{\overline{#1}\kern-2pt^{\vphantom 0}_{#2}}
\newcommand{\Pcshift}[3]{\overline{#1}\kern-1pt^{#3}_{#2}}
\newcommand{\Pch}[2]{\overline{#1}^{\kern1pt h}_{\kern-2pt#2}}
\newcommand{\Pck}[2]{\overline{#1}^{\kern1pt k}_{\kern-2pt#2}}
\newcommand{\Pcinfty}[2]{\overline{#1}^{\kern1pt \infty}_{\kern-2pt#2}}
\newcommand{\Pchn}[2]{\overline{#1}^{\kern1pt h_n}_{\kern-2pt#2}}
\newcommand{\Pchnk}[2]{\overline{#1}^{\kern1pt h'_{n_k}}_{\kern-2pt#2}}
\newcommand{\Pchk}[2]{\overline{#1}^{\kern1pt h_{k}}_{\kern-2pt#2}}
\newcommand{\Pl}[2]{{#1}\kern-2pt^{\vphantom 0}_{#2}}
\newcommand{\Pcb}[2]{\underline{\phantom{u}}\kern-6pt #1_{#2}}
\newcommand{\nchi}{{\raise.3ex\hbox{$\chi$}}}
\newcommand{\MetricDerivative}[1]{|\dot{#1}|}                         %Derivata metrica
\newcommand{\MD}[2]{|\dot{#1}_{#2}|}
\newcommand{\MDS}[2]{|\dot{#1}_{#2}|^2}
\newcommand{\AmbientSpace}{X}  % lo spazio metrico Ambiente
\newcommand{\Geodesic}{\sfx}
\newcommand{\Gdc}{\Geodesic}
\newcommand{\DistName}{{\mathsf d}}          %Distanza
\newcommand{\lDistName}{\DistName_\ell}          %Distanza
\newcommand{\lDDistName}[1]{\DistName_{#1,\ell}}          %Distanza
\newcommand{\WDistName}{\DistName_W}          %Distanza
\newcommand{\Dist}[2]{\DistName(#1,#2)}
\newcommand{\lDist}[2]{\lDistName(#1,#2)}
\newcommand{\lDDist}[3]{\lDDistName#1(#2,#3)}
\newcommand{\DistSquare}[2]{\DistName^2(#1,#2)}
\newcommand{\WDistSquare}[2]{\WDistName^2(#1,#2)}
\newcommand{\GeoCon}[3]{\mathrm{Geo}_{#1}[#2\kern-1pt\to\kern-1pt#3]}
\newcommand{\GeoConPhi}[4]{\mathrm{Geo}^{\phi,#2}_{#1}[#3\kern-1pt\to\kern-1pt#4]}
\newcommand{\GeoConArg}[5]{\mathrm{Geo}^{#5,#2}_{#1}[#3\kern-1pt\to\kern-1pt#4]}
\newcommand{\GDom}[1]{[0,1]}
\newcommand{\GeoSpace}[1]{\mathrm{Geo}({#1})}
\renewcommand{\d}{{\mathrm d}}
\newcommand{\dt}{{\d t}}
\newcommand{\topref}[2]{\stackrel{\eqref{#1}}#2}
\newcommand{\DClass}[2]{[#2]_{#1,\ell}}
\newcommand{\System}{(\AmbientSpace,\DistName,\phi)}
\newcommand{\Syst}{\AmbientSpace,\DistName,\phi}
\newcommand{\DirSuperDer}[3]{{#1}'(#2;#3)}
\newcommand{\DirDistDer}[3]{[\dot{#1},#3]_{#2}}
\newcommand{\EExp}[2]{{\sf E}_{#1}(#2)}
\newcommand{\taueta}{{\tau,\eta}}
\newcommand{\TauEta}{{\tau,\eta}}
\newcommand{\discrMM}[2]{{\sf{MM}}_{#1}(#2)}
\newcommand{\GMM}[1]{{\sf{GMM}}(#1)}
\newcommand{\contMM}[1]{{\sf{MM}}(#1)}
\newcommand{\lambdam}{{\lambda\kern-2pt^-}}
\newcommand{\Urd}{{\frac{ \d}{\d t}\!\!}^+}
\newcommand{\lrd}{{\tfrac{ \d}{\dt}\!}_+}
\newcommand{\Lrd}{{\frac{ \d}{\dt}\!}_+}
\newcommand{\sfE}[2]{{\sf E}_{#1}(#2)}
\newcommand{\GSlope}[3]{\mathfrak {L}_{#1}[#2](#3)}
\newcommand{\GSlopeE}[2]{\mathfrak {L}_{#1}[#2]}
\newcommand{\GSlopeSquare}[3]{\mathfrak {L}_{#1}^2[#2](#3)}
\newcommand{\h}{k}
\newcommand{\n}{{n}}
\newcommand{\EVIshort}[1]{\ensuremath{\EVIname_{#1}}}
\newcommand{\EVI}[4]{\ensuremath{\EVIname_{#4}(#1,#2,#3)}}
\newcommand{\EVIname}{\ensuremath{\mathrm{EVI}}}
\newcommand{\Dom}[1]{\ensuremath{\mathrm{Dom}(#1)}}
\newcommand{\FlowName}{\ensuremath{\mathsf S}}
\newcommand{\Flow}[2]{\FlowName_{#1}(#2)}
\newcommand{\forevery}{\ensuremath{\text{for every }}}
\newcommand{\ooint}{o}
\newcommand{\Resolvent}[2]{\rmJ_{#1}[#2]}
\newcommand{\PhiF}{\mathscr F}
\newcommand{\PhiE}{\mathscr E}
\newcommand{\ICA}{{(\text{IC}1)}}
\newcommand{\ICB}{{(\text{IC}2)}}
\newcommand{\AC}{\mathrm{AC}}
\newcommand{\nc}{\normalcolor}
\renewcommand{\mm}{\mathfrak m}
\begin{document}
 \title{Gradient flows and Evolution Variational Inequalities in metric spaces. 
   I: structural properties}
 \author{Matteo Muratori \thanks{Partially supported by GNAMPA project 2017 \emph{Equazioni Diffusive Non-lineari in Contesti Non-Euclidei e Disuguaglianze Funzionali Associate}, by GNAMPA project 2018 \emph{Problemi Analitici e Geometrici Associati a EDP Non-Lineari Ellittiche e Paraboliche}, and by MIUR PRIN 2015 project
\emph{Calculus of Variations}. Email: \textsf{matteo.muratori@polimi.it}} \\
\emph{Dipartimento di Matematica, Politecnico di Milano}
\and
Giuseppe Savar\'e
\thanks{Partially supported by 
Cariplo Foundation and Regione Lombardia via project \emph{Variational
Evolution Problems and Optimal Transport}, by MIUR PRIN 2015 project
\emph{Calculus of Variations}, and by IMATI-CNR. Email: \textsf{giuseppe.savare@unipv.it}}\\
\emph{Dipartimento di Matematica,
Universit\`a di Pavia}}
%\date{12 ottobre 2004}
\date{}
\maketitle
\begin{abstract}

  This is the first of a series of papers devoted to a thorough analysis
  of the class of gradient flows in a metric space $(X,\DistName)$, that can be
  characterized by Evolution Variational Inequalities.
  We present new results
  concerning the structural properties of solutions to
  the $\mathrm{EVI}$ formulation, such as contraction, regularity,
  asymptotic expansion,
  precise energy identity, stability, asymptotic behaviour and
  their link with the geodesic convexity of the driving functional.
  
  Under the crucial assumption of the existence of an $\mathrm{EVI}$ gradient flow,
  we will also prove two main results:
  \begin{itemize}
    \renewcommand{\labelitemi}{$-$}
  \item the equivalence with the De Giorgi variational
    characterization of curves of maximal slope;
  \item the convergence of
    the Minimizing Movement-JKO scheme to the $\mathrm{EVI}$ gradient flow,
    with an explicit and
    uniform error estimate of order $1/2$ with respect to the step
    size, independent of any geometric hypothesis (as upper or lower
    curvature bounds) on $\DistName$.
  \end{itemize}
  In order to avoid any compactness assumption, we will also introduce
  a suitable relaxation of the Minimizing Movement algorithm obtained
  by
  the Ekeland variational principle, and we will prove its uniform convergence as well.
\end{abstract}
\tableofcontents
%\part{Gradient Flows and Evolution Variational Inequalities}
\section{Introduction}

This is the first of a series of papers devoted to a thorough analysis
of the class of gradient flows in metric spaces that can be
characterized by Evolution Variational Inequalities (EVI, in short).

Gradient flows govern a wide range of important evolution
problems. Perhaps the most popular and well-known theory, with
relevant applications to various classes of Partial Differential Equations, concerns
the evolution in a Hilbert space $X$ driven by a lower semicontinuous and convex (or
$\lambda$-convex) functional $\phi:X\to (-\infty,+\infty]$
with non empty proper domain $\Dom\phi:=\{w\in X:\phi(w)<\infty\}$.
The evolution can be described by a locally Lipschitz curve
$u:(0,+\infty)\to \Dom\phi$ solving the differential inclusion
\begin{equation}
  \label{eq:49}
  u'(t)\in -\partial\phi(u(t))\quad \text{for
    $\Leb1$-a.e.~$t>0$},\qquad \lim_{t\down0}u(t)=u_0\in \overline{\Dom\phi},
\end{equation}
where $\partial\phi$ denotes the Fr\'echet subdifferential of convex analysis.
Existence, uniqueness, well-posedness, approximation
and regularity properties of solutions to \eqref{eq:49}
have been deeply studied by the 
the pioneering papers of \textsc{Komura, Crandall-Pazy, Dorroh, Kato, Br\'ezis},
see e.g.~the book \cite[Chapters III,IV]{Brezis73} and the references therein.

In the Hilbert framework solutions to \eqref{eq:49} enjoy many 
important properties: 
they give rise to a continuous semigroup
$(\FlowName_t)_{t>0}$ of $\lambda$-contracting maps
$\FlowName_t:\overline{\Dom\phi}\to \Dom\phi$, in the sense that
$u(t):=\FlowName_t(u_0)$ is the unique solution to \eqref{eq:49} and
\begin{equation}
  \label{eq:52}
  \|\FlowName_t(u_0)-\FlowName_t(v_0)\|_X\le \rme^{-\lambda
    t}\|u_0-v_0\|_X\quad
  \text{for every }u_0,v_0\in \Dom\phi.
\end{equation}
Among the many important properties of $(\FlowName_t)_{t>0} $,
we recall that the energy map $t\mapsto
\phi(u(t))$ is absolutely continuous and
satisfies the energy-dissipation identity (here in a
differential form, where $\|\partial\phi(\cdot)\|_X$ denotes the
minimal norm of the elements in $\partial\phi(\cdot)$)
\begin{equation}
  \label{eq:38}
  \frac\d{\d t}\phi(u(t))=-\|u'(t)\|_X^2=-\|\partial\phi(u(t))\|_X^2\quad
  \text{for $\Leb 1$-a.e.~$t>0$};
\end{equation}
moreover every solution $u$ arises from the 
locally-uniform limit of the discrete approximations obtained by the Implicit Euler Method
\begin{subequations}
  \begin{gather}
      \label{eq:50}
    U^n_\tau=\mathrm J_\tau(U^{n-1}_\tau), \quad n\in \N,\quad
    U^0_\tau:=u_0,
    \intertext{where the resolvent map $\mathrm
      J_\tau:X\to X$
      is defined by}
    \label{eq:78}
    U=\mathrm J_\tau (V)\quad\Leftrightarrow\quad
    \frac{U-V}\tau\in
    -\partial\phi(U).
    % \frac{U^n_\tau-U^{n-1}_\tau}\tau\in
    % -\partial\phi(U^n_\tau)
  \end{gather}
\end{subequations}
In fact \eqref{eq:50} recursively defines a family of sequences
$(U^n_\tau)_{n\in \N}$ depending on a sufficiently small time step
$\tau>0$ and inducing a piecewise constant interpolant
\begin{equation}
  \label{eq:51}
  \Pc U\tau(t):=U^n_\tau=\mathrm J_\tau^n(u_0)\quad\text{whenever $t\in ((n-1)\tau,n\tau]$},
\end{equation}
converging to the continuous solution $u$ of \eqref{eq:49} as
$\tau\downarrow0$.

Under the initial impulse of \textsc{De Giorgi,
  Degiovanni, Marino, Tosques} \cite{DeGiorgi-Marino-Tosques80,Degiovanni-Marino-Tosques85,Marino-Saccon-Tosques89}, the abstract theory has been extended towards two main
directions: a relaxation of the convexity assumptions on $\phi$
(see e.g.~\cite{Rossi-Savare06,Mielke-Rossi-Savare13})
and a broadening of the structure of the ambient space, from
Hilbert to Banach spaces
(for the theory of doubly nonlinear evolution equations, see e.g.~\cite{Barbu76,Colli92})
or to more general metric and topological spaces \cite{DeGiorgi93}. 
It is remarkable that the original approach by De Giorgi and his
collaborators encompasses both these directions.

Here we focus on the second direction and we consider the metric side of
the theory, referring also to \cite{Daneri-Savare14,Santambrogio17}
for recent overviews.
There are (at least) three different
formulations of gradient flows in a metric space $(X,\DistName)$:
the first one, introduced by \cite{DeGiorgi-Marino-Tosques80},
got inspiration from \eqref{eq:38}, which is in fact an equivalent
characterization of \eqref{eq:49} in Hilbert spaces.
One can give a metric meaning to the (scalar) velocity of a curve $u:[0,\infty)\to X$ by the so
called \emph{metric derivative}
\begin{equation*}
  \label{eq:53}
  |\dot u|(t):=\lim_{h\to0}\frac{\Dist{u(t+h)}{u(t)}}{|h|},
\end{equation*}
and to the norm of the (minimal selection in the) subdifferential by
the \emph{metric slope}
\begin{equation*}
  \label{eq:54}
  |\partial\phi|(u):=\limsup_{v\to u}\frac{(\phi(u)-\phi(v))_+}{\Dist uv}.
\end{equation*}
A \emph{curve of maximal slope} is an absolutely continuous curve
$u:[0,+\infty)\to \AmbientSpace$ satisfying the energy-dissipation identity
\begin{equation}
  \label{eq:55}
    \frac\d{\d t}\phi(u(t))=-|\dot u|^2 (t)=-|\partial\phi|^2 (u(t))\quad
  \text{for $\Leb 1$-a.e.~$t>0$},
\end{equation}
which is the metric formulation of \eqref{eq:38}.

A second approach is related to a variational formulation to
\eqref{eq:78} (since the latter does not make sense in a pure metric setting), which can be considered as the first-order optimality
condition for the variational problem
\begin{equation}
  \label{eq:56}
  U\in \mathrm J_\tau(V)\quad\Leftrightarrow\quad
  U\in \argmin_{W\in \AmbientSpace}\frac1{2\tau}\DistSquare W{V}+\phi(W),
\end{equation}
whose minimizers define a multivalued map still denoted by $\mathrm
J_\tau$.
A recursive selection of $U^n_\tau$ among the minimizers $\mathrm J_\tau(U^{n-1})$ of \eqref{eq:56} 
yields a powerful algorithm to approximate gradient flows.
Pointwise limits up to subsequences
of the interpolants $\Pc U\tau$ defined by \eqref{eq:51} as $\tau\down0$ 
are denoted by $\GMM{\Syst;u_0}$ and called \emph{Generalized Minimizing Movements}, 
a particular important
case of a general framework introduced in \cite{DeGiorgi93}.
Let us remark that the
extensive application of \eqref{eq:56} in the area of Optimal Transport has been
independently initiated by the celebrated paper of
\textsc{Jordan-Kinderlehrer-Otto}
\cite{Jordan-Kinderlehrer-Otto98}; their approach has been so
influential, that the realization of \eqref{eq:56} in
Kantorovich-Rubinstein-Wasserstein spaces (see the examples below and
Section \ref{subsubsec:Wasserstein}) is commonly called \emph{JKO scheme.}

It is not difficult to prove existence of generalized minimizing
movements, under suitable compactness conditions
(see \cite{Ambrosio95,Ambrosio-Gigli-Savare08}); if moreover the slope
of $\phi$ is a lower semicontinuous upper gradient (see \cite[Definitions 1.2.1, 1.2.2]{Ambrosio-Gigli-Savare08}) then generalized
minimizing movements are also curves of maximal slope, according to
\eqref{eq:55}. 

Even if $\phi$ is geodesically $\lambda$-convex (the natural extension
of convexity in metric spaces), however, the evolution does not enjoy
all the nice semigroup properties of the Hilbertian case: it can be
easily checked even in finite-dimensional spaces, endowed with a non-quadratic norm \cite{Ohta-Sturm12}. For this to hold, we need to ask something more. Indeed, the most powerful (and demanding) notion of gradient flow can be
obtained by a metric formulation of \eqref{eq:49}, which may be
written as a system of \emph{evolution variational inequalities}, observing
that 
\begin{equation*}
  \label{eq:57}
  \begin{aligned}
    \frac 12\frac \d{\d t}\|u(t)-w\|_X^2&= \langle
    u'(t),u(t)-w\rangle_X\\&\le \phi(w)-\phi(u(t))-\frac
    \lambda2\|u(t)-w\|_X^2 \quad\text{for every }w\in \Dom\phi,
  \end{aligned}
\end{equation*}
thanks to the $\lambda$-convexity of $\phi$ and the
properties of the subdifferential.

In metric spaces one may similarly look for curves
$u:[0,\infty)\to\Dom\phi$ satisfying 
\begin{equation}
  \label{eq:58}
  \frac 12\frac \d{\d t}\DistSquare{u(t)}w
  +\frac\lambda2\DistSquare {u(t)}w\le \phi(w)-\phi(u(t))
  \quad\text{for every }w\in \Dom\phi;
\end{equation}
if a solution exists for every initial datum $u_0\in \Dom\phi$,
it gives rise to a $\lambda$-contracting semigroup
$(\FlowName_t)_{t\ge0}$ in $\Dom\phi$ satisfying the analogous
stability property of \eqref{eq:52}.
Let us recall that \eqref{eq:58}, which resemble previous formulations of
\cite{Benilan72,Brezis72,Baiocchi89} in Hilbert and Banach spaces,
has been introduced in the metric framework by
\cite{Ambrosio-Gigli-Savare08} and it 
is called the \EVIshort\lambda\ formulation of the gradient
flow driven by $\phi$.

In fact \eqref{eq:58} is the most restrictive definition of gradient flow
for $\lambda$-convex functionals.
Differently from the Hilbertian case, 
in arbitrary metric spaces $\AmbientSpace$ some ``Riemannian-like''
structure for $X$ should also be required
\cite{Ohta-Sturm12,VonRenesse-Tolle12}.
Among the most interesting examples of spaces and $\lambda$-convex
functionals
where an \EVIshort\lambda\ gradient flow exists we can quote:
\renewcommand{\labelitemi}{$\triangleright$}
\begin{itemize}\itemsep-2pt
\item Hilbert spaces,
\item complete and smooth Riemannian manifolds,
\item Hadamard non-positively curved (NPC) spaces, or more generally,
  metric spaces with an
  upper curvature bound \cite{Mayer98,Jost98,Ambrosio-Gigli-Savare08,Bacak14},
\item PC spaces, or more generally Alexandrov spaces with a uniform
  lower curvature bound
  \cite{Burago-Gromov-Perelman92,Burago-Burago-Ivanov01,Plaut02,Perelman-Petrunin,Petrunin07,Savare07,Ohta09,Muratori-SavareIII},
\item the Wasserstein space $(\ProbabilitiesTwo X,\WDistName)$, where $X$ is 
  a complete and smooth Riemannian manifold, a compact Alexandrov
  space or a Hilbert space
  \cite{Ambrosio-Gigli-Savare08,Savare07,Erbar10,Villani09,Ambrosio-Savare-Zambotti09,Ohta09,GigliKuwadaOhta10}.
\item RCD(K,$\infty$) metric measure spaces
  \cite{Ambrosio-Gigli-Savare14b,Erbar-Kuwada-Sturm15,Ambrosio-Mondino-Savare15},
\item $(\ProbabilitiesTwo X,\WDistName)$, where $X$ is an RCD(K,$\infty$)
  metric-measure space
  and $\phi$ is the relative entropy functional
  \cite{Sturm18,Muratori-SavareII},
\end{itemize}
and there is an intensive research to construct EVI gradient flows
in ad-hoc geometric settings
for reaction-diffusion equations
\cite{Kondratyev-Monsaingeon-Vorotnkikov16,Chizat-DiMarino17,
  Liero-Mielke-Savare18,Laschos-Mielke18} and systems
\cite{Mielke11,Glitzky-Mielke13,Liero-Mielke13},
nonlinear viscoelasticity \cite{Mielke-Ortner-Sengul14},
Markov chains \cite{Maas11,Erbar-Maas12,Mielke13},
jump processes \cite{Erbar14},
configuration
and Wiener spaces
\cite{Erbar-Huesmann15,Ambrosio-Erbar-Savare16};
we refer to Section \ref{subsec:examples} for a more detailed
discussion of the main classes of metric structures.

We think that all these examples and the developments of
the metric theory justify a systematic study of the
\EVIshort\lambda-formulation of gradient flows.
This is in fact the main contribution of our investigation.
In the present paper we will assume that an \EVIshort\lambda-flow
exists and we will deeply examine its main structural properties,
independently of the construction method.
In the following companion papers, we will study the natural stability
properties related to perturbation of the functional $\phi$
and of the distance $\DistName$
under suitable variational notions of convergence (as $\Gamma$,
Mosco, or Gromov-Hausdorff convergence) and
the generation results under the weakest assumptions on the metric and
on the functional. In all our analyses we will make a considerable
effort to avoid ad hoc hypotheses, in particular concerning
compactness, in order to cover also important infinite-dimensional examples.

\subsection*{Plan of the paper and main results}
Let us now quickly describe the structure and the main results of the present paper.

In the preliminary  \textbf{Section \ref{sec:preliminaries}} we will briefly
recall the main metric notions we will extensively deal with.
Since we want to avoid compactness assumptions, \emph{a particular attention is
devoted to approximate conditions}: length properties
(Definition \ref{def:length} and Lemma \ref{le:complete-length}),
a new relaxed formulation of the Moreau-Yosida regularization
inspired by the Ekeland variational principle \cite{Ekeland74}
(Section \ref{subsec:MYE}),
and an approximate notion of $\lambda$-convexity
(Definition \ref{def:app_lambda_convexity}).
Theorem
\ref{le:slope_by_Ekeland} provides a very useful \emph{approximation by
points with finite slope} and a crucial estimate, which lies at the core
of the relaxed version of the Minimizing Movement scheme
of Section \ref{sec:uniform_estimate}.
Theorem \ref{le:bfb} shows that \emph{approximately convex
(resp.~$\lambda$-convex) functions are linearly (resp.~quadratically) bounded from below}, a property
which is well known in Banach spaces. Both these results only depend
on the completeness of the sublevels of $\phi$.

\textbf{Section \ref{sec: gf}} contains the main structural properties
of solutions to the Evolution Variational Inequalities
\EVIshort\lambda;
first of all, in Section \ref{subsec:EVIdefinition}
we will consider various formulations of \eqref{eq:58},
showing their equivalence.
It is clear that classical formulations in terms of pointwise
derivatives are quite useful to obtain contraction and
regularity estimates, whereas integral or distributional
characterizations
are important when stability issues are involved.

Our first main result is \emph{Theorem \ref{thm:main1}},
which collects \emph{all the fundamental properties of solutions}
of the \EVIshort\lambda-formulation,
mainly inspired by the Hilbert framework.
Some results (as $\lambda$-contraction, regularization or asymptotic
behaviour) had previously been obtained in
\cite{Ambrosio-Gigli-Savare08}, occasionally using stronger structural
properties on $\phi$. Here we will show that
all the relevant properties are consequence of the Evolution
Variational Inequalities; in particular we will obtain
precise pointwise versions of the energy identity, refined
regularization estimates, and sharp asymptotic expansions, which will
play a crucial role in the analysis of the curves of maximal slopes
and of the minimizing movements in Sections \ref{sec:max-slope} and
\ref{sec:uniform_estimate}.
Section \ref{subsec:lconvexity} collects two basic properties
involving
\EVIshort\lambda-flows and $\lambda$-convexity:
first of all, we extend the result by \cite{Daneri-Savare08} showing
that
the existence of an \EVIshort\lambda-flow entails approximate
$\lambda$-convexity if $\Dom\phi$ is a length (also called intrinsic) subset.
Differently from the geodesic property, the length assumption
should not be considered restrictive, since a result of
\cite{Ambrosio-Erbar-Savare16}
shows that an \EVIshort\lambda-flow in $(X,\DistName)$
is also an \EVIshort\lambda-flow if we replace $\DistName$ by the
length distance $\lDistName$; Theorem \ref{thm:Daneri} then implies
that $\phi$ is always $\lambda$-convex in the modified intrinsic geometry.
We will also show that the constants $\lambda$ of the
$\EVIshort\lambda$-formulation and of the $\lambda$-convexity
coincide.

In the last two sections of the paper we compare
the notions of $\EVIshort\lambda$-flow, of curves of maximal slope and
of generalized Minimizing Movements. Our main assumption is that an
$\EVIshort\lambda$-flow exists; in this case, 
in \textbf{Section \ref{sec:max-slope}} we prove that any curve of
maximal slope, even in a weaker integral sense than \eqref{eq:55}, is a solution
to the \EVIshort\lambda formulation; in particular, we get also
uniqueness of curves of maximal slope.

In \textbf{Section \ref{sec:uniform_estimate}} we will study the
Minimizing Movement scheme, in a new and suitably relaxed form where
we use approximate minimizers obtained at each step via the Ekeland
variational principle. In this way, we can establish the existence of a discrete
approximating sequence without invoking compactness arguments,
and we will prove that every sequence of discrete Minimizing Movements
converges to the solution of the \EVIshort\lambda-flow.
As a byproduct of our analysis, we will also prove explicit error
estimates between discrete Minimizing Movements
and continuous solutions, assuming different regularity on the initial
data and allowing for a
further relaxation of the minimality condition:
when $u_0$ has finite slope, in the case of a convex functional with
no need for the Ekeland regularization, we will obtain
the uniform error estimate of order $1/2$
\begin{equation}
  \label{eq:59}
  \Dist{\Flow t{u_0}}{\mathrm J^n_{\tau}(u_0)}\le \frac {t}{\sqrt
    n}|\partial\phi|(u_0),\quad
  \tau=t/n,
\end{equation}
which reproduces in our metric setting (with a better constant) the celebrated Crandall-Liggett
estimates for contraction semigroups in Banach spaces
\cite{Crandall-Liggett71}, previously obtained in specific metric
settings under much stronger assumptions on $\DistName$
\cite{Mayer98,Ambrosio-Gigli-Savare08,Clement-Desch10,Craig16,Bacak14},
the latter implying suitable contraction properties of $\mathrm J_\tau$.

These estimates are completely new; their strength relies on the fact
that they do not depend on any upper or lower curvature bound on the
distance $\DistName$, but
only on the existence of an $\EVIshort\lambda$-flow.
In particular, they can be applied to the setting of
$\mathrm{RCD}(K,\infty)$ metric spaces or to Wasserstein spaces,
showing that the JKO-Minimizing Movement Scheme is always at least of
order $1/2$ whenever it is applied to an $\EVIshort\lambda$-gradient flow.
Apart from their intrinsic interest, these estimates will also be
crucial in the study of the convergence of $\EVIshort\lambda$-flows, which will be investigated in the next paper \cite{Muratori-SavareII}. For these reasons, we tried to reach the greatest level of generality. 

\subsection*{List of main notations}

\halign{$#$\hfil\ &#\hfil
\cr
(\AmbientSpace,\DistName)&the reference metric space\cr
\phi&a l.s.c.~functional on $\AmbientSpace$ with values in $(-\infty,+\infty]$\cr
\AC^p(I;\AmbientSpace)&$p$-absolutely continuous curves $\sfx:I\to X$, Def.~\ref{def:ACcurves}\cr
|\dot \sfx|,\ \mathrm{Length}[\sfx]&metric velocity and length of an
$\AC$ curve $\sfx$, \eqref{eq:cap1:65} and \eqref{eq:9} \cr
\lDistName,\lDDistName D&length distance induced by $\DistName$ (in a
subset $D$), \eqref{eq:41}\cr
\GeoSpace D,\ \GeoCon D{x_0}{x_1}&geodesics in a subset $D$, Def.~\ref{geo-geo}\cr
\Dom\phi& domain of a proper functional $\phi$, \eqref{eq:61}\cr
\MetricSlope\phi x,\ \GSlope\lambda\phi x&metric and global slopes of
$\phi$, Def.~\ref{def:metric_slope}\cr
\Resolvent\taueta x&Moreau-Yosida-Ekeland resolvent, Def.~\ref{def:MYE-resolvent}\cr
\phi'(\sfx_0,\sfx)&directional derivative of $\phi$ along a geodesic
$\sfx$, Def.~\ref{def:metric_slope2}\cr
\X=\System&a metric-functional system, \eqref{eq:system}\cr
\EVIshort\lambda&Evolution Variational Inequalities characterizing the
flow, Def.~\ref{def:GFlow}\cr
\mathsf E_\lambda(t)& the primitive of the function $ \rme^{\lambda t} $ s.t.~$\mathsf E_\lambda(0)=0$,
\eqref{eq:cap2:11}\cr
(U^n_\taueta)_{n\in \N}&a seq.~in $X$ generated by the 
Minimizing Movement algorithm, Def.~\ref{def:MMS}\cr
\taueta &step size and Ekeland relaxation parameter of a
Min.~Mov., Def.~\ref{def:MMS}\cr
\Pc U\taueta(t)& piecewise-constant interp.~of a discrete
Minimizing Movement, \eqref{eq:cap1:66}\cr
\contMM{\Syst;u_0}&Minimizing
Movements starting from $u_0$, Def.~\ref{def:MM}\cr
 \GMM{\Syst;u_0}&Generalized Minimizing
Movements starting from $u_0$, Def.~\ref{def:MM}\cr
}
\vspace{6pt}

\section{Preliminaries}
\label{sec:preliminaries}
Let us first briefly recall some basic definitions and tools we will extensively use in the forthcoming sections, referring to \cite{Ambrosio-Gigli-Savare08} for a more detailed introduction to the whole subject.
Throughout the present paper we will refer to a metric space
$(\AmbientSpace,\DistName)$.
We will often use the symbol $D$ to denote a distinguished subset of
$\AmbientSpace$, which inherits the distance $\DistName$ from $\AmbientSpace$.

\subsection{Absolutely continuous curves, length subsets and
  geodesics}
\label{subsec:curves}
%% which for the moment we do not assume to be complete.
\begin{definition}[Absolutely continuous curves and metric derivative]
  \label{def:ACcurves}
  Let $I\subset \R$ be an interval and $p\in [1,+\infty]$. A curve 
  $\sfx:I\to X$ belongs to $\AC^p(I;\AmbientSpace)$
  if there exists $m\in L^p(I)$ such that
  \begin{equation}
    \label{eq:cap1:1}
    \Dist{\sfx_s}{\sfx_t}\le \int_s^t m(r)\,\d r\quad
    \forevery s,t\in I \ \text{with} \ s\le t.
  \end{equation}
  The \emph{metric derivative} of $\sfx$ is defined, where it exists, as
  \begin{equation}
    \label{eq:cap1:65}
    \MetricDerivative \sfx( t):=
    \lim_{h\to0}\frac{\Dist{\sfx_{t+h}}{\sfx_t}}{|h|}.
  \end{equation}
\end{definition}
The proof of the following result can be found, e.g., in \cite[Theorem
1.1.2]{Ambrosio-Gigli-Savare08}. 
% A key point lies in the fact that, since the image of $ \sfx  $ is separable, there holds 
% \begin{equation}\label{eq: img-sep}
% \Dist{\sfx_s}{\sfx_t} = \sup_{n \in \N} \left| \Dist{\sfx_s}{v_n} - \Dist{\sfx_t}{v_n}  \right| \quad \forall s,t \in I
% \end{equation}
% for some sequence $ v_n  $ dense in $ \sfx(I) $.

\begin{theorem}[Absolutely continuous curves and metric derivative]
  \label{thm:metric_derivative}
  If $\sfx\in \AC^p(I;\AmbientSpace)$, then $\sfx$ is uniformly continuous, its metric derivative exists at $\Leb 1$-a.e.\ $t\in I$, belongs to $L^p(I)$ and provides the minimal function $m$ satisfying
  \eqref{eq:cap1:1}, i.e.\ $\MetricDerivative \sfx$ complies with \eqref{eq:cap1:1} and
  every function $m$ as in \eqref{eq:cap1:1} satisfies $m(t)\ge
  \MetricDerivative\sfx(t)$ for $\Leb 1$-a.e.\ $t\in I$.
\end{theorem}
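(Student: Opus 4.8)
The plan is to establish all four assertions simultaneously by constructing an explicit candidate for the metric derivative and showing it is at once the pointwise limit \eqref{eq:cap1:65} and the minimal admissible integrand in \eqref{eq:cap1:1}. The uniform continuity is the easy part: if $m\in L^p(I)$ realizes \eqref{eq:cap1:1}, then on every bounded subinterval $m\in L^1$, so the real primitive $t\mapsto\int m$ is absolutely continuous and hence uniformly continuous, and the bound $\Dist{\sfx_s}{\sfx_t}\le\int_s^t m$ transfers this modulus of continuity directly to $\sfx$.

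For the core of the argument I would exploit the separability of the range $\sfx(I)$, which is the continuous image of the separable interval $I$. Fixing a countable dense set $\{y_n\}_{n\in\N}\subset\sfx(I)$, I set $\varphi_n(t):=\Dist{\sfx_t}{y_n}$. By the triangle inequality combined with \eqref{eq:cap1:1}, each $\varphi_n$ is real absolutely continuous, with $|\varphi_n(t)-\varphi_n(s)|\le\Dist{\sfx_s}{\sfx_t}\le\int_s^t m$, so $\varphi_n'$ exists $\Leb 1$-a.e.\ and $|\varphi_n'|\le m$ a.e. I then define the measurable function
\[
  m^*(t):=\sup_{n\in\N}|\varphi_n'(t)|,
\]
which clearly satisfies $m^*\le m$ a.e.

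The key step, and the one I expect to be the main obstacle, is to show that $m^*$ is itself admissible in \eqref{eq:cap1:1}. For each fixed $n$ one has $|\varphi_n(t)-\varphi_n(s)|\le\int_s^t|\varphi_n'|\le\int_s^t m^*$; choosing a subsequence $y_{n_k}\to\sfx_s$ forces $\varphi_{n_k}(s)\to 0$ and $\varphi_{n_k}(t)\to\Dist{\sfx_s}{\sfx_t}$, so passing to the limit yields $\Dist{\sfx_s}{\sfx_t}\le\int_s^t m^*$. This proves both that $m^*$ obeys \eqref{eq:cap1:1} and, together with $m^*\le m$, that it is the smallest such integrand; in particular $m^*\in L^p(I)$.

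Finally I would identify $m^*$ with the metric derivative by a two-sided squeeze holding at $\Leb 1$-a.e.\ $t$. For the upper estimate I use $\Dist{\sfx_{t+h}}{\sfx_t}\le\bigl|\int_t^{t+h}m^*\bigr|$ and the Lebesgue differentiation theorem at Lebesgue points of $m^*$, giving $\limsup_{h\to0}\Dist{\sfx_{t+h}}{\sfx_t}/|h|\le m^*(t)$. For the lower estimate, at any differentiability point of a fixed $\varphi_n$ I have $\Dist{\sfx_{t+h}}{\sfx_t}/|h|\ge|\varphi_n(t+h)-\varphi_n(t)|/|h|\to|\varphi_n'(t)|$, and taking the supremum over the countable family yields $\liminf_{h\to0}\Dist{\sfx_{t+h}}{\sfx_t}/|h|\ge m^*(t)$. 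Hence the limit \eqref{eq:cap1:65} exists and equals $m^*(t)$ a.e., so $\MetricDerivative\sfx=m^*$ is exactly the minimal admissible function. The delicate points are precisely the density/countability passage making $m^*$ both measurable and admissible, and the bookkeeping needed to run the squeeze on a single full-measure set (the common Lebesgue points of $m^*$ and differentiability points of all the $\varphi_n$).
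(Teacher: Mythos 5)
Your proposal is correct and coincides, essentially line by line, with the proof the paper invokes by reference (Theorem 1.1.2 of Ambrosio--Gigli--Savar\'e): the countable dense family $\{y_n\}$, the auxiliary functions $\varphi_n(t)=\Dist{\sfx_t}{y_n}$, the candidate $m^*=\sup_n|\varphi_n'|$, its admissibility via $y_{n_k}\to \sfx_s$, and the two-sided squeeze at common Lebesgue/differentiability points are exactly the ingredients of that argument. The only cosmetic remark is that for unbounded $I$ the uniform continuity should be phrased globally (via H\"older when $p>1$, or absolute continuity of the integral of $m\in L^1(I)$ when $p=1$) rather than on bounded subintervals, but this does not affect the substance.
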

Still by means of similar techniques to those used in the proof of \cite[Theorem 1.1.2]{Ambrosio-Gigli-Savare08}, 
it is not difficult to show that a curve belongs to $\AC^1(I;\AmbientSpace)$ (resp.\ $\AC^\infty(I;\AmbientSpace)$)
if and only if it is \emph{absolutely continuous} (resp.\
\emph{Lipschitz continuous}). Hence from now on $ \AC^1 = \AC $. The length of a curve $\sfx\in \AC(I;\AmbientSpace)$ is
\begin{equation}
  \label{eq:9}
  \operatorname{Length}[\sfx]:=\int_I \MetricDerivative\sfx(t)\,\d t.
\end{equation}
A well-known reparametrization result (see e.g.~\cite[Lemma
1.1.4]{Ambrosio-Gigli-Savare08}) shows that
for every $\sfx\in \AC([a,b];\AmbientSpace)$ with length $L$, there exist
unique maps $\sfy\in \AC^\infty([0,1];\AmbientSpace)$
and $\sigma:[a,b]\to[0,1]$ continuous and (weakly) increasing such
that
\begin{equation}
  \label{eq:43}
  \sfx=\sfy\circ\sigma,\quad
  \MetricDerivative \sfy=L\quad\text{$\Leb 1$-a.e.~in $[0,1]$}.
\end{equation}
Given a subset $D\subset X$,
we can then define the \emph{length distance} induced by $\DistName$
in $D$:
\begin{equation}
  \label{eq:41}
  \lDDist D{x_0}{x_1}:=\inf\Big\{  \operatorname{Length}[\sfx]:
  \sfx\in \AC([0,1];D),\ \sfx(i)=x_i,\ i=0,1\Big\},
\end{equation}
and we will simply use the symbol $\lDistName$ when $D=\AmbientSpace$.

If $D$ is \emph{Lipschitz connected}, i.e.~each 
couple of points $x_0,x_1\in D$ can be connected by a curve
$\sfx\in \AC([0,1];D)$, then $(D,\lDDistName D)$
is a metric space. In general, we adopt the usual convention to set $\lDDist D{x_0}{x_1}=+\infty$ if
there are no absolutely continuous curves connecting $x_0$ to $x_1$ in $D$.
In this case 
$\lDDistName D:D\times D\to[0,+\infty]$
is an \emph{extended} distance on $D$, i.e.~it satisfies all the axioms of a
distance function, possibly assuming the value $+\infty$.
By partitioning $D$ in the equivalence classes with
respect to the relation $\sim_{D,\ell}$ defined by 
\begin{equation}
  x\sim_{D,\ell} y\quad \Leftrightarrow\quad
  \lDDist Dxy<\infty,\label{eq:lequivalence}
\end{equation}
each class
\begin{equation*}
  \label{eq:42}
  \DClass D{\bar x}:=\Big\{x\in D:
  \lDDist Dx{\bar x}<\infty\Big\},\quad \bar x\in D,
\end{equation*}
endowed with the distance $\lDDistName D$ becomes a metric space in the usual sense
and it is closed and open  in $(D,\lDDistName D)$. 
Since $\DistName\le \lDDistName D$, the topology induced by
$\lDDistName D$
is stronger than the original topology induced by $\DistName$;
moreover, if
$(D,\DistName)$ is complete, then $(D,\lDDistName D)$ is
complete as well.

\nc
% Recall that a curve $\sfx:I\to\AmbientSpace$
% is \emph{strongly $\Leb 1$-measurable}
% if it is $\Leb 1$ measurable (with respect to
% the Borel subsets of $\AmbientSpace$) and
% its range is essentially separable, i.e.\ there exists a
% $\Leb 1$-negligible subset $\mathscr N\subset I$
% such that $\sfx(I\setminus \mathscr N)$ is separable.
% When $p\in (1,+\infty)$ we have the following equivalent characterization
% of curves in $\AC^p(I;\AmbientSpace)$
% \cite[Lemma 1]{Lisini07}.
% \begin{proposition}
%   \label{prop:equivalent_\AC}
%   Suppose that $\AmbientSpace$ is complete,
%   and let $\sfx:I\to\AmbientSpace$ be a strongly $\Leb 1$-measurable curve.
%   If 
%   \begin{equation}
%     \label{eq:cap1:88}
%     \limsup_{h\down0}\int_{a}^{b}
%     \frac{\DistPower p{\sfx_{t+h}}{\sfx_t}}{h^p}\,\d t\le C
%     <+\infty\quad
%     \forevery\, (a,b)\Subset I,
%   \end{equation}
%   then there exists a unique curve $\tilde\sfx \in
%   \AC^p(I;\AmbientSpace)$
%   such that $\tilde\sfx_t=\sfx_t$ for $\Leb 1$-a.e.\ $t\in I$.
% \end{proposition}
\begin{definition}[Geodesics and geodesic subsets]\label{geo-geo}
  A \emph{constant speed, minimal geodesic} (in short,
  \emph{geodesic}) in $D\subset \AmbientSpace$ is
  a (Lipschitz) curve $\Geodesic:[0,1]\to D$ such
  that
  \begin{equation*}
    \label{eq:cap1:2}
    \frac{\Dist{\Geodesic_s}{\Geodesic_t}}{|s-t|}
    =\Dist{\Gdc_0}{\Gdc_1}=:|\dot\Gdc|\quad
    \text{for every } 0\le s<t\le 1.
  \end{equation*}
  We denote by $\GeoSpace D\subset \AC^{\infty}([0,1];X)$
  the collection of all of the geodesics in $D$ and by $\GeoCon D{x_0}{x_1}$ the
  (possibly empty) collection of the geodesics in $D$ satisfying
  $\Geodesic_i=x_i$, $i=0,1$. 
  If $\GeoCon D{x_0}{x_1} \not\equiv \emptyset $ for every $ x_0,x_1
  \in D $ we say that $ D $ is a \emph{geodesic subset}. 
% A subset $C\subset\AmbientSpace$ is \emph{geodesic (or strictly intrinsic)}
%   if for every couple $x_0,x_1\in C$ the set
%   $\GeoCon\AmbientSpace{x_0}{x_1}$ is not
%   empty.
\end{definition} 
So geodesics are absolutely continuous curves along
which the space swept between any two points ($ \sfx_0 $ and $
\sfx_1 $ are enough actually) is equal to their distance. By the arc-length reparametrization \eqref{eq:43}, it is not restrictive to assume
that they have constant speed. In particular, if $\AmbientSpace$ is a
geodesic space then $\DistName=\lDistName$ and the $ \inf $ in \eqref{eq:41} is attained.
%\Comment{aggiunta giusto per chi (come me...) non ha troppa familiarita' con le geodetiche}
% We also recall the notion of \emph{intrinsic distance} associated to $\DistName$:
% \begin{definition}[Intrinsic distance]
%   \label{def:intrinsic-distance}
%   For every couple of point $x_0,x_1\in \AmbientSpace$ we set
%   \begin{equation}
%     \label{eq:25}
%     \IntrinsicDist{x_0}{x_1}:=\inf\Big\{\int_0^1|\dot \sfx_t|\,\dt:\sfx\in AC^1(0,1;\AmbientSpace),\
%     \sfx_i=x_i,\ i=0,1\Big\}.
%   \end{equation}
% \end{definition}
% It is easy to check that $\IntrinsicDistName:\AmbientSpace\times\AmbientSpace\to[0,+\infty]$
% (which can also take the value $+\infty$)
% satisfies the axioms of (pseudo-)distances in metric space. $(X,\IntrinsicDistName)$ is a complete (pesudo-)metric space
% with
% \begin{equation}
%   \label{eq:26}
%   \Dist{x_0}{x_1}\le \IntrinsicDist{x_0}{x_1}\quad\text{for every }x_0,x_1\in \AmbientSpace.
% \end{equation}
% $\AmbientSpace$ is called \emph{intrinsic} if $\IntrinsicDistName\equiv\DistName$.
% \begin{remark}
%   \label{rem:intrinsic-AC}
%   Any curve $\sfx\in AC(a,b;\AmbientSpace)$ is also absolutely continuous with respect to
%   the intrinsic distance $\IntrinsicDistName$ (the converse implication is obvious) and
%   \begin{equation}
%     \label{eq:27}
%     |\dot \sfx_t|=\lim_{h\to0}\frac{\IntrinsicDist{\sfx_{t+h}}{\sfx_t}}{|h|}\quad\text{for $\Leb 1$-a.e.\ $t\in (a,b)$.}
%   \end{equation}
%   In particular, the collection of geodesics of $\DistName$ and of $\IntrinsicDistName$ is the same.
% \end{remark}
% \GGG

Since in general metric spaces the existence of geodesics is not for
granted, 
one can consider weaker notions; the
first one is the length (or intrinsic) property.
\begin{definition}[Length (or intrinsic) property]
  \label{def:length} 
    $D$ is a length (or intrinsic) subset of $X$ 
    if for every $x_0,x_1\in D$ and every $d>\Dist{x_0}{x_1}$
    there exists a curve $\sfx\in \AC([0,1];D)$ connecting $x_0$ to
  $x_1$ such that
  $\operatorname{length}[\sfx]\le d$.  
\end{definition}
\noindent
Note that $D$ is intrinsic if and only if $\lDDistName D=\lDistName=\DistName$ on $D\times D$. 

\medskip

A second, even weaker property, is related to
the existence of
$\eps$-approximate intermediate points. \nc
\begin{definition}[Approximate length subsets]
  \label{def:applength}
  Let $x_0,x_1\in X$, $\vartheta\in (0,1)$, $\eps\in (0,1)$. We say that $x$ is a $(\vartheta,\eps)$-intermediate point ($\eps$-midpoint if
  $\vartheta=1/2$) between $x_0$ and $x_1$ if 
  \begin{equation}
    \label{eq:89} 
    \frac{\DistSquare{x_0}{x}}\vartheta+
    \frac{\DistSquare{x}{x_1}}{1-\vartheta}\le \DistSquare{x_0}{x_1}(1+\eps^2\vartheta(1-\vartheta)).
  \end{equation}
  We say that $D\subset X$ is an \emph{approximate length subset} if for every $x_0,x_1\in
  D$ and every $\eps \in (0,1) $ there exists an $\eps$-midpoint $x\in D$
  between $x_0$ and $x_1$. 
\end{definition}

Note that in the case of $\eps$-midpoints \eqref{eq:89} reads
\begin{equation*}
  \label{eq:91}
  \DistSquare{x_0}{x}
  +\DistSquare{x}{x_1}\le \frac 12\DistSquare{x_0}{x_1}(1+ \eps^2/4).
\end{equation*}
The above definition comes from the fact that geodesic points $
\Geodesic_\vartheta $ can be characterized as minimizers of the
functional in the l.h.s.~of \eqref{eq:89} for which the minimum coincides with $\DistSquare{x_0}{x_1}$, see
\cite[Lemma 2.4.8]{Burago-Burago-Ivanov01};
by means of $ \varepsilon $ we admit an arbitrarily small error with respect to exact minima. It is immediate to check that if 
  \begin{equation}
    \label{eq:88}
    \Dist{x_0}{x}\le \vartheta \Dist{x_0}{x_1}(1+\delta),\quad
    \Dist{x}{x_1}\le (1-\vartheta)
    \Dist{x_0}{x_1}(1+\delta)
  \end{equation}
with (for instance) $
    \delta\le \tfrac13 {\eps^2\vartheta(1-\vartheta)} $, then \eqref{eq:89} holds. On the other hand, recalling the elementary identity
\begin{displaymath}
  (\vartheta a+(1-\vartheta)b)^2=\vartheta a^2+(1-\vartheta)b^2-\vartheta(1-\vartheta)(a-b)^2,
\end{displaymath}
$ \ell :=\Dist{x_0}{x}+\Dist{x}{x_1}$ satisfies 
\begin{displaymath}
  \DistSquare{x_0}{x_1}\le 
  \ell^2=\frac{\DistSquare{x_0}{x}}\vartheta+\frac{\DistSquare{x}{x_1}}{1-\vartheta}-
  \vartheta(1-\vartheta)\left(\frac{\Dist{x_0}{x}}\vartheta-\frac{\Dist{x}{x_1}}{1-\vartheta}\right)^2 ,
\end{displaymath}
so that if \eqref{eq:89} holds we get
\begin{equation}\label{eq:89bis}
  \left|\frac{\Dist{x_0}{x}}\vartheta-\frac{\Dist{x}{x_1}}{1-\vartheta}\right|\le 
  \Dist{x_0}{x_1}\eps \qquad \text{and} \qquad \ell \le \Dist{x_0}{x_1} \sqrt{1+\eps^2 \vartheta(1-\vartheta) } ;
\end{equation}
from \eqref{eq:89bis} it is not difficult to deduce that \eqref{eq:88} is satisfied e.g.~with $\delta=\eps$. 

    We collect in the following Lemma a list of useful properties;
    to our purposes, let us denote by
        $\mathbb D:=\{k2^{-n}: \, k,n\in \N,\ 0\le k\le 2^n\}$ the set
    of all dyadic points in $[0,1]$.

\begin{lemma}
  \label{le:complete-length}
  \ 
  \begin{enumerate}[\rm L1:]
  \item If $D\subset X$ is a length subset then it is also an
    approximate length subset.
  \item \label{L2}
    $ D \subset X $ is an approximate length subset if and only if
    $\overline D$ is.
  \item \label{L1}
    If $D$ is an approximate length
    subset of $\AmbientSpace$ then for every $x_0,x_1\in D$ and every $L>\Dist{x_0}{x_1}$ there
    exists $\{\sfx_\vartheta\}_{\vartheta\in \mathbb D}\subset D$ such
    that $ \sfx_0 = x_0 $, $ \sfx_1=x_1 $ and
    \begin{equation*}
      \label{eq:92}
      \Dist{\sfx_{\vartheta'}}{\sfx_{\vartheta''}}\le
      L|\vartheta'-\vartheta''| \quad \forevery \vartheta',\vartheta''
      \in \mathbb D.
      % , \qquad 
      % L := \Dist{x_0}{x_1}(1+\delta) .
    \end{equation*}
    In particular, for every $\vartheta,\eps\in (0,1)$ there is
    a $(\vartheta,\eps)$-intermediate point $x\in D$ between $x_0$ and $x_1$.
  \item
    If $D$ is a complete and approximate length subset of
    $\AmbientSpace$ then it is a length subset of $\AmbientSpace$.
    % \item $D$ is an approximate length subset if and only if for all
    %   $ x,y \in D $ there holds
    %   \begin{equation*}
    %     \label{eq:93}
    %     \begin{aligned}
    %       \Dist{x}{y}=\sup_{\eps>0} \,
    %       \inf\Big\{&\sum_{k=1}^N\Dist{x_k}{x_{k-1}} : \ N \in \N ,
    %       \ \{ x_k \} \subset D, \\& \ x_0=x,\ x_N=y,\ \max_{k} \,
    %       \Dist{x_k}{x_{k-1}}\le \eps\Big\} .
    %     \end{aligned}
    %   \end{equation*}
  \end{enumerate}
  \end{lemma}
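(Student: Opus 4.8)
The plan is to establish the four assertions in order, the quantitative midpoint estimates \eqref{eq:89bis} being the engine throughout.

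\textbf{L1.} Fix $x_0,x_1\in D$ and $\eps\in(0,1)$. Since $D$ is a length subset, with the admissible choice $d:=\Dist{x_0}{x_1}\sqrt{1+\eps^2/4}>\Dist{x_0}{x_1}$ there is $\sfx\in\AC([0,1];D)$ joining $x_0$ to $x_1$ with $\operatorname{length}[\sfx]\le d$; after the arc-length reparametrization \eqref{eq:43} I may assume $\MetricDerivative\sfx\equiv L\le d$. The midpoint $x:=\sfx_{1/2}\in D$ then satisfies $\Dist{x_0}{x},\Dist{x}{x_1}\le L/2$, hence $\DistSquare{x_0}{x}+\DistSquare{x}{x_1}\le L^2/2\le\frac12\DistSquare{x_0}{x_1}(1+\eps^2/4)$, which is exactly \eqref{eq:89} with $\vartheta=1/2$. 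So $x$ is an $\eps$-midpoint.

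\textbf{L2.} Both implications rest only on continuity of $\DistName$ together with the strict margin gained by working with a parameter $\eps'<\eps$. If $\overline D$ is approximate length, given $x_0,x_1\in D$ I take an $\eps'$-midpoint $y\in\overline D$ and replace it by $y'\in D$ with $\Dist{y}{y'}$ small: inequality \eqref{eq:89} is preserved under this perturbation (the gap between $\eps'$ and $\eps$ absorbs the error), so $y'\in D$ is an $\eps$-midpoint. Conversely, if $D$ is approximate length and $x_0,x_1\in\overline D$, I approximate them by $x_0',x_1'\in D$, take an $\eps'$-midpoint $y\in D\subset\overline D$ of the pair $(x_0',x_1')$, and note that by continuity of $\DistName$ the $\eps'$-inequality for $(x_0',x_1')$ upgrades, once the endpoints are chosen close enough, to the $\eps$-inequality for $(x_0,x_1)$. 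A single sufficiently accurate choice suffices, so no compactness is invoked.

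\textbf{L3.} This is the technical core. Set $\sfx_0=x_0$, $\sfx_1=x_1$ and, having constructed the level-$n$ points $\sfx_{k/2^n}$, insert $\sfx_{(2k+1)/2^{n+1}}$ as an $\eps_n$-midpoint of $\sfx_{k/2^n}$ and $\sfx_{(k+1)/2^n}$ in $D$ (legitimate since $D$ is an approximate length subset). Writing $L_n:=2^n\max_k\Dist{\sfx_{k/2^n}}{\sfx_{(k+1)/2^n}}$ for the level-$n$ speed, the \emph{balance} half of \eqref{eq:89bis} bounds the asymmetry of the two new sub-bonds while the \emph{length} half bounds their sum, and together they give $L_{n+1}\le c_n L_n$ with $c_n:=\sqrt{1+\eps_n^2/4}+\eps_n/2$. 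Choosing a summable sequence, e.g.\ $\eps_n=\eps_\ast 2^{-n}$ with $\eps_\ast$ small, one has $c_n\le\exp(\eps_n)$, hence $\prod_{n\ge0}c_n\le\exp(2\eps_\ast)$, which can be forced below the prescribed ratio $L/\Dist{x_0}{x_1}>1$. Then $L_n\le\Dist{x_0}{x_1}\prod_{j<n}c_j\le L$ for every $n$, so consecutive level-$n$ points lie at distance $\le L/2^n$; summing these bonds along a common dyadic refinement of $\vartheta'$ and $\vartheta''$ yields the Lipschitz estimate. The main obstacle is precisely this speed bookkeeping: a crude estimate would let the Lipschitz constant almost double at each bisection, and it is the control of the midpoint asymmetry in \eqref{eq:89bis} plus summability of the $\eps_n$ that keeps the infinite product finite. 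Finally, the ``in particular'' clause follows by running the construction with $L$ close to $\Dist{x_0}{x_1}$ and selecting a dyadic $\bar\vartheta$ close to a given $\vartheta$: the point $\sfx_{\bar\vartheta}$ then satisfies \eqref{eq:88}, hence \eqref{eq:89}.

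\textbf{L4.} Given $x_0,x_1\in D$ and $d>\Dist{x_0}{x_1}$, pick $L\in(\Dist{x_0}{x_1},d)$ and let $\{\sfx_\vartheta\}_{\vartheta\in\mathbb D}\subset D$ be the net from L3. The assignment $\vartheta\mapsto\sfx_\vartheta$ is $L$-Lipschitz on the dense set $\mathbb D$; since $D$ is complete it extends uniquely to an $L$-Lipschitz curve $\sfx\in\AC([0,1];D)$ with $\sfx(i)=x_i$ and $\operatorname{length}[\sfx]=\int_0^1\MetricDerivative\sfx\,\d t\le L<d$. Thus $D$ is a length subset, which completes the proof.
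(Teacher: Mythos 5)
Your proof is correct and follows essentially the same route as the paper's: L1 via a near-geodesic curve's midpoint, L2 by density with an $\eps'<\eps$ margin, L3 by inductive insertion of $\eps_n$-midpoints with a summable error sequence so that the per-level speed factor stays bounded by $\rme^{\eps_n}$ and the infinite product below $L/\Dist{x_0}{x_1}$, and L4 by Lipschitz extension using completeness. The only (immaterial) difference is in L3's bookkeeping, where the paper uses the per-step factor $1+\eps_{n+1}$ coming from \eqref{eq:88}, while you rederive the slightly sharper factor $\sqrt{1+\eps_n^2/4}+\eps_n/2$ directly from \eqref{eq:89bis}.
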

  \begin{proof}
    L1 is a consequence of the discussion
    following Definition \ref{def:applength}.
    The density of $D$ in $\bar D$
    and the triangle inequality yield L2.

    L3 can be proved by adapting the arguments of
    \cite[Section 2.4.4]{Burago-Burago-Ivanov01}: we briefly
    sketch the main steps.

    Let $\delta>0$ be such that $L=\Dist{x_0}{x_1}(1+\delta)$ and
    let us fix a sequence $(\eps_n)_{n\in \N}\subset (0,1)$
    satisfying $\sum_{n=0}^\infty\eps_n\le\log(1+\delta)$. We can parametrize all the points in $\mathbb D$ by two integers
    $n\in \N$ and $h\in [0,2^{n}]$ by the surjective (but not
    injective) map
    $(h,n)\mapsto \theta_{h,n}=h2^{-n}$.
    We call $\D_n:=\{\theta_{h,n}: h\in [0,2^{n}]\}$.
    It is clear that
    for any odd integer $h=2k+1$
    $\theta_{h,n+1}$ is
    the midpoint between $\theta_{k,n}$ and $\theta_{k+1,n}$,
    belonging to $\D_n$,
    whereas for an even integer $h=2k$, the point $\theta_{h,n+1}$ belongs to $\D_n$.

    We can construct a map $\vartheta\mapsto \sfx(\vartheta) = \sfx_\vartheta $ in $\D$
    satisfying
    \begin{equation}
      \label{eq:44}
      \Dist{\sfx(\vartheta)}{\sfx(\vartheta')}\le
      \Dist{x_0}{x_1}|\vartheta-\vartheta'|\exp\Big(\sum_{m=0}^n\eps_m\Big)\quad
      \text{for every }\vartheta,\vartheta'\in \D_n,
    \end{equation}
    by induction with respect to $n$:
    note that \eqref{eq:44} surely holds, by the triangle inequality, if
    \begin{displaymath}
      \Dist{\sfx((k+1)2^{-n})}{\sfx(k2^{-n})}\le
      \frac1{2^n}\Dist{x_0}{x_1}\exp\Big(\sum_{m=0}^n\eps_m\Big)\quad
      \text{for every integer }k\in [0,2^n].
    \end{displaymath}
    The case $n=0$ is trivial; assuming that
    $\sfx$ has already been defined on $\D_n$
    we can extend it to every point $\theta_{2k+1,n+1}\in
    \D_{n+1}\setminus \D_n$ by
    choosing an $\eps_{n+1}$-midpoint between
    $\sfx(\theta_{k,n})$ and $\sfx(\theta_{k+1,n})$; it is clear that (recall \eqref{eq:88})
    \begin{align*}
      \Dist{\sfx(\theta_{2k+1,n+1})}{\sfx(\theta_{2k,n+1})}&\le
      \frac 12\Dist{\sfx(\theta_{k,n})}{\sfx(\theta_{k+1,n})}(1+\eps_{n+1})
                                                             \\&\le 
      \frac1{2^{n+1}}\Dist{x_0}{x_1}
      \exp\Big(\sum_{m=0}^n\eps_m\Big)\exp(\eps_{n+1})
      \\&=
      \frac1{2^{n+1}}\Dist{x_0}{x_1}
      \exp\Big(\sum_{m=0}^{n+1}\eps_m\Big) ,
    \end{align*}
    and an analogous estimate holds for 
    $\Dist{\sfx(\theta_{2k-1,n+1})}{\sfx(\theta_{2k,n+1})}$. 
    % \begin{displaymath}
    %   \sum_{k=2h+1}^{2h'}\Dist{\sfx(\theta_{k,n+1})}{\sfx(\theta_{k-1,n+1})}
    %   \le
    %   \sum_{j=h+1}^{h'}\Dist{\sfx(\theta_{j,n})}{\sfx(\theta_{j-1,n})}
    %   (1+\delta 2^{-(n+1)})
    %   \le (1+-2^{-n}\delta\Dist{x_0}{x_1})
    %   (1+\delta 2^{-(n+1)})|\theta-\vartheta'|
    % \end{displaymath}
    %
    %
    % $\vartheta\in \mathbb D\setminus \{0,1\}$ can be written in a
    % unique way as $(2h+1)/2^n$ of some integer $h\in [0,2^{n-1})$.
    % Suppose that In particular, for every $ x_0,x_1 \in D $,
    % $ \vartheta \in (0,1) $ and $ \varepsilon \in (0,1) $ there exists
    % a $ (\vartheta,\eps) $-intermediate point $ x \in D $ between
    % $ x_0 $ and $x_1$.  Moreover, if $ D $ is a \emph{complete} subset
    % of $X$ then \eqref{eq:92} yields the existence of a Lipschitz
    % curve $ x_\vartheta: [0,1] \to D $ satisfying \eqref{eq:92} for
    % all $ \vartheta \in [0,1] $.
    
    L4 follows immediately from L3 and the completeness of $D$.
  \end{proof}
  
In analogy with \eqref{eq:41}, it is not difficult to show (for instance by using L\ref{L1}) that $D$ is an approximate length subset if and only if for all $ x,y \in D $ and $ \eps>0 $ there holds 
    \begin{equation}
      \label{eq:93}
      %\begin{aligned}
        \Dist{x}{y}=
        \inf\left\{ \sum_{k=1}^N\Dist{x_k}{x_{k-1}} : \ N \in \N , \ \{
          x_k \} \subset D,        
          \ x_0=x,\ x_N=y,\ \max_{k} \,
          \Dist{x_k}{x_{k-1}}\le \eps\right\} .
      %\end{aligned}
    \end{equation}
  
\subsection{Moreau-Yosida regularizations, slopes and Ekeland's variational principle}
\label{subsec:MYE}
On $X$ we will be considering \emph{proper} functionals $\phi:X \to (-\infty,+\infty]$, where  
\begin{equation}
\Dom\phi:=
\big\{x \in X: \, \phi(x) < + \infty\big\} \label{eq:61}
\end{equation}
denotes the (non-empty) domain of $\phi$. We say that $\phi$ is \emph{quadratically bounded from below} if
there exist $ o \in \AmbientSpace$, $\phi_o,\kappa_o\in \R$ such that
\begin{equation} 
  \label{eq:cap1:13}
  \phi(x)+\frac {\kappa_o}{2}\DistSquare x{\ooint }\ge \phi_o\quad
  \text{for every } x\in \AmbientSpace.
\end{equation}
Similarly, we say that $ \phi $ is \emph{linearly bounded from below} if there exist $ o \in \AmbientSpace$, $\phi_o,\ell_o\in \R$ such that
\begin{equation} 
  \label{eq:cap1:13-lin}
  \phi(x)+ {\ell_o}\Dist x{o}\ge \phi_o\quad
  \text{for every } x\in \AmbientSpace ,
\end{equation}
which in particular implies that it is quadratically bounded from below for all $ \kappa_o>0 $.

The (quadratic) 
\emph{Moreau-Yosida} regularizations of $\phi$ (we refer e.g.~to \cite[Section 3.1]{Ambrosio-Gigli-Savare08} or \cite[Chapter 9]{DalMaso93}) are the functionals $\phi_\tau:\AmbientSpace\to\R$ defined by 
\begin{equation}
  \label{eq:1}
  \phi_\tau(x):=\inf_{y\in \AmbientSpace} \phi(y) + \frac 1{2\tau}\DistSquare
  yx \quad \forevery x \in \AmbientSpace,
\end{equation}
and we set
\begin{equation}
    \label{eq:1-bis}
  \tau_o:=\sup\Big\{\tau>0 : \, \Dom{\phi_\tau}\neq\emptyset\Big\}.
\end{equation}
Note that $\tau_o>0$ if and only if $\phi$ is quadratically bounded
from below and $\Dom{\phi_\tau}=X$ for every $\tau\in (0,\tau_o)$.
If $\phi$ satisfies \eqref{eq:cap1:13} then 
\begin{displaymath}
  \tau_o\ge \kappa_o^{-1}\quad\text{if }\kappa_o>0;\qquad
  \tau_o=+\infty \quad \text{if }\kappa_o\le 0.
\end{displaymath}
Because we will mainly deal with functionals that are quadratically
bounded from below (see for instance Theorem \ref{le:bfb} and Theorem 
\ref{thm:main1} below), the regularizations given by \eqref{eq:1} come
naturally into play; they are also strictly related to the Minimizing Movement approach to
gradient flows, see the Introduction or Section \ref{sec:uniform_estimate}. However, it is
also possible to deal with more general regularizations: this will be addressed in detail in \cite{Muratori-SavareII}.

% and which is bounded from below at least in a neighborhood
% of some point:
% \begin{equation}
%   \label{eq:cap1:56}
%   \exists\, u_o\in \Dom\phi ,\ r_o>0:\quad
%   \inf \big\{\phi(v):v\in \AmbientSpace,\ \Dist v{u_o}\le r_o\big\}>-\infty.
% \end{equation}
% We are assuming that $\phi$ is $\lambda$-convex.
\begin{definition}[Metric slopes]
  \label{def:metric_slope}
  The \emph{metric slope} of $\phi$ at $x\in \AmbientSpace$ is
  \begin{equation*}
    \label{eq:cap1:71}
    \MetricSlope\phi x:=
    \begin{cases}
      +\infty&\text{if }x\not\in \Dom\phi,\\
      0&\text{if $x\in \Dom\phi$ is isolated,}\\
      \displaystyle\limsup\limits_{y\to x}\frac{\left(\phi(x)-\phi(y)\right)^+}
    {\Dist xy}&\text{otherwise.}
     \end{cases}
   \end{equation*}
  As usual, we set 
  $\DomainSlope{\phi}:=\big\{x \in \AmbientSpace \! : 
    \MetricSlope\phi x<+\infty\big\}$.
  For $\lambda\in \R$ and $x\in \AmbientSpace$ we then introduce the global $\lambda$-slope
  \begin{equation*}
    \label{eq:cap1:10}
    \GSlope\lambda\phi x:=\sup_{y\neq x}\frac{\left(
      \phi(x)-\phi(y)+\tfrac \lambda2\DistSquare xy\right)^+}{\Dist
      xy} ,
  \end{equation*}
understood to be $ +\infty $ if $ x \not\in \Dom\phi $. Similarly, we
set $\Dom{\GSlopeE\lambda\phi}:=\big\{x\in \AmbientSpace\!:
\GSlope\lambda\phi x<+\infty\big\}$. 

Finally, we will denote by $\lPlainSlope\phi$ the metric slope of
$\phi$ evaluated w.r.t.~the length distance $\lDistName$
\eqref{eq:41}, i.e.~in the extended metric space $(\AmbientSpace,\lDistName)$.
\end{definition} 

Note that for every $S\in [0,+\infty)$ and $x\in \AmbientSpace$ there holds 
\begin{equation}
  \label{eq:cap1:83}
  \MetricSlope\phi x\le S\quad
  \Leftrightarrow\quad
  x\in \Dom\phi,\quad
  \phi(y)\ge \phi(x)-S\Dist yx+o(\Dist yx)\quad\text{as }\Dist yx\to0,
\end{equation} 
whereas 
\begin{equation}
  \label{eq:cap1:89}
  \GSlope\lambda\phi x\le S\quad
  \Leftrightarrow\quad
  \phi(y)\ge \phi(x)-S\Dist yx+\frac\lambda 2\DistSquare yx
  \quad\forevery y\in \AmbientSpace.
\end{equation} 
In particular,
\begin{equation} 
  \label{eq:cap1:12}
  \MetricSlope\phi x\le \GSlope\lambda\phi x\quad
  \forevery x \in \AmbientSpace , \ \lambda\in \R.
\end{equation} 
We will mostly deal with proper functionals $ \phi $ that are \emph{lower semicontinuous} (l.s.c.\ for short): under such an assumption, it is easy to check that the global $\lambda$-slope is also lower semicontinuous. Contrarily, the metric slope is in general not lower semicontinuous, though it can be seen as a pointwise limit of lower semicontinuous functionals: 
\begin{equation}\label{eq: loc-slope}
  \MetricSlope \phi x = \lim_{n\to\infty} \sup\limits_{y \in \Dom\phi
    \setminus \{ x \} \atop \, \Dist x y < \frac1n }\frac{\left(\phi(x)-\phi(y)\right)^+}{\Dist xy} ,
\end{equation}
where the supremum is understood to be eventually $0$ if $x$ is
isolated in $\Dom\phi$ and $ +\infty $ if $ x \not \in \Dom \phi $. 
%\Comment{ho aggiunto questo fatto, puo' essere utile per questioni di misurabilita' ad esempio}

It turns out that the metric slope is always a \emph{weak upper
  gradient} for $\phi$, whereas the global $\lambda$-slope is a
\emph{strong upper gradient} for $\phi$ provided the latter is lower
semicontinuous: see \cite[Definitions 1.2.1, 1.2.2 and Theorem
1.2.5]{Ambrosio-Gigli-Savare08} for more details. 
%\Gsout{These are very important properties, of which however we shall not make an explicit use here.}
% \begin{definition}[\Reg\lambda functionals]
%   \label{def:Reglambda}
%   We say that $\phi$ is \Reg\lambda if
%   \begin{equation}
%     \label{eq:15}
%     \MetricSlope\phi x= \GSlope\lambda\phi x\quad
%     \forevery\, x\in \Dom\phi .
%   \end{equation}
% \end{definition}
%
%
% It is easy to check that if there exists at least one point
% $\ooint $ such that $\GSlope\lambda\phi{\ooint }=S<+\infty$ then
% $\phi$ is quadratically bounded from below,
% since
% \begin{equation}
%   \label{eq:cap1:14}
%   \phi(x)\ge \Big(\phi(\ooint )-\frac
%   {S^2}{2\eps}\Big)+\frac{\lambda-\eps}2
%   \DistSquare x{\ooint }\quad
%   \forevery\, x\in \AmbientSpace,\quad\eps>0,
% \end{equation}
% i.e.\ it satisfies \eqref{eq:cap1:13} with $\phi_o:= \frac
% {S^2}{2\eps}-\phi(\ooint )$ and $\tau_o\le \frac1{(\lambda-\eps)^-}$.

When $\phi$ satisfies suitable coercivity assumptions
(e.g.~its sublevels are locally compact), it is not difficult to 
check that $\DomainSlope\phi $ is dense in $\Dom\phi $, and in
particular is not empty. This can be proved \cite[Lemma 3.1.3]{Ambrosio-Gigli-Savare08}
by studying the properties of the minimizers $y\in \AmbientSpace$ of the variational problem \eqref{eq:1}.
More generally we will see that, thanks to \emph{Ekeland's variational principle}, 
the density of $\DomainSlope\phi$ in $ \Dom \phi $ holds if the sublevels of $\phi$ are merely
\emph{complete} (which in particular implies that $ \phi $ is l.s.c.). As a byproduct, if in addition $\phi$ is quadratically bounded from
below, $\GSlopeE\lambda\phi $ is also a proper functional for all $\lambda \le -\kappa_o$ (it is enough that $ x \mapsto \phi(x)-\tfrac{\lambda}{2}\DistSquare{x}{o} $ is linearly bounded from below actually).

Let us first recall the celebrated variational principle of Ekeland \cite{Ekeland74,Ekeland79}. 
\begin{theorem}[Ekeland's variational principle] 
  \label{thm:Ekeland}
  Let $\varphi:\AmbientSpace\to
  (-\infty,+\infty]$ be a functional bounded from below,
  with complete sublevels $\{x\in \AmbientSpace\!:\varphi(x)\le c\}$ for all
  $c\in \R$. % of a \Gsout{proper}
  % \footnote{Sembra conseguenza della \eqref{eq:cap1:82}}
  % functional $\varphi:\AmbientSpace\to
  % (-\infty,+\infty]$ are complete and $\inf_X\varphi>-\infty$., 
  For every $x\in \Dom\varphi$ % \in \AmbientSpace, \eps>0$
  % satisfy 
  % \begin{equation}
  %   \label{eq:cap1:82}
  %   \varphi(x_0)< \inf_{x \in \AmbientSpace} \varphi(x) +\eps.
  % \end{equation} 
  % Then
  and for every $\eta>0$ there exists a point
  $x_\eta\in \Dom\varphi$ such that
%  \footnote{Ho semplificato un po', mettendo solo quello che veramente
%    usiamo;
%    la stima con $\eps$ \`e una conseguenza e per noi \`e inutile. 
%  Infine ho riscritto le disuguaglianze come devono essere pensate....}
  \begin{subequations}
    \label{subeq:Ekeland}
    \begin{align}
      \label{eq:21}
      \varphi(x_\eta)&<\varphi(y)+\eta\,\Dist y{x_\eta} \quad \forevery y \in \AmbientSpace\setminus\{x_\eta\},\\
      % \Dist {x_0}{x_\eta}& < \eps\eta^{-1}, \\
      \label{eq:20}\varphi(x_\eta) +\eta\,\Dist {x}{x_\eta}&\le \varphi (x).
   \end{align}
  \end{subequations}
\end{theorem}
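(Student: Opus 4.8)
The plan is to derive both inequalities from the existence of a suitable $\preceq$-minimal point, where $\preceq$ is the partial order on $\AmbientSpace$ defined by
\[
  y \preceq z \quad \Longleftrightarrow \quad \varphi(y) + \eta\, \Dist{y}{z} \le \varphi(z).
\]
First I would verify that this is genuinely a partial order: reflexivity is immediate, antisymmetry follows by adding the defining inequalities of $y \preceq z$ and $z \preceq y$ and using $\eta>0$, and transitivity is a direct consequence of the triangle inequality. Two observations will be used throughout: $y \preceq z$ forces $\varphi(y) \le \varphi(z)$ (since $\Dist{y}{z}\ge 0$), so each lower set $S(z) := \{ y : y \preceq z \}$ is contained in the sublevel $\{\varphi \le \varphi(z)\}$; and since complete subsets of a metric space are closed, the completeness of all sublevels makes $\varphi$ lower semicontinuous, hence every $S(z)$ is closed.

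Next I would construct, starting from $x_1 := x$, a $\preceq$-decreasing sequence $x_1 \succeq x_2 \succeq \cdots$ by picking at each step $x_{n+1} \in S(x_n)$ with $\varphi(x_{n+1}) \le \inf_{S(x_n)} \varphi + 2^{-n}$; the infimum is finite because $\varphi$ is bounded below. By construction $\varphi(x_n)$ is nonincreasing and bounded below, hence it converges to some $L \in \R$. For $m \ge n$ we have $x_m \preceq x_n$, so $\eta\, \Dist{x_m}{x_n} \le \varphi(x_n) - \varphi(x_m) \le \varphi(x_n) - L \to 0$, which shows $(x_n)$ is Cauchy. As all $x_n$ lie in the complete sublevel $\{\varphi \le \varphi(x)\}$, the sequence converges to a point $x_\eta$ in that sublevel, so in particular $x_\eta \in \Dom\varphi$.

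Finally I would pass to the limit. Fixing $n$ and letting $m\to\infty$ in $x_m \preceq x_n$, the lower semicontinuity of $\varphi$ together with the continuity of $\Dist{\cdot}{x_n}$ yields $x_\eta \preceq x_n$ for every $n$; the case $n=1$ is exactly \eqref{eq:20}. For \eqref{eq:21}, suppose some $y \ne x_\eta$ satisfied $y \preceq x_\eta$. Then $y \preceq x_\eta \preceq x_n$ for all $n$, so $y \in S(x_n)$ and $\varphi(y) \ge \inf_{S(x_n)}\varphi \ge \varphi(x_{n+1}) - 2^{-n} \to L \ge \varphi(x_\eta)$; combining $\varphi(y) \ge \varphi(x_\eta)$ with $y \preceq x_\eta$ gives $\eta\, \Dist{y}{x_\eta} \le \varphi(x_\eta) - \varphi(y) \le 0$, whence $y = x_\eta$, a contradiction. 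Thus no such $y$ exists, which is precisely \eqref{eq:21}. The delicate point is this last minimality step: one must show that the infima $\inf_{S(x_n)}\varphi$ converge to $L$, so that the weak relation $y \preceq x_\eta$ can be upgraded to the strict conclusion. The completeness of the sublevels (guaranteeing convergence of the Cauchy sequence) and the induced lower semicontinuity (allowing the inequalities to pass to the limit) are exactly the hypotheses that make the argument close.
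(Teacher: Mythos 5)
Your proof is correct. Note first that the paper itself does not prove Theorem \ref{thm:Ekeland}: it is recalled as a known result with a citation to Ekeland's original papers, so there is no internal proof to compare against; your write-up supplies what the paper delegates to the literature. Your argument is the classical Br\o{}ndsted-order proof of Ekeland's principle: introduce the order $y \preceq z \Leftrightarrow \varphi(y) + \eta\,\Dist yz \le \varphi(z)$, iterate near-minimization over the lower sets $S(x_n)$, extract a Cauchy sequence, and show the limit is $\preceq$-minimal and $\preceq$-below the starting point $x$. All steps check out: transitivity via the triangle inequality, the Cauchy estimate $\eta\,\Dist{x_m}{x_n} \le \varphi(x_n) - L$, the passage to the limit in $x_m \preceq x_n$ by lower semicontinuity, and the upgrade of a hypothetical $y \preceq x_\eta$ to $y = x_\eta$ through $\varphi(y) \ge \inf_{S(x_n)}\varphi \ge \varphi(x_{n+1}) - 2^{-n} \to L \ge \varphi(x_\eta)$, which yields \eqref{eq:21}, while $x_\eta \preceq x_1 = x$ is exactly \eqref{eq:20}. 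The genuinely valuable part of your argument is the adaptation to the paper's weakened hypothesis: you correctly observe that completeness of \emph{all sublevels} both forces $\varphi$ to be lower semicontinuous (complete subsets are closed, and closedness of every sublevel is equivalent to lower semicontinuity) and suffices for convergence, because the entire iteration is trapped in the single complete sublevel $\{\varphi \le \varphi(x)\}$ --- this is precisely why the statement does not need $(\AmbientSpace,\DistName)$ itself to be complete, and it matches the paper's parenthetical remark that complete sublevels imply lower semicontinuity. One pedantic point: the relation $\preceq$ fails antisymmetry at points where $\varphi = +\infty$ (any two such points dominate each other), so strictly speaking it is a partial order only on $\Dom\varphi$; this is harmless here, since $y \preceq z$ with $\varphi(z) < \infty$ forces $y \in \Dom\varphi$, so your construction never leaves $\Dom\varphi$, but it would be worth a clause in a polished version.
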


In the next definition we introduce a relaxed version of the variational problem that underlies \eqref{eq:1}, suitable to deal with possible lack of coercivity.  
\begin{definition}[Moreau-Yosida-Ekeland resolvent]
  \label{def:MYE-resolvent}
  Let $\phi:\AmbientSpace\to(-\infty,+\infty]$ be a proper functional. For every $ x \in \AmbientSpace $, $\tau>0$ and $\eta\ge0$ 
%  \GGG we denote by $\mathrm E_{\tau,\xi}[x,x']$ 
%  the set of points $z\in \Dom\phi$ 
%  characterized by the following two conditions:
%  \begin{subequations}
%      \begin{align}
%    \label{eq:cap1new:90bis}
%    \frac1{2\tau}\DistSquare x{z}+\phi(z)
%        &\le\frac1{2\tau}\DistSquare xy+ \phi(y)
%          +\xi\,\Dist y{z}
%    \quad
%    \text{for every } y\in \AmbientSpace ,\\
%    \label{eq:cap1new:90}
%        \frac1{2\tau}\DistSquare x{z}+\phi(z)+\xi\Dist z{x'}
%    &\le 
%        \frac1{2\tau}\DistSquare x{x'}+\phi(x').
%  \end{align}
%  \end{subequations}
%  \nc
 we denote by $\Resolvent\taueta x$ the (possibly empty) set of points
  $x_\taueta\in \Dom\phi$ characterized by the following two conditions:
  \begin{subequations}
      \begin{align}
    \label{eq:cap1:90bis}
    \frac1{2\tau}\DistSquare x{x_{\tau,\eta}}+\phi(x_{\tau,\eta})
    &\le\frac1{2\tau}\DistSquare xy+ \phi(y)
    +\frac\eta2\,\Dist x{x_\taueta}\,\Dist y{x_{\tau,\eta}}
    \quad
    \text{for every } y\in \AmbientSpace ,\\
    \label{eq:cap1:90}
    \frac1{2\tau}\DistSquare x{x_{\tau,\eta}}+\phi(x_{\tau,\eta})
    &\le \phi(x).
  \end{align}
  \end{subequations}
\end{definition}
Note that either $ \phi $ is \emph{not} quadratically
  bounded from below and therefore the infimum in \eqref{eq:1} is $
  -\infty $ for all $ \tau>0 $, or $ \phi $ is quadratically bounded
  from below and for every family $x_{\taueta}\in
  \Resolvent{\taueta}x$, $\eta>0$, $ \tau\in(0,\tau_o) $ and
  {$\bar x\in X$} the following properties hold:
  \begin{equation}
    \label{eq:85}
    \DistSquare{x_\taueta}x\le \frac{2\tau\tau_o}{\tau_o-\tau}
    \big(\phi(x)-\phi_{\tau_o}(x)\big),\qquad
    \lim_{\tau\downarrow0}\Dist{x_\taueta}x=0 \quad \text{provided } x \in \Dom{\phi},
  \end{equation}
\begin{equation*}
%\label{eq:84}
\lim_{\eta\downarrow0} \frac1{2\tau}\DistSquare
x{x_{\tau,\eta}} + \phi(x_\taueta)= \phi_\tau(x);\qquad
\phi_\tau(x)=
\frac1{2\tau}\DistSquare x{\bar x}+\phi(\bar x) \ \ \
\Leftrightarrow \ \ \ \bar x \in \Resolvent{\tau,0}x.
\end{equation*}
In particular, $\Resolvent\taueta x$ is a useful substitute (for $\eta>0$) of $\Resolvent{\tau,0}x$ when this set is empty, i.e.~when the infimum in \eqref{eq:1} is not attained. More importantly, the slopes of its points can be estimated in a quantitative way. 

\begin{theorem}[Ekeland resolvent and slopes] 
  \label{le:slope_by_Ekeland}
  Let $\phi:\AmbientSpace\to(-\infty,+\infty]$ be a proper functional and $ x \in \AmbientSpace $, $ \tau>0 $, $ \eta\ge 0 $. Then every
$x_\taueta\in \Resolvent\taueta x$ belongs to $\DomainSlope\phi$ and satisfies the bound
  \begin{equation}
    \label{eq:cap1:4}
    \MetricSlope\phi{x_\taueta}\le
    \GSlope{- 1  /\tau}\phi{x_\taueta}\le 
    (1+\tfrac 12\eta\tau)\frac{\Dist x{x_\taueta}}\tau.
  \end{equation}
  If moreover $\phi$ has complete sublevels (thus in particular it is
  l.s.c.) and is quadratically bounded from below
  according to \eqref{eq:cap1:13},
  then for every choice of $x\in \Dom\phi $,
  $\tau\in (0,\tau_o)$ and $\eta>0$, the set 
  $\Resolvent\taueta x$ is not empty.
  In particular, $\DomainSlope\phi $ is
  dense in $\Dom\phi $.  
\end{theorem}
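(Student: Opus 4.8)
The plan is to separate the statement into three parts: the slope bound (which holds for any point of the resolvent and any proper $\phi$), the nonemptiness (needing completeness and quadratic boundedness), and the density corollary.

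For the slope bound I would argue by a direct computation from the defining inequality \eqref{eq:cap1:90bis}. Writing $u:=x_\taueta$ and rearranging that inequality gives, for every $y\in\AmbientSpace$,
\[
\phi(u)-\phi(y)-\frac1{2\tau}\DistSquare uy\le \frac1{2\tau}\big(\DistSquare xy-\DistSquare xu-\DistSquare uy\big)+\frac\eta2\,\Dist xu\,\Dist yu.
\]
Squaring the triangle inequality $\Dist xy\le\Dist xu+\Dist uy$ yields $\DistSquare xy-\DistSquare xu-\DistSquare uy\le 2\,\Dist xu\,\Dist uy$, so the right-hand side is bounded by $\big(\tfrac1\tau+\tfrac\eta2\big)\Dist xu\,\Dist uy$. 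Dividing by $\Dist uy$ for $y\neq u$, taking positive parts and the supremum over $y$, and recalling the definition of the global slope with $\lambda=-1/\tau$, I obtain $\GSlope{-1/\tau}\phi u\le(1+\tfrac12\eta\tau)\,\Dist xu/\tau$; the bound $\MetricSlope\phi u\le\GSlope{-1/\tau}\phi u$ is exactly \eqref{eq:cap1:12}. Since the right-hand side is finite, $u\in\DomainSlope\phi$. This step uses only \eqref{eq:cap1:90bis} and the triangle inequality, hence needs no further hypothesis on $\phi$.

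For the nonemptiness I would apply Ekeland's principle (Theorem \ref{thm:Ekeland}) to the functional $\varphi(y):=\tfrac1{2\tau}\DistSquare xy+\phi(y)$ underlying \eqref{eq:1}. When $\tau\in(0,\tau_o)$ and $\phi$ is quadratically bounded from below, $\varphi$ is bounded below because $\inf\varphi=\phi_\tau(x)>-\infty$; moreover $\varphi$ has complete sublevels, since any $\{\varphi\le c\}$ is contained in $\{\phi\le c\}$ (the distance term being nonnegative), so a Cauchy sequence in it converges in $\AmbientSpace$, and lower semicontinuity of $\varphi$ (inherited from that of $\phi$) returns the limit to $\{\varphi\le c\}$. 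Starting Ekeland at $x\in\Dom\varphi$ with parameter $\alpha>0$ produces $u=u_\alpha$ satisfying \eqref{eq:21}--\eqref{eq:20}: inequality \eqref{eq:20} reads $\varphi(u)+\alpha\,\Dist xu\le\varphi(x)$, giving at once \eqref{eq:cap1:90}, while \eqref{eq:21} reads $\varphi(u)\le\varphi(y)+\alpha\,\Dist yu$, which implies the resolvent condition \eqref{eq:cap1:90bis} as soon as $\alpha\le\tfrac\eta2\Dist xu$. The main obstacle is exactly this last requirement: it constrains the Ekeland parameter through the distance of the point it produces, so it cannot be imposed in advance. I would break the circularity by a dichotomy, running Ekeland along $\alpha_n\downarrow0$ and inspecting the points $u_n$. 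If $\alpha_n\le\tfrac\eta2\Dist x{u_n}$ for some $n$, then $u_n\in\Resolvent\taueta x$ and we are done. Otherwise $\Dist x{u_n}<2\alpha_n/\eta\to0$, so $u_n\to x$; passing to the $\liminf$ in $\varphi(u_n)\le\varphi(y)+\alpha_n\Dist y{u_n}$ for fixed $y$, with lower semicontinuity on the left and $\alpha_n\to0$ on the right, yields $\varphi(x)\le\varphi(y)$ for all $y$, so $x$ minimizes $\varphi$ and then trivially satisfies \eqref{eq:cap1:90bis}--\eqref{eq:cap1:90}, i.e.\ $x\in\Resolvent\taueta x$. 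Either way the resolvent is nonempty.

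Finally, density of $\DomainSlope\phi$ in $\Dom\phi$ follows by combining the two parts: fixing any $\eta>0$ and $x\in\Dom\phi$, the points $x_\taueta\in\Resolvent\taueta x$ exist for every $\tau\in(0,\tau_o)$, lie in $\DomainSlope\phi$ by the slope bound, and satisfy $\Dist{x_\taueta}x\to0$ as $\tau\downarrow0$ by \eqref{eq:85}; hence every point of $\Dom\phi$ is approximated by points of finite slope.
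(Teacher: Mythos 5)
Your proposal is correct and follows essentially the same route as the paper: the slope bound comes from the same algebraic manipulation of \eqref{eq:cap1:90bis} (your squared triangle inequality is the paper's factorization in one step), nonemptiness comes from Ekeland's principle applied to $\varphi(y)=\tfrac1{2\tau}\DistSquare xy+\phi(y)$ with the same dichotomy — your ``all parameters fail, hence $u_n\to x$ and $x$ minimizes $\varphi$'' is the contrapositive of the paper's upfront split into ``$x$ minimizes $\varphi$'' versus ``$\delta=\inf_{n\ge n_0}\Dist x{x_n}>0$, pick $\bar n$ with $2\eta_{\bar n}\le\delta\eta$'' — and density follows from \eqref{eq:85} in both. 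Your explicit verification that the sublevels of $\varphi$ are complete (via $\{\varphi\le c\}\subset\{\phi\le c\}$ and lower semicontinuity) is a detail the paper leaves implicit, but otherwise the two proofs coincide.
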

\begin{proof}
  Let us first check \eqref{eq:cap1:4}: if $x_\taueta\in \Resolvent\taueta x$ then
  \eqref{eq:cap1:90bis} yields, for every $y\in \AmbientSpace$,
  \begin{align*}
    \phi(y)&\ge\phi(x_\taueta)-
    \frac\eta2\,\Dist x{x_\taueta}\Dist y{x_\taueta}+  \frac1{2\tau}
    \Big(\DistSquare x{x_\taueta}-
    \DistSquare xy\Big)\\
    &\ge
    \phi(x_\taueta)-
    \frac\eta2\Dist x{x_\taueta}\Dist y{x_\taueta}-\Dist y{x_\taueta}  \frac1{2\tau}
    \Big(\Dist x{x_\taueta}+
    \Dist xy\Big)\\
    &=
    \phi(x_\taueta)-
    \Big(\frac \eta2+\tau^{-1}\Big)\Dist x{x_\taueta}\Dist y{x_\taueta}-
    \Dist y{x_\taueta} \frac1{2\tau}
    \Big(\Dist xy-\Dist x{x_\taueta}\Big)\\
    &\ge
    \phi(x_\taueta)-
    \Big(\frac \eta2+\tau^{-1}\Big)\Dist x{x_\taueta}\Dist y{x_\taueta}-
    \frac1{2\tau}\DistSquare y{x_\taueta}.
 \end{align*} 
  Recalling \eqref{eq:cap1:89} and \eqref{eq:cap1:12} we obtain \eqref{eq:cap1:4}.

  Let us now prove that if $\phi$ satisfies \eqref{eq:cap1:13}, its sublevels are complete, 
  $x\in \Dom\phi$, $\tau<\tau_o$ and $\eta>0$,
  then $\Resolvent\taueta x$ is not empty. 
  We want to apply Ekeland's variational principle
  (Theorem \ref{thm:Ekeland}) to the function
  \begin{displaymath}
    \varphi(y):=\frac 1{2\tau}\DistSquare xy+\phi(y)\quad y\in \AmbientSpace,
  \end{displaymath} 
  choosing $x_0=x$. Note that $\varphi$ is bounded from below by \eqref{eq:cap1:13}.
  In case
  %$\phi(x)=\phi_\tau(x)$, then
  $x$ itself is a minimizer of $\varphi$, we can simply pick $x_{\tau,\eta}=x$.
  % , and it is easy to
%   check by \eqref{eq:cap1:83} that
%   $\MetricSlope\phi x=0$.
  Otherwise, $\phi(x)=\varphi(x)>\inf_{y\in \AmbientSpace}\varphi(y)$ and we can set
  $\eps=2(\varphi(x)-\inf_{y\in\AmbientSpace}
  \varphi(y))>0$.  
  % since $\phi(x)=\psi(x)$ and $\inf_\AmbientSpace \psi=\phi_\tau(x)$, $\eps$ is strictly positive.
  Upon choosing a vanishing sequence $\eta_n\downarrow0$,
  Ekeland's variational principle provides a sequence $ \{ x_n \} \subset \Dom\phi $ such that
  \begin{equation}
    \label{eq:19}
    \varphi(x_n)=\frac 1{2\tau}\DistSquare x{x_n}+\phi(x_n)
    \topref{eq:20} \le \varphi(x)=\phi(x)
  \end{equation}
  and
  \begin{equation}
    \label{eq:23}
    \varphi(x_n)\topref{eq:21}\le \varphi(y) + \eta_n \, \Dist y{x_n} \quad
    \forevery y\in \AmbientSpace .
  \end{equation} 
  Since $x$ is not a minimizer of $\varphi$ and $ \phi $ is l.s.c., there exists $ n_0 \in \N $ such that the quantity $\delta:=\inf_{n \ge n_0}\Dist x{x_n}$
  is strictly positive and therefore there exists in turn an integer $\bar n \ge n_0 $ such that
  $2\eta_{\bar n}\le \delta \eta$. 
  %\Comment{prima c'era $ n_0=0 $} 
  By choosing $x_\taueta = x_{\bar n}$, inequalities
  \eqref{eq:cap1:90} and \eqref{eq:cap1:90bis} follow from
  \eqref{eq:19} and \eqref{eq:23}, respectively. The last assertion is just a consequence of \eqref{eq:85}.
\end{proof} 

\subsection[${\lambda}$-convex and approximately $\lambda$-convex functionals]{$\boldsymbol{\lambda}$-convex and approximately $\boldsymbol \lambda$-convex functionals}

We will study a class of gradient flows 
which is strictly related to suitable convexity properties of the
driving functional. We therefore recall the meaning of convexity on geodesic subsets and introduce a slightly weaker property on (approximate) length subsets.  

\begin{definition}[$\lambda$-convexity]
  \label{def:lambda_convexity}
  A functional $\phi:\AmbientSpace\to(-\infty,+\infty]$
  is \emph{$\lambda$-convex} in $D\subset \AmbientSpace$ 
  for some $\lambda\in \R$ if
  for every
  couple of points $\Gdc_0,\Gdc_1\in D\cap\Dom\phi$ there
  exists a geodesic $\Gdc\in \GeoCon D{\Gdc_0}{\Gdc_1}$
  such that
  \begin{equation}
    \label{eq:cap1:60}
    \phi(\Gdc_t)\le (1-t)\phi(\Gdc_0)+t\phi(\Gdc_1)-\frac\lambda2
    t(1-t)\DistSquare{\Gdc_0}{\Gdc_1} \quad \forevery
    t\in [0,1].
  \end{equation}
  We denote by $\GeoConPhi D\lambda{\Gdc_0}{\Gdc_1}\subset
  \GeoCon D{\Gdc_0}{\Gdc_1}$
  the subset of geodesics enjoying \eqref{eq:cap1:60}. The functional
  $\phi$ is \emph{strongly $\lambda$-convex} in $D$ if
  it is $\lambda$-convex and \eqref{eq:cap1:60}
  holds for \emph{every} geodesic in $\GeoCon D{\Gdc_0}{\Gdc_1}$,
  i.e.~$\emptyset\neq\GeoConPhi D\lambda{\Gdc_0}{\Gdc_1}=
  \GeoCon D{\Gdc_0}{\Gdc_1}$.
\end{definition} 

\begin{definition}[Approximate $\lambda$-convexity]
  \label{def:app_lambda_convexity}
  A functional $\phi:\AmbientSpace\to(-\infty,+\infty]$ 
  is \emph{approximately $\lambda$-convex} in $D\subset \AmbientSpace$ 
  for some $\lambda\in \R$ if
  for every
  couple of points $\Gdc_0,\Gdc_1\in D\cap\Dom\phi$ and for every
  $\vartheta , \eps \in (0,1)$ there 
  exists a $(\vartheta,\eps)$-intermediate point
  $\Gdc_{\vartheta,\eps}\in D$ between $\Gdc_0 $ and $\Gdc_1$ 
  such that 
  \begin{equation}
    \label{eq:cap1:60-bis}
    \phi(\Gdc_{\vartheta,\eps})\le (1-\vartheta)\phi(\Gdc_0)+\vartheta\phi(\Gdc_1)-\frac{\lambda-\eps}2
    \vartheta(1-\vartheta)\DistSquare{\Gdc_0}{\Gdc_1} .
  \end{equation}
\end{definition}
% Note that $ D \subset X $ must be an approximate length subset.  

When $D = \AmbientSpace$ (or, equivalently, $D=\Dom\phi$) we just say that $\phi$ is (approximately) $\lambda$-convex, and if $ \lambda = 0 $ we say (approximately) ``convex'' rather than (approximately) ``$ 0 $-convex''. 

\smallskip

Note that an approximately convex function may be far from being
convex. For example, consider the Dirichlet function in $ [0,1] $
which takes  the values $0$ in $ \Q $ and $1$ in $ [0,1] \setminus \Q
$: the latter is clearly non-convex but turns out to be approximately
convex. Nevertheless, if $D$ is complete and $ \phi $ is l.s.c.~such kind of situations cannot occur (see Remark \ref{acag}).

\begin{lemma}
  \label{le:pedante}
  Let $D\subset X$ be a set whose intersection with the sublevels of $\phi$,
  $\{\phi\le c\}$ with $c\in \R$, are complete.
  If $\phi$ is approximately $\lambda$-convex in $D$
  then $D$ is a length subset of $\AmbientSpace$.
\end{lemma}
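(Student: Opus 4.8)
The plan is to fix $x_0,x_1\in D\cap\Dom\phi$ and $d>\Dist{x_0}{x_1}$ (the case $\Dist{x_0}{x_1}=0$ being trivial, and pairs meeting $D\setminus\Dom\phi$ being left aside, since the hypotheses only constrain $D\cap\Dom\phi$), and to produce a Lipschitz curve $\sfx\colon[0,1]\to D$ joining them with $\operatorname{Length}[\sfx]\le d$. I would reproduce the dyadic construction carried out in the proof of L\ref{L1} of Lemma \ref{le:complete-length}, but selecting each new midpoint by means of approximate $\lambda$-convexity rather than by the mere approximate length property. Writing $d=\Dist{x_0}{x_1}(1+\delta)$ with $\delta>0$ and fixing $(\eps_n)\subset(0,1)$ with $\sum_n\eps_n\le\log(1+\delta)$, at the $(n{+}1)$-st stage I would choose, between two consecutive level-$n$ points, the $(1/2,\eps_{n+1})$-intermediate point furnished by Definition \ref{def:app_lambda_convexity}. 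Since such a point is in particular an $\eps_{n+1}$-midpoint, the distance estimate \eqref{eq:44} carries over unchanged and gives $\Dist{\sfx(\vartheta)}{\sfx(\vartheta')}\le\Dist{x_0}{x_1}(1+\delta)\,|\vartheta-\vartheta'|=d\,|\vartheta-\vartheta'|$ for all dyadic $\vartheta,\vartheta'$, so the net is $d$-Lipschitz on $\D$.

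The one genuinely new point, and the main obstacle, is that completeness is assumed only for the sublevels $\{\phi\le c\}\cap D$ and not for $D$ itself, so L4 cannot be invoked directly; instead one must guarantee that the whole dyadic net stays inside a single fixed complete sublevel. This is where the quantitative inequality \eqref{eq:cap1:60-bis} enters. Using it with $\vartheta=1/2$ together with $\eps_{n+1}<1$, the value of $\phi$ at a newly created midpoint $w$ between two consecutive level-$n$ points $y,z$ satisfies $\phi(w)\le\tfrac12(\phi(y)+\phi(z))+\tfrac{|\lambda|+1}{8}\DistSquare yz$. Feeding in the $d$-Lipschitz bound $\Dist yz\le d\,2^{-n}$ just obtained, and letting $M_n$ denote a bound for $\phi$ on the level-$n$ points, one gets the recursion $M_{n+1}\le M_n+\tfrac{(|\lambda|+1)d^2}{8}\,4^{-n}$. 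Since $\sum_n 4^{-n}$ converges, $M_n\le M^*:=\max(\phi(x_0),\phi(x_1))+\tfrac{(|\lambda|+1)d^2}{6}<\infty$ for every $n$; hence every dyadic point lies in the complete set $\{\phi\le M^*\}\cap D$.

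It then remains to complete the net to a curve. Being $d$-Lipschitz, the map $\vartheta\mapsto\sfx(\vartheta)$ is uniformly continuous on the dense set $\D\subset[0,1]$ and takes values in the complete metric space $\{\phi\le M^*\}\cap D$; it therefore extends uniquely to a $d$-Lipschitz map $\sfx\colon[0,1]\to\{\phi\le M^*\}\cap D\subset D$ with $\sfx(0)=x_0$ and $\sfx(1)=x_1$. By Theorem \ref{thm:metric_derivative} this curve belongs to $\AC([0,1];D)$ with $\MetricDerivative\sfx\le d$ a.e., whence $\operatorname{Length}[\sfx]\le d$. As $d>\Dist{x_0}{x_1}$ was arbitrary, Definition \ref{def:length} is satisfied and $D$ is a length subset.
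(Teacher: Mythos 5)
Your proof is correct and follows essentially the same route as the paper's: the dyadic scheme of Lemma \ref{le:complete-length} with new midpoints selected through \eqref{eq:cap1:60-bis}, an inductive bound keeping all dyadic points in one fixed sublevel (the paper uses $a:=\tfrac 12(\lambda-1)_-\DistSquare{x_0}{x_1}$ and the geometric series $\sum_h 4^{-h}$, matching your $M^*$ up to harmless constants), and completeness of $\{\phi\le c\}\cap D$ to close the net into a Lipschitz curve. The two points you spell out explicitly --- the uniform-continuity extension of the $d$-Lipschitz net from the dyadics, and the tacit restriction to $x_0,x_1\in\Dom\phi$ (forced since $\varphi_0=\max(\phi(x_0),\phi(x_1))$ must be finite) --- are left implicit in the paper but are exactly what its argument relies on.
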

\begin{proof}
  We argue as in the proof of Lemma \ref{le:complete-length};
  we only have to check that the points
  constructed by the induction argument are contained in a common sublevel
  of $\phi$. In fact, we will show that one can pick
  every point $\sfx(k2^{-n})$ so that
  \begin{equation}
  \phi(\sfx(k2^{-n}))
  \le \varphi_n:=\varphi_0+a 
  \sum_{h=0}^{n-1} 4^{-h}\le \varphi_0+4a/3,
  \label{eq:45}
\end{equation}
where 
$\varphi_0:=\max(\phi(x_0),\phi(x_1))$,
$a:=\frac 12(\lambda-1)_-\DistSquare{x_0}{x_1}$.
We argue by induction: the case $n=0$ is trivial;
assuming \eqref{eq:45}, it is sufficient to check the induction step
for every $\vartheta\in \D_{n+1}$ of the form
$\vartheta=\sfx((2k+1)2^{-(n+1)})$ for some
integer $k\in [0,2^n-1]$. If we pick $ \sfx((2k+1)2^{-(n+1)}) $ as an $ \varepsilon_{n+1} $-midpoint between $ \sfx(k2^{-n}) $ and $ \sfx((k+1)2^{-n}) $ satisfying \eqref{eq:cap1:60-bis}, we deduce that
\begin{align*}
    \phi(\sfx((2k+1)2^{-(n+1)}))&\le \varphi_n+\frac 18(\lambda-1)_-
    \DistSquare{\sfx(k2^{-n})}{\sfx((k+1)2^{-n})}
    \\&\le
    \varphi_n+\frac {(\lambda-1)_- }{8\cdot 4^{n}}
    \DistSquare{x_0}{x_1}(1+\delta)^2\le 
    \varphi_n+\frac {(\lambda-1)_- }{2\cdot 4^{n}}
    \DistSquare{x_0}{x_1} \le \varphi_{n+1},  
  \end{align*}
  where we used the fact that $1+\delta\le 2$.
\end{proof}

\begin{remark}[Approximate convexity along approximate geodesics]\label{acag}
\upshape
Under the same hypotheses as in Lemma \ref{le:pedante}, one can show that for any given $ \varepsilon \in (0,1) $ there exists a Lipschitz curve $ \vartheta \in [0,1] \mapsto \sfx_{\vartheta,\varepsilon} \in D \cap \Dom{\phi} $ such that \eqref{eq:cap1:60-bis} holds and 
\begin{equation*}\label{pedante2}
      \Dist{\sfx_{\vartheta',\varepsilon}}{\sfx_{\vartheta'',\varepsilon}}\le
     \Dist{x_0}{x_1} \left(1+\varepsilon \right)\left|\vartheta'-\vartheta''\right| \quad \forevery \vartheta',\vartheta''
      \in [0,1].
\end{equation*}
In order to prove it, one has to adapt the proof of Lemma \ref{le:pedante} so as to make sure that \eqref{eq:cap1:60-bis} is satisfied at every induction step. We omit details, but let us mention that for this procedure to work it is necessary to require that each $ \varepsilon_{n} $, at fixed $n$, is chosen depending also on $k$ (just as a consequence of the fact that the $ \varepsilon $-error allowed in \eqref{eq:cap1:60-bis} is weighted by $ \vartheta(1-\vartheta) $). 
\end{remark}

% and approximate $ \lambda $-convex, then
% one could prove that every couple of point $x_0,x_1\in D$
% can be connected by approximate geodesics (in the sense discussed
% below \eqref{eq:92})
% along which an approximate convexity inequality holds, \nc
% a property that is much closer to standard $ \lambda $-convexity as in Definition \ref{def:lambda_convexity}.
%
% \smallskip
%
\nc
For $ \lambda $-convex functionals there is a way of characterizing $ \MetricSlope{\phi}{x} $ as modulus of the slope along the \emph{direction of maximal slope}, to some extent. 

\begin{definition}[Directional derivatives]
  \label{def:metric_slope2}
  If $\Gdc$ is a geodesic starting at $\Gdc_0\in \Dom\phi $ we set
  \begin{equation*}
    \label{eq:cap1:70a}
    \DirSuperDer\phi{\Gdc_0}\Gdc:=\liminf_{t\down0}\frac{\phi(\Gdc_t)-\phi(\Gdc_0)}t.
  \end{equation*}
\end{definition}
It is not difficult to check that, if in addition $\Gdc\in \GeoConPhi \AmbientSpace\lambda{\Gdc_0}{\Gdc_1}$, then 
\begin{equation}
  \label{eq:cap1:61}
  \phi(\Gdc_1)\ge \phi(\Gdc_0)+
  \DirSuperDer\phi{\Gdc_0}\Gdc
  +\frac\lambda2\DistSquare{\Gdc_0}{\Gdc_1}.
\end{equation}
Indeed, \eqref{eq:cap1:60} can be rewritten as
\begin{equation*}\label{eq: conv-t}
  \phi(\Gdc_1)\ge \phi(\Gdc_0)+
  \frac{\phi(\Gdc_t)-\phi(\Gdc_0)}t
  +\frac\lambda2(1-t)\DistSquare{\Gdc_0}{\Gdc_1};
\end{equation*}
passing to the limit as $t\down0$ we obtain \eqref{eq:cap1:61}.
If $\phi$ is $\lambda$-convex it follows that \cite[Theorem 2.4.9]{Ambrosio-Gigli-Savare08}
\begin{equation*}
  \label{eq:cap1:72}
  \MetricSlope\phi x=\sup_{y\in \Dom\phi, \,
    \Gdc\in \GeoConPhi\AmbientSpace\lambda xy}
  \frac{\left(\DirSuperDer\phi x\Gdc\right)^-}{|\dot\Gdc|}=
  \GSlope\lambda\phi x ,
 %  \sup_{y\neq x}
%   \Big(\frac{\phi(x)-\phi(y)}
%   {\Dist xy}+\frac \lambda2\Dist xy\Big)^+,
\end{equation*}
where the supremum is understood to be zero if $ x \in \Dom\phi $ is isolated in $ \Dom\phi $, whereas it is set to $ +\infty $ if $ x \not\in \Dom\phi $. Hence the slope of $\phi$ is lower semicontinuous if $ \phi $ is, and it can be characterized by the global lower bound
\eqref{eq:cap1:89}. In fact the identity between metric slope and global $ \lambda $-slope also holds if $ \phi $ is approximately $\lambda$-convex. 
\begin{proposition}[Approximate $ \lambda $-convexity and slopes]\label{lem:app-conv-slope}
  If $\phi : \AmbientSpace \to (-\infty,+\infty] $ is an approximately $\lambda$-convex functional then  
  \begin{equation} 
  \label{eq:cap1:72bis}
  \MetricSlope\phi x =
  \GSlope\lambda\phi x \quad \forevery x \in \AmbientSpace.
 %  \sup_{y\neq x} 
%   \Big(\frac{\phi(x)-\phi(y)}
%   {\Dist xy}+\frac \lambda2\Dist xy\Big)^+,
\end{equation}
If the sublevels of $\phi$ are complete, we also have
  \begin{equation} 
  \label{eq:cap1:72tris}
  \MetricSlope\phi x =\lMetricSlope\phi x
  \quad \forevery x \in \AmbientSpace .
 %  \sup_{y\neq x} 
%   \Big(\frac{\phi(x)-\phi(y)}
%   {\Dist xy}+\frac \lambda2\Dist xy\Big)^+,
\end{equation}
\end{proposition}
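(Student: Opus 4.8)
The inequality $\MetricSlope\phi x\le\GSlope\lambda\phi x$ is already contained in \eqref{eq:cap1:12}, so to prove \eqref{eq:cap1:72bis} the plan is to establish the reverse bound $\GSlope\lambda\phi x\le\MetricSlope\phi x$ at every $x\in\AmbientSpace$, replacing the exact geodesics used in the $\lambda$-convex identity recalled above by the approximate intermediate points supplied by Definition \ref{def:app_lambda_convexity}. If $x\notin\Dom\phi$ both sides are $+\infty$, and if $\Dom\phi=\{x\}$ both sides vanish (every $y\neq x$ lies outside $\Dom\phi$ and contributes $0$). Moreover, if some point of $\Dom\phi$ other than $x$ exists, then \eqref{eq:cap1:60-bis} together with \eqref{eq:88} produces points of $\Dom\phi$ arbitrarily close to $x$, so $x$ cannot be isolated in $\Dom\phi$; hence I may assume $x\in\Dom\phi$ is non-isolated.

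Fix $y\in\Dom\phi$ with $y\neq x$ (points outside $\Dom\phi$ give a vanishing positive part, hence contribute $0$ to the supremum defining $\GSlope\lambda\phi x$), and assume $\phi(x)-\phi(y)+\tfrac\lambda2\DistSquare xy>0$, the only nontrivial case. Given $\vartheta,\eps\in(0,1)$, approximate $\lambda$-convexity yields a $(\vartheta,\eps)$-intermediate point $z=z_{\vartheta,\eps}\in\Dom\phi$ between $x$ and $y$ for which \eqref{eq:cap1:60-bis} holds; rearranging it gives
\begin{equation*}
  \phi(x)-\phi(z)\ge \vartheta\left(\phi(x)-\phi(y)+\frac{\lambda-\eps}{2}(1-\vartheta)\DistSquare xy\right).
\end{equation*}
On the other hand \eqref{eq:88}, valid with $\delta=\eps$ by \eqref{eq:89bis}, gives $\Dist xz\le\vartheta(1+\eps)\Dist xy$, so in particular $\Dist xz\to0$ as $\vartheta\downarrow0$. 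Choosing first $\eps$ small enough that $\phi(x)-\phi(y)+\tfrac{\lambda-\eps}2\DistSquare xy>0$, the bracket above is positive for all small $\vartheta$, and dividing through yields
\begin{equation*}
  \frac{(\phi(x)-\phi(z))^+}{\Dist xz}\ge \frac{\phi(x)-\phi(y)+\frac{\lambda-\eps}{2}(1-\vartheta)\DistSquare xy}{(1+\eps)\Dist xy}.
\end{equation*}

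Since $z_{\vartheta,\eps}\to x$ as $\vartheta\downarrow0$ for $\eps$ fixed, these points are admissible competitors in the $\limsup$ defining $\MetricSlope\phi x$; letting $\vartheta\downarrow0$ in the last display (so that $1-\vartheta\to1$) and then $\eps\downarrow0$ gives $\MetricSlope\phi x\ge\big(\phi(x)-\phi(y)+\tfrac\lambda2\DistSquare xy\big)/\Dist xy$. Taking the supremum over $y$ produces $\GSlope\lambda\phi x\le\MetricSlope\phi x$ and hence \eqref{eq:cap1:72bis}. The one genuinely delicate point, and the main obstacle, is exactly this iterated limit: one must first extract the local difference quotient ($\vartheta\downarrow0$) and only afterwards remove the error built into the intermediate point and into the convexity inequality ($\eps\downarrow0$), all the while keeping the sign of the numerator under control so that the passage to the positive part is harmless.

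For \eqref{eq:cap1:72tris}, suppose in addition that the sublevels of $\phi$ are complete. Applying Lemma \ref{le:pedante} with $D=\Dom\phi$ --- whose intersection with each sublevel $\{\phi\le c\}$ equals that sublevel, hence is complete --- shows that $\Dom\phi$ is a length subset of $\AmbientSpace$, so by the remark following Definition \ref{def:length} one has $\lDistName=\DistName$ on $\Dom\phi\times\Dom\phi$. For $x\in\Dom\phi$ only points $y\in\Dom\phi$ contribute a nonzero positive part to either $\MetricSlope\phi x$ or $\lMetricSlope\phi x$, and on $\Dom\phi$ the two distances coincide; consequently the two $\limsup$'s, taken respectively along $y\to x$ in $\DistName$ and in $\lDistName$, agree. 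For $x\notin\Dom\phi$ both slopes equal $+\infty$. This proves \eqref{eq:cap1:72tris}.
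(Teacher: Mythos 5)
Your proof is correct and takes essentially the same route as the paper's: for \eqref{eq:cap1:72bis} both arguments use a $(\vartheta,\eps)$-intermediate point satisfying \eqref{eq:cap1:60-bis} together with the bound $\Dist{x}{z}\le \vartheta(1+\eps)\Dist{x}{y}$, let $\vartheta\down0$ first and $\eps\down0$ afterwards (the paper merely phrases the conclusion via the global characterization \eqref{eq:cap1:89} instead of bounding the quotient defining $\GSlope\lambda\phi x$ directly, a cosmetic difference), and for \eqref{eq:cap1:72tris} both invoke Lemma \ref{le:pedante} to conclude that $\Dom\phi$ is a length subset on which $\lDistName=\DistName$, so the two slopes coincide. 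Your explicit treatment of the degenerate cases ($x\notin\Dom\phi$, isolated points) is sound and simply makes precise what the paper absorbs into its ``with no loss of generality'' reduction.
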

\begin{proof}
We can suppose with no loss of generality that $ x=\Geodesic_0 \in \DomainSlope{\phi} $ and $ y = \Geodesic_1 \in \Dom{\phi} $. By Definitions \ref{def:applength} (along with subsequent discussions) and \ref{def:app_lambda_convexity}, for every $\vartheta,\eps \in (0,1)$ we can find a $(\vartheta,\eps)$-intermediate point $\Geodesic_{\vartheta,\eps}$ between $ \Geodesic_0 $ and $\Geodesic_1$ such that 
\begin{equation}\label{eq:conv-t-bis}  
    \begin{gathered}
      \phi(\Geodesic_1)\ge \phi(\Geodesic_0)+
      \frac{\phi(\Geodesic_{\vartheta,\eps})-\phi(\Geodesic_0)}\vartheta
      +\frac{\lambda-\eps}2(1-\vartheta)\DistSquare{\Geodesic_0}{\Geodesic_1},\\
      \vartheta(1-\eps)\Dist{\Geodesic_0}{\Geodesic_1}\le \Dist{\Geodesic_0}{\Geodesic_{\vartheta,\eps}}\le
      \vartheta(1+\eps)\Dist{\Geodesic_0}{\Geodesic_1} , 
    \end{gathered}
\end{equation}
whence
\begin{displaymath}
    \phi(\Geodesic_1)\ge \phi(\Geodesic_0) -
      \frac{\left( \phi(\Geodesic_0) - \phi(\Geodesic_{\vartheta,\eps}) \right)^+ }{\Dist{\Geodesic_0}{\Geodesic_{\vartheta,\eps}}}
      \Dist{\Geodesic_0}{\Geodesic_1}(1 + \eps)
      +\frac{\lambda-\eps}2(1-\vartheta)\DistSquare{\Geodesic_0}{\Geodesic_1};
\end{displaymath}
by passing to the limit as $\vartheta\downarrow0$, we end up with 
\begin{displaymath}
    \phi(\Geodesic_1)\ge \phi(\Geodesic_0)-\MetricSlope\phi{\Geodesic_0}
      \Dist{\Geodesic_0}{\Geodesic_1}(1+\eps)
      +\frac{\lambda-\eps}2 \DistSquare{\Geodesic_0}{\Geodesic_1}.
\end{displaymath} 
Identity \eqref{eq:cap1:72bis} follows by finally letting
$\eps\down0$, recalling \eqref{eq:cap1:89}--\eqref{eq:cap1:12}.

In order to prove \eqref{eq:cap1:72tris} it is sufficient to observe
that $\Dom\phi$ is a length subset of $\AmbientSpace$, thanks to
Lemma \ref{le:pedante}, so that
$\DistName=\lDistName=\lDDistName{\Dom\phi}$
on $\Dom\phi$
and therefore $\PlainSlope\phi=\lPlainSlope\phi$ by the characterization
\eqref{eq: loc-slope} of the metric slope.
\end{proof}
% In particular, any $\lambda$-convex functional
% is \Reg\lambda according to definition \ref{def:Reglambda}.

Another important property of approximately $ \lambda $-convex
functionals $ \phi $ is their quadratic boundedness from below,
provided sublevels are complete. This is obvious if $\lambda>0$ as
long as standard convexity is concerned, since in such case $\phi$ is
even bounded from below, see \cite[Lemma
2.4.8]{Ambrosio-Gigli-Savare08}.
We will also show that $x\mapsto \phi(x)-\frac \lambda2\DistSquare x{o}$ 
is linearly bounded from below; in the case $\lambda=0$ 
we thus find a metric analog of the well-known property of convex
l.s.c.~functionals in Banach spaces.
%\Comment{ho solo cercato di rendere meno ambigua la frase che c'era prima}
% Let us first set
% \begin{equation}
%     \label{eq:cap1:74}
%     \tau_\lambda=\frac 1{\lambda^-}\quad\text{if }\lambda<0,\quad
%     \tau_\lambda=+\infty\quad\text{if }\lambda\ge0,
%   \end{equation}
%   so that $\lambda^- \tau<1$ if $\tau\in (0,\tau_\lambda)$.
\begin{theorem}[Approximate $\lambda$-convexity and quadratic boundedness] 
  \label{le:bfb}
  If
  $\phi : \AmbientSpace \to (-\infty,+\infty] $ 
  is approximately $\lambda$-convex and has complete sublevels, then
  it is quadratically bounded from below. More precisely, for every $o\in \Dom\phi$ and 
  $\kappa_o>-\lambda$ it satisfies the lower bound \eqref{eq:cap1:13} 
%   \begin{equation}
%     \label{eq:70}
%     \phi(x)+\frac {\kappa_o}{2}\DistSquare xo\ge \phi_o
%     \quad
%     \text{for every }x\in \AmbientSpace,
%   \end{equation}
  with
  \begin{equation}
    \label{eq:cap1:73}
    \phi_o:=\phi(o)-\frac{\big(\phi(o)-m_o+ \frac{\lambda^+}{2} \big)^2}
    {2(\lambda+\kappa_o)}
    -\frac{\lambda+\kappa_o}2,\quad
    m_o:=\inf\left\{\phi(x): x \in \AmbientSpace, \ \Dist x{o}\le 1 \right\}
    >-\infty.
  \end{equation} 
  %  it satisfies
%   \begin{equation}
%   \label{eq:cap1:75}
%   \phi_\tau(x)=
%   \inf_{y\in \Dom\phi }\frac 1{2\tau}\DistSquare xy+\phi(y)>-\infty\quad
%   \forevery x\in \AmbientSpace,\ \tau\in (0,\tau_o).
% \end{equation}
Furthermore, the functional $ x \mapsto \phi(x)-\frac{\lambda}{2} \DistSquare x o $ is linearly bounded from below and satisfies the lower bound \eqref{eq:cap1:13-lin} with 
  \begin{equation}
    \label{eq:cap1:73-lin}
        \ell_o:=\phi(o)-m_o+\frac{\lambda}{2} , \quad 
    \phi_o := m_o-\frac{\lambdam}{2} .
  \end{equation}
\end{theorem}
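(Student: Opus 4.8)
The plan is to reduce the quadratic bound to the linear one: once we know that $\psi(x):=\phi(x)-\tfrac\lambda2\DistSquare{x}{o}$ satisfies \eqref{eq:cap1:13-lin}, the quadratic bound \eqref{eq:cap1:13} for $\phi$ follows at once by writing $\phi(x)+\tfrac{\kappa_o}2\DistSquare{x}{o}=\psi(x)+\tfrac{\lambda+\kappa_o}2\DistSquare{x}{o}$ and minimizing the resulting scalar expression $\tfrac{\lambda+\kappa_o}2R^2-\ell_o R+\mathrm{const}$ over $R=\Dist{x}{o}\ge0$, which is bounded below precisely because $\kappa_o>-\lambda$. The single engine behind everything is the interpolation inequality \eqref{eq:cap1:60-bis} of approximate $\lambda$-convexity, applied with $\Gdc_0=o$ and $\Gdc_1=x$ a generic point (the bound being trivial when $x\notin\Dom\phi$), together with the distance control \eqref{eq:88}--\eqref{eq:89bis} on the location of the intermediate point.

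First I would prove $m_o>-\infty$, which is the only step genuinely using completeness. Since complete sublevels are closed, $\phi$ is lower semicontinuous, and at $o\in\Dom\phi$ this produces a radius $\rho\in(0,1)$ with $\phi\ge\phi(o)-1$ on the ball $\{\Dist{\cdot}{o}\le\rho\}$. Now take any $x$ with $\Dist{x}{o}\le1$ and apply \eqref{eq:cap1:60-bis} with a fixed small $\vartheta\sim\rho$ (and $\eps$ small), so that by \eqref{eq:88} the intermediate point $z$ satisfies $\Dist{o}{z}\le\vartheta(1+\eps)\le\rho$, whence $\phi(z)\ge\phi(o)-1$. Rearranging \eqref{eq:cap1:60-bis} into
\[
  \phi(x)\ \ge\ \frac1\vartheta\bigl(\phi(z)-(1-\vartheta)\phi(o)\bigr)+\frac{\lambda-\eps}2(1-\vartheta)\DistSquare{x}{o}
\]
gives a lower bound for $\phi(x)$ that is uniform over the unit ball, hence $m_o>-\infty$.

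With $m_o$ finite, I would treat a generic $x$ with $R:=\Dist{x}{o}>1$ (the case $R\le1$ being covered directly by $\phi\ge m_o$). Applying \eqref{eq:cap1:60-bis} with $\Gdc_0=o$, $\Gdc_1=x$ and the tuned parameter $\vartheta=\tfrac1{R(1+\eps)}$ forces, via \eqref{eq:88}, $\Dist{o}{z}\le1$, so that $\phi(z)\ge m_o$; the same rearrangement, followed by letting $\eps\downarrow0$, yields
\[
  \phi(x)\ \ge\ \frac\lambda2 R^2-\ell_o R+\phi(o),\qquad \ell_o=\phi(o)-m_o+\tfrac\lambda2,
\]
i.e.\ the linear bound \eqref{eq:cap1:13-lin} for $\psi$ on $\{\Dist{\cdot}{o}\ge1\}$; gluing this with $\phi\ge m_o$ on the unit ball upgrades the constant to the uniform $\phi_o=m_o-\tfrac{\lambda^-}2$. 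Completing the square as in the first paragraph then delivers \eqref{eq:cap1:13} with the stated $\phi_o$.

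I expect the real difficulty to be twofold. Conceptually, approximate $\lambda$-convexity supplies only interpolation, never extrapolation, so the lower bound on the large ball $\{\Dist{\cdot}{o}\le R\}$ must be \emph{imported} from the small ball where lower semicontinuity bites; getting this base case $m_o>-\infty$ correct is the crux, and it is exactly where completeness is consumed. Technically, the remaining work is the bookkeeping of the explicit constants — in particular the appearance of $\lambda^+$ in place of $\lambda$ and the extra $-\tfrac{\lambda+\kappa_o}2$ in $\phi_o$, which arise from carefully matching the regimes $R\le1$ and $R>1$ and from the $\eps$-slack in the intermediate-point inequality — but this amounts to routine optimization of a one-variable quadratic.
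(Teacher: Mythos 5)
Your proposal is correct. For the lower bounds themselves it essentially reproduces the paper's computation: the paper likewise rearranges the approximate convexity inequality with $\Gdc_0=o$, $\Gdc_1=x$ (its \eqref{conv-bound}) and tunes $\vartheta^{-1}=\Dist{o}{x}(1+\eps)$ when $\Dist{o}{x}>1$ so that the intermediate point lands in the closed unit ball where $\phi\ge m_o$, exactly as you do (the paper derives the quadratic bound first and obtains the linear one as a byproduct, you go the other way — immaterial). The genuine divergence is the proof of $m_o>-\infty$. The paper argues by contradiction: assuming $m_o=-\infty$, it builds inside the complete set $Z=\{x:\phi(x)\le\phi(o),\ \Dist{x}{o}\le1\}$ an iterative sequence of $(2^{-n},\eps)$-intermediate points between $x_n$ and points $y_n$ with $\phi(y_n)\le-4^n$; the sequence is Cauchy, stays in the unit ball, drives $\phi(x_n)\to-\infty$, and its limit — which exists by completeness — would violate lower semicontinuity and properness. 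You instead use lower semicontinuity at the single point $o$ to produce a radius $\rho$ with $\phi\ge\phi(o)-1$ on the ball of radius $\rho$, and then a single application of \eqref{eq:cap1:60-bis} with a fixed small $\vartheta$ (so that, by \eqref{eq:88}, the intermediate point $z$ satisfies $\Dist{o}{z}\le\vartheta(1+\eps)\le\rho$) pivots any putative very negative value of $\phi$ in the unit ball into a violation of that small-ball bound: rearranging gives $\phi(x)\ge\phi(o)-\vartheta^{-1}-\tfrac12(\lambda-\eps)^-$ uniformly on the unit ball. This argument is valid and strictly more economical: it consumes completeness only through the chain complete sublevels $\Rightarrow$ closed sublevels $\Rightarrow$ lower semicontinuity, whereas the paper's construction uses completeness essentially (the Cauchy limit must exist); your route thus shows that this step already holds for merely l.s.c.\ approximately $\lambda$-convex functionals. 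What the paper's iterative scheme buys is uniformity of technique with Lemma \ref{le:complete-length} and Lemma \ref{le:pedante}; what yours buys is brevity and a weaker hypothesis.

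One small caveat on the constants: gluing the two regimes into the linear bound with $\ell_o=\phi(o)-m_o+\tfrac\lambda2$, $\phi_o=m_o-\tfrac{\lambda^-}2$ and then completing the square yields the quadratic constant $m_o-\tfrac{\lambda^-}2-\tfrac{\ell_o^2}{2(\lambda+\kappa_o)}$, which is not literally \eqref{eq:cap1:73} and in some regimes is weaker. To land on the stated constant you should, as the paper does, first unify the regimes into $\phi(x)\ge\phi(o)-(\Dist{o}{x}\lor1)\bigl(\phi(o)-m_o+\lambda^+/2\bigr)+\tfrac\lambda2\DistSquare{o}{x}$ — the $\lambda^+$ absorbing the case $\Dist{o}{x}\le1$ — and then apply Young's inequality together with $(\Dist{o}{x}\lor1)^2\le1+\DistSquare{o}{x}$, which is where the extra $-\tfrac{\lambda+\kappa_o}2$ in \eqref{eq:cap1:73} comes from. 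This is the routine bookkeeping you anticipated, but the completing-the-square shortcut by itself does not deliver the theorem's exact constants.
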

\begin{proof} 
  Let us first prove that $m_o$ given by  \eqref{eq:cap1:73} is finite: 
  % By  \cite[Lemma 2.4.8]{Ambrosio-Gigli-Savare05}
%   it is sufficient to show that $\phi$ 
%   is bounded from below on a neighborhood of a point $o\in \Dom\phi$, i.e.
  we argue by contradiction assuming that
  $m_o=-\infty$.
Let $ Z $ be the complete metric space given by 
$$ 
Z:=\big\{x\in \AmbientSpace: \, \phi(x) \le \phi(o) , \ \Dist x{o}\le 1 \big\} . 
$$ 
For all $ \eps \in (0,1) $, we can pick a general sequence $ \{ y_n \} \subset Z $ such that $\phi(y_{n})\le-4^n $ and define inductively another sequence $ \{ x_n \} $ by setting $ x_{n+1}:=\Geodesic_{\vartheta,\eps} $, where $ \vartheta=2^{-n} $ and $ \Geodesic_{\vartheta,\eps} $ is a $ (\vartheta,\eps) $-intermediate point between $ x_n $ and $ y_{n} $ fulfilling \eqref{eq:cap1:60-bis} with $ \Geodesic_0=x_n $ and $ \Geodesic_1=y_{n} $. The sequence is supposed to start from $ x_3=o $, so that $ \Dist{x_n}{o} \le 1 $ for all $ n \ge 3 $ and (note that $  \Dist{x_n}{y_n} \le 2 $)  
  \begin{equation}\label{recurs}
    \Dist{x_n}{x_{n+1}} \topref{eq:88} \le 2^{-{n}+1} (1+\eps) , \qquad
    \phi(x_{n+1})-\phi(x_n)
    \topref{eq:cap1:60-bis} \le
    2^{-n}\big[ \phi(y_{n})-\phi(x_n)+ 2 (\lambda^-+\eps) \big] .
  \end{equation} 
From the right-hand inequality of \eqref{recurs} it is not difficult to check that $ \phi(x_n)\to-\infty $ and therefore $ x_n \in Z $ eventually: because $ \{ x_n \} $ is a Cauchy sequence (hence converging to some
  $x_\infty\in Z$) and $ \phi $ is l.s.c., it follows that $ \phi(x_\infty)\le-\infty $, which is impossible since $-\infty$ is not in the range of $\phi$. 

In order to show the bound \eqref{eq:cap1:13}, it is not restrictive to assume $ x \in \Dom \phi $. The latter is then a simple consequence of \eqref{eq:conv-t-bis}, understood with $ \Geodesic_0=o $ and $ \Geodesic_1=x $, which yields for every $ \vartheta,\eps \in (0,1) $ 
  \begin{equation}\label{conv-bound}
      \phi(x)\ge \phi(o) -
      \vartheta^{-1} \big(\phi(o)- \phi(\Geodesic_{\vartheta,\eps})\big)
      +\frac{\lambda-\eps}2(1-\vartheta)\DistSquare{o}{x}.
  \end{equation} 
When $\Dist ox>1$ we can pick $\vartheta^{-1}:=\Dist ox (1+\eps)$, so that $\phi(\Gdc_{\vartheta,\eps})\ge m_o$; 
  in any case, we obtain for every $x\in \AmbientSpace$ 
  \begin{equation*}
    \phi(x) \ge \phi(o)-(\Dist ox\lor 1)(1+\eps)\left(\phi(o)-m_o + \lambda^+/2\right)+
    \frac {\lambda - \eps }2\DistSquare ox ,
  \end{equation*}
and by letting $ \eps \down 0 $ 
  \begin{equation*}\label{eq:lbb}
  \begin{aligned}
    \phi(x) \ge & \phi(o)-(\Dist ox\lor 1)\left(\phi(o)-m_o + \lambda^+/2\right)+
    \frac {\lambda }2\DistSquare ox \\
     \ge   &  \phi(o)-\frac1 {2\delta} \left(\phi(o)-m_o+\lambda^+/2 \right)^2-\frac \delta 2+\frac{\lambda-\delta}2\DistSquare ox
\end{aligned}
  \end{equation*}   
for every $ \delta > 0  $. By choosing $\delta:=\lambda+\kappa_o>0$ we get \eqref{eq:cap1:73}. Linear boundedness from below with \eqref{eq:cap1:73-lin} follows straightforwardly from \eqref{conv-bound} arguing similarly to above.
\end{proof}

\section{The metric approach to gradient flows}\label{sec: gf}
% In the following we will always assume that
% \begin{assumption}
%   \label{ass:phi}
%   $\phi:\AmbientSpace\to(-\infty,+\infty]$
%   is a function quadratically bounded from below
%   \eqref{eq:cap1:13} with \emph{complete} sublevels.
% \end{assumption}
The goal of this section is to study the properties of
$\lambda$-gradient flows, which will be associated with a
metric-functional system
\begin{equation}
  \label{eq:system}
  \begin{gathered}
    \X=\System, \ \text{where $(\AmbientSpace,\DistName)$ is a metric space and}\\
    \text{$\phi:X\to (-\infty,+\infty]$ is a proper l.s.c.~functional.}
  \end{gathered}
\end{equation}
In the following, $\X$ and $\System$ will always refer to \eqref{eq:system}. 

\subsection[Evolution Variational Inequalities
  (${\EVIname}$)]{Evolution Variational Inequalities
  ($\boldsymbol{\EVIname}$)}
\label{subsec:EVIdefinition}
The next (quite demanding) definition is modeled
after the case of $\lambda$-convex functionals in Euclidean-like spaces,
and has first been introduced \cite[Chapter 4]{Ambrosio-Gigli-Savare08}.
\begin{definition}[\EVIname\ and Gradient Flow]
  \label{def:GFlow}
  % and let $u_0\in \overline{\Dom\phi }$.
  Let $\lambda\in \R$ and $\X=\System$ be a metric-functional system as
  in \eqref{eq:system}. 
  A solution of the \emph{Evolution Variational Inequality} $\EVI
  X\DistName\phi\lambda$ 
  is a continuous curve
  $ u: t\in (0,+\infty)\mapsto u_t\in \Dom\phi $ such that
  %$\lim_{t\downarrow0}u_t=u_0$ and
  \begin{equation} 
    \label{eq:EVI}
    \frac 12 \Urd\DistSquare {u_t}v+
    \frac \lambda 2\DistSquare{u_t}v
    \le \phi(v)-\phi(u_t)
   \quad
    \forevery t\in (0,+\infty), \
    v\in \Dom\phi .
    \tag{\EVIshort\lambda}
  \end{equation}
  An \emph{\EVIshort\lambda-Gradient Flow} of $\phi$ in $D\subset \overline{\Dom\phi}$ is a family of continuous maps
  $\FlowName_t:D\to D$, $t\ge0$, such that for every $u_0\in D$
  \begin{subequations}
    \begin{equation}
      \Flow{t+h}{u_0}=\Flow h{\Flow t{u_0}}\quad \forevery t,h\ge 0,
      \qquad
    \lim_{t\downarrow0}\Flow t{u_0}=\Flow 0{u_0}={u_0},
    \label{eq:18}
  \end{equation}
  \begin{equation}
    \label{eq:28}
    \text{the curve }t\mapsto \Flow t{u_0} \ \text{is  a solution of \EVI \AmbientSpace\DistName\phi\lambda}.
  \end{equation}
  \end{subequations}
\end{definition}
\begin{remark}[Upper, lower and distributional derivatives]
  \label{rem:ulder}
  \upshape
  Thanks to Lemma \ref{le:monotonicity} from Appendix \ref{app}, for \emph{continuous} curves $u $ the upper right derivative in \eqref{eq:EVI} can be replaced by the lower right derivative or the distributional one; in other words, \eqref{eq:EVI} is equivalent to
  \begin{equation} 
    \label{eq:EVIbis}
    \frac 12 \Lrd\DistSquare {u_t}v+
    \frac \lambda 2\DistSquare{u_t}v
    \le \phi(v)-\phi(u_t)
   \quad
   \forevery t\in (0,+\infty), \ v\in \Dom\phi,
    \tag{$ \EVIname_\lambda^\ell $}    
 \end{equation}
 or to
  \begin{equation}
    \label{eq:EVItris}
    \frac 12 \frac \d\dt \DistSquare {u_t}v+
    \frac \lambda 2\DistSquare{u_t}v
    \le \phi(v)-\phi(u_t)
   \quad
   \text{in }\DD'((0,+\infty)), \quad 
   \forevery v\in \Dom\phi 
    \tag{$\EVIname_\lambda^\DD $}    
 \end{equation} 
along with the further requirement $\phi\circ u\in L^1_{\rm loc}((0,+\infty))$. If $u$ is also \emph{absolutely} continuous,
  a pointwise derivative almost everywhere (compare with \cite[Theorem 4.0.4]{Ambrosio-Gigli-Savare08})
  is sufficient: indeed, in such case, $ u $ satisfies $\EVI \AmbientSpace\DistName\phi\lambda$ in the integral form \eqref{eq:cap1:78} below (the fact that $\phi\circ u \in L^1_{\rm loc}((0,+\infty)) $ easily follows from the lower semicontinuity of $ \phi $ and \eqref{eq:EVI} itself).
\end{remark} 

The next result shows two different characterizations
of solutions to \eqref{eq:EVI} (see also
\cite{Daneri-Savare08,Clement-Desch10b}).
At the core of its proof is the fact that $t\mapsto \phi(u_t)$ is a
nonincreasing function along any solution to \eqref{eq:EVI}. To our purposes, from here on it is convenient to set
\begin{equation} 
  \label{eq:cap2:11}
  \sfE\lambda t:=\int_0^t \rme^{\lambda r}\,\d r=
  \begin{cases}
    \frac{\rme^{\lambda t}-1}\lambda&\text{if }\lambda\neq0,\\
    t&\text{if }\lambda=0.
  \end{cases}
\end{equation}

\begin{theorem}[Integral characterizations of $\EVIname$]
  \label{thm:uniqueness}
  A curve $u:(0,+\infty)\to
  \overline{\Dom\phi}$ is a solution of $\EVI X\DistName\phi\lambda$
  according to Definition
  \ref{def:GFlow} if and only if it satisfies one of the following
  equivalent formulations.
  \begin{description}
  \item[\ICA] For every $ v \in \Dom \phi $ the maps $t\mapsto \phi(u_t)$, $ t\mapsto \DistSquare{u_t}v$ belong to
    $L^1_{\rm loc}((0,+\infty))$ and for all $s,t\in (0,+\infty)$ with $s<t$ there holds 
  \begin{equation} 
    \label{eq:cap1:78}
    \frac 12\DistSquare{u_t}v-
    \frac 12\DistSquare{u_s}v+\int_s^t\Big(\phi(u_r)+
    \frac {\lambda} 2\DistSquare{u_r}v\Big)\,\d r\le
    (t-s)\phi(v).
    \tag{$\EVIname_\lambda^{\prime}$}
    % \quad\forevery\, v\in \Dom\phi .
  \end{equation}
\item[\ICB] %$u_t\in \Dom\phi $ for every $t>0$ and
  For every $s,t\in (0,+\infty)$ with $s<t$ and $v\in \Dom\phi$ there holds
  \begin{equation}
    \label{eq:cap1:78bis}
    \frac {\rme^{\lambda(t-s)}}2\DistSquare{u_t}v-
    \frac 12\DistSquare{u_s}v\le
    \sfE\lambda{t-s}\Big(\phi(v)-\phi(u_t)\Big).
    \tag{$\EVIname_\lambda^{\prime\prime}$}
    %\quad\forevery\, v\in \Dom\phi .
  \end{equation}
  \end{description}
Furthermore, the map $t\mapsto\phi(u_t)$ is nonincreasing (in particular it is right continuous).
  \end{theorem}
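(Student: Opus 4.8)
The plan is to route every implication through the distributional form of the inequality, treating the monotonicity of the energy $g(r):=\phi(u_r)$ as the single substantial point; everything else is integration or a one-sided limit. I would begin by invoking Remark~\ref{rem:ulder}, so that for the continuous curve $u$ the differential inequality \eqref{eq:EVI} is equivalent to the distributional form \eqref{eq:EVItris} together with $\phi\circ u\in L^1_{\rm loc}((0,+\infty))$; the delicate passage between one-sided Dini derivatives and the distributional derivative is exactly the content of Lemma~\ref{le:monotonicity}. With \eqref{eq:EVItris} at hand, the equivalence \eqref{eq:EVI}$\Leftrightarrow$\ICA is almost a restatement: for fixed $v\in\Dom\phi$ the continuous map $G(t):=\tfrac12\DistSquare{u_t}v+\int_{c}^{t}\big(\phi(u_r)+\tfrac\lambda2\DistSquare{u_r}v-\phi(v)\big)\,\d r$ (with any fixed base point $c$) has distributional derivative $\le 0$ precisely by \eqref{eq:EVItris}, i.e.\ $G$ is nonincreasing, and rewriting $G(t)\le G(s)$ for $s<t$ is \eqref{eq:cap1:78}. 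The local integrability required in \ICA comes from the Remark; one may also see one half of it directly, since $u([a,b])$ is compact and $\phi$, being l.s.c., is bounded below there, so $(\phi\circ u)^-$ is locally bounded.

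\emph{The core is the monotonicity of $g=\phi\circ u$.} Multiplying \eqref{eq:EVItris} by the integrating factor $\rme^{\lambda(r-s)}$ and integrating on $[s,\sigma]$ gives, for every $v\in\Dom\phi$,
\[
\tfrac12\rme^{\lambda(\sigma-s)}\DistSquare{u_\sigma}v-\tfrac12\DistSquare{u_s}v+\int_s^\sigma \rme^{\lambda(r-s)}\phi(u_r)\,\d r\le \phi(v)\,\sfE\lambda{\sigma-s}.
\]
Choosing $v=u_s\in\Dom\phi$ and discarding the nonnegative distance term yields the weighted forward-average bound $\int_s^\sigma \rme^{\lambda(r-s)}g(r)\,\d r\le g(s)\,\sfE\lambda{\sigma-s}$ for all $0<s<\sigma$. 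I would then combine this with the lower semicontinuity of $g$ (composition of the l.s.c.\ $\phi$ with the continuous $u$): if $g$ were not nonincreasing, choose $s_0<t_0$ and a level $c$ with $g(s_0)<c<g(t_0)$, let $(a,b)$ be the connected component of the open set $\{g>c\}$ containing $t_0$ (so $s_0\le a<t_0<b$), and observe that $g(a)\le c$ while $g>c$ on $(a,b)$; applying the forward-average bound at $s=a$ with $\sigma\in(a,b)$ gives $c\,\sfE\lambda{\sigma-a}<\int_a^\sigma \rme^{\lambda(r-a)}g(r)\,\d r\le g(a)\,\sfE\lambda{\sigma-a}\le c\,\sfE\lambda{\sigma-a}$, a contradiction. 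Hence $g$ is nonincreasing, it has right limits, and l.s.c.\ forces $g(t^+)=g(t)$, i.e.\ right continuity. This is the step I expect to be the main obstacle, as it is where one-sided \emph{averaged} information must be upgraded to a \emph{pointwise} monotone conclusion and where lower semicontinuity is genuinely used.

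\emph{Closing the cycle.} With monotonicity established, \ICB follows from the integrated identity above: since $g(r)\ge g(t)$ for $r\le t$, I bound $\int_s^t \rme^{\lambda(r-s)}(\phi(v)-\phi(u_r))\,\d r\le (\phi(v)-\phi(u_t))\,\sfE\lambda{t-s}$, which is \eqref{eq:cap1:78bis}. For the reverse implication \ICB$\Rightarrow$\eqref{eq:EVI}, monotonicity is immediate (take $v=u_s$ in \eqref{eq:cap1:78bis}: the left side is nonnegative and $\sfE\lambda{t-s}>0$, forcing $\phi(u_t)\le\phi(u_s)$), whence right continuity of $g$ again by l.s.c.; then I divide \eqref{eq:cap1:78bis} with $s=t_0$, $t=t_0+h$ by $h$ and let $h\downarrow0$, using $\sfE\lambda h/h\to1$, $(\rme^{\lambda h}-1)/h\to\lambda$ and $\phi(u_{t_0+h})\to\phi(u_{t_0})$, to recover the lower-right derivative form \eqref{eq:EVIbis}, equivalent to \eqref{eq:EVI} by Remark~\ref{rem:ulder}. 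This yields \eqref{eq:EVI}$\Leftrightarrow$\ICA, \eqref{eq:EVI}$\Rightarrow$\ICB and \ICB$\Rightarrow$\eqref{eq:EVI}, so the three formulations are equivalent and the monotonicity of $t\mapsto\phi(u_t)$ has been obtained along the way.
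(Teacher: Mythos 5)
There is a genuine gap, and it sits exactly where you placed your standing assumption: your entire scheme is run ``for the continuous curve $u$'', but in the implications \ICA\ $\Rightarrow$ \eqref{eq:EVI} and \ICB\ $\Rightarrow$ \eqref{eq:EVI} no continuity of $u$ is given. The theorem is stated for a bare curve $u:(0,+\infty)\to\overline{\Dom\phi}$, and Definition \ref{def:GFlow} builds continuity (and $u_t\in\Dom\phi$) into the notion of solution; so the substantive content of the ``if'' directions is precisely that the integral inequalities \emph{force} regularity. This is also the point of the theorem for applications: integral formulations are used because they pass to limits (cf.\ the density extension in Corollary \ref{cor:gf}), where continuity of the limit curve is not given in advance. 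Since Remark \ref{rem:ulder} (and the last clause of Lemma \ref{le:monotonicity}, which needs $\zeta$ right continuous, hence $u$ at least right continuous) is only available for continuous curves, invoking it to pass from \ICA\ to \eqref{eq:EVI} is circular with respect to regularity. The paper's proof spends most of its effort exactly here: from \eqref{eq:cap1:78} it first extracts right continuity of $u$, then the measurability of $(t,s)\mapsto\DistSquare{u_t}{u_s}$, then — via the choice $v=u_s$, a double integration in $s$, and Gronwall — the averaged estimate \eqref{eq:cap1:87}, whose consequence \eqref{eq:29} shows $t\mapsto\Dist{u_t}w\in W^{1,2}_{\rm loc}$ and hence that $u$ coincides with its continuous representative. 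None of this appears in your proposal, and your distributional shortcut cannot replace it, because the equivalence ``$G'\le 0$ in $\DD'$ iff $G$ nonincreasing pointwise for all $s<t$'' already presupposes that $t\mapsto\DistSquare{u_t}v$ is continuous.

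Within the class of continuous curves, your argument is essentially correct and your monotonicity step is genuinely different from the paper's: the forward-average bound $\int_s^\sigma \rme^{\lambda(r-s)}\phi(u_r)\,\d r\le \phi(u_s)\,\sfE\lambda{\sigma-s}$ combined with the level-set/connected-component argument on the open set $\{\phi\circ u>c\}$ is a clean, more elementary route to monotonicity than the paper's Lebesgue-point argument (two small repairs: you must first justify $u_s\in\Dom\phi$ before taking $v=u_s$ — e.g.\ note that in \ICB\ the right-hand side would be $-\infty$ if $\phi(u_t)=+\infty$, and in your level-set argument observe that the left endpoint $a$ automatically satisfies $\phi(u_a)\le c<+\infty$, so monotonicity plus $\phi\circ u\in L^1_{\rm loc}$ yields $u_t\in\Dom\phi$ for every $t$). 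But note what the heavier machinery buys the paper: the quantitative bound \eqref{eq:cap1:87}--\eqref{eq:29} is not only the engine of the continuity proof, it is reused verbatim in Theorem \ref{thm:main1} (estimate \eqref{eq:72}) to get local Lipschitz regularity of EVI solutions. Your shortcut discards exactly that estimate, so even as a proof of the equivalence for continuous curves it would leave the later regularity arguments without their key input.
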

\begin{proof}
  We split the proof in various steps. 
  
  \medskip
  
  \noindent\emph{\eqref{eq:EVI} $\Rightarrow$ \eqref{eq:cap1:78}: solutions of \eqref{eq:EVI} according to Definition \ref{def:GFlow}
    satisfy \textnormal{\ICA}}. It is enough to apply Lemma \ref{le:monotonicity} to the functions
  \begin{displaymath}
    \zeta(t)=\frac 12\DistSquare{u_t}v,\quad
    \eta(t)=
    \frac \lambda 2\DistSquare{u_t}v+\phi(u_t)-\phi(v).
  \end{displaymath}

 \medskip

\noindent\emph{\eqref{eq:cap1:78bis} $\Rightarrow$ \eqref{eq:EVI}: solutions of \textnormal{\ICB}\ satisfy \eqref{eq:EVI} according to Definition \ref{def:GFlow} (and $t\mapsto \phi(u_t)$ is nonincreasing)}. First of all note that \eqref{eq:cap1:78bis} yields $ u_t \in \Dom \phi $ for all $ t>0 $. Hence by choosing $v=u_s$ in \eqref{eq:cap1:78bis} 
  we immediately get that $t\mapsto \phi(u_t)$ is nonincreasing, in
  particular
  it is locally bounded in $(0,+\infty)$.
  It follows that 
  \begin{displaymath}
    \limsup_{t\downarrow t_0}\Dist{u_t}v\le
    \Dist{u_{t_0}}v\le \liminf_{t\uparrow t_0}\Dist{u_t}v\quad
    \forevery t_0>0 , \ v\in \Dom\phi ,
  \end{displaymath}
  so that $u$ is right continuous and the maps
  $t\mapsto \Dist{u_t}v$, $ t\mapsto\phi(u_t)$ are lower semicontinuous
  and satisfy \eqref{eq:EVI}; arguing as in step \eqref{eq:EVI} $\Rightarrow$ \eqref{eq:cap1:78} (only lower semicontinuity is needed) we can show that they satisfy \ICA\ and by the next step $u$ is also continuous. 

\medskip 

\noindent\emph{\eqref{eq:cap1:78} $\Rightarrow$ \eqref{eq:EVI}: solutions of \textnormal{\ICA}\ satisfy \eqref{eq:EVI} according to Definition \ref{def:GFlow} (and $t\mapsto \phi(u_t)$ is nonincreasing)}. To begin with, from \eqref{eq:cap1:78} we immediately get the right continuity of $ u $, since
  \begin{displaymath}
    \limsup_{t\down s}\DistSquare{u_t}v\le
    \DistSquare{u_s}v\quad
    \forevery\ v\in \Dom\phi
  \end{displaymath} 
  and $u_s\in \overline{\Dom\phi }$. Because $t\mapsto \phi(u_t) \in
  L^1_{\rm loc}((0,+\infty)) $ and $ u $ is right continuous, there
  exists a sequence $ v_n \subset \Dom \phi $ which is dense in the
  image of $ u $. In particular there holds
\[
\DistSquare{u_t}{u_s} = \sup_{n \in \N} \left| \Dist{u_t}{v_n} - \Dist{u_s}{v_n}  \right|^2 \quad \forall s,t \in (0,+\infty) ,
\]  
so that the real function $ (t,s) \mapsto \DistSquare{u_{t}}{u_s} $ is Lebesgue measurable in $ (0,+\infty) \times (0,+\infty) $. 
%  In particular, for every $r_0,s_0\in (0,+\infty)$ 
%  \begin{equation}
%    \label{eq:71}
%    \lim_{r\down r_0,s\down s_0}\DistSquare{u_r}{u_s}=\DistSquare{u_{r_0}}{u_{s_0}},
%  \end{equation}
%  so that a simple application of \cite[2.9.13]{Federer69} to the Vitali relation $V:=\{(x,x+[0,r]^2):x\in \R^2,r>0\}$ shows that 
%  the real function $(r,s)\mapsto \DistSquare{u_{r}}{u_s}$ is Lebesgue-measurable in $(0,+\infty)\times(0,+\infty)$.
%\Comment{qui come dicevo ho evitato di ricorrere alle ``malefiche'' relazioni di Vitali...}  
Given almost every $ s>0 $ and any $ h>0 $, we can choose $v=u_s$ and $ t=s+h$ in \eqref{eq:cap1:78} to get 
  \begin{equation}
    \label{eq:cap1:70}
    \frac 12\DistSquare{u_{s+h}}{u_s}+\frac \lambda 2\int_0^h
    \DistSquare{u_{s+r}}{u_s}\,\d r\le
    \int_0^h \Big(\phi(u_s)-\phi(u_{s+r})\Big)\,\d r;
  \end{equation}
  a further integration w.r.t.\ $s$ from $t_0>0$ to $t_1>t_0$ yields
  \begin{align*}
     & \int_{t_0}^{t_1} \DistSquare{u_{s+h}}{u_s}\,\d s
    + \lambda \int_0^h \int_{t_0}^{t_1}
    \DistSquare{u_{s+r}}{u_s}\,\d s \,\d r \le 
    2 \int_0^h \int_{t_0}^{t_1}\Big(\phi(u_s)-\phi(u_{s+r})\Big)\,\d
    s\,\d r
    \\ = &
    2\int_{0}^h \int_0^r
    \Big(\phi(u_{t_0+\xi})-\phi(u_{t_1+\xi})\Big)\,\d\xi\,\d r = 
    2\int_{0}^h \int_0^h
    \Big(\phi(u_{t_0+\xi})-\phi(u_{t_1+\xi})\Big)\nchi_{[0,r]}(\xi)\,\d\xi\,\d r 
    \\ = &
    2\int_{0}^h
    \Big(\phi(u_{t_0+\xi})-\phi(u_{t_1+\xi})\Big)(h-\xi)\,\d\xi
    = 
    2h^2\int_{0}^1
    \Big(\phi(u_{t_0+h\xi})-\phi(u_{t_1+h\xi})\Big)(1-\xi)\,\d\xi.
  \end{align*}
  Upon setting $x(h):=\int_{t_0}^{t_1}\DistSquare{u_{s+h}}{u_s}\,\d s$,
  we therefore get 
  \begin{equation*}
    \label{eq:cap1:84}
    x(h)+\lambda \int_0^h x(r)\,\d r\le
    h^2\,y(h),\qquad
    y(h):=2\int_{0}^1
    \Big(\phi(u_{t_0+h\xi})-\phi(u_{t_1+h\xi})\Big)(1-\xi) \,\d \xi,
  \end{equation*}
  so that Gronwall's Lemma yields
  \begin{equation}
    \label{eq:cap1:85}
    h^{-2}\,x(h)\le \rme^{\lambda^- h}\sup_{0\le \delta\le h}y(\delta).
  \end{equation}
  If $t_0,t_1$ are Lebesgue points of the map $s\mapsto \phi(u_s)$,
  then 
  \begin{equation*}
    \label{eq:cap1:86}
    \lim_{h\down0} y(h)=
    \lim_{h\down0} \sup_{0\le \delta\le h}y(\delta)
    = \phi(u_{t_0})-\phi (u_{t_1}),
    %\qquad     \sup_{0<\delta<1} y(\delta)<+\infty,
  \end{equation*}
 whence from \eqref{eq:cap1:85} 
  \begin{equation}
    \label{eq:cap1:87}
    %\sup_{0<h<1}\int_{t_0}^{t_1}\frac{\DistSquare{u_{s+h}}{u_s}}{h^2}\,\d s
    %<+\infty, \quad
    \limsup_{h\down
      0}\int_{t_0}^{t_1}\frac{\DistSquare{u_{s+h}}{u_s}}{h^2}\,\d s
    \le \phi(u_{t_0})-\phi(u_{t_1}).
  \end{equation}
  It follows that
  the restriction of the
  map $t\mapsto \phi(u_t)$ to its Lebesgue set is not increasing;
  $\phi$ being l.s.c.\ and $u$ right continuous, we get
  $\phi(u_t)\le \phi(u_{t_0})$ for every Lebesgue point $t_0<t$,
  and in particular $u_t\in \Dom\phi $ for every $t>0$.
  On the other hand,
  by choosing $ t=s+h $ and $v=u_s$ in \eqref{eq:cap1:78} (now we are allowed to do it for every $ s>0 $) and recalling the
  right continuity of $u$, we end up with
  \begin{equation*}
    \label{eq:cap2:1}
    \phi(u_t)\le \limsup_{h\down0}\frac 1{h}\int_s^{s+h}
    \phi(u_r)\,\d r\le \phi(u_s)\quad
    \forevery  t>s>0,
  \end{equation*}
 i.e.\ $ t \mapsto \phi(u_t) $ is nonincreasing in $ (0,+\infty) $. As $\phi$ is lower semicontinuous, we also deduce that 
  \begin{equation} 
    \label{eq:cap2:1bis}
    \phi(u_t)\le \phi(u_s)=
    \lim_{h\down0}\phi(u_{s+h})=
    \lim_{h\down0}\frac 1{h}\int_s^{s+h}
    \phi(u_r)\,\d r\quad
    \forevery  t>s > 0.
  \end{equation}
Moreover, \eqref{eq:cap1:87} and the triangle inequality show that for every interval $(t_0,t_1)\subset (0,+\infty)$
  there exists a constant $C_{t_0,t_1}>0$ such that 
  \begin{equation}
    \label{eq:29}
    \int_{t_0}^{t_1-h} \frac{\DistSquare{u_{s+h}}{u_s}}{h^2}\,\d s
    \le C_{t_0,t_1}\quad \forevery h\in (0,t_1-t_0).
  \end{equation} 
  Since $|\Dist {u_{s+h}}w-\Dist {u_s}w|\le \Dist{u_{s+h}}{u_s}$, by \eqref{eq:29} we can infer that the curve
  $t\mapsto \Dist{u_t}w$ belongs to the Sobolev space $W^{1,2}_{\rm loc}((0,+\infty))$ for every $w\in X$.
  Because it is also right continuous, it coincides with its continuous representative and therefore
 $u\in C^0((0,+\infty);\AmbientSpace)$. By differentiating \eqref{eq:cap1:78} and using \eqref{eq:cap2:1bis}, we finally obtain \eqref{eq:EVI}. 
  
  \medskip 

\noindent\emph{\eqref{eq:EVI} $\Rightarrow$ \eqref{eq:cap1:78bis}: solutions of \eqref{eq:EVI} according to Definition \ref{def:GFlow} satisfy \textnormal{\ICB}}. We multiply inequality \eqref{eq:EVI} by
  $\rme^{\lambda t}$ obtaining
  \begin{equation}
    \label{eq:cap2:13}
    \Urd\Big(\frac {\rme^{\lambda t}}2\DistSquare{u_t}v\Big)\le
    \rme^{\lambda t}\Big(\phi(v)-\phi(u_t)\Big).
  \end{equation}
By combining steps \eqref{eq:EVI} $\Rightarrow$ \eqref{eq:cap1:78} and \eqref{eq:cap1:78} $\Rightarrow$ \eqref{eq:EVI} we have established that $t\mapsto \phi(u_t)$ is nonincreasing, so that an integration of \eqref{eq:cap2:13} through Lemma \ref{le:monotonicity} yields \eqref{eq:cap1:78bis}.
\end{proof} 

\begin{remark}[$ \EVIname $ in $ [0,+\infty) $]\label{rem: to-zero} 
\upshape
We can actually extend the above results down to $ t=0 $. More precisely, a curve $u:[0,+\infty) \to \overline{\Dom\phi}$ is a solution of $\EVI X\DistName\phi\lambda$ according to Definition \ref{def:GFlow} (with the additional requirement that $ u \in C^0([0,+\infty);\AmbientSpace) $) if and only if it is a solution of \ICA\ down to $ s=0 $ (with the additional requirements that $ t \mapsto \DistSquare{u_t} v \in L^1_{\rm loc}([0,+\infty))$, $ t \mapsto \phi^-(u_t) \in L^1_{\rm loc}([0,+\infty)) $) and if and only if it is a solution of \ICB\ down to $ s=0 $. Accordingly, the map $ t \mapsto \phi(u_t) $ is nonincreasing in $ [0,+\infty) $. Furthermore, if $ u_0 \in \Dom \phi $ then \eqref{eq:EVI} also holds at $ t=0 $, whereas in case $ u_0 \not\in \Dom \phi $ both the l.h.s.\ and the r.h.s.\ are $ -\infty $. 

In order to prove such properties, one can recall the proof of Theorem \ref{thm:uniqueness} and exploit the already established local (in $[0,+\infty)$) boundedness from below of $ t \mapsto \phi(u_t) $. 
\end{remark}
% In order to get simpler formulae, we set
% \begin{equation}
%   \label{eq:cap2:6}
%   \EExp\lambda t:=\int_0^t \rme^{\lambda s}\,\d s=
%   \begin{cases}
%     t&\text{if }\lambda=0,\\
%     \frac{\rme^{\lambda t}-1}\lambda&\text{if }\lambda\neq0.
%   \end{cases}
% \end{equation}

\subsection[The fundamental properties of $ {\EVIname} $]{The fundamental properties of $ \boldsymbol{\EVIname} $}
The next theorem collects many useful results which illustrate some
important consequences of $\EVI\AmbientSpace\DistName\phi\lambda$,
which can be considered as the metric version of the analogous
properties
for gradient flows of $\lambda$-convex functionals in Hilbert spaces,
see e.g.~\cite{Brezis73};
partial results have also been obtained 
in the Wasserstein framework or under some joint convexity properties
of $\System$, see
\cite{Ambrosio-Gigli-Savare08,Ambrosio-Savare06}.
In particular, the energy identity \eqref{eq:Cap12:24} plays an important role in
\cite{Ambrosio-Gigli13,Ambrosio-Gigli-Savare14b}.

\begin{theorem}[Properties of $\EVIname_\lambda $] 
  \label{thm:main1}
  Let $u, u^1,u^2\in C^0([0,+\infty);\AmbientSpace)$ be solutions of
  $\EVI\AmbientSpace\DistName\phi\lambda$. Then the following claims hold.
  \begin{description}
  
  \item[$\boldsymbol \lambda$-contraction and uniqueness]
    \begin{equation}
    \label{eq:cap1:81}
    \Dist{u^1_t}{u^2_t}\le \rme^{-\lambda (t-s)}
    \Dist{u^1_s}{u^2_s}\quad
    \forevery 0\le s<t<+\infty.
  \end{equation}
  In particular, for every $u_0\in \overline{\Dom\phi }$
  there is at most one solution $u$ of $\EVI X\DistName\phi\lambda$ satisfying
  the initial condition $\lim_{t\down0}u_t=u_0$.
  
\item[Regularizing effects]
  \begin{gather}
 \text{$u$ is \emph{locally Lipschitz} in $(0,+\infty)$ and
   $u_t\in \DomainSlope\phi \subset \Dom\phi $
   for every $t>0$}; \nonumber \label{eq: c24} \\
     \text{the map $t\in[0,+\infty) \mapsto \phi(u_t)$ is
       nonincreasing and (locally semi- in $(0,+\infty)$, if $\lambda<0$) convex};
     \label{eq:Cap1:58}\\
     \text{the map $t\in[0,+\infty)\mapsto \rme^{\lambda t}
       \MetricSlope\phi{u_t}$ is nonincreasing and right continuous}. \label{eq:cap2:3}
   \end{gather}
   
   \item[\emph{A priori} estimates] for every $v\in \Dom\phi $, $ t > 0 $ 
    \begin{equation}
        \label{eq:2}
        \frac {\rme^{\lambda t}}2\DistSquare{u_t}v+
        \sfE\lambda t\Big(\phi(u_t)-\phi(v)\Big)+\frac {\big(\sfE \lambda t\big)^2}2\MetricSlopeSquare\phi{u_t}\le
        \frac 12\DistSquare{u_0}v ,
      \end{equation}
%    \begin{equation}
%      \label{eq:Cap12:111}
%      \phi(u_t) \le
%      \phi(v)+
%      \frac{1}{2 \sfE\lambda t}\DistSquare{u_0}{v},\\
%      %\quad\text{for every } v\in \Dom\phi ,
%    \end{equation}
    and for every $ v \in \Dom{\GSlopeE\lambda\phi} $
     \begin{equation}
      \label{eq:Cap1:38}
      \MetricSlopeSquare\phi{u_t} \le
      \frac{1}{2\rme^{\lambda t}-1} {\GSlopeSquare\lambda\phi v} +
      \frac1{(\sfE\lambda t)^2}\DistSquare{u_0}v\quad
      % \forevery\, v\in \DomainSlope\phi ,\
      \text{provided }-\lambda t<\log 2.
      % \rme^{-2\lambda^- t}\MetricSlopeSquare\phi{u_t}\le
%       \MetricSlopeSquare\phi v+
%       \frac1{t^2}\DistSquare{u_0}v-\frac{\lambda}{t}
%       \Big(\frac12\DistSquare{u_t}v+
%       \frac1{t}\int_0^t \DistSquare{u_s}v\,ds\Big)
    \end{equation}
    % \begin{gather}
%       \label{eq:cap2:12}
%       %\begin{aligned}
%         \DistSquare{u_t}{u_o}%&
%         \le \Big(\DistSquare{u_0}{u_o}+
%         2\sfE{\lambda+\lambda_o}t\big(\phi(u_o)-m_o\big)
%         \Big)\rme^{-(\lambda+\lambda_o) t}
%         %&%\text{if }&
%         % \lambda_o<0\text{ in }\eqref{eq:cap1:13};
%         % \\
% %         \label{eq:cap2:12bis}
% %         \DistSquare{u_t}{u_0}%&
% %         \le t\big(\phi(u_0)-m_o\big)\quad
% %         % &
% %         \text{if }
% %         % &
% %         \lambda_o=0\text{ in }\eqref{eq:cap1:13}.
% %       %\end{aligned}
%       \end{gather}

    \item[Asymptotic expansions as $ \boldsymbol{ t\downarrow0 }$] 
      if $u_0\in \DomainSlope\phi $ and $\lambda\le 0$ then for every $v\in \Dom\phi $, $t\ge0$ 
      \begin{equation}
        \label{eq:3}
        \frac {\rme^{2\lambda t}}2\DistSquare{u_t}v-
        \frac 12\DistSquare{u_0}v\le \sfE{2\lambda} t \Big(\phi(v)-\phi(u_0) \Big)+
        \frac {t^2}2\MetricSlopeSquare\phi{u_0}.
      \end{equation}
      The functional $ x \mapsto \phi(x) - \frac{\lambda}{2} \DistSquare x o $ is linearly bounded from below for all $ o \in \Dom \phi $, and for every $ u_0 , v  \in \Dom\phi $, $ t \ge 0 $ there holds 
        \begin{equation}
        \label{eq:3-lessreg}
        % \begin{gathered}
        \frac {\rme^{\lambda t}}2\DistSquare{u_t}v-
        \frac 12\DistSquare{u_0} v \le  \sfE{\lambda} t  \Big(\phi(v)-\phi(u_0)\Big) + 2 \rme^{-\lambda t} \int_0^t \rme^{2\lambda s} \Big( \phi(u_0) - \phi_{\sfE{\lambda}{s}}(u_0) \Big) \d s .   
        %\end{gathered}
      \end{equation}
%       \item[$\lambda$-regularity of $\phi$]
%     The functional $\phi$ satisfies the $\lambda$-regularity condition %(see Definition \ref{def:Reglambda})
%     \begin{equation}
%       \label{eq:4}
%       \MetricSlope\phi u=\GSlope\lambda\phi u\quad\text{for every }u\in  D\cap \Dom\phi .
%     \end{equation}

  \item[Right, left limits and energy identity]
  for every $t>0$ the \emph{right limits}
  \begin{equation*}
    \label{eq:Cap12:23}
    |\dot u_{t+}|:=\lim_{h\down0}\frac{\Dist{u_{t+h}}{u_t}}h,\quad
    \frac{\d}{\d t}\phi(u_{t+}):=
    \lim_{h\down0}\frac{\phi(u_{t+h})-\phi(u_t)}h
    % \frac 12\DistSquare{u_t}v:=
%     \lim_{h\down0}\frac 1{2h}
%     \Big(\DistSquare{u_{t+h}}v-
%     \DistSquare{u_{t}}v\Big)
  \end{equation*}
  exist finite, satisfy
  \begin{equation}
      \label{eq:Cap12:24}
      \frac{\d}{\d t}\phi(u_{t+})=-|\dot u_{t+}|^2=
      -\MetricSlopeSquare\phi{u_t}=
      -\lMetricSlopeSquare\phi{u_t}=
      -\GSlopeSquare\lambda\phi{u_t}\quad
      \forevery  t>0
    \end{equation}
    and define a right-continuous map. If in addition $ u_0 \in \DomainSlope\phi $ the same holds at $ t=0 $, otherwise all the terms in \eqref{eq:Cap12:24} are $ -\infty $ at $ t=0 $.
  
Moreover, there exists an at-most-countable set $\mathcal C\subset (0,+\infty)$ such that the analogous identities for the \emph{left limits} hold in $ (0,+\infty)\setminus\mathcal C$, while in $ \mathcal{C} $ the latter hold upon replacing $\MetricSlope\phi{u_t} $, $\lMetricSlope{\phi}{u_t}$ and $ \GSlope\lambda\phi{u_t} $ with $  \lim_{h\downarrow 0} \MetricSlope\phi{u_{t-h}} $, $  \lim_{h\downarrow 0} \lMetricSlope\phi{u_{t-h}} $ and $ \lim_{h\downarrow 0} \GSlope\lambda\phi{u_{t-h}} $, respectively. 
% \begin{equation}
%       \label{eq:Cap1:59}
%       \left\{
%       \begin{aligned}
%        & \vv_{t-}=\lim_{h\down0}\frac{\ShortMap{u_t}{u_{t-h}}-\Identity}h
%         =-\MinSelT\phi(u_t),\\
%        &  \frac{d}{dt_-}\phi(u_t)=
%         -\MetricSlopeSquare\phi{u_t}.
%       \end{aligned}
%        \right.`
%       \quad\forevery\, t\in (0,+\infty)\setminus\mathcal C.
%     \end{equation}
%   \item[Energy identity.]
%     The map $t\mapsto \phi(u_t)$ is locally absolutely
%     continuous and the energy identity holds:
%     \begin{equation}
%       \label{eq:Cap12:10bis}
%       \frac d{dt}\phi(u_t)=-\MetricSlope\phi{u_t}
%       |\dot u_t|=-\MetricSlopeSquare\phi{u_t}=-|\dot u_t|^2
%       \quad\text{for a.e.\ }t\in >0
%     \end{equation}

  \item[Asymptotic behaviour as $ \boldsymbol{t \to +\infty} $] 
    if $\lambda>0$ and $ \phi $ has complete sublevels, then
    it admits a unique minimum point $\bar u$
    and for every  $t > t_0\ge0$ we have:
    \begin{subequations} 
      \begin{gather}
        \label{eq:Cap1:42}
        \frac\lambda 2\DistSquare{u_t}{\bar u}\le
        \phi(u_t)-\phi(\bar u)\le \frac1{2\lambda}
        |\partial\phi|^2(u_t),
        %\quad\forevery\, t\ge0,
        \\
        \label{eq:Cap12:18}
        \Dist{u_t}{\bar u}\le
        \Dist{u_{t_0}}{\bar u} \, \rme^{-\lambda
          (t-t_0)},
        \\ 
        \label{eq:Cap1:40}
        \phi(u_t)-\phi(\bar u)\le
        \Big(\phi(u_{t_0})-\phi(\bar u)\Big)\rme^{-2\lambda(t-t_0)},
        \quad
        \phi(u_t)-\phi(\bar u)\le
        \frac{1}{2\EExp\lambda{t-t_0}}\DistSquare{u_{t_0}}{\bar u},
        \\
        \label{eq:Cap1:41}\MetricSlope\phi{u_t}\le
        \MetricSlope\phi{u_{t_0}}\,\rme^{-\lambda(t-t_0)},\quad
        \MetricSlope\phi{u_t}\le
        \frac1{\EExp\lambda {t-t_0}}\Dist{u_{t_0}}{\bar u}.
      \end{gather}
    \end{subequations} 
  If $\lambda=0$ and $\bar u$ is a minimum point of $\phi$ (if any), then 
  \begin{equation} 
    \label{eq:Cap12:19}
    \begin{gathered}
      \MetricSlope\phi{u_t}\le \frac{\Dist
        {u_0}{\bar u}}t,
      \quad
      \phi(u_t)-\phi(\bar u)\le \frac{
        \DistSquare{u_0}{\bar u}}{2t} \qquad \forevery t>0,
      \\
      \text{the map} \ t \mapsto \Dist{u_t}{\bar u} \ \text{is nonincreasing}.
  \end{gathered}
\end{equation}
If in addition $ \phi $ has \emph{compact} sublevels, then $ u_t $ converges to a minimum point of $ \phi $ as $ t\to+\infty $.

  \item[Stability]
    if $ \{u^n\} \subset C^0([0,+\infty);\AmbientSpace)$ is a sequence of solutions of $\EVI\AmbientSpace\DistName\phi\lambda$
    such that $\lim_{n \to \infty}u^n_0=u_0$, then
    \begin{align}\label{eq:200}
      \lim_{n \to \infty} u^n_t &= u_t \quad \forevery t \ge 0 , \\
      \label{eq:cap2:16}
      \lim_{n \to  \infty}\phi(u_t^n)&=\phi(u_t) \quad \forevery t>0,\\
      \label{eq:80}
      % \qquad
      \lim_{n\to\infty}\MetricSlope{\phi}{u_t^n}&=\MetricSlope{\phi}{u_t}
      \quad \forevery t\in (0,+\infty)\setminus\mathcal C.
    \end{align} 
    Moreover, \eqref{eq:200}, \eqref{eq:cap2:16}, \eqref{eq:80} occur \emph{locally uniformly} in $ [0,+\infty) $, $ (0,+\infty) $, $ (0,+\infty)\setminus\mathcal C $, respectively, and 
    \begin{equation}\label{eq:80-bis}
    \MetricSlope{\phi}{u_t} \le \liminf_{n\to\infty} \MetricSlope{\phi}{u_t^n} \le \limsup_{n\to\infty} \MetricSlope{\phi}{u_t^n} \le \lim_{s \up t} \MetricSlope{\phi}{u_s} \quad \forevery t \in \mathcal{C} .
    \end{equation}
  \end{description}
\end{theorem}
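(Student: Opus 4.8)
The whole theorem is driven by the two integral reformulations of Theorem~\ref{thm:uniqueness}, above all the exponential one \eqref{eq:cap1:78bis}, together with the monotonicity of $t\mapsto\phi(u_t)$, the difference-quotient bound \eqref{eq:29}, and the continuity already recorded there. I would treat $\lambda$-contraction first, since it needs nothing else. Freezing the test point, \eqref{eq:EVI} written for $u^1$ against $v=u^2_t$ and for $u^2$ against $v=u^1_t$ controls the two partial upper right derivatives of $(s,\sigma)\mapsto\DistSquare{u^1_s}{u^2_\sigma}$; summing them on the diagonal $s=\sigma=t$ --- legitimate by Lemma~\ref{le:monotonicity} --- the energy terms cancel and leave $\Urd\DistSquare{u^1_t}{u^2_t}\le-2\lambda\DistSquare{u^1_t}{u^2_t}$, whence \eqref{eq:cap1:81} by Gronwall. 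Uniqueness and the first stability statement \eqref{eq:200}, with its local uniformity, are then immediate by letting $s\downarrow0$.

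Next the regularizing effects. Plugging an arbitrary $v\in\Dom\phi$ into the pointwise \eqref{eq:EVI} and bounding $\tfrac12\Urd\DistSquare{u_t}v\ge-\Dist{u_t}v\,|\dot u_{t+}|$ gives $\phi(u_t)-\phi(v)+\tfrac\lambda2\DistSquare{u_t}v\le|\dot u_{t+}|\,\Dist{u_t}v$, i.e.\ $\MetricSlope\phi{u_t}\le\GSlope\lambda\phi{u_t}\le|\dot u_{t+}|<\infty$ by \eqref{eq:cap1:89} and \eqref{eq:cap1:12}; hence $u_t\in\DomainSlope\phi$ for every $t>0$. The bound \eqref{eq:29} promotes $t\mapsto\Dist{u_t}w$ to $W^{1,2}_{\rm loc}$, and the finite slope upgrades this to local Lipschitz continuity, so $u$ is absolutely continuous and $|\dot u_t|$ exists a.e. Applying \eqref{eq:cap1:81} to the time-shifted solutions $r\mapsto u_{t+r}$ and $r\mapsto u_{t+\delta+r}$, both of which solve the EVI by the semigroup property \eqref{eq:18}, yields $\rme^{\lambda(t+r)}|\dot u_{(t+r)+}|\le\rme^{\lambda t}|\dot u_{t+}|$, the monotonicity underlying \eqref{eq:cap2:3}; an analogous comparison gives convexity of $t\mapsto\phi(u_t)$ \eqref{eq:Cap1:58}. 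Right continuity and the one-sided limits of the slope then follow because a monotone function has at most countably many jumps, which is precisely the exceptional set $\mathcal C$.

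The technical heart is the energy identity \eqref{eq:Cap12:24}, equivalently the sharp a priori estimate \eqref{eq:2}. Testing \eqref{eq:cap1:78bis} with $v=u_s$ and dividing by $(t-s)^2$ as $t\downarrow s$ gives $\tfrac12|\dot u_{t+}|^2\le-\tfrac{\d}{\d t}\phi(u_{t+})$, while the metric slope, being a weak upper gradient, gives the reverse $-\tfrac{\d}{\d t}\phi(u_t)\le\MetricSlope\phi{u_t}\,|\dot u_t|\le\GSlope\lambda\phi{u_t}\,|\dot u_t|$. These one-sided comparisons individually lose a factor two; the real point is to upgrade them to the sharp energy-dissipation balance $-\tfrac{\d}{\d t}\phi(u_{t+})\ge\tfrac12|\dot u_{t+}|^2+\tfrac12\GSlopeSquare\lambda\phi{u_t}$, which identifies $u$ as a curve of maximal slope. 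I would extract this from \eqref{eq:cap1:78bis} by testing against $u_t$ and against near-optimal points for $\GSlope\lambda\phi{u_t}$ simultaneously, so that the distance-dissipation and the slope-dissipation add up with the correct constant; combined with the upper-gradient inequality and Young's inequality this forces $-\tfrac{\d}{\d t}\phi(u_{t+})=|\dot u_{t+}|^2=\MetricSlopeSquare\phi{u_t}=\GSlopeSquare\lambda\phi{u_t}$. The remaining identity $\MetricSlope\phi{u_t}=\lMetricSlope\phi{u_t}$ follows once $\Dom\phi$ is recognized as a length subset, via Lemma~\ref{le:pedante} and the slope characterization of Proposition~\ref{lem:app-conv-slope}. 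With the identity and the monotonicity \eqref{eq:cap2:3} in hand, \eqref{eq:2} comes by integrating $\tfrac{\d}{\d s}\big(\tfrac{\rme^{\lambda s}}2\DistSquare{u_s}v+\sfE\lambda s(\phi(u_s)-\phi(v))\big)\le-\sfE\lambda s\,\MetricSlopeSquare\phi{u_s}$ against the weight produced by $\MetricSlope\phi{u_s}\ge\rme^{\lambda(t-s)}\MetricSlope\phi{u_t}$; the bounds \eqref{eq:Cap1:38}, \eqref{eq:3} and \eqref{eq:3-lessreg} are variants of the same integration, the last one invoking the resolvent estimate \eqref{eq:85} and the linear lower bound of Theorem~\ref{le:bfb}.

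Finally the long-time behaviour and the remaining stability claims are consequences of these estimates. For $\lambda>0$, completeness of the sublevels together with the quadratic lower bound of Theorem~\ref{le:bfb} and the contraction applied to constant solutions produce a unique minimizer $\bar u$; inserting $v=\bar u$ into \eqref{eq:2}, \eqref{eq:Cap1:38} and the energy identity yields the exponential rates \eqref{eq:Cap1:42}--\eqref{eq:Cap1:41}, while the degenerate case $\lambda=0$ gives the $1/t$ rates \eqref{eq:Cap12:19}, with convergence of $u_t$ under compactness. For stability, \eqref{eq:cap1:81} already gives \eqref{eq:200}; the energy convergence \eqref{eq:cap2:16} and the slope convergence \eqref{eq:80}, \eqref{eq:80-bis} then follow by combining lower semicontinuity of $\phi$ and of $\GSlope\lambda\phi{\cdot}$, the established identity $\MetricSlope\phi{u_t}=\GSlope\lambda\phi{u_t}$, and a Helly-type passage to the limit on the nonincreasing maps $t\mapsto\rme^{\lambda t}\MetricSlope\phi{u^n_t}$, whose limiting jump set is exactly $\mathcal C$. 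The genuine obstacle throughout is the sharp energy-dissipation balance of the third paragraph: every soft one-sided estimate leaves a factor-two gap, and closing it --- while simultaneously controlling the one-sided limits of the slope and the countable set $\mathcal C$ --- is where the real work lies.
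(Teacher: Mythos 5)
Your first step---the $\lambda$-contraction---contains a genuine gap. You sum the two partial upper right Dini derivatives of $(s,\sigma)\mapsto\DistSquare{u^1_s}{u^2_\sigma}$ on the diagonal, but for merely continuous curves this is not a valid operation: in general $\Urd f(t,t)$ is \emph{not} bounded by the sum of the partial upper right derivatives (take $f(s,\sigma)=-|s-\sigma|$, whose partial Dini derivatives on the diagonal both equal $-1$ while the diagonal function is constant), and Lemma \ref{le:monotonicity}, which concerns one-variable functions, does not legitimize it. Nor can you rescue the step via absolute continuity of $u^1,u^2$: in this theorem local Lipschitz regularity is itself \emph{derived from} contraction (the paper obtains \eqref{eq:72} by applying \eqref{eq:cap1:81} to time-shifted solutions and averaging with \eqref{eq:29}), so that route is circular. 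The paper's actual device is the integral characterization \eqref{eq:cap1:78bis}, applied once with $u=u^1$, $v=u^2_t$ and once with $u=u^2$, $v=u^1_s$, summed so that the mixed term $\DistSquare{u^1_s}{u^2_t}$ cancels, then symmetrized to reach \eqref{eq:cap2:17}; only the resulting \emph{one-variable} function of $t$ is differentiated, which is precisely what Lemma \ref{le:monotonicity} covers. This rigorous doubling argument is the content your shortcut skips.

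The second problem is a repeated hypothesis mismatch: Theorem \ref{thm:main1} assumes only that the given curves solve \eqref{eq:EVI}; it does not assume a full gradient flow exists for all data, nor that $\Dom\phi$ is an (approximate) length subset, nor that sublevels are complete. You therefore cannot invoke Lemma \ref{le:pedante} and Proposition \ref{lem:app-conv-slope} (which need approximate $\lambda$-convexity and complete sublevels, themselves obtained via Theorem \ref{thm:Daneri} from a flow on an approximate length subset) to conclude $\MetricSlope\phi{u_t}=\lMetricSlope\phi{u_t}$; the paper proves this identity directly \emph{along the trajectory}, from $\lDist{u_t}{u_{t+h}}\le\int_t^{t+h}\MetricSlope\phi{u_s}\,\d s$ and the $\limsup$ computation \eqref{eq:LLL}. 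The same objection hits your uses of Theorem \ref{le:bfb}: for \eqref{eq:3-lessreg} the paper derives linear boundedness of $x\mapsto\phi(x)-\frac\lambda2\DistSquare{x}{o}$ from \eqref{eq:cap1:89} together with the fact that the energy identity makes $\Dom{\GSlopeE\lambda\phi}$ nonempty (and it does not use \eqref{eq:85}); for $\lambda>0$ it constructs the minimizer by showing $(u_k)_{k}$ is Cauchy in a complete sublevel and that the constant curve at the limit solves \eqref{eq:EVI}---your ``contraction against constant solutions'' presupposes the minimizer you are trying to produce. By contrast, your treatment of the energy identity, though packaged as closing a ``factor-two gap'' with Young's inequality, is essentially sound and close in substance to the paper, which closes the loop without any Young step through the chain $\GSlope\lambda\phi{u_s}\le\delta_-(s)\le\delta_+(s)\le\MetricSlope\phi{u_s}\le\GSlope\lambda\phi{u_s}$ (see \eqref{eq:Cap1:31}, \eqref{eq:chain-ineq}, \eqref{eq:Cap1:34}); note also that at that stage only $\delta_-(t)$, not $|\dot u_{t+}|$, is available in your lower bound for $\Urd\DistSquare{u_t}{v}$, since existence and finiteness of the right metric derivative are among the things being proved.
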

\begin{proof}
  Let us consider each statement in turn. 
  
  \medskip
  
  \paragraph{\emph{$\boldsymbol \lambda$-contraction}} 
  Since $u^1,u^2$ are solutions of \eqref{eq:EVI}, 
  by applying \eqref{eq:cap1:78bis} with $u_t = u^1_t$ and $v=u^2_t$,
  after a further multiplication by $\rme^{\lambda(t-s)}$ we get
  \begin{equation}
    \label{eq:cap2:14}
     \frac {\rme^{2\lambda(t-s)}}2\DistSquare{u^1_t}{u^2_t}-
    \frac {\rme^{\lambda(t-s)}}2\DistSquare{u^1_s}{u^2_t}\le
    \rme^{\lambda(t-s)}\sfE\lambda{t-s}\Big(\phi(u^2_t)-\phi(u^1_t)\Big);
  \end{equation}
  analogously, by applying \eqref{eq:cap1:78bis} 
  with $u_t =u^2_t$ and $v =u^1_s$, we obtain
  \begin{equation}
    \label{eq:cap2:15}
    \frac {\rme^{\lambda(t-s)}}2\DistSquare{u^2_t}{u^1_s}-
    \frac 12\DistSquare{u^2_s}{u^1_s}\le
    \sfE\lambda{t-s}\Big(\phi(u^1_s)-\phi(u^2_t)\Big) .
  \end{equation}
  The sum of \eqref{eq:cap2:14} and \eqref{eq:cap2:15} yields 
  \begin{displaymath}
    \frac {\rme^{2\lambda(t-s)}}2\DistSquare{u^1_t}{u^2_t}-
    \frac 12\DistSquare{u^1_s}{u^2_s}\le
    \Big(\rme^{\lambda(t-s)}-1\Big)\sfE\lambda{t-s}
    \Big(\phi(u^2_t)-\phi(u^1_t)\Big)+
    \sfE\lambda{t-s}\Big(\phi(u^1_s)-\phi(u^1_t)\Big)
  \end{displaymath}
  and therefore, upon inverting the roles of $u^1, u^2$, multiplying by $ \rme^{2\lambda s} $ and again summing up, 
  \begin{equation}
    \label{eq:cap2:17}
    \rme^{2\lambda t}\DistSquare{u^1_t}{u^2_t}-
    \rme^{2\lambda s}\DistSquare{u^1_s}{u^2_s}\le
    \rme^{2\lambda s}
    \sfE\lambda{t-s}\Big(\phi(u^1_s)-\phi(u^1_t)
    +
    \phi(u^2_s)-\phi(u^2_t)\Big) .
  \end{equation}
  Dividing \eqref{eq:cap2:17} by $t-s$ and passing to the limit as $t\downarrow s$ (using the lower semicontinuity of $ t \mapsto \phi(u_t) $) we end up with 
% \cite[Lemma 4.3.4]{Ambrosio-Gigli-Savare05} we obtain that
%   the map $t\mapsto \DistSquare{u^1(t)}{u^2(t)}$ is
%   locally absolutely continuous in $(0,+\infty)$ and satisfies
%   \begin{align*}
%     \frac \d{\d t}\DistSquare{u^1(t)}{u^2(t)}&\le
%     \limsup_{h\down0}\frac{\DistSquare{u^1(t)}{u^2(t)}-
%       \DistSquare{u^1(t-h)}{u^2(t)}}h\\&+
%     \limsup_{h\down0}\frac{\DistSquare{u^1(t)}{u^2(t+h)}-
%       \DistSquare{u^1(t)}{u^2(t)}}h
%   \end{align*}
%   for $\Leb 1$-a.e. $t>0$. Thanks to \eqref{eq:EVI} we get
%   \begin{align*}
%     \limsup_{h\down0}\frac{\DistSquare{u^1(t)}{u^2(t)}-
%       \DistSquare{u^1(t-h)}{u^2(t)}}{2h}&\le
%     \phi(u^2(t))-\phi(u^1(t))-\frac
%     \lambda2\DistSquare{u^1(t)}{u^2(t)}\\
%      \limsup_{h\down0}\frac{\DistSquare{u^1(t)}{u^2(t+h)}-
%        \DistSquare{u^1(t)}{u^2(t)}}{2h}
%      &\le
%     \phi(u^1(t))-\phi(u^2(t))-\frac
%     \lambda2\DistSquare{u^1(t)}{u^2(t)}
%   \end{align*}
%   Summing up these two contributions, we finally obtain
  \begin{displaymath}
    \Urd\Big(\rme^{2\lambda t}\DistSquare{u^1_t}{u^2_t}\Big)\le 0
      \quad
      \text{for every }t>0,
  \end{displaymath}
  which yields \eqref{eq:cap1:81} by Lemma \ref{le:monotonicity} (recall the continuity of $ u^1_t,u^2_t $ down to $ t=0 $). 
  
  \medskip
  \paragraph{\emph{Regularizing effects I: solutions are locally Lipschitz}}
  By choosing $u^1_t=u_t$ and $u^2_t=u_{t+h}$ in \eqref{eq:cap1:81} (note that for every $h>0$ the curve $t\mapsto u_{t+h}$ is still a solution of $\EVI\AmbientSpace\DistName\phi\lambda$), we find that 
  \begin{displaymath}
    \text{the map} \
    t\mapsto \rme^{2\lambda t}\frac{\DistSquare{u_{t+h}}{u_t}}{h^2}
    \ \text{is nonincreasing for every } h>0,
  \end{displaymath}
 which together with \eqref{eq:29} yields for every $t>3t_0$ and $0<h<t_0$ (it suffices to consider the case $\lambda\le 0$) 
  \begin{equation}
    \label{eq:72}
    \rme^{2\lambda t}\frac{\DistSquare{u_{t+h}}{u_t}}{h^2}\le
    \frac{1}{t_0}\int_{t_0}^{3t_0-h} \frac{\DistSquare{u_{s+h}}{u_s}}{h^2}\,\d s\le
    \frac{C_{t_0,3t_0}}{t_0}. 
  \end{equation} 
Hence \eqref{eq:72} ensures that $ u $ is locally Lipschitz in $ (0,+\infty) $. 
  % We integrate
%   in the interval $(0,t)$
%   the following form of  \eqref{eq:EVI}
%   \begin{equation}
%     \label{eq:Cap1:36}
%     \frac \d{\d s}\frac{\rme^{\lambda s}}2\DistSquare{u_s}v
%     +\rme^{\lambda s}\phi(u_s)\le \rme^{\lambda s}\phi(v);
%   \end{equation}
%   recalling that $t\mapsto \phi(u_t)$ is
%   nonincreasing we get
%   \begin{displaymath}
%     \EExp\lambda t\phi(u_t)\le
%     \int_0^t \rme^{\lambda s}\phi(v)\,ds-
%     \int_0^t\frac{d}{ds}\frac{\rme^{\lambda s}}2\DistSquare{u_s}v\,ds
%     \le
%     \EExp\lambda t\phi(v)+
%     \frac 12\DistSquare{u_0}v.
%   \end{displaymath}
%  \paragraph{\emph{$\lambda$-contractive semigroup.}}

\medskip
  \paragraph{\emph{Right limits, energy identity and regularizing effects II at $ \boldsymbol{t>0} $}}
  By reasoning as above, estimate \eqref{eq:cap1:81} yields 
  \begin{equation} 
    \label{eq:Cap1:24}
    \Dist{u_{t+h}}{u_t}\le \rme^{-\lambda (t-t_0)}
    \Dist{u_{t_0+h}}{u_{t_0}}\quad
    \forevery\, 0\le t_0<t<+\infty.
  \end{equation}
  If we set 
  \begin{equation*}
    \label{eq:Cap1:27}
    \delta_+(t):=\limsup_{h\down0}\frac{\Dist{u_{t+h}}{u_t}}h,\quad
    \delta_-(t):=\liminf_{h\down0}\frac{\Dist{u_{t+h}}{u_t}}h
    \qquad \forevery t\ge0, 
  \end{equation*} 
  then from \eqref{eq:Cap1:24} we deduce that
  \begin{equation} 
    \label{eq:Cap1:28}
    \text{the map} \
    t \in[0,+\infty) \mapsto \rme^{\lambda t}\delta_+(t) \ \text{is nonincreasing}.
  \end{equation} 
  We denote by $\mathcal I$
  the subset of $(0,+\infty)$ where
  the metric derivative \eqref{eq:cap1:65} of $u_t$ exists finite.
 As $ u $ is locally Lipschitz, by Theorem \ref{thm:metric_derivative} we know that
  $\Leb 1\big((0,+\infty)\setminus 
  \mathcal I\big)=0$ and
  \begin{equation}
    \label{eq:Cap1:25}
    \delta_-(t)=\delta_+(t)=\MetricDerivative u(t)<+\infty
    \quad\forevery t\in \mathcal I;
  \end{equation}
  in particular,
  \eqref{eq:Cap1:28} and \eqref{eq:Cap1:25} guarantee that 
  $\delta_+(t) \le M_{t_0} < +\infty$ for every $t>t_0>0 $. We aim at showing that in fact 
  \begin{equation}
    \label{eq:Cap1:35}
    \delta_-(t)=\delta_+(t)=|\partial\phi|(u_t)=\GSlope\lambda\phi {u_t}\quad\forevery t>0.
  \end{equation} 
  Dividing \eqref{eq:cap1:78} by $t-s$, for every $v\in \Dom\phi $
  and $ 0 < s <t $ we get 
  \begin{equation*}
    \label{eq:cap2:2}
    -\frac{\Dist{u_{t}}{u_s}}{2(t-s)}
    \Big(\Dist{u_t}v+
    \Dist{u_{s}}v\Big)+
    \frac 1{t-s}\int_s^{t}
    \Big(\phi(u_{r})+
    \frac\lambda{2}\DistSquare{u_{r}}v\Big)
    \,\d r
    \le
    \phi(v).
  \end{equation*}
  Passing to the limit as $t\down s$
  and recalling \eqref{eq:cap2:1bis}, we obtain 
  \begin{equation*}
    \label{eq:Cap1:30}
    \phi(v)\ge \phi(u_s)
    -\delta_-(s)\Dist{u_s}v+
    \frac\lambda{2}\DistSquare{u_{s}}v\quad
    \forevery v\in \Dom\phi, \ s > 0, 
  \end{equation*}
  which upon recalling \eqref{eq:cap1:89} yields  
  \begin{equation} 
    \label{eq:Cap1:31}
    |\partial\phi|(u_s)\le
    \GSlope\lambda\phi{u_s}\le \delta_-(s)\le \delta_+(s)
    \quad\forevery  s > 0.
  \end{equation} 
In particular, $u_t\in \DomainSlope\phi$ for all $ t>0 $. Now let us fix $ s>0 $. We know that $ \delta_+(s)<+\infty $ and from \eqref{eq:Cap1:31} we can assume with no loss of generality that $\delta_+(s)>0$; 
  so, dividing \eqref{eq:cap1:70} by $h^2$ and rescaling
  the integrand, we infer that for all $ \varepsilon>0 $ and $ h $ small enough there holds
  \begin{equation}\label{eq:chain-ineq} 
  \begin{aligned}
    \frac1{2h^2}\DistSquare{u_{s+h}}{u_s}
    &\le
    \frac 1h\int_0^{1}
    \Big(\phi(u_s)-\phi(u_{s+h\rho})
    -
    \frac\lambda{2}\DistSquare{u_{s+h\rho}}{u_s}\Big)
    \d\rho
    \\&\le
    \int_0^1
    \frac{(\phi(u_s)-\phi(u_{s+h\rho}))^+}{\Dist{u_{s+h\rho}}{u_s}}\frac {\Dist{u_{s+h\rho}}{u_s}} {h\rho}\rho\,\d\rho-
    \frac\lambda2\int_0^1 \frac{\DistSquare{u_{s+h\rho}}{u_s}}{h}\,\d\rho \\
    &  \le
    \int_0^1
    \big( \MetricSlope \phi {u_s} + \varepsilon \big) \big( \delta_+(s) + \varepsilon \big) \rho \,\d\rho  +
    \frac{\lambda^-}2 \, \varepsilon \int_0^1   \left( \delta_+(s) + \varepsilon \right) \rho \,\d\rho .
  \end{aligned}
  \end{equation}
  Letting $h\down 0$ first and $ \varepsilon \downarrow 0 $ then, we therefore obtain 
  \begin{equation}
    \label{eq:Cap1:34}
    \frac12\delta^2_+(s)\le
    \frac12\MetricSlope\phi{u_s}\,\delta_+(s)\quad
    \forevery s > 0,
  \end{equation} 
  which yields \eqref{eq:Cap1:35} in view of \eqref{eq:Cap1:31} and \eqref{eq:cap2:3} (in the open interval $ (0,+\infty) $) in view of \eqref{eq:Cap1:28}: the right continuity of
  $t\mapsto \rme^{\lambda t}\MetricSlope\phi{u_t}$
  just follows from the continuity of $u_t$ and
  the lower semicontinuity of the global $ \lambda $-slope of $\phi$. 

  Since the map $t\mapsto\MetricSlope\phi{u_t}$
  is locally bounded in $(0,+\infty)$, inequality \eqref{eq:cap1:89} together with the fact that $ u $ is also locally Lipschitz show
  that the map $t\mapsto \phi(u_t)$ is in turn locally Lipschitz continuous.
  Hence by combining \eqref{eq:cap1:87}, \eqref{eq:cap1:89}, \eqref{eq:Cap1:25} and \eqref{eq:Cap1:35} we get:
  \begin{displaymath}
% \int_{t_0}^{t_1}\rme^{2\lambda (t-t_0)}
%     |\dot u|^2(t_0+)\,\d t
%     \le
    \int_{t_0}^{t_1} |\dot u_{t+}|^2 \, \d t\le
    \phi(u_{t_0})-
    \phi(u_{t_1})\le
    \MetricSlope\phi {u_{t_0}}\Dist{u_{t_0}}{u_{t_1}}
    -\frac\lambda2\DistSquare{u_{t_0}}{u_{t_1}} \quad \forevery t_1>t_0>0 ; 
  \end{displaymath}
  dividing by $t_1-t_0$ and passing to the limit
  as $t_1\down t_0$, since $t\mapsto |\dot u_{t+}|=
  \MetricSlope\phi{u_t}$ is
  right continuous
  we obtain
  \begin{equation}
    \label{eq:cap2:5}
    |\dot u_{t_0+}|^2 = \lim_{t_1\down t_0} 
    \frac{1}{t_1-t_0} \int_{t_0}^{t_1} |\dot u_{t+}|^2 \, \d t \le
    -\frac {\d}{\d t}\phi(u_{t})\restr{t=t_0+}
    \le
    \MetricSlope\phi{u_{t_0}}|\dot u_{t_0+}|,
  \end{equation}
which yields the first two equalities of \eqref{eq:Cap12:24}. Hence if $ \lambda \ge 0 $ the convexity of $ t \mapsto \phi(u_t) $ (at least in $ (0,+\infty) $) is just a consequence of the latter and the fact that $t\mapsto \rme^{\lambda t} \MetricSlope\phi{u_t}$ is nonincreasing. More in general, the function 
$$ t \mapsto \rme^{2\lambda t} \phi(u_t) - 2\lambda \int_0^t \rme^{2\lambda s} \phi(u_s) \, \d s $$
is convex, which yields the claim about the local semi-convexity of $
t \mapsto \phi(u_t) $ in the case $ \lambda<0 $.

Finally, in order to prove that
$\MetricSlope\phi{u_t}=\lMetricSlope\phi{u_t}$,
it is sufficient to observe that
\begin{displaymath}
  \lDist{u_t}{u_{t+h}}
  \le \int_{t}^{t+h}|\dot u_s|\,\d s = \int_{t}^{t+h}\MetricSlope\phi{u_s}\,\d s ,
\end{displaymath}
where $\lDistName$ is either the length distance induced by $\Dom\phi$ or by $\AmbientSpace$, so that if $\MetricSlope\phi{u_t}>0$
(otherwise there is nothing to prove) then
\begin{equation}\label{eq:LLL}
\begin{aligned}
 \lMetricSlope\phi{u_t}\ge &
  \limsup_{h\down0}\frac{\phi(u_t)-\phi(u_{t+h})}{\lDist{u_{t+h}}{u_t}}
  \ge
  \limsup_{h\down0}\left(\frac{\phi(u_t)-\phi(u_{t+h})}{\int_{t}^{t+h}\MetricSlope\phi{u_s}\,\d s}\right) \\
  = & \limsup_{h\down0}\left(\frac{\int_{t}^{t+h}\MetricSlopeSquare\phi{u_s}\,\d s}{\int_{t}^{t+h}\MetricSlope\phi{u_s}\,\d s}\right) 
  \stackrel{\text{H\"older}}\ge \limsup_{h \down 0} \sqrt{\frac{\int_{t}^{t+h}\MetricSlopeSquare\phi{u_s}\,\d s}{h}}  = \MetricSlope\phi{u_t}.
  \end{aligned}
\end{equation}
Since the converse inequality $\lPlainSlope\phi\le \PlainSlope\phi$ is
always true, we conclude. 
\medskip
\paragraph{\emph{Right limits, energy identity and regularizing effects II at $ \boldsymbol{t=0} $}}  
Through Remark \ref{rem: to-zero} we have already seen that $ t \mapsto \phi(u_t) $ is in fact nonincreasing down to $ t=0 $, which in particular ensures that it is continuous at $ t=0 $ as well and therefore convex in the whole $ [0,+\infty) $ if $ \lambda \ge 0 $. 

As for the energy identity \eqref{eq:Cap12:24}, let $ u_0 \not \in \DomainSlope \phi  $. By combining \eqref{eq:Cap1:28}, \eqref{eq:Cap1:35} and the lower semicontinuity of the global $\lambda$-slope, it follows that $ \delta_-(0) = \delta_+(0) = + \infty $, whence \eqref{eq:Cap1:35} also holds at $ t=0 $. If $ u_0 \not\in \Dom \phi $, it is apparent that $ \frac{\d}{\d t}\phi(u_{0+}) = -\infty $, and all the slopes at $ u_0 $ are by definition $+\infty$. On the other hand, if $ u_0 \in \Dom \phi $ the left-hand inequality in \eqref{eq:cap2:5} does hold at $ t_0=0 $, and the l.h.s.\ is $ +\infty $ since $ \lim_{t \downarrow 0} \MetricSlope{\phi}{u_t} = +\infty $. Similarly, we have that $ \lMetricSlope{\phi}{u_0} = +\infty $ because in this case \eqref{eq:LLL} still holds at $t=0$. Suppose now that $ u_0 \in \DomainSlope \phi $. In order to prove the validity of \eqref{eq:Cap12:24} at $ t=0 $ it suffices to show that $ \delta_+(0) $ is finite: then by arguing as above the key inequalities \eqref{eq:Cap1:31}, \eqref{eq:Cap1:34}, \eqref{eq:cap2:5} and \eqref{eq:LLL} still hold at zero. To this end, let us consider \eqref{eq:chain-ineq} at $ s=0 $, which in particular yields
\begin{equation*}\label{eq:slope:zero}
\DistSquare{u_{h}}{u_0}
    \le
    2\big( \MetricSlope \phi {u_0} + \varepsilon \big)
    \int_0^h
    \Dist{u_{\tau}}{u_0} \, \d\tau -
    \lambda \int_0^h \DistSquare{u_{\tau}}{u_0}\,\d\tau
    \le 4 C \, \int_0^h
    \Dist{u_{\tau}}{u_0} \, \d\tau
\end{equation*}
for a suitable $ C>0 $ independent of $ h $ small enough. Upon letting $ x(h):=\int_0^h \Dist{u_{\tau}}{u_0} \, \d\tau $, an elementary ODE argument shows that $ x(h) \le {C} h^2 $, whence $ \Dist{u_{h}}{u_0} \le {2C} h $ and the finiteness of $ \delta_+(0) $ is proved. We can then conclude that \eqref{eq:cap2:3} is true at $ t=0 $ as well.
    
\medskip   
  \paragraph{\emph{Left limits}}
  We already know that
  $t\mapsto \rme^{\lambda t}\MetricSlope\phi{u_t}$
  is finite, nonincreasing
  and right continuous in $(0,+\infty)$;
  let us therefore denote by $\mathcal C$
  its (at most countable) jump set,
  i.e.
   \begin{equation*}
    \label{eq:Cap1:53}
    (0,+\infty)\setminus \mathcal C=
    \Big\{t\in (0,+\infty): \,
    \lim_{h\down0}|\partial\phi|(u_{t-h})=|\partial\phi|(u_t)\Big\}.
  \end{equation*}
  If $t_0\in (0,+\infty)\setminus \mathcal C$ it follows that
  \begin{equation}
    \label{eq:Cap1:47bis}
    \frac \d{\d t}\phi(u_t)\restr{t=t_0-}=
    \lim_{h\down0}\frac{\phi(u_{t_0})-\phi(u_{t_0-h})}h
    =-|\partial\phi|^2(u_{t_0})
    \topref{eq:cap1:89}\ge -\MetricSlope\phi{u_{t_0}}
    \liminf_{h\down0}\frac{\Dist{u_{t_0}}{u_{t_0-h}}}h,
  \end{equation}
  and therefore
  \begin{equation}
    \label{eq:73}
    \liminf_{h\down0}\frac{\Dist{u_{t_0}}{u_{t_0-h}}}h\ge \MetricSlope\phi{u_{t_0}};
 \end{equation}
  on the other hand,
  by the $\Leb{1}$-a.e.~equality between $|\dot u|(t)$
  and $|\partial\phi|(u_t)$ we get
  \begin{equation}
    \label{eq:Cap1:54}
    \frac{\Dist{u_{t_0-h}}{u_{t_0}}}h\le \frac{1}{h}\int_{t_0-h}^{t_0}|\dot u_s|\,\d s=
    \frac{1}{h}\int_{t_0-h}^{t_0}|\partial\phi|(u_s)\,\d s \quad \Rightarrow \quad
    \limsup_{h\down0}\frac{\Dist{u_{t_0-h}}{u_{t_0}}}h\le
    \MetricSlope\phi{u_{t_0}}.
  \end{equation}
If $ t_0 \in \mathcal{C} $, just note that inequalities \eqref{eq:Cap1:47bis}--\eqref{eq:Cap1:54} still hold provided one replaces $ \MetricSlope\phi{u_{t_0}} $ with $ \lim_{h\down0}|\partial\phi|(u_{t_0-h}) $. 
  
\medskip   
 \paragraph{\emph{\emph{A priori} estimates}} 
%  Estimate 
%  \eqref{eq:Cap12:111} follows directly from \eqref{eq:cap1:78bis}
%  by choosing $s=0$ and neglecting the first nonnegative term.  
  In order to show \eqref{eq:2} and 
  \eqref{eq:Cap1:38}, we can apply \eqref{eq:cap2:3}, 
  the fact that $-\frac \d{\dt}\phi(u_t)=|\partial\phi|^2(u_t)$ outside $ \mathcal{C} $, and finally
  \eqref{eq:EVI} to obtain
  \begin{align*}
    \frac 12\big(\EExp\lambda t\big)^2|\partial\phi|^2(u_t)&
    =\frac12\big(\EExp{-\lambda}t\big)^2\rme^{2\lambda t}|\partial\phi|^2(u_t)
    \le
    \int_0^t \EExp{-\lambda} s \rme^{-\lambda s}
    \rme^{2\lambda s}|\partial\phi|^2(u_s)\,\d s
    \\&
    =-\int_0^t  \EExp{-\lambda} s \rme^{\lambda s}
    \Big(\phi(u_s)-\phi(u_t)\Big)'\,\d s
    =
    \int_0^t \rme^{\lambda s}\Big(\phi(u_s)-\phi(u_t)\Big) \d s
    \\& \topref{eq:EVI}\le
    \int_0^t -\frac 12\Big(\rme^{\lambda s}\DistSquare{u_s}v\Big)'
    +\rme^{\lambda s}\Big(\phi(v)-\phi(u_t)\Big) \d s
    \\&=
    \frac 12\DistSquare {u_0}v+\EExp\lambda t\Big(\phi(v)-\phi(u_t)\Big)-
    \frac {\rme^{\lambda t}}2\DistSquare {u_t}v,
  \end{align*} 
  where $ \tfrac{\d}{\d s} $ has been replaced by $ ^\prime $ for notational convenience. This proves \eqref{eq:2}. If $v \in \Dom{\GSlopeE\lambda\phi} $, thanks to \eqref{eq:cap1:89}
  and the Cauchy-Schwarz inequality, 
  we can bound difference $\phi(v)-\phi(u_t)$ by
  \begin{align*}
    \EExp\lambda t\big(\phi(v)-\phi(u_t)\big)
    &\le \EExp\lambda t\Big(\GSlope\lambda \phi v \, \Dist{u_t}{v}-\frac\lambda 2
    \DistSquare{u_t}{v}\Big)
    \\&\le \frac{(\EExp\lambda t)^2}{2(2\rme^{\lambda t}-1)}
    \GSlopeSquare\lambda \phi v+
    \frac {2\rme^{\lambda t}-1}2\DistSquare{u_t}{v}-
    \frac{\rme^{\lambda t}-1}2\DistSquare {u_t}{v}
    \\&=
    \frac{(\EExp\lambda t)^2}{2(2\rme^{\lambda t}-1)}
    \GSlopeSquare\lambda \phi v +
    \frac{\rme^{\lambda t}}2\DistSquare{u_t}v,
  \end{align*}
  at least when $2\rme^{\lambda t}>1$.
  Substituting this bound in \eqref{eq:2}
  we obtain \eqref{eq:Cap1:38}. 
%   In order to prove \eqref{eq:cap2:12}
%   we combine \eqref{eq:cap1:13} with
%   \eqref{eq:EVI}, obtaining
%   \begin{align*}
%     \frac 12\frac{\d}{\d t}\DistSquare{u_t}{u_o}+
%     \frac{\lambda+\lambda_o}2 \DistSquare{u_t}{u_o}
%     \le \phi(u_o)-m_o,
%   \end{align*}
%   and we integrate in time after a multiplication by $e^{(\lambda+
%     \lambda_o)t}$.
%   \eqref{eq:cap2:12bis} in
%   the case $\lambda_o=0$ follows easily from the energy identity
%   \eqref{eq:Cap12:24}.

\medskip 
  \paragraph{\emph{Asymptotic expansions}}
  In order to show the validity of \eqref{eq:3} we multiply
  \eqref{eq:cap2:13} by $ \rme^{\lambda t} $, integrate and use the estimate
    \begin{displaymath}
      \begin{aligned}
        -\int_0^{t}\rme^{2\lambda r} \phi(u_r)\,\d r&=
        \int_0^{t}\big(\sfE{2\lambda}{t}-
        \sfE{2\lambda}{r}\big)\MetricSlopeSquare\phi{u_r}\,\d
        r - \sfE{2\lambda}{t}\phi(u_0)\\
      &\le
      \MetricSlopeSquare\phi{u_0}
      \int_0^{t}
      \big(\sfE{2\lambda}{t}-
      \sfE{2\lambda}{r}\big)\rme^{-2\lambda r}\,\d r-
      \sfE{2\lambda}{t}\phi(u_0)
   \end{aligned}
  \end{displaymath}
along with the elementary inequality (for $ \lambda \le 0 $)
  \begin{align*}
    \big(\sfE{2\lambda}{t}-
    \sfE{2\lambda}{r}\big)\rme^{-2\lambda r}\le (t-r)\quad \forevery r\in [0,t].
  \end{align*} 
  
The fact that $ x \mapsto \phi(x) - \frac{\lambda}{2} \DistSquare x o  $ is linearly bounded from below is a simple consequence of  \eqref{eq:cap1:89} and the energy identity \eqref{eq:Cap12:24}, which in particular ensures that $ \Dom{\GSlopeE\lambda\phi}$ is not empty.

In order to prove \eqref{eq:3-lessreg}, it is convenient to use an approach by \emph{curves of maximal slope} (see the next Section \ref{sec:max-slope}). Namely, first of all one observes that the energy identity \eqref{eq:Cap12:24} implies 
\begin{equation}\label{eq:max-slope-1}
\frac{\rme^{\lambda s}}{2} \left| \dot u_s \right|^2 + \frac{\rme^{\lambda s}}{2} \MetricSlopeSquare \phi {u_s} - \lambda \rme^{\lambda s} \phi({u_s}) \le - \frac{\d}{\d s} \big( \rme^{\lambda s} \phi(u_{s}) \big)  \quad \forevery s \in (0,+\infty) \setminus \mathcal{C} .
\end{equation}
An integration of \eqref{eq:max-slope-1} yields
\begin{equation}\label{eq:max-slope-2}
%\begin{aligned}
\frac{1}{2} \int_0^t \left| \dot u_s \right|^2 \rme^{\lambda s} \d s  + \frac{1}{2} \int_0^t \MetricSlopeSquare \phi {u_s} \, \rme^{\lambda s} \d s - \lambda \int_0^t \rme^{\lambda s} \phi({u_s}) \, \d s \\
\le \phi(u_0)- \rme^{\lambda t} \phi(u_t) \quad \forevery t > 0 .
%\end{aligned}
\end{equation}
Since H\"older's inequality and the definition of metric derivative entail
\begin{equation*}\label{eq:max-slope-3}
\DistSquare{u_t}{u_0} \le \left( \int_0^t \rme^{-\lambda s} \, \d s \right) \left( \int_0^t \left| \dot u_s \right|^2 \rme^{\lambda s} \d s \right) =  \sfE{-\lambda}{t} \, \int_0^t \left| \dot u_s \right|^2 \rme^{\lambda s} \d s  ,
\end{equation*} 
from \eqref{eq:max-slope-2} there follows, for every $ t>0 $,
\begin{equation}\label{eq:max-slope-4}
\frac{1}{2\sfE{-\lambda}{t}} \DistSquare{u_t}{u_0} + \rme^{\lambda t} \phi(u_t) + \frac{1}{2} \int_0^t \MetricSlopeSquare \phi {u_s} \, \rme^{\lambda s} \d s - \lambda \int_0^t \rme^{\lambda s} \phi({u_s}) \, \d s \le \phi(u_0) .
\end{equation}
Recalling the definition of Moreau-Yosida approximation \eqref{eq:1}, by virtue of \eqref{eq:max-slope-4} we deduce 
\begin{equation}\label{eq:max-slope-5}
\int_0^t \MetricSlopeSquare \phi {u_s} \, \rme^{\lambda s} \d s - 2 \lambda \int_0^t \rme^{\lambda s} \phi({u_s}) \, \d s \le 2 \Big( \phi(u_0) - \rme^{\lambda t} \, \phi_{\sfE{\lambda}{t}}(u_0) \Big) \quad  \forevery t > 0 .
\end{equation} 
Note that, $ \phi $ being quadratically bounded from below for all $ \kappa_o > -\lambda $ and $ \lambda \sfE{\lambda}{t} > -1 $ for all $ t>0 $, the r.h.s.\ of \eqref{eq:max-slope-5} is always finite. By exploiting again the energy identity, we can integrate by parts the first term in l.h.s.\ of \eqref{eq:max-slope-5} to get 
\begin{equation}\label{eq:max-slope-6} 
-\rme^{\lambda t} \phi(u_{t}) - \lambda \int_0^t \rme^{\lambda s} \phi(u_{s}) \, \d s \le  \phi(u_0) - 2 \rme^{\lambda t} \, \phi_{\sfE{\lambda}{t}}(u_0) \quad  \forevery t > 0 .
\end{equation} 
Hence by integrating the differential inequality \eqref{eq:max-slope-6} w.r.t.\ the unknown $ t \mapsto - \int_0^t \rme^{\lambda s} \phi(u_{s}) \, \d s $, we end up with 
\begin{equation}\label{eq:max-slope-7} 
\begin{aligned}
- \int_0^t \rme^{\lambda s} \phi(u_{s}) \, \d s \le & \rme^{-\lambda t} \int_0^t \rme^{\lambda s} \Big( \phi(u_0) - 2 \rme^{\lambda s} \, \phi_{\sfE{\lambda}{s}}(u_0) \Big) \d s \\
= & - \sfE{\lambda}{t} \phi(u_0) + 2 \rme^{-\lambda t} \int_0^t \rme^{2\lambda s} \Big( \phi(u_0) - \phi_{\sfE{\lambda}{s}}(u_0) \Big) \d s \qquad \forevery t>0 .
\end{aligned}
\end{equation} 
Estimate \eqref{eq:3-lessreg} is therefore a consequence of \eqref{eq:max-slope-7} combined with the integral version of \eqref{eq:cap2:13}.

 \medskip
  \paragraph{\emph{Stability}}
The convergence of $ \{ u^n_t \} $ to $ u_t $ for all $ t>0 $ along with the fact that the latter is locally uniform in $ [0,+\infty) $ are immediate consequences of \eqref{eq:cap1:81}. Moreover, \eqref{eq:cap2:3} and \eqref{eq:Cap1:38} ensure that the sequence $ \MetricSlope{\phi}{u^n_t} $ is uniformly (in $n$) bounded by a constant
  $M_t<+\infty$ for every $t>0$. Hence thanks to \eqref{eq:Cap12:24} and \eqref{eq:cap1:89} we get
  \begin{displaymath}
    \phi(u_t)\ge \phi(u^n_t) - M_t \, \Dist{u_t}{u^n_t} -
    \frac \lambda 2 \DistSquare{u_t}{u^n_t},
  \end{displaymath}
  so that $\limsup_{n\to\infty} \phi(u^n_t) \le \phi(u_t)$. On the other hand, $\phi$ being lower semicontinuous, we also have that $ \liminf_{n\to\infty} \phi(u^n_t) \ge \phi(u_t) $, which finally implies \eqref{eq:cap2:16}. Such convergence is locally uniform in $ (0,+\infty) $ since $ \{ \phi(u^n_t) \} $ is a sequence of nonincreasing functions converging pointwise to the continuous function $ \phi(u_t) $.

Let us turn to slopes. By virtue of \eqref{eq:cap2:3} and \eqref{eq:Cap12:24}, it is straightforward to deduce the following inequalities:
\begin{equation}\label{ineq:slope-1-pre}
 \frac{\phi(u^n_t)-\phi(u^n_{t+\tau})}{\sfE{-2\lambda}{\tau}} \le \MetricSlopeSquare{\phi}{u^n_t} \le \frac{\phi(u^n_{t-\tau})-\phi(u^n_t)}{\sfE{2\lambda}{\tau}} \quad \forevery t>0, \ \tau \in (0,t) ;
\end{equation}
\eqref{eq:80} and \eqref{eq:80-bis} can therefore be proved by letting first $ n \to \infty $ in \eqref{ineq:slope-1-pre} (using \eqref{eq:cap2:16}) and then $ \tau \down 0 $, exploiting the energy identities both for right and left limits. The reason why \eqref{eq:80} occurs locally uniformly in $ (0,+\infty) \setminus \mathcal{C} $ is again a consequence of the fact that $ \{ \rme^{\lambda t} \MetricSlope{\phi}{u^n_t} \} $ is a sequence of nonincreasing functions converging pointwise in $ (0,+\infty) \setminus \mathcal{C} $ to $ \rme^{\lambda t} \MetricSlope{\phi}{u_t}  $, which is continuous in $ (0,+\infty) \setminus \mathcal{C} $. 
%\Modified{
% As $ t \mapsto \rme^{\lambda t}|\partial\phi|(u_{t,n})$
% is nonincreasing for all $ n $ and uniformly bounded for positive times, applying Helly's Theorem it is not restrictive to assume
%  that the sequence $ |\partial\phi|(u_{t,n}) $ converges pointwise everywhere in $ (0,+\infty) $ to some limit $ \mathcal{H}(t) $. By passing to the limit in the energy identities (for $ u_n $) we get
%  \begin{displaymath}
%    \int_s^t \mathcal{H}^2(r) \,\d r=\phi(u_{s})-
%    \phi(u_{t})\qquad \forevery
%    0<s<t<+\infty .
%  \end{displaymath}
%In particular, $ \mathcal{H}(t) = |\partial\phi|(u_{t}) $ for $ \Leb 1 $-a.e.\ $ t>0 $. Since $ e^{\lambda t}\mathcal{H}(t) $ is nonincreasing and $ |\partial\phi|(u_{t}) $ is continuous outside $ \mathcal{C} $, equality holds in $ (0,+\infty) \setminus \mathcal{C} $.  
% }

\medskip 
  \paragraph{\emph{Asymptotic behaviour}} When $\lambda>0$, \eqref{eq:Cap1:58} and \eqref{eq:Cap1:24} ensure
  that the sequence $k\mapsto u_k$ belongs to a
  fixed sublevel of $\phi$ and
  satisfies the Cauchy condition
  in $\AmbientSpace$, since
  \begin{equation*}
    \label{eq:Cap1:39}
    \Dist{u_{k+1}}{u_k}\le \rme^{-\lambda}
    \Dist{u_{k}}{u_{k-1}};
  \end{equation*}
thus, it is convergent to some limit $ \bar{u} \in \Dom{\phi} $. If we multiply \eqref{eq:2} by $ \rme^{-\lambda t} $, let $ t=k $ and $ k \to \infty $, thanks to the lower semicontinuity of $ \phi $ we deduce that the constant curve $ t \mapsto \bar{u} $ solves \eqref{eq:EVI} (if we let $ v=u_t $ we get the left-hand inequality in \eqref{eq:Cap1:42}), which ensures that $ \bar{u} $ is the unique minimum point for $ \phi $, along with the validity of \eqref{eq:Cap12:18}.  
  Inequality \eqref{eq:Cap1:41} is just \eqref{eq:cap2:3}
  and \eqref{eq:Cap1:38} with
  $v=\bar u $ (note that $\MetricSlope\phi{\bar u}=\GSlope \lambda \phi {\bar{u}}=0$).
  In order to prove the right-hand inequality in \eqref{eq:Cap1:42}, observe that
  \eqref{eq:cap1:89} and Young's inequality yield
  \begin{equation}
    \label{eq:Cap1:43}
    \phi(u_t)-\phi(\bar u)\le
    \MetricSlope \phi {u_t} \, \Dist {u_t}{\bar u}-
    \frac\lambda2\DistSquare {u_t}{\bar u}\le
    \frac1{2\lambda} \MetricSlopeSquare \phi {u_t}.
  \end{equation}
%  For the opposite inequality,
%  since $\MetricSlope\phi{\bar u}=0$,
%  from \eqref{eq:cap1:89} we easily get
%  \begin{equation}
%    \label{eq:Cap1:44}
%    \phi(u)-\phi(\bar u)\ge
%    \frac\lambda2\DistSquare u{\bar u}.
%  \end{equation}
  The first estimate of \eqref{eq:Cap1:40} now 
  follows upon noticing that \eqref{eq:Cap1:43} yields
  \begin{equation*}
    \label{eq:Cap1:45}
    \frac{\d}{\d t}\Big(\phi(u_t)-\phi(\bar u)\Big)=-|\partial\phi|^2(u_t)
    \le -2\lambda \Big(\phi(u_t)-\phi(\bar u)\Big);
  \end{equation*}
  on the other hand, the second estimate of \eqref{eq:Cap1:40} easily follows from \eqref{eq:2} with $v=\bar u$.  
  
If $ \lambda=0 $ and $ \bar{u} $ is a minimum point of $ \phi $, the map $ t \mapsto \bar{u} $ trivially solves \eqref{eq:EVI}, so that $t\mapsto \Dist{u_t}{\bar u}$ is nonincreasing by virtue of \eqref{eq:cap1:81}. The two estimates in \eqref{eq:Cap12:19} follow similarly from \eqref{eq:2} and \eqref{eq:Cap1:38}, upon observing again that $ \GSlope  0 \phi {\bar{u}} = 0 $. If in addition $ \phi $ has compact sublevels, then by \eqref{eq:2} and the lower semicontinuity of $ \phi $ we easily deduce that there exists a sequence $ \{ u_{t_n} \} $ converging to some $ \bar u $ such that
$$
\phi(\bar u) \le \liminf_{n\to\infty} \phi(u_{t_n}) \le \phi(v) \quad \forevery v \in \AmbientSpace , 
$$
whence $ \bar{u} $ is a minimum point of $ \phi $. On the other hand, in this case we know that $ t \mapsto \Dist{u_t}{\bar u} $ is nonincreasing, so that the whole curve $ u_t $ is forced to converge to $ \bar{u} $ as $ t \to +\infty $. 
\end{proof} 
% \begin{remark}
%   \label{rem:Reglambda}
%   \upshape
%   Under the same assumption of Theorem \ref{thm:main1}, we easily get
%   by \eqref{eq:Cap12:24} that
%   \begin{equation}
%     \label{eq:cap2:18}
%     \MetricSlope\phi u=\GSlope\lambda\phi u\quad
%     \forevery\, u\in \overline D.
%   \end{equation}
%   In particular, if $D$ is dense in $\Dom\phi $ then
%   $\phi$ is a \Reg\lambda functional.
% \end{remark}

As a consequence of Theorem \ref{thm:main1}, we have a finer equivalence between the notions of slope on points that belong to the domain of the gradient flow, which is basically a consequence of the fact that solutions of the \EVIname\ are curves of maximal slope (see Section \ref{sec:max-slope} below). 
\begin{proposition}[Equivalence of slopes]\label{char-slope}
Let $\phi : X \rightarrow (-\infty,+\infty] $ be a proper l.s.c.~functional which admits an \EVIshort\lambda-gradient flow in $ D \not \equiv \emptyset $.
Then 
\begin{equation}\label{eq:slopes-semi}
\underset{ \substack{  y \to x  \\ y \in D \cap \DomainSlope{\phi} }
}{  \limsup  } \, \frac{\left( \phi(x) - \phi(y)
  \right)^+}{\Dist{x}{y}} =
\MetricSlope\phi x=
\lMetricSlope\phi x=\GSlope{\lambda}{\phi}{x} \quad \forevery x \in D .
\end{equation}
\end{proposition}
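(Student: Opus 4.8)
The plan is to run everything through the gradient flow $u_t := \Flow t x$ issuing from $x\in D$: by the regularizing effect of Theorem~\ref{thm:main1} the points $u_t$ with $t>0$ lie in $D\cap\DomainSlope\phi$, and along the curve all the slopes coincide thanks to the energy identity \eqref{eq:Cap12:24}. Writing $L(x)$ for the left-most quantity in \eqref{eq:slopes-semi}, the always-available inequalities are
\[
\lMetricSlope\phi x \le \MetricSlope\phi x \le \GSlope\lambda\phi x, \qquad L(x) \le \MetricSlope\phi x,
\]
the first two from $\DistName\le\lDistName$ and \eqref{eq:cap1:12}, the last because $L(x)$ is a $\limsup$ over a subset of $\{y\to x\}$. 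So it suffices to close the loop by showing $\GSlope\lambda\phi x\le L(x)$ and $\MetricSlope\phi x\le\lMetricSlope\phi x$. I may assume $x\in\Dom\phi$ (otherwise every term is $+\infty$, the curve $u_t\to x$ with finite energy witnessing $L(x)=+\infty$) and that the flow is non-stationary near $x$ (otherwise the energy identity forces $\MetricSlope\phi{u_t}=0$, hence every term vanishes).

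First I would prove $\GSlope\lambda\phi x\le L(x)$. Setting $S:=L(x)$ and assuming $S<\infty$ (else trivial), the definition of the restricted $\limsup$ gives, for each $\eps>0$ and all $y\in D\cap\DomainSlope\phi$ close enough to $x$, the bound $\phi(y)\ge\phi(x)-(S+\eps)\Dist x y$. Applying this with $y=u_t$ for small $t$, and using the integrated energy identity $\phi(x)-\phi(u_t)=\int_0^t\MetricSlopeSquare\phi{u_s}\,\d s$ together with $\Dist x{u_t}\le\int_0^t\MetricSlope\phi{u_s}\,\d s$, yields
\[
\int_0^t \MetricSlopeSquare\phi{u_s}\,\d s \le (S+\eps)\int_0^t \MetricSlope\phi{u_s}\,\d s .
\]
The crucial step is to convert this into a pointwise bound: since $s\mapsto\rme^{\lambda s}\MetricSlope\phi{u_s}$ is nonincreasing by \eqref{eq:cap2:3}, one has $\MetricSlope\phi{u_s}\ge\rme^{\lambda(t-s)}\MetricSlope\phi{u_t}$ for $s\le t$, and feeding this into the left-hand integral gives $\MetricSlope\phi{u_t}\le(S+\eps)\rme^{|\lambda|t}$, whence $\limsup_{t\down0}\MetricSlope\phi{u_t}\le S$. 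Because $u_t\in\DomainSlope\phi$, the energy identity \eqref{eq:Cap12:24} gives $\MetricSlope\phi{u_t}=\GSlope\lambda\phi{u_t}$, so the lower bound \eqref{eq:cap1:89} reads $\phi(y)\ge\phi(u_t)-\MetricSlope\phi{u_t}\,\Dist y{u_t}+\tfrac\lambda2\DistSquare y{u_t}$ for every $y$; letting $t\down0$ and using $u_t\to x$, $\phi(u_t)\to\phi(x)$ and the $\limsup$ bound just obtained, I get $\phi(y)\ge\phi(x)-S\,\Dist x y+\tfrac\lambda2\DistSquare x y$ for all $y$, i.e.\ $\GSlope\lambda\phi x\le S$ via \eqref{eq:cap1:89}. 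This closes $L(x)=\MetricSlope\phi x=\GSlope\lambda\phi x=:S$.

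It remains to show $\MetricSlope\phi x\le\lMetricSlope\phi x$. By lower semicontinuity of the global slope and $\GSlope\lambda\phi{u_t}=\MetricSlope\phi{u_t}$ one has $\liminf_{t\down0}\MetricSlope\phi{u_t}\ge\GSlope\lambda\phi x=S$, which combined with the previous $\limsup$ bound gives $\lim_{t\down0}\MetricSlope\phi{u_t}=S$. Using $\lDist x{u_h}\le\int_0^h\MetricSlope\phi{u_s}\,\d s$ (as $\MetricDerivative u=\MetricSlope\phi{u_\cdot}$ a.e.) and the energy identity, the difference quotient along the curve, computed exactly as in \eqref{eq:LLL}, satisfies
\[
\lMetricSlope\phi x \ge \limsup_{h\down0}\frac{\phi(x)-\phi(u_h)}{\lDist x{u_h}} \ge \limsup_{h\down0}\frac{\int_0^h \MetricSlopeSquare\phi{u_s}\,\d s}{\int_0^h \MetricSlope\phi{u_s}\,\d s} \ge \lim_{h\down0}\frac1h\int_0^h \MetricSlope\phi{u_s}\,\d s = S ,
\]
the middle inequality by Cauchy--Schwarz. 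Together with $\lMetricSlope\phi x\le\MetricSlope\phi x=S$ this gives $\lMetricSlope\phi x=S$, completing \eqref{eq:slopes-semi}; when $S=+\infty$ the same chain shows every term equals $+\infty$.

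The main obstacle is the passage from the integrated inequality to the pointwise bound $\limsup_{t\down0}\MetricSlope\phi{u_t}\le L(x)$: a priori the slope may blow up as $t\down0$, and it is precisely the \EVIname-specific monotonicity \eqref{eq:cap2:3} of $\rme^{\lambda t}\MetricSlope\phi{u_t}$ that tames it. Two routine points must be checked: that $u\in\AC([0,t];\AmbientSpace)$ down to $t=0$, which follows from $\int_0^t\MetricSlope\phi{u_s}\,\d s\le\sqrt t\,(\phi(x)-\phi(u_t))^{1/2}<\infty$ once $x\in\Dom\phi$, so that the energy identity and the length estimates extend to the closed interval; and the continuity $\phi(u_t)\to\phi(x)$ as $t\down0$, guaranteed by Remark~\ref{rem: to-zero} together with lower semicontinuity.
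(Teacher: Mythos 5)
Your proof is correct, but it reaches the crucial inequality $\GSlope{\lambda}{\phi}{x}\le L(x)$ (with $L(x)$ the left-most quantity in \eqref{eq:slopes-semi}) by a genuinely different route than the paper. The paper's argument is essentially a remark on the proof of Theorem \ref{thm:main1}: in the difference-quotient chain \eqref{eq:chain-ineq}--\eqref{eq:Cap1:34} used to prove the energy identity, the slope $\MetricSlope\phi{u_s}$ can be replaced verbatim by the $\limsup$ of the difference quotients along the curve, which is dominated by $L(x)$ because the points $u_{s+h\rho}$ lie in $D\cap\DomainSlope\phi$; combined with \eqref{eq:Cap1:31} this closes the loop (the paper also sketches a second route via the restricted functional $\phi_D$). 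You instead use Theorem \ref{thm:main1} purely as a black box: from the integrated energy identity and the length bound you get $\int_0^t\MetricSlopeSquare\phi{u_s}\,\d s\le(S+\eps)\int_0^t\MetricSlope\phi{u_s}\,\d s$, and your deintegration trick --- feeding the monotonicity \eqref{eq:cap2:3} into the left-hand side to extract the pointwise bound $\MetricSlope\phi{u_t}\le(S+\eps)\rme^{|\lambda|t}$ --- substitutes for the paper's re-examination of \eqref{eq:chain-ineq}; passing to the limit in \eqref{eq:cap1:89} at $u_t$, using the lower semicontinuity of the global slope, then yields $\GSlope{\lambda}{\phi}{x}\le S$. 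What each buys: your route is longer but modular, never reopening the proof of the energy identity, and the monotonicity-based integral-comparison step is of independent interest; the paper's route is two lines once Theorem \ref{thm:main1} is in place, and it obtains $\MetricSlope\phi x=\lMetricSlope\phi x=\GSlope{\lambda}{\phi}{x}$ at $x=u_0$ for free from the statement immediately below \eqref{eq:Cap12:24} (valid at $t=0$, finite or infinite) --- so your final section re-deriving $\MetricSlope\phi x\le\lMetricSlope\phi x$ via the \eqref{eq:LLL}-type computation is logically redundant, though sound and self-contained. The routine points you flag (absolute continuity of $u$ down to $t=0$, continuity of $\phi(u_t)$ at $0$, the stationary case, and the $S=+\infty$ case handled by contraposition) are all dealt with correctly.
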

\begin{proof}
 The identity $\MetricSlope\phi x=
\lMetricSlope\phi x=\GSlope{\lambda}{\phi}{x}$ is an immediate
consequence
of Theorem \ref{thm:main1} (precisely, the statement immediately below
\eqref{eq:Cap12:24}).

In order to prove that these quantities also coincide with the
left-hand side of \eqref{eq:slopes-semi},
it is enough to notice that, in the proof of Theorem \ref{thm:main1}, when showing the energy identity \eqref{eq:Cap12:24} (see in particular inequalities \eqref{eq:chain-ineq}--\eqref{eq:Cap1:34}) the metric slope at $ x = u_s $ can in fact be replaced by the $ \limsup $ of $ (\phi(u_s) - \phi(u_t)) / \Dist{u_s}{u_t} $ as $ t \downarrow s $, which is clearly bounded from above by the l.h.s.~of \eqref{eq:slopes-semi}. 

Alternatively, one could just observe that $ u_s $ is also a solution of the \EVIname\ associated with $ {\phi}_D $, the latter functional being the same as $ \phi $ on $ D $ and $ +\infty $ elsewhere, so that \eqref{eq:slopes-semi} is a direct consequence of both the energy identities for $\EVI\AmbientSpace\DistName\phi\lambda$ and $\EVI\AmbientSpace\DistName{\phi_D}\lambda$. Here the fact that $ \phi_D $ may not be lower semicontinuous on $ \Dom{\phi} \setminus D $ is inessential.
\end{proof}

\begin{remark}[$ \EVIname $, slopes and Moreau-Yosida regularizations]\label{rem-apriori}\upshape 
Because the function $ \tau \mapsto \phi_\tau (u_0) $ is nonincreasing, one can simply bound the integral remainder in the r.h.s.\ of \eqref{eq:3-lessreg} by the \emph{pointwise} remainder 
\begin{equation}\label{eq:lessreg-comm}
2 \rme^{-\lambda t} \sfE{2\lambda}{t} \Big( \phi(u_0)-\phi_{\sfE{\lambda}{t}}(u_0) \Big) .
\end{equation} 
On the other hand, from \cite[Lemma 3.1.5]{Ambrosio-Gigli-Savare08} we have the identity 
\begin{equation}\label{eq:duality-slope}
\limsup_{\tau \down 0} \frac{\phi(u_0)-\phi_\tau(u_0)}{\tau} =  \frac{1}{2} \MetricSlopeSquare{\phi}{u_0} ,
\end{equation}
so that if $ u_0 \in \DomainSlope \phi $ then \eqref{eq:lessreg-comm} reproduces (as $ t \downarrow 0 $) the remainder in \eqref{eq:3} up to a factor $2$. However, in order to get asymptotically the same estimate (i.e.~with the same factor), it is essential to keep such remainder in the integral form \eqref{eq:3-lessreg}. Indeed, from \eqref{eq:3-lessreg} itself we easily deduce (use the fact that $ v $ is arbitrary)
\begin{equation}\label{eq:est-yosida}
\sfE{\lambda}{t} \Big( \phi(u_0)-\phi_{\sfE{\lambda}{t}}(u_0) \Big)  \le 2 \rme^{-\lambda t} \int_0^t \rme^{2\lambda s} \Big( \phi(u_0) - \phi_{\sfE{\lambda}{s}}(u_0) \Big) \d s ,
\end{equation} 
which implies in turn that
\begin{equation}\label{eq:est-yosida-bis}
\text{the map} \quad t\in(0,+\infty) \mapsto \frac{\int_0^t \rme^{2\lambda s} \left( \phi(u_0) - \phi_{\sfE{\lambda}{s}}(u_0) \right) \d s}{(\sfE{\lambda}{t})^2} \quad \text{is nonincreasing} .
\end{equation}
Hence by gathering \eqref{eq:est-yosida-bis} and \eqref{eq:duality-slope}, we end up with
\begin{equation}\label{eq:199}
\frac{\int_0^t \rme^{2\lambda s} \left( \phi(u_0) - \phi_{\sfE{\lambda}{s}}(u_0) \right) \! \d s}{(\sfE{\lambda}{t})^2} \le \limsup_{h \down 0} \frac{\int_0^h \rme^{2\lambda s} \left( \phi(u_0) - \phi_{\sfE{\lambda}{s}}(u_0) \right) \! \d s}{(\sfE{\lambda}{h})^2} \le \frac{1}{4} \MetricSlopeSquare{\phi}{u_0} ;
\end{equation}
in particular (compare with \cite[Theorem 3.1.6]{Ambrosio-Gigli-Savare08} in the $ \lambda $-convex case), \eqref{eq:duality-slope}, \eqref{eq:est-yosida}, \eqref{eq:199} yield
\begin{equation*}\label{sup-tau-yosida}
\sup_{\tau>0: \, \lambda \tau > -1} \, (1+\lambda \tau) \, \frac{\phi(u_0)-\phi_\tau(u_0)}{\tau} = \frac{1}{2} \MetricSlopeSquare{\phi}{u_0} . 
\end{equation*}
Nevertheless, we point out that in order to obtain the correct asymptotic expansion for $ \Dist{u_t}{u_0} $ (let $ \lambda=0 $ for simplicity) it is enough to use the energy identity as follows:
$$
\DistSquare{u_t}{u_0} \le t \int_0^t \left| \dot{u}_s \right|^2 \d s = t \left( \phi(u_0)-\phi(u_t) \right) ,
$$
from which it is easy to deduce that
$$
\frac{1}{2} \DistSquare{u_t}{u_0} \le t \left( \phi(u_0)-\phi_t(u_0) \right) .
$$
In any case, the integral remainder is necessary in \eqref{eq:3-lessreg}, which holds for all $ v \in \Dom{\phi} $.
\end{remark} 

\begin{corollary}[Construction of \EVIshort\lambda-gradient flows]\label{cor:gf} 
  Suppose that for every initial value 
  $u_0\in D_0\subset \overline{\Dom\phi}$ there exists a solution $u_t$ of $\EVI X\DistName\phi\lambda$ such that
  $\lim_{t\down 0}u_t=u_0$.
  Then, if we set
  \begin{equation*}
    \label{eq:30}
    D:=\bigcup_{t\ge 0, \, u_0\in D_0}\Big\{u_t: \, u \text{ is the solution of \eqref{eq:EVI} starting from $u_0$} \Big\},
  \end{equation*}
  there exists a unique \EVIshort\lambda-gradient flow  $\FlowName_t:D\to D$ of $\phi$
  according to Definition \ref{def:GFlow}, which satisfies  
  the $\lambda$-contraction property 
    \begin{equation}
      \label{eq:Cap12:1bis}
     \Dist{\Flow t{u_0}}{\Flow t{v_0}}\le
     \rme^{-\lambda t}\Dist{u_0}{v_0}\qquad
     \forevery u_0,v_0\in {D} , \ t \ge 0 .
    \end{equation}
    If in addition $\phi$ has complete sublevels (in particular, if $\AmbientSpace$ is complete), then
 $\FlowName_t$ can always be extended by density to an \EVIshort\lambda-gradient flow of $\phi$ in $\overline D$.
\end{corollary}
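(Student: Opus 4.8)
The plan is to construct the flow first on $D$ directly from the uniqueness theory of Theorem \ref{thm:main1}, and then to propagate every property to $\overline D$ by the contraction estimate together with the a priori bound \eqref{eq:2}. Every $w\in D$ can be written as $w=u_{t_0}$ for some solution $u$ of \eqref{eq:EVI} with $u_0:=\lim_{t\downarrow0}u_t\in D_0$ and some $t_0\ge0$, so that $u\in C^0([0,\infty);\AmbientSpace)$; I set $\FlowName_s(w):=u_{t_0+s}$. Since \eqref{eq:EVI} carries no explicit time dependence, the shifted curve $s\mapsto u_{t_0+s}$ is again a solution (the same translation invariance used in Theorem \ref{thm:main1}), with value $w$ at $s=0$. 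Hence if $w$ admits two representations $u_{t_0}=\tilde u_{\tilde t_0}$, the curves $s\mapsto u_{t_0+s}$ and $s\mapsto\tilde u_{\tilde t_0+s}$ are both solutions of \eqref{eq:EVI} emanating from $w$, so the uniqueness clause of Theorem \ref{thm:main1} forces them to coincide; thus $\FlowName_s(w)$ is well defined and clearly $\FlowName_s(w)\in D$.

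The algebraic and metric properties on $D$ are then immediate. The identity $\FlowName_0=\mathrm{id}$, the semigroup law $\FlowName_{s+h}=\FlowName_h\circ\FlowName_s$ and the continuity $\lim_{s\downarrow0}\FlowName_s(w)=w$ of \eqref{eq:18} all follow at once from $\FlowName_s(u_{t_0})=u_{t_0+s}$ and the continuity of $u$. For the contraction \eqref{eq:Cap12:1bis} I apply \eqref{eq:cap1:81} with $s=0$ to the two solutions $t\mapsto\FlowName_t(u_0)$, $t\mapsto\FlowName_t(v_0)$ (continuous on $[0,\infty)$), obtaining $\Dist{\FlowName_t(u_0)}{\FlowName_t(v_0)}\le\rme^{-\lambda t}\Dist{u_0}{v_0}$ and in particular the (Lipschitz) continuity of each $\FlowName_t:D\to D$. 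Uniqueness of the flow is again the uniqueness part of Theorem \ref{thm:main1}: two \EVIshort\lambda-gradient flows produce, for every fixed $w$, two solutions of \eqref{eq:EVI} with the same initial datum, which must agree.

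For the extension under completeness of the sublevels, I would fix $w\in\overline D$ (note $\overline D\subset\overline{\Dom\phi}$, the latter being closed), choose $w_n\in D$ with $w_n\to w$, and set $\FlowName_t(w):=\lim_n\FlowName_t(w_n)$. By \eqref{eq:Cap12:1bis} the sequence $\{\FlowName_t(w_n)\}$ is Cauchy for each $t$; to force convergence I use \eqref{eq:2}: discarding its nonnegative terms gives, for any fixed $v\in\Dom\phi$ and $t>0$, the bound $\phi(\FlowName_t(w_n))\le\tfrac1{2\sfE\lambda t}\DistSquare{w_n}{v}+\phi(v)$, uniform in $n$ since $w_n\to w$, so the Cauchy sequence lies in a fixed complete sublevel of $\phi$ and hence converges; independence of the approximating sequence is again \eqref{eq:Cap12:1bis}, and one sets $\FlowName_0(w)=w$ consistently. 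Passing to the limit in \eqref{eq:Cap12:1bis} extends the contraction, hence continuity, to $\overline D$, and the semigroup law and initial condition \eqref{eq:18} then transfer by continuity of the $\FlowName_t$.

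The main obstacle, and the only genuinely analytic point, is to verify \eqref{eq:28}, i.e.\ that $t\mapsto\FlowName_t(w)$ solves \eqref{eq:EVI}. Here I would not pass to the limit in the pointwise inequality but in the integral characterization \ICB of Theorem \ref{thm:uniqueness}: since $\FlowName_t(w_n)\to\FlowName_t(w)$ locally uniformly in $t$, the squared-distance terms of \eqref{eq:cap1:78bis} converge, while, because $\sfE\lambda{t-s}>0$, the lower semicontinuity of $\phi$ controls the term $-\phi(\FlowName_t(w_n))$ on the right-hand side in the favorable direction (so that the $\limsup$ of the right-hand side is bounded by $\sfE\lambda{t-s}(\phi(v)-\phi(\FlowName_t(w)))$). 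This yields \eqref{eq:cap1:78bis} for the limit curve, for all $0<s<t$ and $v\in\Dom\phi$, whence Theorem \ref{thm:uniqueness} certifies that $t\mapsto\FlowName_t(w)$ is a solution of \eqref{eq:EVI}, completing the verification that $\FlowName_t$ is an \EVIshort\lambda-gradient flow of $\phi$ in $\overline D$.
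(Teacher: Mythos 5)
Your proposal is correct and follows essentially the same route as the paper: the flow on $D$ is obtained from the uniqueness/contraction statements of Theorem \ref{thm:main1}, and the extension to $\overline D$ proceeds by density, using \eqref{eq:Cap12:1bis} for the Cauchy property, the a priori bound \eqref{eq:2} to confine the sequence to a complete sublevel, and the lower semicontinuity of $\phi$ to pass to the limit in the integral formulation \eqref{eq:cap1:78bis}. The only difference is expository: the paper dismisses the construction on $D$ as a ``straightforward consequence'' of Theorem \ref{thm:main1}, whereas you spell out the translation invariance and well-definedness, which is a useful elaboration rather than a deviation.
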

\begin{proof}
  We just check the last statement, since the other ones are straightforward consequences of the $ \lambda $-contraction and uniqueness properties entailed by Theorem \ref{thm:main1}.
  Thanks to the latter, it is easy to extend $\FlowName_t$ from $D$ to its closure:
  for every $u_0\in \overline D$ one can simply take an 
  arbitrary sequence $ \{ u_0^n \} \subset D$ converging to $u_0$ and
  set $\Flow t{u_0}:=\lim_{n\to\infty}\Flow t{u_0^n}$ at every $ t>0 $. 
  The limit does exist since $ \{ \Flow t{u_0^n} \} $ is a Cauchy sequence 
  by \eqref{eq:cap1:81},
  and in view of \eqref{eq:2} 
  \begin{displaymath} 
    %\frac {\rme^{\lambda(t-s)}}2\DistSquare{u_t}v-
    \phi(\Flow t{u_0^n})\le 
    \frac 1{2 \sfE\lambda{t}}\DistSquare{u_0^n}v+
    \phi(v)\quad \forevery v\in \Dom\phi,
  \end{displaymath}
  so that it also
  belongs to a fixed sublevel of $\phi$, which is complete by assumption.  
  Estimate \eqref{eq:cap1:81} itself shows that the limit is independent
  of the chosen sequence $ \{ u_0^n \} $. Thanks to the lower semicontinuity of $ \phi $, it is immediate to verify
  that each trajectory $u_t$ of the extended flow still satisfies e.g.~the integral
  formulation \eqref{eq:cap1:78bis} down to $ s=0 $. Finally, the semigroup property \eqref{eq:18} is inherited as well by the extended flow since, as mentioned above, the limit trajectory is independent of the particular sequence $ \{ u_0^n \} $.
\end{proof}

% \begin{remark}[A stronger metric in $ \Dom\phi $]
%   \upshape
%   The set $\Dom\phi $ can be endowed with the stronger energy-metric
%   \begin{equation}
%     \label{eq:cap2:19}
%     \Distphi xy:=\Dist xy+|\phi(x)-\phi(y)| \quad \forevery x,y \in \Dom{\phi} ,
%   \end{equation}
%   so that $\big(\Dom\phi ,\DistphiName\big)$ is also a
%   metric space. When the sublevels of $ \phi $ are complete, such space is complete if and only if the restrictions of $ \phi $ on all of its sublevels are $ \DistName $-continuous. By virtue of \eqref{eq:cap2:16},
%  it turns out that $\FlowName_t$ is a continuous map from $ ( D , \DistName )$ to
%   $\big(\Dom\phi ,\DistphiName\big)$ for every $t>0$.
% \end{remark} 

\subsection[$\EVIshort\lambda$-gradient flows,
  the length distance, and $  \lambda$-convexity]{$\boldsymbol{ \EVIshort\lambda}$-gradient flows,
  the length distance, and $ \boldsymbol \lambda$-convexity}
\label{subsec:lconvexity}

Let us first recall a result of \cite[Theorem 3.5]{Ambrosio-Erbar-Savare16}.
\begin{theorem}[Self-improvement of $\EVIshort\lambda$-gradient flows]
\label{thm:self-improvement}
Let $(\sfS_t)_{t\ge0}$ be an $\EVIshort\lambda$-gradient flow of $\phi$ in $ \AmbientSpace = \overline{\Dom\phi}^{\lDistName}$. Then $( \FlowName_t)_{t\ge 0} $ is also an $\EVIshort\lambda$-gradient flow in $ \AmbientSpace $ w.r.t.~the length distance $\lDistName$ on each equivalence class defined by $\sim_\ell$ in \eqref{eq:lequivalence}.
\end{theorem}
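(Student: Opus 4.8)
The plan is to verify the defining inequality in one of its equivalent integral forms, now measured with $\lDistName$, on a fixed equivalence class. Fix an $\sim_\ell$-class $C$; since $\lDistName\ge\DistName$ the $\lDistName$-topology is finer than the $\DistName$-topology, so $\phi$ remains lower semicontinuous on $(C,\lDistName)$, and $(C,\lDistName)$ is a genuine (complete, if $\AmbientSpace$ is) metric space by the discussion following \eqref{eq:lequivalence}. Each trajectory $t\mapsto u_t=\Flow t{u_0}$ is $\DistName$-locally Lipschitz (Theorem \ref{thm:main1}) and stays in a single class, and since $\lDist{u_{t+h}}{u_t}\le\int_t^{t+h}\MetricDerivative u(r)\,\d r\to 0$ it is also $\lDistName$-continuous. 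Hence Theorem \ref{thm:uniqueness}, applied verbatim to the metric--functional system $(C,\lDistName,\phi)$, reduces the claim to establishing the length analogue of \eqref{eq:cap1:78bis}, namely
\[ \frac{\rme^{\lambda(t-s)}}2\lDistSquare{u_t}v-\frac12\lDistSquare{u_s}v\le\sfE\lambda{t-s}\big(\phi(v)-\phi(u_t)\big)\qquad\forevery v\in\Dom\phi\cap C. \]

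The one genuinely new ingredient is that the flow contracts curve-length. Given $x,y$ in the same class and $\eps>0$, pick $\gamma\in\AC([0,1];\AmbientSpace)$ with $\gamma_0=x$, $\gamma_1=y$ and $\operatorname{Length}[\gamma]\le\lDist xy+\eps$. Applying the pointwise $\lambda$-contraction \eqref{eq:cap1:81} to the two solutions issued from $\gamma_{r}$ and $\gamma_{r'}$ gives $\Dist{\Flow h{\gamma_r}}{\Flow h{\gamma_{r'}}}\le\rme^{-\lambda h}\Dist{\gamma_r}{\gamma_{r'}}$, so the image curve $r\mapsto\Flow h{\gamma_r}$ is absolutely continuous with metric speed bounded by $\rme^{-\lambda h}\MetricDerivative\gamma(r)$; therefore $\operatorname{Length}[\Flow h\gamma]\le\rme^{-\lambda h}\operatorname{Length}[\gamma]$ and, letting $\eps\downarrow0$, $\lDist{\Flow hx}{\Flow hy}\le\rme^{-\lambda h}\lDist xy$. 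In particular the semigroup is $\lambda$-contracting for $\lDistName$, and along the flow the $\lDistName$-metric derivative coincides with the $\DistName$-one, hence with $\MetricSlope\phi{u_t}=\lMetricSlope\phi{u_t}$; combined with the energy identity \eqref{eq:Cap12:24} this already shows that $u$ is a curve of maximal slope for $(\phi,\lDistName)$.

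To promote contraction to the full length-EVI we must recover the sharp right-hand side $\phi(v)-\phi(u_t)$, which the triangle inequality alone cannot do (it only yields the lossy bound $\lMetricSlope\phi{u_t}\,\lDist{u_t}v$). The approach is to compare $u_{t+h}$ with a fixed $v\in\Dom\phi$ through the \emph{skew curve} $\theta\mapsto c^h_\theta:=\Flow{h(1-\theta)}{\gamma_\theta}$, where $\gamma$ is a near-optimal curve from $u_t$ to $v$: indeed $c^h_0=u_{t+h}$ and $c^h_1=v$, so $\operatorname{Length}[c^h]$ bounds $\lDist{u_{t+h}}v$. Splitting its speed into the spatial increment (contracted by $\rme^{-\lambda h(1-\theta)}$ as above, producing the $\lDistSquare{u_t}v$ and the $\lambda$-terms after squaring and letting $h\downarrow0$) and the temporal increment along each trajectory $a\mapsto\Flow a{\gamma_\theta}$ (whose metric speed is the slope, governed by \eqref{eq:cap1:89} and the energy identity \eqref{eq:Cap12:24}, producing the $\phi(v)-\phi(u_t)$ term) yields the differential form of the desired inequality, which integrates to the length analogue of \eqref{eq:cap1:78bis}.

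I expect the main obstacle to be precisely this last step: controlling the temporal contribution and the cross terms uniformly as $h\downarrow0$ (and then $\eps\downarrow0$ in the near-optimal $\gamma$), while handling two degeneracies — that the interior points $\gamma_\theta$ of the comparison curve need not lie in $\Dom\phi$, and that the slope at $v$ may be infinite when $v\notin\DomainSlope\phi$. A clean way around these is to first prove the inequality for $v$ with finite slope (dense by Theorem \ref{le:slope_by_Ekeland}, which moreover recovers both the value $\phi(v)$ and the distance in the limit) and to pass the estimate through the integral form \eqref{eq:cap1:78bis}, where the monotone, integrable quantity $t\mapsto\phi(u_t)$ absorbs the boundary irregularities; the general $v\in\Dom\phi$ then follows from the lower semicontinuity of $\phi$ and a limiting argument.
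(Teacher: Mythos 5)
A preliminary remark: the paper does not prove this theorem at all --- it is recalled verbatim from \cite[Theorem 3.5]{Ambrosio-Erbar-Savare16} --- so there is no internal proof to compare against and your argument has to stand on its own. Its first half does: lower semicontinuity of $\phi$ survives the passage to the finer $\lDistName$-topology, trajectories are $\lDistName$-continuous because $\lDist{u_{t+h}}{u_t}\le\int_t^{t+h}|\dot u_r|\,\d r$, the reduction to the integral form \eqref{eq:cap1:78bis} on a fixed equivalence class via Theorem \ref{thm:uniqueness} is legitimate, and the length-contraction step (the flow maps $\AC$ curves to $\AC$ curves with metric speed multiplied by $\rme^{-\lambda h}$, whence $\lDist{\Flow hx}{\Flow hy}\le \rme^{-\lambda h}\lDist xy$ by \eqref{eq:cap1:81}) is correct.

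The genuine gap is the skew-curve step, and it is not a technicality to be patched by Ekeland-dense comparison points (Theorem \ref{le:slope_by_Ekeland}): the mechanism itself cannot produce the signed right-hand side. Everything you can extract from the curve $c^h_\theta=\Flow{h(1-\theta)}{\gamma_\theta}$ is an estimate of the form $\lDist{u_{t+h}}v\le \rme^{-\lambda h}\bigl(\lDist{u_t}v+\eps\bigr)+T_h$ with $T_h\ge 0$ the temporal contribution. Test it with $\lambda=0$ and $v$ a minimizer of $\phi$ with $\phi(u_t)>\phi(v)$: the $\lDistName$-EVI you must prove forces $\Urd\tfrac12\lDistSquare{u_t}v\le \phi(v)-\phi(u_t)<0$, a strict decrease at a definite rate, while your bound permits no decrease whatsoever. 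Structurally, as $h\down0$ the temporal legs contribute, after multiplication by $\lDist{u_t}v\approx\int_0^1|\dot\gamma_\theta|\,\d\theta$ and Cauchy--Schwarz, at least $\int_0^1\MetricSlope\phi{\gamma_\theta}\,|\dot\gamma_\theta|\,\d\theta\ge|\phi(v)-\phi(u_t)|$ --- by the very upper-gradient property you invoke via \eqref{eq:cap1:89} and \eqref{eq:Cap12:24} --- which is exactly the lossy slope-times-distance bound you set out to beat: the skew curve is still a concatenation/triangle-inequality device at infinitesimal scale, so it reproduces that loss instead of removing it. Your proposed fix (dense $v$ with finite slope, passing through the integral form) addresses only the finiteness degeneracies at $v$ and at interior points $\gamma_\theta$, not this sign obstruction. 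Any successful transfer must let $\phi$ enter linearly, with its sign, through the EVI inequality itself evaluated at comparison points moving along (a suitably regularized version of) the connecting curve, rather than through nonnegative length bookkeeping of a flowed curve; the argument in the cited reference is accordingly of a genuinely different nature.
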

The restriction to equivalence classes in the previous Theorem is due
to the fact that our definition of \EVIshort\lambda-gradient flow
just refers to distances. If $\AmbientSpace$ is Lipschitz connected, then
$\AmbientSpace$ contains only one equivalence class of $\sim_\ell$
and therefore $(\sfS_t)_{t\ge0}$ is an $\EVIshort\lambda$-gradient flow
of $\phi$ in $X$ w.r.t.~$\lDistName$. Note that Theorem \ref{thm:self-improvement}, as a byproduct, provides an alternative way to prove the identity $ \MetricSlope{\phi}{u_t} = \lMetricSlope{\phi}{u_t} $ (at least under the corresponding assumptions).

A result of \cite{Daneri-Savare08} (see Theorem 3.2 there)
shows, in particular, that if $ D \subset \overline{\Dom{\phi}} $ is a
geodesic subset and $\phi$ admits an \EVIshort\lambda-gradient flow in $ D $, then $\phi$ is strongly $\lambda$-convex in $ D $. An analogous property is enjoyed by approximate length subsets. 

\begin{theorem}[$\EVIshort\lambda$-gradient flows entail $\lambda$-convexity] \label{thm:Daneri}
  Suppose that $\phi$ admits an \EVIshort\lambda-gradient flow
  in the subset $D\subset \overline{\Dom\phi}$. Then the following hold: 
  \begin{enumerate}[(1)]
  \item \label{e-a} if $ \Geodesic_0,\Geodesic_1 \in D \cap \Dom{\phi} $, $ \vartheta,\eps \in (0,1) $ and $\Geodesic_{\vartheta,\eps} \in D $ is a $(\vartheta,\eps)$-intermediate point between them,
    \begin{equation}
      \label{eq:94}
      \phi(\FlowName_t(\Geodesic_{\vartheta,\eps})) \le (1-\vartheta)\phi(\Geodesic_0)+\vartheta\phi(\Geodesic_1)
      -\frac 12 \left(\lambda-\frac{\eps^2}{\sfE{\lambda}t}\right)\vartheta(1-\vartheta)\DistSquare{\Geodesic_0}{\Geodesic_1} \quad \forevery t>0; 
    \end{equation}
  \item \label{e-b} if $D$ is an approximate length subset then $\phi$
    is approximately  $\lambda$-convex in $D$;
  \item \label{e-new}
    if $D=X=\overline{\Dom\phi}^{\lDistName}$, then $\phi$ is approximately $\lambda$-convex w.r.t.~the distance
    $\lDistName$ on each equivalence class of $\sim_\ell$;
  \item \label{e-c} if $\Geodesic_0,\Geodesic_1 \in D \cap \Dom\phi$ and $\Gdc \in \GeoCon D{\Geodesic_0}{\Geodesic_1}$
    then $\Gdc\in \GeoConPhi D\lambda{\Geodesic_0}{\Geodesic_1}$; in particular, if
    for every $\Geodesic_0,\Geodesic_1\in D\cap\Dom\phi$ the set $\GeoCon D{\Geodesic_0}{\Geodesic_1}$
    is not empty, then $\phi$ is strongly $\lambda$-convex in $D$.
  \end{enumerate}
\end{theorem}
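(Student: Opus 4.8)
The plan is to establish \eqref{eq:94} first and then deduce the remaining three assertions from it by limiting arguments. To prove \eqref{eq:94}, fix $\Geodesic_0,\Geodesic_1\in D\cap\Dom\phi$ and a $(\vartheta,\eps)$-intermediate point $\Geodesic_{\vartheta,\eps}\in D$, and consider the trajectory $u_t:=\FlowName_t(\Geodesic_{\vartheta,\eps})$, which solves $\EVI\AmbientSpace\DistName\phi\lambda$ with $u_0=\Geodesic_{\vartheta,\eps}$. I would apply the a priori estimate \eqref{eq:2}, discarding the nonnegative slope term, with the two admissible test points $v=\Geodesic_0$ and $v=\Geodesic_1$, to get $\rme^{\lambda t}\DistSquare{u_t}{\Geodesic_i}\le\DistSquare{\Geodesic_{\vartheta,\eps}}{\Geodesic_i}+2\sfE\lambda t(\phi(\Geodesic_i)-\phi(u_t))$ for $i=0,1$, and then take the convex combination with weights $1-\vartheta$ and $\vartheta$.

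The two distance contributions are then estimated separately. On the right-hand side I would use the algebraic identity $(1-\vartheta)a^2+\vartheta b^2=\vartheta(1-\vartheta)\big(a^2/\vartheta+b^2/(1-\vartheta)\big)$, with $a=\Dist{\Geodesic_{\vartheta,\eps}}{\Geodesic_0}$ and $b=\Dist{\Geodesic_{\vartheta,\eps}}{\Geodesic_1}$, together with the defining inequality \eqref{eq:89} of a $(\vartheta,\eps)$-intermediate point, obtaining the bound $\vartheta(1-\vartheta)\DistSquare{\Geodesic_0}{\Geodesic_1}(1+\eps^2\vartheta(1-\vartheta))$. On the left-hand side the crucial estimate is $(1-\vartheta)\DistSquare{u_t}{\Geodesic_0}+\vartheta\DistSquare{u_t}{\Geodesic_1}\ge\vartheta(1-\vartheta)\DistSquare{\Geodesic_0}{\Geodesic_1}$, which follows at once from $((1-\vartheta)p-\vartheta q)^2\ge0$ with $p=\Dist{u_t}{\Geodesic_0}$, $q=\Dist{u_t}{\Geodesic_1}$, together with the triangle inequality $p+q\ge\Dist{\Geodesic_0}{\Geodesic_1}$. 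After collecting terms, dividing by $\sfE\lambda t>0$ and using $(1-\rme^{\lambda t})/\sfE\lambda t=-\lambda$ (recall \eqref{eq:cap2:11}), \eqref{eq:94} follows, in fact with the marginally sharper factor $\eps^2\vartheta(1-\vartheta)$ in place of $\eps^2$. Assertion \eqref{e-c} is then immediate: for $\Gdc\in\GeoCon D{\Geodesic_0}{\Geodesic_1}$ each $\Gdc_\vartheta$ is an exact intermediate point, hence $(\vartheta,\eps)$-intermediate for \emph{every} $\eps$, so letting $\eps\downarrow0$ in \eqref{eq:94} at fixed $t$ and then $t\downarrow0$, the continuity of $\FlowName_t$ and the lower semicontinuity of $\phi$ give \eqref{eq:cap1:60}; strong $\lambda$-convexity when all pairs are joined by geodesics follows.

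Assertion \eqref{e-b} is the delicate one and is where I expect the main difficulty. Given $\vartheta,\eps$, I would take a $(\vartheta,\eps')$-intermediate point $y\in D$ from the approximate length hypothesis and set $z:=\FlowName_t(y)$; by \eqref{eq:94}, $z$ already satisfies the energy inequality \eqref{eq:cap1:60-bis} with constant $\lambda-\eps'^2/\sfE\lambda t$, so it remains to ensure simultaneously that (i) $\eps'^2\le\eps\,\sfE\lambda t$ and (ii) $z$ is itself $(\vartheta,\eps)$-intermediate. These pull $t$ in opposite directions, and the resolution is to fix $\eps'\ll\eps$ first and only afterwards choose $t$ in an admissible window. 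To verify (ii) I would again add the two EVI estimates, arriving at $\DistSquare{z}{\Geodesic_0}/\vartheta+\DistSquare{z}{\Geodesic_1}/(1-\vartheta)\le\rme^{-\lambda t}\DistSquare{\Geodesic_0}{\Geodesic_1}(1+\eps'^2\vartheta(1-\vartheta))+2\rme^{-\lambda t}\sfE\lambda t\,K$; the constant $K$ is finite provided $z$ remains in a fixed bounded set, which holds because $\phi$ is quadratically bounded from below, a consequence of the mere existence of the flow through the linear-boundedness statement of Theorem \ref{thm:main1}. Since the right-hand side tends to $\DistSquare{\Geodesic_0}{\Geodesic_1}(1+\eps'^2\vartheta(1-\vartheta))$ as $t\downarrow0$, condition (ii) holds for all small $t$; choosing $\eps'$ small enough that the lower bound $\eps'^2/\eps$ on $\sfE\lambda t$ required by (i) falls below the threshold on $\sfE\lambda t$ imposed by (ii) makes both constraints compatible.

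Finally, \eqref{e-new} follows by transporting \eqref{e-b} to the length metric: by the self-improvement Theorem \ref{thm:self-improvement}, $(\FlowName_t)$ is also an $\EVIshort\lambda$-flow for $\lDistName$ on each equivalence class of $\sim_\ell$, and every such class is a length space for $\lDistName$, hence in particular an approximate length subset; applying \eqref{e-b} in $(\AmbientSpace,\lDistName)$ then yields approximate $\lambda$-convexity with respect to $\lDistName$.
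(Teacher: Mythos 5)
Your proof is correct and its architecture coincides with the paper's: \eqref{eq:94} is obtained exactly as in the paper (test the a priori estimate \eqref{eq:2} along $u_t=\FlowName_t(\Geodesic_{\vartheta,\eps})$ with $v=\Geodesic_0$ and $v=\Geodesic_1$, take the convex combination with weights $1-\vartheta$, $\vartheta$, and invoke \eqref{eq:89}; your sharper factor $\eps^2\vartheta(1-\vartheta)$ is a correct bonus), part \eqref{e-b} is proved by flowing a $(\vartheta,\eps')$-intermediate point for a short time and resolving the two competing constraints in the same order as the paper (the admissible time threshold for the intermediacy of $\FlowName_t(y)$ is uniform in $\eps'\le\eps/2$, after which $\eps'$ small makes the error term in \eqref{eq:94} at most $\eps$), and part \eqref{e-new} follows from Theorem \ref{thm:self-improvement} exactly as in the paper. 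Two steps deviate locally, both legitimately. First, to check that $z=\FlowName_t(y)$ is still $(\vartheta,\eps)$-intermediate, the paper does not re-use \eqref{eq:2}: it uses only the $\lambda$-contraction \eqref{eq:cap1:81} together with the triangle inequality, $\Dist{\FlowName_t(\Geodesic_{\vartheta,\delta})}{\Geodesic_i}\le\rme^{-\lambda t}\Dist{\Geodesic_{\vartheta,\delta}}{\Geodesic_i}+\Dist{\FlowName_t(\Geodesic_i)}{\Geodesic_i}$, and verifies the sufficient condition \eqref{eq:88}; this is lighter than your route, which requires a lower bound for $\phi(z)$ and hence the quadratic boundedness from Theorem \ref{thm:main1} (note also that the boundedness of $z$ for small $t$ comes from the contraction and the continuity of the flow at $t=0$, not from the lower bound on $\phi$ as your wording suggests, and that the degenerate case $\Geodesic_0=\Geodesic_1$ should be dispatched separately by taking the point itself as intermediate point --- a trivial lacuna shared by the paper's sketch). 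Second, for part \eqref{e-c} the paper is terse: it asserts that \eqref{e-c} ``follows from \eqref{e-b}'' and points to Daneri--Savar\'e, whereas your derivation directly from \eqref{eq:94} (each $\Gdc_\vartheta$ on a geodesic is $(\vartheta,\eps)$-intermediate for every $\eps$, so let $\eps\down0$ at fixed $t$ and then $t\down0$, using \eqref{eq:18} and the lower semicontinuity of $\phi$) is self-contained and arguably cleaner, since approximate $\lambda$-convexity alone produces \emph{some} intermediate point and does not force the energy inequality \eqref{eq:cap1:60} along the prescribed geodesic.
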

\begin{proof}
Property \eqref{e-c} follows from \eqref{e-b}, and in fact had already
been established in \cite[Theorem 3.2]{Daneri-Savare08}, from which
one can borrow tools to prove \eqref{e-a}. The main idea consists in
evaluating \eqref{eq:2} with $ u_t =
\FlowName_t(\Geodesic_{\vartheta,\eps}) $ and $ v=\Geodesic_0 $,
multiply it by $ (1-\vartheta) $ and sum it to the analogous estimate
one obtains by plugging $ v=\Geodesic_1 $ instead, multiplied by $
\vartheta $. By exploiting \eqref{eq:89}, it is then not difficult to
deduce the validity of \eqref{eq:94}.

As for \eqref{e-b}, just note that $ \Dist{\FlowName_t(\Geodesic_{\vartheta,\delta})}{\Geodesic_i} \le
\rme^{-\lambda t} \Dist{\Geodesic_{\vartheta,\delta}}{\Geodesic_i} +
\Dist{\FlowName_t(\Geodesic_i)}{\Geodesic_i} $ ($ i=0,1 $). Hence for
any fixed $ \eps \in (0,1) $, one sees that $
\FlowName_t(\Geodesic_{\vartheta,\delta}) $ is a $
(\vartheta,\eps) $-intermediate point
up to picking $\delta\le \eps/2$ and $t>0$ sufficiently small independently of $ \delta $.
Choosing now $\delta\le (\eps \sfE{\lambda}t)^{1/2}$, thanks to \eqref{eq:94} we see that
$\FlowName_t(\Geodesic_{\vartheta,\delta})  $ also satisfies 
\eqref{eq:cap1:60-bis}.

Finally, \eqref{e-new} is an immediate consequence of \eqref{e-b} and Theorem \ref{thm:self-improvement}.
\end{proof} 

We conclude this subsection by an alternative \emph{local} formulation of gradient flows, which becomes a characterization in case
$\phi$ is geodesically $\lambda$-convex for some $\lambda\in \R$. This leads to a definition which is actually independent of $\lambda$. To this aim let us first introduce, for a given geodesic $\sfv$ starting from $\sfv_0=u_{t_0}$, the following quantity:
\begin{equation} 
  \label{eq:cap1:77}
  \begin{aligned}
    \DirDistDer u{t_0}\sfv:= \lim_{s\down0}\frac 1{2s}\Urd
    \DistSquare{u_t}{\sfv_s}\restr{t=t_0} = & \sup_{0<s\le 1}\frac
    1{2s}\Urd \DistSquare{u_t}{\sfv_s}\restr{t=t_0}
    \\=&\sup_{0<s\le
      1}\limsup_{t\down t_0}\frac {\DistSquare{u_t}{\sfv_s}
      -\DistSquare{u_{t_0}}{\sfv_s} }{2s(t-t_0)}.
  \end{aligned}
\end{equation}
The identity between $\sup$ and $\lim$ in \eqref{eq:cap1:77}
will be justified through Lemma \ref{le:UDer_monotone} below.

\begin{proposition}[Local characterization of $ \EVIname $]
  Let $u:(0,+\infty)\to \Dom\phi$ be a continuous 
  curve. If $u$ is a solution of $\EVI\AmbientSpace\DistName\phi\lambda$ 
  according to Definition \ref{def:GFlow}, then 
  for every $t>0$ and every geodesic $\sfv$ emanating
  from $\sfv_0=u_t\in \Dom\phi $ there holds
  \begin{equation} 
    \label{eq:cap1:80}
    \DirDistDer ut\sfv\le \DirSuperDer\phi{u_t}{\sfv}.
  \end{equation}
  Conversely, if $u$ satisfies \eqref{eq:cap1:80} and $\phi$ is $\lambda$-convex, then
  $u$ is a solution of $\EVI\AmbientSpace\DistName\phi\lambda$.
\end{proposition}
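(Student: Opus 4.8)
The plan is to prove the two implications separately, in both cases linking the pointwise inequality \eqref{eq:EVI} to the directional derivative of $\phi$ through points moving along the geodesic $\sfv$ and the $\sup$--$\lim$ identity built into the definition \eqref{eq:cap1:77}.

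For the direct implication, fix $t_0>0$ and a geodesic $\sfv$ with $\sfv_0=u_{t_0}$. We may assume $\DirSuperDer\phi{u_{t_0}}{\sfv}<+\infty$, since otherwise \eqref{eq:cap1:80} is trivial; this forces $\sfv_s\in\Dom\phi$ along a sequence $s\downarrow0$. For such $s$ I would test \eqref{eq:EVI} at $t=t_0$ with $v=\sfv_s$, divide by $s$ and isolate the derivative term to obtain
\[
\frac1{2s}\Urd\DistSquare{u_t}{\sfv_s}\restr{t=t_0}
\le \frac{\phi(\sfv_s)-\phi(\sfv_0)}{s}-\frac{\lambda}{2s}\DistSquare{\sfv_0}{\sfv_s}.
\]
Since $\sfv$ has constant speed, $\DistSquare{\sfv_0}{\sfv_s}=s^2|\dot\sfv|^2$, so the last term equals $\tfrac{\lambda s}2|\dot\sfv|^2$ and vanishes. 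Letting $s\downarrow0$, the left-hand side converges to $\DirDistDer u{t_0}\sfv$ (the limit exists by \eqref{eq:cap1:77}, which relies on Lemma \ref{le:UDer_monotone}), while the $\liminf$ of the right-hand side equals $\DirSuperDer\phi{u_{t_0}}{\sfv}$ because the vanishing summand does not affect the $\liminf$; this is exactly \eqref{eq:cap1:80}.

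For the converse, fix $t_0>0$ and $v\in\Dom\phi$. Using the $\lambda$-convexity of $\phi$ (Definition \ref{def:lambda_convexity}), select a geodesic $\sfv\in\GeoConPhi\AmbientSpace\lambda{u_{t_0}}v$ from $u_{t_0}$ to $v$; then \eqref{eq:cap1:61} gives
\[
\DirSuperDer\phi{u_{t_0}}{\sfv}\le\phi(v)-\phi(u_{t_0})-\frac\lambda2\DistSquare{u_{t_0}}v.
\]
Taking $s=1$ in the supremum \eqref{eq:cap1:77} yields $\tfrac12\Urd\DistSquare{u_t}v\restr{t=t_0}\le\DirDistDer u{t_0}\sfv$, since $\sfv_1=v$; chaining this with the hypothesis \eqref{eq:cap1:80} and the displayed bound produces precisely \eqref{eq:EVI} at $t=t_0$. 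As $t_0>0$ and $v\in\Dom\phi$ are arbitrary and $u_t\in\Dom\phi$ throughout, $u$ solves $\EVI\AmbientSpace\DistName\phi\lambda$.

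The calculations themselves are light; the only delicate point is the limit $s\downarrow0$ in the direct part, where I must invoke the monotonicity of $s\mapsto\frac1{2s}\Urd\DistSquare{u_t}{\sfv_s}\restr{t=t_0}$ and the attendant $\sup$--$\lim$ identity from Lemma \ref{le:UDer_monotone}, and carefully handle the $\liminf$ of a sum one of whose terms vanishes. In the converse, the single essential use of $\lambda$-convexity is to guarantee a geodesic realizing \eqref{eq:cap1:61}; dropping it leaves only the one-sided inequality \eqref{eq:cap1:80}.
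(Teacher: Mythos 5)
Your proposal is correct and follows essentially the same route as the paper: testing \eqref{eq:EVI} with $v=\sfv_s$, dividing by $s$ and passing to the limit via the monotonicity of $s\mapsto s^{-1}\Urd\DistSquare{u_t}{\sfv_s}$ from Lemma \ref{le:UDer_monotone} for the direct implication, and for the converse selecting $\sfv\in\GeoConPhi\AmbientSpace\lambda{u_{t_0}}v$, chaining \eqref{eq:cap1:77} at $s=1$ with \eqref{eq:cap1:80} and \eqref{eq:cap1:61}. Your extra care about $\sfv_s\in\Dom\phi$ along a sequence and about the $\liminf$ of the sum is a slightly more explicit rendering of the paper's ``without loss of generality'' remark, not a different argument.
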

\begin{proof}
  Let the continuous curve
  $u:(0,+\infty)\to \Dom\phi $ satisfy $\EVI\AmbientSpace\DistName\phi\lambda$
  and $\sfv$ be a geodesic emanating from $u_{t_0}$, for any $ t_0>0 $. By plugging $ v=\sfv_s $ in \eqref{eq:EVI} (one can assume with no loss of generality that $ \sfv_s \in \Dom \phi $), for every $ s \in (0,1] $ we get 
  \begin{equation*}
    \label{eq:cap2:7}
    \frac 12\Urd \DistSquare{u_t}{\sfv_s} \restr{t=t_0} \le \phi(\sfv_s)-\phi(u_{t_0})-
    \frac {\lambda  s^2}2\DistSquare{u_{t_0}}{\sfv_1} .
  \end{equation*}
  Dividing by $s$ and passing to the limit as $s\down0$, we end up with 
  \begin{displaymath}
    \DirDistDer u {t_0} \sfv
    =\lim_{s\down0}\frac 1{2s} \Urd \DistSquare{u_t}{\sfv_s} \restr{t=t_0} \le
    \liminf_{s\down0}
    \left(\frac{\phi(\sfv_s)-\phi(u_{t_0})}s-
    \frac {\lambda  s}2\DistSquare{u_{t_0}}{\sfv_1}\right)=
    \DirSuperDer\phi{u_{t_0}}\sfv.
  \end{displaymath}  
  Suppose now that $u$ satisfies \eqref{eq:cap1:80} and $\phi$ is $\lambda$-convex;
  for every $t_0>0$ and $v\in \Dom\phi $, let $\sfv$ be an admissible geodesic in
  $\GeoConPhi\AmbientSpace\lambda{u_{t_0}}v$.
  By the definition of $\DirDistDer ut\sfv$ and
  \eqref{eq:cap1:61}, we obtain:
  \begin{displaymath}
    \frac 12\Urd \DistSquare{u_t}v\restr{t=t_0} \topref{eq:cap1:77} \le
    \DirDistDer u{t_0}\sfv\le \DirSuperDer\phi{u_{t_0}}\sfv
    \le \phi(v)-\phi(u_{t_0})-\frac{\lambda} 2\DistSquare{u_{t_0}}v.\qedhere
  \end{displaymath}
\end{proof} 

\begin{corollary}[$ \EVIname_\lambda $ with different $ \lambda $]
  \label{cor:trivial}
  Let $\lambda_1<\lambda_2$ and 
  let $u$ be a solution of $\EVI\AmbientSpace\DistName\phi{\lambda_1}$ 
  according to Definition \ref{def:GFlow}.
  If $\phi$ is $\lambda_2$-convex, then
  $u$ is a also solution of $\EVI\AmbientSpace\DistName\phi{\lambda_2}.$ 
\end{corollary}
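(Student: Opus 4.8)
The plan is to route the argument through the local characterization of solutions established in the immediately preceding Proposition, exploiting the essential fact that the local differential inequality \eqref{eq:cap1:80} carries no trace of the convexity parameter.

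First, since $u$ solves $\EVI\AmbientSpace\DistName\phi{\lambda_1}$, it is by Definition \ref{def:GFlow} a continuous curve with values in $\Dom\phi$; I would therefore apply the forward implication of that Proposition to conclude that, for every $t_0>0$ and every geodesic $\sfv$ emanating from $\sfv_0=u_{t_0}$, the inequality $\DirDistDer u{t_0}\sfv\le \DirSuperDer\phi{u_{t_0}}\sfv$ of \eqref{eq:cap1:80} holds. The crucial point --- which is what makes the whole corollary work --- is that, although the proof of that implication begins with the $\lambda_1$-dependent inequality \eqref{eq:EVI}, the only place $\lambda_1$ enters is through a term of order $s^2$; after dividing by $s$ and letting $s\down0$ this contribution vanishes, so the resulting condition \eqref{eq:cap1:80} is completely independent of $\lambda_1$.

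Second, now that \eqref{eq:cap1:80} is available and $\phi$ is by hypothesis $\lambda_2$-convex, I would invoke the converse implication of the very same Proposition, this time with parameter $\lambda_2$, to obtain directly that $u$ is a solution of $\EVI\AmbientSpace\DistName\phi{\lambda_2}$. This closes the argument.

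There is essentially no obstacle here: the entire content is the observation that the intermediate local condition \eqref{eq:cap1:80} is $\lambda$-agnostic, which is precisely what allows the two halves of the Proposition to be chained across distinct parameters. One may note that the ordering hypothesis $\lambda_1<\lambda_2$ plays no active role in the deduction itself; it merely records the natural setting, since $\lambda_2$-convexity is a stronger requirement than $\lambda_1$-convexity and, conversely, any $\EVI\AmbientSpace\DistName\phi{\lambda_2}$ solution is trivially an $\EVI\AmbientSpace\DistName\phi{\lambda_1}$ solution because $\DistSquare{u_t}v\ge 0$.
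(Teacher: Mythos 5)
Your proposal is correct and takes essentially the same route as the paper: the corollary is stated there without a separate proof precisely because it follows by chaining the forward implication of the preceding Proposition (applied with $\lambda_1$) to its converse (applied with $\lambda_2$), through the $\lambda$-independent local condition \eqref{eq:cap1:80}. Your supporting observations --- that the $\lambda_1$-dependence enters only at order $s^2$ and vanishes in the limit $s\down0$, and that the hypothesis $\lambda_1<\lambda_2$ is never actually used in the deduction --- are both accurate.
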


\begin{lemma}
  \label{le:UDer_monotone}
  Let $u:(0,+\infty)\to \AmbientSpace$ be a continuous curve
  and $\sfv$ be a given geodesic emanating from $\sfv_0:=u_t$
  for some $t>0$. Then the map 
  \begin{equation*}
    \label{eq:cap2:8}
    s \mapsto s^{-1} \, \Urd \DistSquare{u_t}{\sfv_s} \
    \text{is nonincreasing in $(0,1]$}.
  \end{equation*}
\end{lemma}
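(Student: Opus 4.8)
The plan is to reduce the statement about the upper right $t$-derivative to an elementary, \emph{fixed-endpoint} monotonicity property of the squared distance along $\sfv$, and then pass to the limit. Let $t_0>0$ be the base time, so that $\sfv_0=u_{t_0}$, and write $\ell:=\Dist{\sfv_0}{\sfv_1}=\MetricDerivative\sfv$ for the (constant) speed of $\sfv$; as in \eqref{eq:cap1:77} the symbol $\Urd\DistSquare{u_t}{\sfv_s}$ is understood as $\limsup_{t\down t_0}\big(\DistSquare{u_t}{\sfv_s}-\DistSquare{u_{t_0}}{\sfv_s}\big)/(t-t_0)$. The starting remark is that for every fixed point $w\in\AmbientSpace$ the function $\rho_w(s):=\Dist{w}{\sfv_s}$ is $\ell$-Lipschitz on $[0,1]$, since $|\rho_w(s)-\rho_w(s')|\le\Dist{\sfv_s}{\sfv_{s'}}=\ell\,|s-s'|$ by the triangle inequality and the constant-speed property of $\sfv$.

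The core of the argument is the following claim, valid for each fixed $w$ and needing only the Lipschitz bound above: the map
\[
  s\longmapsto G_w(s):=\frac1s\Big(\rho_w(s)^2-s^2\ell^2\Big)
\]
is nonincreasing on $(0,1]$. Indeed $G_w$ is locally Lipschitz on $(0,1]$, hence absolutely continuous on each $[\delta,1]$ with $\delta>0$ and differentiable a.e., with
\[
  G_w'(s)=\frac1{s^2}\Big(2s\,\rho_w(s)\rho_w'(s)-\rho_w(s)^2-s^2\ell^2\Big).
\]
Since $\rho_w\ge0$, $s>0$ and $|\rho_w'|\le\ell$ a.e., the Cauchy--Schwarz/AM--GM estimate $2s\,\rho_w\rho_w'\le 2(s\ell)\rho_w\le \rho_w^2+s^2\ell^2$ gives $G_w'(s)\le0$ a.e., whence $G_w$ is nonincreasing on each $[\delta,1]$ and therefore on $(0,1]$.

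It remains to feed this into the derivative. Because $\sfv_0=u_{t_0}$ we have $\DistSquare{u_{t_0}}{\sfv_s}=\DistSquare{\sfv_0}{\sfv_s}=s^2\ell^2$, so that $\DistSquare{u_t}{\sfv_s}-s^2\ell^2=s\,G_{u_t}(s)$ and hence
\[
  \frac1s\,\Urd\DistSquare{u_t}{\sfv_s}\restr{t=t_0}
  =\limsup_{t\down t_0}\frac{G_{u_t}(s)}{t-t_0}.
\]
Fix $0<s_1<s_2\le1$. For every $t>t_0$ the claim applied with $w=u_t$ yields $G_{u_t}(s_1)\ge G_{u_t}(s_2)$, so $G_{u_t}(s_1)/(t-t_0)\ge G_{u_t}(s_2)/(t-t_0)$; taking $\limsup_{t\down t_0}$ (which preserves pointwise inequalities in $[-\infty,+\infty]$) gives exactly $s_1^{-1}\,\Urd\DistSquare{u_t}{\sfv_{s_1}}\restr{t=t_0}\ge s_2^{-1}\,\Urd\DistSquare{u_t}{\sfv_{s_2}}\restr{t=t_0}$, which is the assertion. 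The only points requiring care are the passage from $G_w'\le0$ a.e.\ to monotonicity (handled by absolute continuity of $G_w$ on $[\delta,1]$) and the legitimacy of the final $\limsup$ step when the difference quotients are unbounded; the degenerate case $\ell=0$ (constant geodesic) is covered verbatim. I expect the AM--GM estimate for $G_w'$ to be the crux, while everything else is routine bookkeeping.
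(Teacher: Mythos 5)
Your proof is correct, but it follows a genuinely different route from the paper's. The paper first factorizes the upper Dini derivative of the \emph{squared} distance, writing $s^{-1}\,\Urd\DistSquare{u_t}{\sfv_s}=2\,|\dot\sfv|\,\Urd\Dist{u_t}{\sfv_s}$ (using continuity of $u$ and the base-point identity $\Dist{u_t}{\sfv_s}=s|\dot\sfv|$), and then shows $s\mapsto\Urd\Dist{u_t}{\sfv_s}$ is nonincreasing directly from the ordered difference quotients produced by the triangle inequality together with the geodesic additivity $\Dist{u_t}{\sfv_{s_2}}=\Dist{u_t}{\sfv_{s_1}}+\Dist{\sfv_{s_1}}{\sfv_{s_2}}$ — a two-line purely metric argument once the factorization is done. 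You instead stay at the level of squared distances and isolate the fixed-endpoint functional $G_w(s)=s^{-1}\big(\Dist{w}{\sfv_s}^2-s^2\ell^2\big)$, proving its monotonicity by a.e.\ differentiation plus AM--GM using only the $\ell$-Lipschitz bound on $s\mapsto\Dist{w}{\sfv_s}$; the reduction via $\DistSquare{u_{t_0}}{\sfv_s}=s^2\ell^2$ and the final $\limsup$ passage are sound. What each approach buys: the paper's is shorter and avoids any differentiation machinery, but its product-factorization of the $\limsup$ implicitly needs $\Dist{u_t}{\sfv_s}>0$ (the constant-geodesic case $\ell=0$ would give a $0\cdot\infty$ form and needs separate, trivial, handling), whereas your argument treats $\ell=0$ uniformly and — notably — never uses the continuity of $u$, so it is marginally more general than the statement requires. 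Two minor remarks: your monotonicity claim for $G_w$ can even be made fully elementary, since for $s_1<s_2$ a direct computation with $\rho_w(s_1)\ge\big(\rho_w(s_2)-\ell(s_2-s_1)\big)^+$ gives $G_w(s_1)-G_w(s_2)\ge \frac{s_2-s_1}{s_1s_2}\big(\rho_w(s_2)-\ell s_2\big)^2\ge0$, dispensing with absolute continuity altogether; and your proof uses the geodesic property only through the Lipschitz bound and the base-point identity $\Dist{\sfv_0}{\sfv_s}=s\ell$ (which, by the triangle inequality, together in fact force the full geodesic property, so nothing is lost).
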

\begin{proof}
  Thanks to the continuity of $u$,
  we have:
  \begin{equation*}
    \label{eq:cap2:9}
    s^{-1} \, \Urd \DistSquare{u_t}{\sfv_s}=
    2s^{-1}\Dist{u_t}{\sfv_s}\limsup_{h\down 0}\frac{\Dist{u_{t+h}}{\sfv_s}
      -\Dist{u_t}{\sfv_s}}h= 
    2 |\dot \sfv| \, \Urd \Dist{u_t}{\sfv_s}.
  \end{equation*} 
  We are therefore left with proving that 
  $s\mapsto \Urd \Dist{u_t}{\sfv_s}$ is not increasing with respect to $s$. To this aim, note that if $s_1<s_2$ the triangular inequality
  and the geodesic property of $\sfv$ yield
  \begin{equation*} 
    \label{eq:cap2:4}
    \Dist{u_{t+h}}{\sfv_{s_2}}\le 
    \Dist{u_{t+h}}{\sfv_{s_1}}+
    \Dist{\sfv_{s_1}}{\sfv_{s_2}},\qquad
    \Dist{u_{t}}{\sfv_{s_2}}=
    \Dist{u_{t}}{\sfv_{s_1}}+
    \Dist{\sfv_{s_1}}{\sfv_{s_2}}, 
  \end{equation*}
  whence
  \begin{equation}
    \label{eq:cap2:10}
    \frac{\Dist{u_{t+h}}{\sfv_{s_2}}-       \Dist{u_{t}}{\sfv_{s_2}}}h\le
    \frac{\Dist{u_{t+h}}{\sfv_{s_1}}-       \Dist{u_{t}}{\sfv_{s_1}}}h.
  \end{equation}
  The thesis follows by passing to the limit in \eqref{eq:cap2:10} as $h \down 0$.
\end{proof}

\subsection{Examples}
\label{subsec:examples}
We already considered the case of $\lambda$-convex
functionals in Hilbert spaces in the Introduction;
many examples of applications can be found in \cite{Brezis71,Brezis71b,Brezis72,Barbu76,Showalter97}.
A discussion concerning the smooth Riemannian case can be found in
\cite[Proposition 23.1]{Villani09}.
\subsubsection[Hadamard NPC spaces, $\mathrm{CAT}(k)$ spaces and
  convexity along generalized geodesics]{Hadamard NPC spaces, $\boldsymbol{\mathrm{CAT}(k)}$ spaces and
  convexity along generalized geodesics}
\label{subsubsec:CAT}
One of the nicest metric setting where general existence of
\EVIshort\lambda-gradient flows can be proved is provided by the class
of Hadamard \emph{non
  positively curved} (NPC) metric spaces, which
we introduce here by one of their
equivalent characterizations \cite[Section 1.2]{Bacak14}.
A Hadamard (or $\mathrm{CAT}(0)$) space is a geodesic space (or set, recall Definition \ref{geo-geo})
such that the map $x\mapsto \frac 12 \DistSquare{y}{x}$ is strongly
$1$-convex for every $y\in \AmbientSpace$; that is, 
for every $\sfx\in \GeoCon{}{\sfx_0}{\sfx_1}$ and every $y\in
\AmbientSpace$ we have
\begin{equation}
  \label{eq:60}
  \DistSquare{y}{\sfx_t}\le (1-t)\DistSquare y{\sfx_0}+t\DistSquare
  y{\sfx_1}-
  t(1-t)\DistSquare{\sfx_0}{\sfx_1}.
\end{equation}
When $X$ is a smooth Riemannian manifold, the above condition
is equivalent to requiring the non-positivity of its sectional curvature.

If $\phi$ is geodesically convex in a Hadamard space, \textsc{Mayer} \cite{Mayer98} and \textsc{Jost}
\cite{Jost98} proved the convergence of the Minimizing
Movement scheme (recall (\ref{eq:50},b) or \eqref{eq:56}; see also Section \ref{sec:uniform_estimate}) to a
contraction semigroup $(\FlowName_t)_{t\ge0}$ in $\overline {\Dom\phi}$, using in
particular that the map $x\mapsto \Resolvent\tau x$ is a contraction;
they also provide nice applications to the Harmonic map flow in
Hadamard spaces.
The generation result has been extended to arbitrary $\mathrm{CAT}(k)$ spaces,
$k\in \R$,
by \cite{Ohta-Palfia17}.
In this framework, the link between the limit contraction semigroup
(obtained by the convergence of the variational scheme) with the \EVIshort\lambda-formulation 
has been clarified by \cite{Ambrosio-Gigli-Savare08}, with two further
important developments:
the optimal error estimate of order 1 (as in Hilbert spaces, \cite{Baiocchi89,Nochetto-Savare-Verdi00})
\begin{equation}
  \label{eq:62}
  \Dist{\Flow t{u_0}}{\mathrm J^n_{t/n}(u_0)}\le \frac{t}{n\sqrt 2}\MetricSlope\phi{u_0} 
\end{equation}
and a considerably weaker assumption on the system $\System$.
It is in fact sufficient that for every triple of points $\sfx_0,\sfx_1,y\in
\Dom\phi$ there exists a curve $\sfx:[0,1]\to \AmbientSpace$
connecting $\sfx_0$ to $\sfx_1$ (and possibly depending on $y$) along which 
\eqref{eq:60} holds and $\phi$ satisfies the $\lambda$-convexity
inequality \eqref{eq:cap1:60}. This condition clearly covers the case of Hadamard NPC spaces but also
allows for important applications to the
Kantorovich-Rubinstein-Wasserstein space $(\ProbabilitiesTwo {\R^d},\WDistName)$,
which is not an NPC space if $d\ge 2$: a large class of interesting
functionals in $ \ProbabilitiesTwo {\R^d} $ are in fact \emph{convex along generalized geodesics}, we refer to
\cite[Section 11.2]{Ambrosio-Gigli-Savare08} for more details.

We can recap the above discussion in the following result:
\begin{theorem}[Existence of $\EVIshort\lambda$-flows in
  $\mathrm{CAT}$ spaces]
  \label{thm:CAT}
  Let $\System$ be a metric-functional system, for which
  $(\AmbientSpace,\DistName)$ is a complete geodesic metric space.
  Let us assume that at least one of the
  following two conditions holds:
  \begin{enumerate}
  \item $(X,\DistName)$ is a $\mathrm{CAT}(k)$ space
    for some $k\in \R$ and $\phi$ is $\lambda$-convex;
  \item for every $\sfx_0,\sfx_1,y\in \Dom\phi$
    there exists a curve $\sfx:[0,1]\to\Dom\phi$ connecting $\sfx_0$ to
    $\sfx_1$ such that
    \eqref{eq:60} and \eqref{eq:cap1:60} hold.
  \end{enumerate}
  Then the functional $\phi$ generates an $\EVIshort\lambda$-flow in
  $\overline{\Dom\phi}$,
  according to Definition \ref{def:GFlow}.  
\end{theorem}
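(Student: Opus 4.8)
The plan is to reduce both hypotheses to the single ``generalized geodesic'' condition $(2)$ and then to construct, for every initial datum, a solution of $\EVI\AmbientSpace\DistName\phi\lambda$ by the Minimizing Movement scheme \eqref{eq:56}, finally assembling these solutions into a flow through Corollary \ref{cor:gf}. First I would carry out the reduction $(1)\Rightarrow(2)$. In a Hadamard ($\mathrm{CAT}(0)$) space geodesics are unique and \eqref{eq:60} holds globally for every $y$, so the unique geodesic joining $\sfx_0$ to $\sfx_1$ is a single curve along which both \eqref{eq:60} and, by $\lambda$-convexity of $\phi$, the inequality \eqref{eq:cap1:60} hold, which is exactly $(2)$. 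For $\mathrm{CAT}(k)$ with $k>0$ the squared distance is strongly convex only on balls of radius $<\pi/(2\sqrt k)$, and \eqref{eq:60} must be read in its localized comparison form (the setting of \cite{Ohta-Palfia17}); since $\phi$ is $\lambda$-convex it is quadratically bounded from below by Theorem \ref{le:bfb}, so the iterates of the scheme stay in a fixed sublevel of controlled diameter on which geodesics are unique and $(2)$ holds. This confinement is the first delicate point.

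Under $(2)$ the construction runs as follows. Because $\phi$ is in particular $\lambda$-convex, Theorem \ref{le:bfb} gives $\tau_o>0$, so that for every $\tau$ with $1+\lambda\tau>0$ the functional $F(w):=\tfrac1{2\tau}\DistSquare wV+\phi(w)$ is $(\lambda+\tfrac1\tau)$-convex along the curves furnished by $(2)$. Testing this convexity at midpoints yields $\tfrac{\lambda+1/\tau}{8}\DistSquare{w_0}{w_1}\le \tfrac12\big(F(w_0)+F(w_1)\big)-\inf F$, whence every minimizing sequence for $F$ is Cauchy and the resolvent $U=\mathrm J_\tau(V)$ is well defined \emph{without any compactness assumption}; this is the crucial use of the geometry. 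The key discrete estimate is then obtained by testing minimality against the curve $\sfx$ joining $U$ to an arbitrary $w\in\Dom\phi$ associated (via $(2)$) with $y=V$, namely
\[
\frac1{2\tau}\DistSquare UV+\phi(U)\le \frac1{2\tau}\DistSquare{\sfx_s}V+\phi(\sfx_s);
\]
inserting \eqref{eq:60} with $y=V$ and \eqref{eq:cap1:60}, dividing by $s$ and letting $s\downarrow0$, we arrive at the discrete Evolution Variational Inequality
\[
\frac1{2\tau}\DistSquare UV+\frac1{2\tau}\DistSquare Uw+\frac\lambda2\DistSquare Uw+\phi(U)\le \frac1{2\tau}\DistSquare wV+\phi(w).
\]

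From this discrete inequality I would run the Crandall--Liggett \cite{Crandall-Liggett71} / Ambrosio--Gigli--Savar\'e \cite{Ambrosio-Gigli-Savare08} argument: it produces a discrete $\lambda$-contraction and a comparison between interpolants with different step sizes, showing that the piecewise-constant interpolants $\Pc U\tau$ form a Cauchy family for uniform convergence on compact subintervals and converge to a curve $u$ with $\lim_{t\downarrow0}u_t=u_0$. Passing to the limit in the discrete analogue of \eqref{eq:cap1:78bis} shows that $u$ satisfies the integral formulation \ICB, hence is a genuine solution of $\EVI\AmbientSpace\DistName\phi\lambda$ by Theorem \ref{thm:uniqueness}; when $u_0\in\DomainSlope\phi$ the same comparison yields the order-one error estimate \eqref{eq:62}. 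Having produced a solution for every $u_0\in\Dom\phi$, Corollary \ref{cor:gf} (using completeness of the sublevels) extends them by density to the unique $\EVIshort\lambda$-gradient flow $\FlowName_t$ on $\overline{\Dom\phi}$, and Theorem \ref{thm:main1} then supplies all its structural properties.

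The main obstacle is precisely the passage to the limit in the absence of compactness: both the well-posedness of $\mathrm J_\tau$ and the Cauchy estimate for the interpolants rest entirely on the strong convexity encoded in \eqref{eq:60} along the $y$-dependent curves, with the discrete EVI acting as the exact mechanism that transfers this convexity into a contraction at the continuous level. The secondary difficulty is the $k>0$ localization discussed above, where \eqref{eq:60} holds only on balls of bounded radius and one must guarantee a priori that the whole scheme remains confined to such a region.
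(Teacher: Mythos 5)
You should first be aware that the paper itself contains no proof of Theorem \ref{thm:CAT}: it is stated as a recap of known results, with condition (2) covered by \cite[Chapter 4]{Ambrosio-Gigli-Savare08} (and \cite{Mayer98,Jost98} for the Hadamard case) and the $\mathrm{CAT}(k)$ case by \cite{Ohta-Palfia17}. Your treatment of condition (2) is essentially the correct Ambrosio--Gigli--Savar\'e route: quadratic lower bound via Theorem \ref{le:bfb}, strong $(\lambda+\tfrac1\tau)$-convexity of the penalized functional along the $y$-dependent curves giving a Cauchy minimizing sequence and hence a well-posed resolvent without compactness, the discrete variational inequality obtained by testing against $\sfx_s$ and letting $s\down0$, convergence of the interpolants, passage to the limit in the integral formulation \ICB, and assembly of the flow on $\overline{\Dom\phi}$ via Corollary \ref{cor:gf}. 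That part is sound (modulo the substantial work hidden in the Crandall--Liggett/Cauchy comparison of interpolants, which in \cite{Ambrosio-Gigli-Savare08} is carried out via De Giorgi's variational interpolation rather than the raw Crandall--Liggett recursion).

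The genuine gap is your reduction of case (1) with $k>0$ to condition (2). The confinement argument does not work: quadratic boundedness from below, i.e.\ $\phi(x)\ge \phi_o-\tfrac{\kappa_o}2\DistSquare x\ooint$, gives no upper bound whatsoever on the diameter of sublevels --- they can be unbounded (take $X=\R^d$, which is $\mathrm{CAT}(k)$ for every $k\ge0$, and $\phi$ a linear functional, which is $0$-convex with unbounded sublevels), so you cannot force the scheme into a ball of radius $<\pi/(2\sqrt k)$; and since the theorem asserts the flow on all of $\overline{\Dom\phi}$ for arbitrary initial data, no such localization could suffice globally anyway. Moreover, even inside a small ball of a $\mathrm{CAT}(k)$ space with $k>0$ the squared distance is only semiconvex with a modulus strictly weaker than the coefficient $t(1-t)\DistSquare{\sfx_0}{\sfx_1}$ appearing in \eqref{eq:60} (the constant degenerates as the radius approaches $\pi/(2\sqrt k)$), so condition (2) is simply not recovered, not even locally. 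This is precisely why the $\mathrm{CAT}(k)$ case required the separate and genuinely harder argument of \cite{Ohta-Palfia17}, which works with the local semiconcavity/semiconvexity structure and refined resolvent expansions rather than with global convexity along generalized geodesics; your proposal as written proves the theorem only under condition (2), which subsumes $\mathrm{CAT}(0)$ but not $\mathrm{CAT}(k)$ with $k>0$.
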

\subsubsection{Alexandrov spaces}
\label{subsubsec:Alexandrov}
A second important class of metric spaces where $\lambda$-convex
functionals
generate an $\EVIshort\lambda$-flow is provided by Alexandrov spaces
with curvature bounded from below
\cite{Burago-Gromov-Perelman92,Gromov99,Sturm99,Burago-Burago-Ivanov01,Plaut02}: they
can be considered as the natural non-smooth metric version of 
Riemannian manifolds with sectional curvature bounded from below.
Referring to the above quoted papers for the general definition,
which is based on triangle comparison with the reference
$2$-dimensional model of constant curvature \cite{Alexandrov51},
here we limit ourselves to recalling one of the equivalent characterizations
of positively curved (PC) geodesic spaces:
the squared distance function $ x \mapsto \frac 12\DistSquare y x $ is
\mbox{$-1$-concave} along geodesics; more explicitly, 
for every $y\in \AmbientSpace$ and $\sfx\in \GeoCon{}{\sfx_0}{\sfx_1}$
we have
\begin{equation*}
  \label{eq:60bis}
  \DistSquare{y}{\sfx_t}\ge (1-t)\DistSquare y{\sfx_0}+t\DistSquare
  y{\sfx_1}-
  t(1-t)\DistSquare{\sfx_0}{\sfx_1}.
\end{equation*}
Existence of gradient flows for $\lambda$-convex functionals in complete
$m$-dimensional (in particular locally compact) Alexandrov spaces has
been proved by \cite{Perelman-Petrunin}. In \cite{Muratori-SavareIII} we will generalize this result to cover
arbitrary Alexandrov spaces and even more general cases,
characterized by a new geometric property,  weaker than the Alexandrov one,
based on a local angle condition
between geodesics
and on the semi-concavity of the squared distance
\cite{Savare07,Daneri-Savare14}
(see also \cite{Lytchak05} and \cite{Ohta09} for a different viewpoint).
Such an approach has also the advantage to be
stable with respect to the Wasserstein construction, in the sense that the above property is
shared by the Wasserstein space $(\ProbabilitiesTwo X,\WDistName)$. Note that the space $(\ProbabilitiesTwo X,\WDistName)$ constructed on an Alexandrov space $(X,\DistName)$
is not finite dimensional and fails to be an Alexandrov space if
$(X,\DistName)$ is not positively curved
(see \cite[Chapter 12.3]{Ambrosio-Gigli-Savare08}, \cite[Proposition 2.10]{Sturm06I}).

Referring to \cite{Muratori-SavareIII} for a more detailed discussion,
here we limit ourselves to stating the Alexandrov case:
\begin{theorem}[Existence of $\EVIshort\lambda$-flows in
  Alexandrov spaces with lower curvature bounds]
  \label{thm:Alexandrov}
  \hfill Let \\ $\System$ be a metric-functional system, for which
  $(\AmbientSpace,\DistName)$ is a complete geodesic metric space
  satisfying a uniform lower curvature bound in the sense of
  Alexandrov
  and $\phi$ is $\lambda$-convex.
  Then the functional $\phi$ generates an $\EVIshort\lambda$-flow in
  $\overline{\Dom\phi}$,
  according to Definition \ref{def:GFlow}.  
\end{theorem}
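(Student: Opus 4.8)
The plan is to construct the flow through the Minimizing Movement / JKO scheme \eqref{eq:56} and then to pass to the limit in a \emph{discrete} version of the Evolution Variational Inequality. Since $\phi$ is $\lambda$-convex with complete sublevels, Theorem \ref{le:bfb} ensures that it is quadratically bounded from below, so that for $\tau$ small enough the Moreau--Yosida functional $W\mapsto \tfrac1{2\tau}\DistSquare{W}{V}+\phi(W)$ is coercive; in the (finite-dimensional, hence locally compact) Alexandrov setting its minimizers exist by the direct method, giving a genuine resolvent map $\Resolvent\tau V$. Starting from $U^0_\tau=u_0\in\Dom\phi$ one generates $U^n_\tau\in\Resolvent\tau{U^{n-1}_\tau}$ and the piecewise-constant interpolant $\Pc U\tau$. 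The usual a priori estimates --- monotonicity of $n\mapsto\phi(U^n_\tau)$, the bound $\sum_n\DistSquare{U^n_\tau}{U^{n-1}_\tau}\le 2\tau\,(\phi(u_0)-\inf_B\phi)$ on a fixed sublevel $B$, and the resulting uniform H\"older control on $\Pc U\tau$ --- follow by comparison with the previous point, exactly as in Section \ref{sec:uniform_estimate}.

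The heart of the matter is a discrete Evolution Variational Inequality: for every $v\in\Dom\phi$ and every step,
\[
\frac1{2\tau}\Big(\DistSquare{U^n_\tau}{v}-\DistSquare{U^{n-1}_\tau}{v}\Big)+\frac\lambda2\DistSquare{U^n_\tau}{v}\le \phi(v)-\phi(U^n_\tau)+\omega(\tau),
\]
with an error $\omega(\tau)$ controlled uniformly as $\tau\downarrow0$. To obtain it, fix the minimizer $U:=U^n_\tau$ and a $\lambda$-convex geodesic $\sfx$ joining $U=\sfx_0$ to $v=\sfx_1$ (which exists by Definition \ref{def:lambda_convexity}, the space being geodesic); testing the minimality of $U$ against the intermediate points $\sfx_s$ and inserting the convexity inequality \eqref{eq:cap1:60}, one arrives for every $s\in(0,1]$ at
\[
s\big(\phi(U)-\phi(v)\big)+\frac\lambda2 s(1-s)\DistSquare{U}{v}\le \frac1{2\tau}\Big(\DistSquare{\sfx_s}{U^{n-1}_\tau}-\DistSquare{U}{U^{n-1}_\tau}\Big).
\]

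The remaining step --- and the genuine difficulty --- is to bound the right-hand side from above by the right combination of $\DistSquare{v}{U^{n-1}_\tau}$ and $\DistSquare{U}{v}$, i.e.~to estimate $\DistSquare{\sfx_s}{U^{n-1}_\tau}$ by $(1-s)\DistSquare{U}{U^{n-1}_\tau}+s\DistSquare{v}{U^{n-1}_\tau}-s(1-s)\DistSquare{U}{v}$. In the $\mathrm{CAT}$/upper-curvature case this is immediate, because the squared distance $\DistSquare\cdot{U^{n-1}_\tau}$ is \emph{convex} along geodesics (cf.~\eqref{eq:60}); here the situation is exactly the opposite, since an Alexandrov \emph{lower} curvature bound makes that function semiconcave and the comparison inequality points the wrong way. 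This is precisely why the naive computation fails and why one must replace it by the first-variation / local angle estimate for the resolvent point, following \cite{Perelman-Petrunin} in the locally compact case and \cite{Savare07,Daneri-Savare14} in general: the angle condition between $\sfx$ and the geodesic from $U$ to $U^{n-1}_\tau$ delivers the required upper bound up to an error of higher order in the step size. I expect this to be by far the most delicate point, and it is exactly the geometric content isolated in the companion paper \cite{Muratori-SavareIII}.

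Finally I would let $\tau\downarrow0$. Compactness of the interpolants $\Pc U\tau$ follows from the uniform H\"older bound together with local compactness of the Alexandrov space --- or, to dispense with compactness in more general cases, from the relaxed Minimizing Movement scheme and the estimates of Section \ref{sec:uniform_estimate}. Any limit curve $u$ inherits from the discrete inequality above and the lower semicontinuity of $\phi$ the integral formulation \eqref{eq:cap1:78bis} (condition \ICB) of Theorem \ref{thm:uniqueness}, hence is a solution of $\EVI\AmbientSpace\DistName\phi\lambda$ with $\lim_{t\downarrow0}u_t=u_0$. Since this holds for every $u_0\in\Dom\phi$, and $\Dom\phi$ is dense in $\overline{\Dom\phi}$, Corollary \ref{cor:gf} assembles the solution maps into an $\EVIshort\lambda$-gradient flow on $\overline{\Dom\phi}$ enjoying the $\lambda$-contraction property \eqref{eq:Cap12:1bis}, which is the desired conclusion; all the steps other than the angle/comparison estimate are the by-now standard machinery developed in this paper.
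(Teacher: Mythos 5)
Before anything else, note that the paper contains \emph{no proof} of Theorem \ref{thm:Alexandrov}: it appears in the Examples section as a survey statement, with the locally compact case attributed to \cite{Perelman-Petrunin} and the general case explicitly deferred to the companion paper \cite{Muratori-SavareIII}, where the generation theory based on a local angle condition between geodesics and the semiconcavity of the squared distance (in the spirit of \cite{Savare07,Daneri-Savare14}) is developed. So there is no in-paper argument to match; your sketch reconstructs the route of the cited literature, and does so largely accurately. In particular, your diagnosis of why the $\mathrm{CAT}$-style computation breaks is exactly right: under a lower curvature bound the map $x\mapsto \DistSquare{x}{U^{n-1}_\tau}$ is semiconcave rather than convex along geodesics, so the comparison inequality needed to close the discrete $\EVIname$ points the wrong way, and the first-variation/angle estimate must take its place.

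Two genuine gaps remain, however. First, the heart of the matter --- the angle/first-variation estimate for the resolvent point --- is asserted and cited, not proved. Since everything else in your sketch (coercivity via Theorem \ref{le:bfb}, a priori estimates, interpolants, passage to the integral formulation \ICB, assembly of the flow via Corollary \ref{cor:gf}) is standard machinery, the proposal as written establishes nothing beyond what the references contain; that is the same position the paper itself takes, but it leaves your ``proof'' without its only nontrivial ingredient. Second, and more concretely incorrect: your fallback for dispensing with compactness, namely invoking ``the relaxed Minimizing Movement scheme and the estimates of Section \ref{sec:uniform_estimate}'', is circular. All convergence and error estimates there (Theorem \ref{lem:Error-est-approx}, Corollaries \ref{cor:Error_Estimate} and \ref{cor:GMM=EVI}) are proved \emph{under the hypothesis that the \EVIshort\lambda-gradient flow already exists}; the Ekeland relaxation yields existence of the discrete iterates without compactness, but Section \ref{sec:uniform_estimate} provides no Cauchy-type comparison between two discrete families and hence no limit curve unless the flow is presupposed. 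Relatedly, your parenthetical ``finite-dimensional, hence locally compact'' silently restricts the argument to the Perelman--Petrunin setting, whereas the theorem is stated for arbitrary complete Alexandrov spaces with a uniform lower curvature bound, possibly infinite-dimensional --- precisely the generality that requires either a genuinely discrete Crandall--Liggett-type estimate or the generation results of \cite{Muratori-SavareIII}, neither of which is available inside the present paper.
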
  
\subsubsection[The Kantorovich-Rubinstein-Wasserstein space
  $(\ProbabilitiesTwo X,\WDistName)$]{The Kantorovich-Rubinstein-Wasserstein space
  $\boldsymbol{(\ProbabilitiesTwo X,\WDistName)}$}
\label{subsubsec:Wasserstein}
One of the main recent motivations to study gradient flows in metric
spaces
comes from the remarkable \textsc{Otto}'s
interpretation of the heat and Fokker-Planck
flows \cite{Jordan-Kinderlehrer-Otto98}
and of a large class of nonlinear diffusion equations \cite{Otto01}
as gradient flows of suitable entropy functionals in the
Kantorovich-Rubinstein-Wasserstein (KRW) space of probability
measures $(\ProbabilitiesTwo X,\WDistName)$. 

We assume for simplicity that $(X,\DistName)$ is complete and
separable. We denote by
$\Probabilities X$ the space of Borel probability measures on $X$ and by
$\ProbabilitiesTwo X$ the subset
of those $\mu\in \Probabilities X$ with finite quadratic moment:
\begin{equation*}
  \label{eq:64}
  \int_X \DistSquare{x}{o}\,\d\mu(x)<\infty\quad
  \text{for some (and thus any) $o\in\AmbientSpace$}.
\end{equation*}
The KRW-distance between two measures $\mu_1,\mu_2\in
\ProbabilitiesTwo X$ can be defined as
\begin{equation*}
  \label{eq:63}
  \WDistSquare{\mu_1}{\mu_2}:=\min\left\{
  \int_{X\times X}\DistSquare{x_1}{x_2}\,\d\mmu(x_1,x_2):
  \mmu\in \Probabilities{X\times X},\quad
  \pi^i_\sharp\mmu=\mu_i \right\},
\end{equation*}
where $\pi^i:X\times X\to X$, 
$\pi^i(x_1,x_2):=x_i$, are the Cartesian projections and for 
a Borel map $\tt:X\to Y$ between metric spaces,
$\tt_\sharp:\Probabilities X\to\Probabilities Y$ denotes the
push-forward operation defined by
\begin{equation*}
  \label{eq:65}
  \tt_\sharp\mu(B):=\mu(\tt^{-1}(B))\quad
  \text{for every Borel subset $B\subset Y$ and every $\mu\in
    \Probabilities X$.}
\end{equation*}
It turns out (see e.g.~\cite{Ambrosio-Gigli-Savare08,Villani09})
that $(\ProbabilitiesTwo X,\WDistName)$ is complete and separable, and
it is also a geodesic (resp.~length) space provided
$(X,\DistName)$ is geodesic (resp.~length).

If $X=\R^d$ with the usual Euclidean distance,
by Otto's \cite{Otto01} and subdifferential
\cite{Ambrosio-Gigli-Savare08} calculus, one can see that
the metric gradient flow of the entropy functional
(also called \emph{Csisz\`ar $f$-divergence)}
\begin{equation}
  \label{eq:66}
  \PhiF(\mu):=\int_X
  F\Big(\frac{\d\mu}{\d\mm}\Big)\,\d\mm,%\quad
  %\mm:=\rme^{-V}\Leb d,
\end{equation}
where $F:[0,+\infty)\to[0,\infty)$ is a smooth, convex and superlinear
function and $\mm:=\rme^{-V}\Leb d $ is a Borel measure
induced by a smooth potential $V:X\to \R$ (for simplicity), provides a solution to the diffusion
equation
\begin{equation}
  \label{eq:67}
  \partial_t \mu=\nabla\cdot\left(\mu \nabla F'(\varrho)\right) = \nabla\cdot\left( \rme^{-V} \nabla\! \left( \varrho F'(\varrho) - F(\varrho) \right) \right) \quad
  \text{in }(0,\infty)\times \R^d,\quad
  \varrho=\frac{\d\mu}{\d\mm},
\end{equation}
e.g.~in the weak sense
\begin{displaymath}
  \int_0^\infty\kern-5pt\int_{\R^d} \partial_t\zeta(t,x)\,\d\mu_t(x)\,\d t=
  \int_0^\infty\kern-5pt\int_{\R^d} \nabla\zeta(t,x)\cdot \nabla
  F'(\varrho_t(x))\,\d\mu_t(x)\,\d t
\end{displaymath}
for every $\zeta\in \rmC^\infty_c((0,\infty)\times \R^d)$. Among the class of entropy functionals \eqref{eq:66}, the logarithmic
entropy one
\begin{equation}
  \label{eq:79}
  \PhiE(\mu):=\int_X
  E\Big(\frac{\d\mu}{\d\mm}\Big)\,\d\mm, \quad
  E(\varrho):=\varrho\log\varrho,
\end{equation}
plays a distinguished role, since one
easily sees that in the case of the functional $\PhiE$ the density $ \sigma = \rme^{-V} \varrho $ of $\mu$ w.r.t.~$ \Leb d $ solves the Fokker-Planck equation
\begin{equation}
  \label{eq:69}
  \partial_t \sigma =\Delta \sigma + \nabla\cdot (\sigma \nabla V)
  \quad \text{in }(0,\infty)\times \R^d ,
\end{equation}
which reduces to the heat equation when $V$ is constant (so that $\mm=\Leb d$).

It has been a striking achievement of \textsc{McCann} \cite{McCann97}
to show that if $F$ satisfies the condition (clearly fulfilled by $E$)
\begin{equation}
  \label{eq:75}
  s\mapsto \rme ^sF(\rme^{-s})\quad\text{is convex and nonincreasing in }(0,\infty),
\end{equation}
and $\mm$ is log-concave (equivalently, $V$ is convex in $\R^d$), then the functional $\PhiF$
of \eqref{eq:66} is convex in $(\ProbabilitiesTwo
X,\WDistName)$; it turns out that it is also convex along generalized geodesics
\cite[Chapter 9]{Ambrosio-Gigli-Savare08}, according to the condition stated in Theorem \ref{thm:CAT} (a more refined characterization of the $\lambda$-convexity of $\PhiF$, involving the dimension $d$ and the first and second differential of $V$, is also available, see \cite{Villani09}).

This nice property has been used in \cite{Ambrosio-Gigli-Savare08}
to show that the gradient flow of $\PhiF$ in $(\ProbabilitiesTwo
X,\WDistName)$ (initially obtained as a limit of the Minimizing
Movement scheme) is in fact an $\EVIshort0$-flow in the sense of
Definition \ref{def:GFlow}. Such a characterization, further extended to to $\lambda$-convex
potentials $V$
and to functionals including interaction
energies, has been extremely useful in the study of
the asymptotic behaviour
\cite{CarrilloMcCannVillani06} and in the analysis of more complex
flows, see e.g.~\cite{Matthes-McCann-Savare09,Blanchet-Carlen-Carrillo12,Kinderlehrer-Monsaingeon-Xu17}.

Starting from further geometric investigations of the link between
lower Ricci curvature bounds and the geodesic convexity of the
entropy functionals \eqref{eq:66} when $X$ is a complete Riemannian manifold
endowed with the Riemannian distance $\sfd$
\cite{Otto-Villani00,Cordero-McCann-Schmuckenschlager01,Cordero-McCann-Schmuckensclager06,Sturm05},
an intensive effort has been devoted to
identify the Wasserstein gradient flow of the logarithmic entropy with
the heat flow in $X$ or, more generally, with the
$L^2(X,\mm)$-gradient flow of the Dirichlet form
\begin{displaymath}
  \DD(u):=\frac 12\int_X |\nabla u(x)|^2\,\d\mm(x)
\end{displaymath}
associated with the Sobolev space $W^{1,2}(X,\DistName,\mm)$.
The case of smooth Riemannian manifolds has been
fully analyzed by \textsc{Erbar} \cite{Erbar10} and \textsc{Villani}
\cite[Chapter 23]{Villani09}; the case where $X$ is a Hilbert space and
$\mm$
is a log-concave measure has been studied in
\cite{Ambrosio-Savare-Zambotti09},
whereas Alexandrov spaces have been considered in
\cite{Ohta09,GigliKuwadaOhta10}.
We can summarize part of the above discussion in the following result
(which, however, is far from being exhaustive):
\begin{theorem}[Heat flow as \EVIshort\lambda-flow in KRW spaces]
  \label{thm:Heat}
  Let $(X,\DistName)$ be a complete, separable
  geodesic metric space endowed with a Borel measure $\mm$,
  $\lambda\in \R$
  and let us consider the metric-functional system
  $\X=(\ProbabilitiesTwo X,\WDistName,\PhiF)$,
  where $\PhiF$ is the entropy functional \eqref{eq:66} satisfying
  McCann's condition \eqref{eq:75}.
  If one of the following assumptions holds
  \begin{enumerate}
  \item $(X,\sfd)$ is a $d$-dimensional smooth Riemannian manifold
    with Ricci curvature $\ge \kappa_1$, $\mm:=\rme^{-V}\,\mathrm{Vol}$
    where $V:X\to \R$ is a
    geodesically $\kappa_2$-convex potential, $\lambda:=\kappa_1+\kappa_2$;
  \item $(X,\sfd)$ is a Hilbert space and $\mm$ is a log-concave
    measure, $\lambda:=0$;
  \item $(X,\sfd)$ is a compact $d$-dimensional Alexandrov space
    with curvature $\ge\kappa$ and
    $\mm=\mathscr H^d$
    is the Hausdorff $d$-dimensional measure, $\lambda:=\kappa$,
  \end{enumerate}
  then $\PhiF$ generates an $\EVIshort\lambda$-flow in
  $(\ProbabilitiesTwo X,\WDistName)$.
\end{theorem}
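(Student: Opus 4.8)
The plan is to deduce all three cases from the general existence result of Theorem \ref{thm:CAT}, and precisely from its second condition, which does not require $(\ProbabilitiesTwo X,\WDistName)$ to carry any global curvature bound but only asks, for each triple of points, for a single connecting curve along which both the squared-distance inequality \eqref{eq:60} and the $\lambda$-convexity inequality \eqref{eq:cap1:60} hold. First I would observe that, since $(X,\DistName)$ is complete, separable and geodesic in each of the three settings, the space $(\ProbabilitiesTwo X,\WDistName)$ is itself a complete geodesic space, so that Theorem \ref{thm:CAT} becomes applicable as soon as its hypotheses are matched.

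The geometric half of the argument concerns \eqref{eq:60}. Given $\mu_0,\mu_1,\nu\in\Dom\PhiF$, I would employ the \emph{generalized geodesics} based at $\nu$: one fixes optimal plans coupling $\nu$ to $\mu_0$ and to $\mu_1$ and interpolates, along geodesics of $X$, between the corresponding fibers. By the very construction of such a curve the squared distance $\WDistSquare{\nu}{\cdot}$ is $1$-convex along it, so \eqref{eq:60} holds with $y=\nu$. This is the device (see \cite[Chapter 9]{Ambrosio-Gigli-Savare08}) that substitutes for the missing Alexandrov or CAT structure of the Wasserstein space.

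The analytic half is the $\lambda$-convexity of $\PhiF$ along these same curves, which I would verify case by case. In case (1), McCann's condition \eqref{eq:75} yields displacement convexity of the internal-energy term, the Ricci lower bound $\kappa_1$ supplies the curvature contribution and the $\kappa_2$-convexity of $V$ the potential one; adding them along the generalized geodesic (as in the Riemannian computations of Erbar and of Villani, \cite[Chapter 23]{Villani09}) gives exactly $\lambda=\kappa_1+\kappa_2$. In case (2), log-concavity of $\mm$ renders $\PhiF$ convex ($\lambda=0$) along generalized geodesics, following \cite{Ambrosio-Savare-Zambotti09}. In case (3), the Alexandrov lower bound $\kappa$ on the compact base transfers to $\lambda=\kappa$-convexity of the entropy on $(\ProbabilitiesTwo X,\WDistName)$, as in \cite{Ohta09,GigliKuwadaOhta10}. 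Taking $y=\nu$ in \eqref{eq:60} as the base point of the generalized geodesic used in \eqref{eq:cap1:60} then verifies condition (2) of Theorem \ref{thm:CAT}, and the existence of the $\EVIshort\lambda$-flow in $\overline{\Dom\PhiF}$ follows.

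The main obstacle is conceptual rather than computational: when $\Dim X\ge2$ the space $(\ProbabilitiesTwo X,\WDistName)$ is neither CAT nor Alexandrov, so neither Theorem \ref{thm:Alexandrov} nor the first branch of Theorem \ref{thm:CAT} applies directly. The substantive work---carried out in the cited references---is to exhibit \emph{one and the same} curve witnessing \eqref{eq:60} and \eqref{eq:cap1:60} simultaneously; the generalized-geodesic construction achieves this, and the delicate point is the compatibility of the chosen base point $\nu$ with the displacement-convexity estimate for $\PhiF$, which is precisely what the convexity theorems above guarantee.
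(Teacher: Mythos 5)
Your reduction of all three cases to condition 2 of Theorem \ref{thm:CAT} has a genuine gap in its geometric half, and it affects exactly cases (1) and (3). The assertion that $\WDistSquare{\nu}{\cdot}$ is $1$-convex along a generalized geodesic ``by the very construction'' is only true when the base space is flat (Euclidean or Hilbertian): what the construction needs fiberwise is the inequality
\begin{equation*}
  \DistSquare{y}{\sfx_t}\le (1-t)\DistSquare y{\sfx_0}+t\DistSquare y{\sfx_1}
  -t(1-t)\DistSquare{\sfx_0}{\sfx_1}
\end{equation*}
along geodesics of $X$, i.e.\ precisely \eqref{eq:60}, which is the defining inequality of non-positive curvature. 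In case (1) the hypothesis is only a lower Ricci bound, which gives no upper sectional bound at all, and in case (3) the Alexandrov hypothesis yields the \emph{reverse} inequality \eqref{eq:60bis} (semiconcavity of the squared distance): on a positively curved base such as the round sphere the fiberwise-geodesic interpolation violates \eqref{eq:60} for suitable $y$, so the generalized-geodesic device cannot verify condition 2 of Theorem \ref{thm:CAT} there. The analytic half has the same defect: McCann's condition together with $\mathrm{Ric}\ge\kappa_1$ gives $\lambda$-convexity of $\PhiF$ along \emph{ordinary} Wasserstein geodesics (CD theory), but $\lambda$-convexity along generalized geodesics in a curved manifold is not established in the references you invoke, and they never prove it. The paper itself signals this: the discussion in Section \ref{subsubsec:CAT} presents convexity along generalized geodesics as the tool for $(\ProbabilitiesTwo{\R^d},\WDistName)$, and the remark right after the theorem stresses that $(\ProbabilitiesTwo X,\WDistName)$ over a non-positively-curved $X$ is not even an Alexandrov space, so that Theorem \ref{thm:Alexandrov} does not apply either.

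For the record, the paper gives no proof of Theorem \ref{thm:Heat}: it is stated as a summary of the literature, and the cited proofs match your route only in case (2). There, your argument is exactly that of \cite{Ambrosio-Savare-Zambotti09}: the Hilbert base makes \eqref{eq:60} an identity along generalized geodesics, log-concavity of $\mm$ gives convexity of the entropy along them, and condition 2 of Theorem \ref{thm:CAT} yields the \EVIshort0-flow. Case (1) is instead proved in \cite{Erbar10} and \cite[Chapter 23]{Villani09} by a direct derivation of the \EVIname\ along ordinary geodesics, exploiting the smooth Riemannian structure and the Ricci bound through interpolation/action estimates; case (3) is proved in \cite{Ohta09,GigliKuwadaOhta10} via the lower-curvature-bound machinery (semiconcavity of the squared distance, angle conditions, Kuwada duality), the very approach the authors develop further in \cite{Muratori-SavareIII}. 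To repair your write-up you should either restrict the generalized-geodesic argument to case (2) and cite the genuinely different proofs for (1) and (3), or supply a new construction of curves satisfying \eqref{eq:60} and \eqref{eq:cap1:60} simultaneously in positively curved bases, which is not available.
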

As we already remarked, when $X$ is not positively curved,
$(\ProbabilitiesTwo X,\WDistName)$ is not an Alexandrov space
so that Theorem \ref{thm:Alexandrov} cannot be directly applied.

\subsubsection[$\mathrm{RCD}(K,\infty)$ metric-measure spaces]{$\boldsymbol{\mathrm{RCD}(K,\infty)}$ metric-measure spaces}
\label{subsubsec:RCD}
The equivalence between a lower Ricci curvature bound $\mathrm{Ric}\ge
K$
and the $K$-convexity of the logarithmic entropy functional \eqref{eq:79}
in Riemannian manifolds endowed with the Riemannian volume measure
$\mm$ motivated the deep investigations of \textsc{Lott-Villani}
\cite{Lott-Villani09} and \textsc{Sturm} \cite{Sturm06I, Sturm06II}
to solve the problem of finding synthetic notions of lower Ricci
curvature bounds for general metric-measure spaces
$(X,\DistName,\mm)$, a structure formed by a complete, separable, and
length metric space $(X,\DistName)$ and a Borel measure $\mm$
satisfying the growth condition
\begin{equation*}
\text{$\mm(\mathrm B(o,r))\le A\rme^{Br^2}$ for some
$o\in \AmbientSpace$ and constants $A,B\ge0$,}
\label{eq:76}
\end{equation*}
where $\mathrm B(o,r):=
\{x\in \AmbientSpace:\Dist xo<r\}$.
According to their definition, such a structure satisfies the \emph{Curvature-Dimension}
$\mathrm{CD}(K,\infty)$ condition if
the logarithmic entropy functional $\PhiE$ \eqref{eq:79} is $K$-convex in $(\ProbabilitiesTwo
X,\WDistName)$.

In order to capture a Riemannian-like structure, related to the
linearity of the heat flow in $(X,\DistName,\mm)$, the
Lott-Sturm-Villani condition has been reinforced in 
\cite{Ambrosio-Gigli-Savare14b}
(see also \cite{Ambrosio-Gigli-Savare14})
by asking that the logarithmic Entropy function $\PhiE$ generates
an $\EVIshort K$-flow in $(\ProbabilitiesTwo
X,\WDistName)$. This condition precisely characterizes the class of
$\mathrm{RCD}(K,\infty)$ metric-measure spaces:
\begin{definition}[Metric-measure spaces with Ricci
  curvature bounded from below]
  \label{def:RCD}
  \hfill A complete, \\ separable metric-measure space $(X,\DistName,\mm)$
  satisfies the $\mathrm{RCD}(K,\infty)$ condition if
  the logarithmic entropy function $\PhiE$ of \eqref{eq:79} generates
  an $\EVIshort K$-flow in $(\ProbabilitiesTwo
  X,\WDistName)$.
\end{definition}
It is a remarkable fact that in this case the gradient flow of $\PhiE$
is a semigroup of operators in $\ProbabilitiesTwo
X$ satisfying $ \FlowName_t(\alpha \mu + (1-\alpha) \sigma) = \alpha \FlowName_t(\mu ) + (1-\alpha) \FlowName_t(\sigma)$ for every $ \alpha \in (0,1) $, a property encoded by the logarithmic structure of $\PhiE$ and
the variational formulation of the $\EVIshort K$-flow \cite{Savare07}.

The $\mathrm{RCD}(K,\infty)$-condition shows the relevance of the
synthetic notion
of \EVIshort K-flows; its equivalence with the celebrated Bakry-\'Emery
curvature-dimension condition has been proved in
\cite{Ambrosio-Gigli-Savare15} and its $N$-dimensional
version has been introduced and deeply studied by
\textsc{Erbar-Kuwada-Sturm} in \cite{Erbar-Kuwada-Sturm15}
(by using a refined notion of the \EVIshort\lambda-flow)
and in \cite{Ambrosio-Mondino-Savare15} (by studying the
gradient flows generated by other entropy functionals).
We refer to the survey \cite{Ambrosio-Gigli-Savare17} for a brief
introduction to this theory and to its further developments.

We conclude this discussion by observing that the existence of the
\EVIshort K flow of the entropy functional $\PhiE$
at the level of measures $(\ProbabilitiesTwo X,\WDistName)$
implies relevant generation properties also for the original space $(X,\DistName)$.
In fact, \textsc{Sturm} \cite{Sturm18} proved
that for locally compact metric-measure spaces satisfying the
$\mathrm{RCD}(K,\infty)$-condition
every continuous $\lambda$-convex functional $\phi:X\to \R$ generates an
$\EVIshort\lambda$-flow.
As a byproduct of our analysis of the variational convergence of
\EVIshort\lambda-flows
in \cite{Muratori-SavareII} we will show that this property actually holds in arbitrary $\mathrm{RCD}(K,\infty)$ spaces, thus obtaining the following result.
\begin{theorem}[Existence of $\EVIshort\lambda$-flows in
  $\mathrm{RCD}$ spaces]
  \label{thm:EVIinRCD}
  Let $(X,\DistName,\mm)$ be an $\mathrm{RCD}(K,\infty)$ metric measure
  space and let $\phi:X\to (-\infty,+\infty]$ be
  a continuous $\lambda$-convex functional with proper domain.
  Then $\phi$ generates an $\EVIshort\lambda$-flow in
  $\overline{\Dom\phi}$,
  according to Definition \ref{def:GFlow}.  
\end{theorem}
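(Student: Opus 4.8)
The plan is to derive the general case from \textsc{Sturm}'s theorem \cite{Sturm18}, which already covers \emph{locally compact} $\mathrm{RCD}(K,\infty)$ spaces, by the variational-convergence (stability) machinery developed in the companion paper \cite{Muratori-SavareII}; I will only outline the strategy here. The decisive structural input is Definition \ref{def:RCD} itself: the logarithmic entropy $\PhiE$ generates an $\EVIshort K$-flow on $(\ProbabilitiesTwo X,\WDistName)$, so the ambient space carries a genuinely Riemannian-like structure even when $(\AmbientSpace,\DistName)$ is not locally compact. First I would approximate $(\AmbientSpace,\DistName,\mm)$ in the pointed measured Gromov--Hausdorff topology by a sequence of \emph{locally compact} $\mathrm{RCD}(K,\infty)$ spaces $(X_n,\DistName_n,\mm_n)$ — the relevant infinite-dimensional examples, such as log-concave measures on a separable Hilbert space, are exhausted by finite-dimensional cylindrical sections, which are locally compact $\mathrm{RCD}(K,\infty)$ spaces — carrying continuous $\lambda$-convex functionals $\phi_n$ that converge to $\phi$ in a compatible Mosco/$\Gamma$ sense along the converging spaces. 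Since the $\mathrm{RCD}(K,\infty)$ condition is itself stable under such convergence, the approximants may legitimately be taken to be $\mathrm{RCD}(K,\infty)$.

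On each approximant \textsc{Sturm}'s theorem furnishes an $\EVIshort\lambda$-flow $\FlowName^n_t$ of $\phi_n$ in $\overline{\Dom{\phi_n}}$, with the driving constant $\lambda$ \emph{uniform} in $n$. The core of the argument is then the stability theorem of \cite{Muratori-SavareII}: because $\lambda$ does not degenerate, the $\lambda$-contraction \eqref{eq:cap1:81} and the a priori estimate \eqref{eq:2} of Theorem \ref{thm:main1} yield equicontinuity of the family $\{\FlowName^n_t\}$ together with a uniform control of the slopes $\MetricSlope{\phi_n}{\cdot}$; passing to the limit in the integral characterization \eqref{eq:cap1:78bis} — precisely the formulation tailored to stability issues — one identifies the limit trajectory as a solution of $\EVI\AmbientSpace\DistName\phi\lambda$. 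Corollary \ref{cor:gf} then assembles the resulting trajectories into a bona fide $\EVIshort\lambda$-gradient flow on $\overline{\Dom\phi}$: a continuous $\lambda$-convex $\phi$ has closed, hence complete, sublevels, so Theorem \ref{le:bfb} guarantees quadratic boundedness from below and the density extension of Corollary \ref{cor:gf} applies.

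The hard part will be the first step: producing the locally compact $\mathrm{RCD}(K,\infty)$ approximants together with compatible continuous $\lambda$-convex functionals $\phi_n$, and certifying that the chosen convergence is strong enough to transfer \emph{both} the $\lambda$-convexity of $\phi_n$ and the EVI property to the limit. The geodesic inequality \eqref{eq:cap1:60} is fragile under weak convergence, so one must instead propagate the metrically robust \emph{approximate} $\lambda$-convexity \eqref{eq:cap1:60-bis} along the converging spaces; here the equivalence between $\EVIshort\lambda$-flows and approximate $\lambda$-convexity established in Theorem \ref{thm:Daneri} should be decisive, as it lets one argue entirely in terms of $(\vartheta,\eps)$-intermediate points rather than exact geodesic midpoints, which may fail to converge. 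Once the uniform constant $\lambda$ and the correct topology are secured, the remaining ingredients — equi-boundedness of the energies, compactness of the flows and identification of the limit via \eqref{eq:cap1:78bis} — are comparatively routine.
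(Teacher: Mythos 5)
Your proposal cannot be checked against an in-paper argument, because the paper deliberately gives none: Theorem \ref{thm:EVIinRCD} is stated as a byproduct of the variational-convergence analysis of \EVIshort\lambda-flows deferred to the companion paper \cite{Muratori-SavareII}. At the level of strategy your outline belongs to the same family (Sturm's locally compact result \cite{Sturm18} plus a stability/limiting argument), but it contains a genuine gap at exactly the step you yourself flag as the hard part, and the gap is not a technicality. You need \emph{every} $\mathrm{RCD}(K,\infty)$ space $(X,\DistName,\mm)$ to arise as a pointed measured Gromov--Hausdorff limit of \emph{locally compact} $\mathrm{RCD}(K,\infty)$ spaces carrying continuous $\lambda$-convex functionals $\phi_n$ Mosco-converging to $\phi$. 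No such approximation theorem exists: your justification that the relevant infinite-dimensional examples ``are exhausted by finite-dimensional cylindrical sections'' covers Hilbertian/Wiener-type examples with product structure, not an abstract $\mathrm{RCD}(K,\infty)$ space, and the stability of the $\mathrm{RCD}$ condition under pmGH convergence runs in the wrong direction — it certifies that a \emph{limit} of $\mathrm{RCD}$ spaces is $\mathrm{RCD}$, it does not manufacture locally compact approximants of a given one. Without this step the argument is circular: reducing the general case to the locally compact one is precisely the improvement over \cite{Sturm18} that the theorem asserts.

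Two further points would also need repair even granting the approximation. First, Theorem \ref{thm:Daneri} is invoked in the wrong direction: it extracts (approximate) $\lambda$-convexity \emph{from} an existing \EVIshort\lambda-flow, whereas here $\phi$ is $\lambda$-convex by hypothesis and the whole difficulty is \emph{generating} the flow; propagating \eqref{eq:cap1:60-bis} along the approximants does not help with that. Second, passing to the limit in \eqref{eq:cap1:78bis} when the flows $\FlowName^n_t$ live on \emph{different} spaces $X_n$ requires a common metric embedding together with actual convergence of trajectories; the $\lambda$-contraction \eqref{eq:cap1:81} gives Cauchy-type control only once well-prepared converging initial data and one converging trajectory are available, and in a non-locally-compact limit space there is no compactness with which to produce them — this is the very obstruction the paper's program is designed to circumvent. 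The route announced in Section \ref{subsubsec:RCD} keeps the space \emph{fixed} and instead exploits the structural input of Definition \ref{def:RCD}: the entropy $\PhiE$ already generates an \EVIshort K-flow on $(\ProbabilitiesTwo X,\WDistName)$, and one works with lifts and perturbations of $\phi$ at the level of measures (in the spirit of $\mu\mapsto\int\phi\,\d\mu+\eps\,\RelativeEntropy\mu\mm$), letting the stability machinery of \cite{Muratori-SavareII}, with a uniform convexity parameter, pass the EVI property to the limit — no Gromov--Hausdorff approximation of $X$ by locally compact spaces is needed.
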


\section[Energy-dissipation inequality, curves of maximal slope and ${\EVIname}$]{Energy-dissipation inequality, curves of maximal slope and $\boldsymbol{\EVIname}$}\label{sec:max-slope}
There exists a weaker notion of
gradient flow for the system $\System$, which is 
strictly related to the energy identity
\eqref{eq:Cap12:24}. Referring to \cite[Chapters 2, 3]{Ambrosio-Gigli-Savare08} for more
details (see also \cite{Ambrosio-Gigli13}),
we provide the following definition, which is again independent of $ \lambda $.
\begin{definition}[EDI and curves of maximal slope]\label{def: max-slope}
Let $\System$ be a metric-functional system as in \eqref{eq:system} and $ v \in  \AC^2_{\rm loc}([0,+\infty);\AmbientSpace) $. We say that $v$ satisfies the \emph{Energy-Dissipation Inequality (EDI$_0$)} starting from $t=0$ if
\begin{equation}
  \label{eq:17}
  v_0\in \Dom\phi,
  \qquad
  \int_0^t \Big(\frac12\MDS vr
  +\frac 12\MetricSlopeSquare\phi{v_r}\Big)\,\d r+\phi(v_t)\le
  \phi(v_0)
  \quad
  \text{for every }t>0.
  \tag{EDI$_0$}
\end{equation}
Moreover, we say that $v$ is a \emph{curve of maximal slope} if the map $t\mapsto \phi(v_t)$ is locally absolutely continuous in $ [0,+\infty) $ and 
\begin{equation} 
  \label{eq:cap2:20}
  \frac {\d}{\d t}\phi(v_t)\le -\frac12\MDS vt
  -\frac 12\MetricSlopeSquare\phi{v_t}\quad
  \text{for $\Leb 1$-a.e.\ $t>0$}.
\end{equation}
\end{definition}
Note that every curve of maximal slope for the system $\System$
satisfies \eqref{eq:17}: it is sufficient to integrate
\eqref{eq:cap2:20} in the interval $[0,t]$.
In fact \eqref{eq:cap2:20} is much stronger, since it also yields the identity
\begin{equation} 
  \label{eq:cap2:20-equality}
 \frac {\d}{\d t}\phi(v_t) = - \MetricSlopeSquare\phi{v_t} = - \MDS vt \quad
  \text{for $\Leb 1$-a.e.\ $t>0$}.
\end{equation}
Indeed, \eqref{eq:cap1:83} and the differentiability of $ t \mapsto \phi(v_t) $ and $ t \mapsto v_t $ (in the metric sense \eqref{eq:cap1:65}) for $\Leb 1$-a.e.\ $t>0$ entail
$$ 
\frac {\d}{\d t}\phi(v_t) \ge - \MetricSlope\phi{v_t} \, \MD vt \quad
  \text{for $\Leb 1$-a.e.\ $t>0$},
  $$
which combined with \eqref{eq:cap2:20} entails $ (\MetricSlope\phi{v_t}-\MD vt)^2=0 $. 
In turn, \eqref{eq:cap2:20-equality} is equivalent to the Energy-Dissipation
Equality
\begin{equation}
  \label{eq:31}
  \int_s^t \Big(\frac12\MDS vr
  +\frac 12\MetricSlopeSquare\phi{v_r}\Big)\,\d r+\phi(v_t)=
  \phi(v_s)
  \quad
  \text{for every }0\le s\le t.
  \tag{EDE}
\end{equation}
When $\phi$ admits an \EVIshort\lambda-gradient flow according to Definition
\ref{def:GFlow}, the two notions in fact coincide.
We have already seen in Theorem \ref{thm:main1} (identity \eqref{eq:Cap12:24})
that every solution of
$\EVI\AmbientSpace\DistName\phi\lambda$ (starting from $ u_0 \in \Dom{\phi} $) is a curve of maximal slope
and therefore satisfies
the Energy-Dissipation Equality \eqref{eq:31}.
The converse implication, only upon requiring the Energy-Dissipation
Inequality \eqref{eq:17}, is stated in the next theorem. 
\begin{theorem}[Curves of maximal slope and \EVIshort\lambda-gradient flow]\label{thm:MaxSlopeAreGFlows}
  Let $v$ be a curve that satisfies the {Energy-Dissipation Inequality} \eqref{eq:17}. 
  Assume in addition that the \EVIshort\lambda-gradient flow $\FlowName_t $ of $\phi$ exists (for some $ \lambda \in \R $) in a set
$D\subset \overline{\Dom\phi}$ that contains the image of $ v $. Then $v_t=\Flow t{v_0}$ for every $t\ge0$.
\end{theorem}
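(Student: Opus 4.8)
The plan is to prove that the two curves coincide by a time-reversal comparison argument resting on the $\lambda$-contraction and the sharp a priori estimate \eqref{eq:2}. Write $u_t:=\Flow t{v_0}$ for the flow trajectory issued at $v_0$ (note $v_0\in\Dom\phi$ by \eqref{eq:17}, and the image of $v$ lies in $D$, so $\Flow\tau{v_s}$ is defined for all $s,\tau\ge0$). Fix $t>0$ and consider the auxiliary curve $g(s):=\Flow{t-s}{v_s}$, $s\in[0,t]$, which interpolates between $g(0)=\Flow t{v_0}=u_t$ and $g(t)=\Flow0{v_t}=v_t$. Using the semigroup property \eqref{eq:18} to write $\Flow{t-s}{v_s}=\Flow{t-s-h}{\Flow h{v_s}}$ together with the $\lambda$-contraction \eqref{eq:cap1:81}, I would estimate, for $0\le s<s+h\le t$,
\[
\Dist{g(s+h)}{g(s)}\le \rme^{-\lambda(t-s-h)}\,\Dist{v_{s+h}}{\Flow h{v_s}},
\]
thereby reducing the whole problem to the infinitesimal comparison, for small $h$, between the curve $v$ and the flow, both started at $v_s$.

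Before carrying out that comparison I would upgrade the hypothesis \eqref{eq:17} to the full Energy--Dissipation \emph{Equality} on every subinterval: since the metric slope is a weak upper gradient and $\int_0^t(\MDS vr+\MetricSlopeSquare\phi{v_r})\,\d r<\infty$ by \eqref{eq:17}, the map $r\mapsto\phi(v_r)$ is absolutely continuous and $-\frac{\d}{\d r}\phi(v_r)\le\MetricSlope\phi{v_r}\MD vr\le\frac12\MDS vr+\frac12\MetricSlopeSquare\phi{v_r}$ for a.e.\ $r$; combined with \eqref{eq:17} this forces equality, so that $\phi(v_s)-\phi(v_{s+h})=\int_s^{s+h}\big(\frac12\MDS vr+\frac12\MetricSlopeSquare\phi{v_r}\big)\,\d r$. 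In particular $\MD vr=\MetricSlope\phi{v_r}$ and $v_r\in\DomainSlope\phi$ for a.e.\ $r$.

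The core step is the infinitesimal estimate. Applying \eqref{eq:2} to the flow trajectory $r\mapsto\Flow r{v_s}$ (an $\EVIname_\lambda$ solution issued at $v_s$) with test point $v_{s+h}\in\Dom\phi$ and horizon $h$ gives
\[
\tfrac{\rme^{\lambda h}}2\DistSquare{\Flow h{v_s}}{v_{s+h}}\le \tfrac12\DistSquare{v_s}{v_{s+h}}+\sfE\lambda h\big(\phi(v_{s+h})-\phi(\Flow h{v_s})\big)-\tfrac{(\sfE\lambda h)^2}2\MetricSlopeSquare\phi{\Flow h{v_s}}.
\]
I would then insert $\DistSquare{v_s}{v_{s+h}}\le h\int_s^{s+h}\MDS vr\,\d r$, the EDE increment for $v$ just obtained, and the energy identity \eqref{eq:Cap12:24} for the flow in the form $\phi(v_s)-\phi(\Flow h{v_s})=\int_0^h\MetricSlopeSquare\phi{\Flow r{v_s}}\,\d r$. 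Dividing by $h^2$ and letting $h\downarrow0$ at any Lebesgue point $s$ with $v_s\in\DomainSlope\phi$, each term converges to an explicit limit, and using $\sfE\lambda h/h\to1$ and the right continuity of the slope along the flow the four contributions cancel exactly: the limit of the right-hand side equals $\frac12\MDS vs-\frac12\MDS vs-\frac12\MetricSlopeSquare\phi{v_s}+\MetricSlopeSquare\phi{v_s}-\frac12\MetricSlopeSquare\phi{v_s}=0$. Hence $\Dist{\Flow h{v_s}}{v_{s+h}}=o(h)$ for a.e.\ $s$, and therefore the metric derivative of $g$ vanishes at a.e.\ $s\in(0,t)$.

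Finally I would conclude that $g$ is constant, so that $v_t=g(t)=g(0)=u_t$; as $t>0$ is arbitrary and $v_0=u_0$, this is the claim. The main obstacle is precisely this last passage: a.e.\ vanishing of the metric derivative yields constancy only once $g$ is known to be absolutely continuous. I would obtain this from the increment bound $\Dist{g(s+h)}{g(s)}\le\rme^{-\lambda(t-s-h)}\big(\int_s^{s+h}\MD vr\,\d r+\int_0^h\MetricSlope\phi{\Flow r{v_s}}\,\d r\big)$, whose right-hand side is controlled, for small $h$, by the $L^1_{\rm loc}$ function $\rme^{-\lambda(t-s)}(\MD vs+\MetricSlope\phi{v_s})$ together with the local-in-time Lipschitz regularity of the flow away from $s=t$; the delicate point is the uniform-in-$s$ control of $\int_0^h\MetricSlope\phi{\Flow r{v_s}}\,\d r$, which rests on the regularizing estimates of Theorem \ref{thm:main1}.
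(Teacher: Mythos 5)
Your proposal is a genuinely different proof from the paper's. The paper argues discretely, by a ``Lady Windermere's fan'': it samples $v$ at times $t^n_k=\tau_k(\bar t+n)$ chosen through Lemma \ref{le:RiemannSums} precisely so that left-endpoint Riemann sums of the merely integrable function $r\mapsto\MetricSlopeSquare\phi{v_r}$ converge, compares $v^n_k$ with $\FlowName_{\tau_k}(v^{n-1}_k)$ via the asymptotic expansion \eqref{eq:3}, and telescopes using the contraction \eqref{eq:cap1:81}; crucially, it uses \eqref{eq:17} only as an inequality (one telescoped sum $\sum_n I^n_k\le\phi(v_0)-\phi(v_{t_k})$) and never differentiates anything almost everywhere. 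Your continuous interpolant $g(s)=\Flow{t-s}{v_s}$, the upgrade of \eqref{eq:17} to the Energy-Dissipation Equality, and the $o(h)$ comparison via \eqref{eq:2} form a valid alternative skeleton --- and your limit computation in the core step is correct, resting on \eqref{eq:Cap12:24} and the right continuity of the slope along the flow at points $v_s\in\DomainSlope\phi$, which holds for a.e.\ $s$. What the paper's route buys is exactly the avoidance of the two delicate points below; what yours buys is transparency and no sampling lemma. Two steps, however, need repair.

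First, the EDI$\,\to\,$EDE upgrade: the \emph{weak} upper gradient property of the metric slope does not by itself give absolute continuity of $r\mapsto\phi(v_r)$. A weak upper gradient controls the derivative only once $\phi\circ v$ is already known to be (a.e.\ equal to) a function of bounded variation, and \eqref{eq:17} compares $\phi(v_t)$ only with $\phi(v_0)$, not with intermediate values, so no BV information is available a priori. The correct route inside this paper is Proposition \ref{char-slope}: since the flow exists on $D$ containing the image of $v$, one has $\MetricSlope\phi{v_r}=\GSlope\lambda\phi{v_r}$, and the global $\lambda$-slope of an l.s.c.\ functional is a \emph{strong} upper gradient; together with Cauchy--Schwarz and the finiteness of $\int_0^t(\MDS vr+\MetricSlopeSquare\phi{v_r})\,\d r$ this yields local absolute continuity of $\phi\circ v$, after which your argument does force equality and $\MD vr=\MetricSlope\phi{v_r}$ a.e.

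Second --- and this step would fail as written --- the absolute continuity of $g$. Your increment bound controls $\Dist{g(s+h)}{g(s)}$ by $\rme^{|\lambda|t}\big(\int_s^{s+h}\MD vr\,\d r+h\,\rme^{|\lambda|h}\MetricSlope\phi{v_s}\big)$ with the slope evaluated at the \emph{left endpoint} $s$: summing over an arbitrary partition produces a left-endpoint Riemann sum of the merely $L^2$ function $s\mapsto\MetricSlope\phi{v_s}$, which can blow up along bad partitions (this is exactly the obstruction that forces the paper to choose its sampling points via Lemma \ref{le:RiemannSums}), so it does not furnish an $L^1$ modulus; and a.e.\ vanishing of the metric derivative implies constancy only for AC curves (Cantor-type behaviour must be excluded). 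The fix stays within the paper's toolbox: split instead
\begin{equation*}
\Dist{g(s+h)}{g(s)}\le \rme^{-\lambda(t-s-h)}\Dist{v_{s+h}}{v_s}+\Dist{\Flow{t-s-h}{v_s}}{\Flow{t-s}{v_s}} .
\end{equation*}
The first term has the AC modulus $\rme^{|\lambda|t}\int_s^{s+h}\MD vr\,\d r$. For $s\in[0,t-\eps]$ the second term is at most $h$ times $\sup_{r\ge \eps/2}\MetricSlope\phi{\Flow r{v_s}}$, which by the regularizing estimates \eqref{eq:2}/\eqref{eq:Cap1:38} and the continuity of $v$ is locally bounded in $s$. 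Hence $g$ is AC on $[0,t-\eps]$ for every $\eps>0$, therefore constant there; continuity of $g$ at $s=t$ (contraction plus $\lim_{r\down0}\Flow r{v_t}=v_t$) then gives $v_t=g(t)=g(0)=\Flow t{v_0}$, completing your argument.
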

\begin{proof}
  %First of all note
  % that, since $ t \mapsto \phi(v_t) \in \AC^1_{\rm loc}((0,+\infty);\R) $ and $ v \in C^0([0,+\infty);\AmbientSpace) $, to our purposes it is not restrictive to assume that $u_0\in \Dom\phi $ and $ t \mapsto \phi(v_t) $ is locally absolutely continuous in $ [0,+\infty) $.
  We can suppose with no loss of generality that $\lambda\le 0$.
  The map $t\mapsto \MetricSlopeSquare\phi{v_t}$ is Lebesgue measurable (recall \eqref{eq: loc-slope}) and belongs to 
  $L^1((0,T))$ for every $T>0$ in view of
\eqref{eq:17}.
  By applying Lemma \ref{le:RiemannSums} below, we can therefore find $ \bar t \in (0,1) $, a sequence $\tau_{\h}\downarrow 0$
  and points $t_{\h}:=\tau_{\h} \bar t$ such that, upon letting 
  $t^{\n}_{\h}:=t_{\h}+\n\tau_{\h}$, $v^{\n}_{\h}:=
  v_{t^{\n}_{\h}}$ and choosing $N_{\h}\in \N$ so that
  $\tau_{\h} (N_{\h}+2)\le T< \tau_{\h} (N_{\h}+3)$, we have
  \begin{equation}
    \label{eq:cap2:21}
    C_{\h}:=
    \sum_{\n=0}^{N_{\h}}\left|\tau_{\h}\MetricSlopeSquare\phi{v^{\n}_{\h}}
    -\int_{t^{\n}_{\h}}^{t_{\h}^{\n+1}}
    \MetricSlopeSquare\phi{v_r}\,\d r\right|,\qquad
    \lim_{k\to\infty} C_{\h}=0.
  \end{equation}
  Note that eventually $ v^{\n}_{\h} \in \DomainSlope \phi $ for all $ n $ as above. Now let us set $u^{\n}_{\h}:=\FlowName_{\n\tau_{\h}}[v^{0}_k]$ and $w^{\n+1}_{k}:=\FlowName_{\tau_{\h}}[v^{\n}_{\h}]$; 
  in order to estimate $\Dist{v^{\n}_{\h}}{u^{\n}_{\h}}$
  we start from the basic recurrence relations
  \begin{displaymath}
    \rme^{\lambda t^{\n}_{\h}}\Dist{v^{\n}_{\h}}{u^{\n}_{\h}}\le
    \rme^{\lambda t^{\n}_{\h}}
    \Big(\Dist{v^{\n}_{\h}}{w^{\n}_{\h}}+
    \Dist{w^{\n}_{\h}}{u^{\n}_{\h}}\Big)
    \topref{eq:Cap12:1bis}\le
    \rme^{\lambda t^n_{\h}}\Dist{v^{\n}_{\h}}{w^{\n}_{\h}}+
    \rme^{\lambda t^{\n-1}_{\h}}\Dist{v^{\n-1}_{\h}}{u^{\n-1}_{\h}},
  \end{displaymath}
  so that a telescopic summation plus the inequality $\lambda t^n_k\le \lambda\tau_k$ (for $ n \ge 1 $) yield
  \begin{equation}
    \label{eq:cap2:26}
    \sup_{1\le \n\le N_{\h}} \rme^{\lambda t^{\n}_{\h}}\Dist{v^{\n}_{\h}}{u^{\n}_{\h}}
    \le \sum_{\n=1}^{N_{\h}} \rme^{\lambda\tau_{\h}}\Dist{v^{\n}_{\h}}{w^{\n}_{\h}}.
  \end{equation}
  It is therefore crucial to estimate the terms $\Dist{v^{\n}_{\h}}{w^{\n}_{\h}}$. If we apply \eqref{eq:3} with $ u_t=w_\h^\n(t)=\FlowName_t[v^{\n-1}_{\h}] $, $v=v^{\n}_{\h}$ and $ t=\tau_\h $, we obtain: 
%  we multiply \eqref{eq:EVI} by $e^{2\lambda t}$ and
%  integrate in time in the interval $(0,\tau_{\h})$ choosing
%  $v^{\n-1}_{\h}$ as initial datum and $v:=v^{\n}_{\h}$;
%  setting $w^{\n}_{\h}(t):=\FlowName_t[v^{\n-1}_{\h}]$ and applying \eqref{eq:3} we get
  \begin{equation}
    \label{eq:cap2:22}
    \frac {\rme^{2\lambda \tau_{\h}}}2\DistSquare{w^{\n}_{\h}}{v^{\n}_{\h}}-
    \frac 12\DistSquare{v^{\n-1}_{\h}}{v^{\n}_{\h}}\le
    \sfE{2\lambda}{\tau_{\h}}\big(\phi(v^{\n}_{\h})-\phi(v^{n-1}_\h)\big)
    +\frac{\tau_k^2}2\MetricSlopeSquare\phi{v^{n-1}_k};%\int_0^{\tau_{\h}}\rme^{2\lambda r} \phi(w^{\n}_{\h}(r))\,\d r.
  \end{equation}
on the other hand, by the definition of metric slope and H\"older's inequality we infer that
  \begin{equation}
    \label{eq:cap2:23}
    \frac 12\DistSquare{v^{\n-1}_{\h}}{v^{\n}_{\h}}\le
    \frac{\tau_{\h}}2\int_{t^{\n-1}_{\h}}^{t^{\n}_{\h}}\MDS vr\,\d r=
    \tau_\h\,I^n_\h+\tau_{\h}\phi(v^{\n-1}_{\h})-\tau_{\h}\phi(v^{\n}_{\h})-\frac{\tau_{\h}}2
    \int_{t^{\n-1}_{\h}}^{t^{\n}_{\h}}\MetricSlopeSquare\phi{v_r}\,\d r,
  \end{equation}
  where
  \begin{equation*}
    \label{eq:36}
    I^n_\h:=\phi(v^{\n}_{\h})-\phi(v^{\n-1}_{\h})+
    \frac12 \int_{t^{\n-1}_{\h}}^{t^{\n}_{\h}}\Big(\MDS vr+\MetricSlopeSquare\phi{v_r}\Big)\,\d r .
  \end{equation*}
  % An integration by parts and the monotonicity of
%   $r\mapsto \rme^{2\lambda r} \MetricSlopeSquare\phi{w^{\n}_{\h}(r)}$
%   yield
%   \begin{displaymath}
%     \begin{aligned}
%       -\int_0^{\tau_{\h}}\rme^{2\lambda r} \phi(w^{\n}_{\h}(r))\,\d r&=
%       \int_0^{\tau_{\h}}\big(\sfE{2\lambda}{\tau_{\h}}-
%       \sfE{2\lambda}{r}\big)\MetricSlopeSquare\phi{w^{\n}_{\h}(r)}\,\d
%       r - \sfE{2\lambda}{\tau_{\h}}\phi(v^{\n-1}_{\h})\\
%       &\le
%       \MetricSlopeSquare\phi{v^{\n-1}_{\h}}
%       \int_0^{\tau_{\h}}
%       \big(\sfE{2\lambda}{\tau_{\h}}-
%       \sfE{2\lambda}{r}\big)\rme^{-2\lambda r}\,\d r-
%       \sfE{2\lambda}{\tau_{\h}}\phi(v^{\n-1}_{\h})
%       \\&=
%       \MetricSlopeSquare\phi{v^{\n-1}_{\h}}
%       \tsfE{2\lambda}{\tau_{\h}}
%       -
%       \sfE{2\lambda}{\tau_{\h}}\phi(v^{\n-1}_{\h}),\qquad
%       \tsfE\lambda t:=\int_0^t \sfE\lambda r\,\d r.
%     \end{aligned}
%   \end{displaymath}
  Summing up \eqref{eq:cap2:22} and \eqref{eq:cap2:23},
  after a multiplication by a factor $2$ we end up with
  \begin{gather*}
    \label{eq:cap2:25}
    \rme^{2\lambda \tau_{\h}}\DistSquare{w^{\n}_{\h}}{v^{\n}_{\h}}
    \le
    \tau_{\h} \big(a^{\n}_{\h} %+b^{\n}_{\h}
    +c^{\n}_{\h}+I^n_\h),\\
    a^{\n}_{\h}:= 2\big(1-\tau_{\h}^{-1}\sfE{2\lambda}{\tau_{\h}}\big)
    \big(\phi(v^{\n-1}_{\h})-\phi(v^{\n}_{\h})\big),
    %b^{\n}_{\h}&:= 
    %\MetricSlopeSquare\phi{v^{\n-1}_{\h}}
    %\Big(\frac 2{\tau_{\h}}\tsfE{2\lambda}{\tau_{\h}}-\tau_{\h}\Big)
    %\\
     \qquad c^{\n}_{\h}:=
    \tau_{\h}\MetricSlopeSquare\phi{v^{\n-1}_{\h}}-
    \int_{t^{\n-1}_{\h}}^{t^{\n}_{\h}}\MetricSlopeSquare\phi{v_r}\,\d r,
  \end{gather*}
  so that by \eqref{eq:cap2:26} there follows
  \begin{equation}
    \label{eq:cap2:27}
    \sup_{1\le \n\le N_{\h}} \rme^{\lambda t^{\n}_{\h}}\Dist{v^{\n}_{\h}}{u^{\n}_{\h}}
    \le \sqrt T \left(\sum_{\n=1}^{N_{\h}} a^{\n}_{\h}+c^{\n}_{\h} +I^n_\h \right)^{ \frac12 }.
  \end{equation} 
  An elementary numerical inequality yields 
  \begin{displaymath}
    2\big(1-\tau_{\h}^{-1}\sfE{2\lambda}{\tau_{\h}}\big)\le {2} |\lambda|\tau_{\h};
    %\tsfE{2\lambda}{\tau_{\h}}\le \frac 12\tau_{\h}^2
  \end{displaymath}
 thus, letting $T_\h:=t^{N_\h}_\h$, a telescopic summation and $ \phi(v_{t_k}) \le \phi(v_0) $ ensure that
  \begin{equation*}
    \label{eq:cap2:28}
    \sum_{\n=1}^{N_{\h}}a^{\n}_{\h}\le
    {2} |\lambda|\tau_{\h} \left| \sum_{\n=1}^{N_{\h}}\big(\phi(v^{\n-1}_{\h})-\phi(v^{\n}_{\h})\big) \right| \le
    {2} |\lambda|\tau_{\h}\big( \phi(v_0)-\phi(v_{T_k})\big),\qquad
    %b^{\n}_{\h}\le 0,\quad
     \sum_{\n=1}^{N_{\h}}c^{\n}_{\h}\le C_{\h}.
   \end{equation*}
   Furthermore, another telescopic summation entails
   \begin{align*}
     \sum_{\n=1}^{N_{\h}}I^n_\h={}
                                 \phi(v_{T_\h})-\phi(v_{t_\h})+\frac 12\int_{t_\h}^{T_\h}\Big(\MDS
                                 vr+\MetricSlopeSquare\phi{v_r}\Big)\,\d
                                 r
                                 \topref{eq:17}\le{} \phi(v_0)-\phi(v_{t_\h}).
   \end{align*}
   Recalling \eqref{eq:cap2:21}, \eqref{eq:cap2:27}, the lower
   semicontinuity of $\phi$ and the fact that $ \phi(v_T) > -\infty $, 
   we finally obtain
   \begin{displaymath}
     \lim_{k \to \infty}
     \sup_{1\le \n\le N_{\h}} \rme^{\lambda t^{\n}_{\h}}\Dist{v^{\n}_{\h}}{u^{\n}_{\h}}=0 ,
   \end{displaymath}
which shows that $ v_t = \Flow t{v_0} $ for every $t \ge 0$ upon noticing the continuity of the map $(t,u) \mapsto \FlowName_t(u) $, consequence of \eqref{eq:Cap12:1bis} and continuity w.r.t.\ $t$.
 \end{proof}

\begin{remark}[When $ v_0 \not\in \Dom \phi $]\upshape
The case of a curve $ v_t $ such that $ v_0 \not \in \Dom \phi $ can be dealt with similarly. Indeed, more in general, we can say that a curve $ v \in \AC^2_{\rm loc}((0,+\infty);\AmbientSpace) \cap C^0([0,+\infty);\AmbientSpace) $ satisfies the Energy-Dissipation Inequality if 
\begin{equation*}
  \label{eq:17-nodomain}\tag{EDI$_s$}
   v_t\in \Dom\phi, \qquad \liminf_{s \downarrow 0} \left[ \int_s^t \Big(\frac12\MDS vr
  +\frac 12\MetricSlopeSquare\phi{v_r}\Big)\,\d r + \phi(v_t) - \phi(v_s) \right] \le 0
  \quad
  \text{for every }t>0.
\end{equation*}
Note that \eqref{eq:17-nodomain} is consistent with \eqref{eq:17}, in the sense that if $ v_0 \in \Dom\phi $ then it is implied by \eqref{eq:17} (and it is equivalent to the latter provided $ t \mapsto \phi(v_t) $ is continuous at $t=0$). It is then straightforward to check that one can repeat the proof of Theorem \ref{thm:MaxSlopeAreGFlows} up to replacing the initial time $ 0 $ with $s$, eventually letting $ s \downarrow 0 $.
\end{remark} 
 
 \begin{lemma}
   \label{le:RiemannSums}
   Let $g \in L^1((0,T)) $ for some $ T>0 $. There exists a sequence
   $\tau_{\h}\down0$ and a set of points $\bar t\in (0,1)$ with full $ \Leb 1 $-measure such that, by
   choosing $t^n_k:=\tau_k(\bar t+n)$ and $N_k\in \N$ so that $T\in [\tau_k (N_k+2),\tau_k(N_k+3))$, there holds
   \begin{equation*}
     \label{eq:cap2:29}
     \lim_{\h \to \infty}\sum_{\n=0}^{N_{\h}} \left|\tau_{\h} g(t^{\n}_{\h})
     -\int_{t^{\n}_{\h}}^{t_{\h}^{\n+1}} g(r)\,\d r \right|=0.
   \end{equation*}
 \end{lemma}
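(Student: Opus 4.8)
The plan is to \emph{average over the mesh offset} $\bar t$ and reduce the whole statement to the $L^1$-convergence, as $\tau\downarrow0$, of the forward averaging operator
\[ A_\tau g(u):=\frac1\tau\int_u^{u+\tau}g(r)\,\d r \]
towards $g$. For fixed $\tau>0$ let $N=N(\tau)$ be the integer prescribed by $T\in[\tau(N+2),\tau(N+3))$, and regard the quantity to be controlled as a function of the shift $s\in(0,1)$,
\[ F(\tau,s):=\sum_{n=0}^{N}\left|\tau\,g(\tau(s+n))-\int_{\tau(s+n)}^{\tau(s+n+1)}g(r)\,\d r\right| . \]
With $\bar t=s$ this is exactly the sum appearing in the Lemma, so the assertion is that for a suitable sequence $\tau_k\downarrow0$ one has $F(\tau_k,\bar t)\to0$ for almost every $\bar t\in(0,1)$. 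Since $g$ is merely integrable, one cannot expect to control $F(\tau,s)$ pointwise in $s$ for an \emph{a priori} fixed sequence of meshes; the remedy is to integrate in $s$ first.

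The first computation I would carry out is the change of variables $u=\tau(s+n)$ in each summand, which turns the $s$-integral of a single term into $\int_{\tau n}^{\tau(n+1)}|g(u)-A_\tau g(u)|\,\d u$. Summing over $n=0,\dots,N$ collapses the partition and gives the clean identity
\[ \int_0^1 F(\tau,s)\,\d s=\int_0^{\tau(N+1)}\big|g(u)-A_\tau g(u)\big|\,\d u\;\le\;\big\|g-A_\tau g\big\|_{L^1(0,T)} . \]
Here the domain of integration is admissible: if $u\le\tau(N+1)$ then $u+\tau\le\tau(N+2)\le T$, so the averages $A_\tau g(u)$ only involve values of $g$ inside $(0,T)$.

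Next I would establish $\|g-A_\tau g\|_{L^1(0,T)}\to0$ as $\tau\downarrow0$. Extending $g$ by $0$ outside $(0,T)$ and writing $g(u)-A_\tau g(u)=\tfrac1\tau\int_0^\tau\big(g(u)-g(u+h)\big)\,\d h$, Tonelli's theorem (or Minkowski's integral inequality) yields
\[ \big\|g-A_\tau g\big\|_{L^1}\le\frac1\tau\int_0^\tau\big\|g-g(\cdot+h)\big\|_{L^1}\,\d h , \]
and the right-hand side tends to $0$ as $\tau\downarrow0$ by the classical continuity of translations in $L^1$. Hence $\int_0^1 F(\tau,s)\,\d s\to0$. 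Choosing any sequence $\tau_k\downarrow0$, the nonnegative functions $s\mapsto F(\tau_k,s)$ converge to $0$ in $L^1(0,1)$; passing to a subsequence (not relabelled) we obtain $F(\tau_k,\bar t)\to0$ for almost every $\bar t\in(0,1)$, which is precisely the claim, the exceptional null set being absorbed into the full-measure set of admissible shifts.

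The manipulations are elementary, so there is no serious analytical obstacle; the single conceptual point—what I regard as the crux—is the averaging idea itself. Because pointwise Riemann-type convergence of $\tau\,g(t^n_k)$ towards the interval integrals genuinely fails for a general $L^1$ datum at a fixed offset, one must randomise the mesh position $\bar t$: this is exactly what converts the sum $F(\tau,\cdot)$ into the $L^1$-distance between $g$ and its running average, for which convergence is standard. The final extraction of an almost-everywhere convergent subsequence is then the mechanism that promotes this $L^1$ convergence of $F(\tau_k,\cdot)$ into the desired pointwise statement over a full-measure set of offsets $\bar t$.
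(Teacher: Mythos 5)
Your proof is correct and takes essentially the same route as the paper's: both average over the mesh offset via Fubini--Tonelli to convert the sum into the $L^1(0,T)$-distance between $g$ (extended by zero) and its running average $A_\tau g$, kill that distance by continuity of translations in $L^1$, and then extract a subsequence along which the offset-dependent sum converges to zero for almost every $\bar t\in(0,1)$. The only difference is presentational: you make the change of variables $u=\tau(s+n)$ and the resulting identity $\int_0^1 F(\tau,s)\,\mathrm{d}s=\int_0^{\tau(N+1)}|g-A_\tau g|\,\mathrm{d}u$ explicit, while the paper runs the same computation through the translation modulus $I(y)=\int_0^T|g(x)-g(x+y)|\,\mathrm{d}x$.
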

 \begin{proof}
   First of all, let us trivially extend $g$ by $0$ outside the interval $(0,T)$ (for notational convenience we do not relabel such an extension). It is well known that the bounded map  
   \begin{displaymath}
     y \in [0,+\infty)\mapsto I(y):=\int_0^{T}|g(x)-g(x+y)|\,\d x
   \end{displaymath}
   is continuous; hence a further integration
   with respect to $y$ and Fubini's Theorem yield
   \begin{align*}
     \limsup_{\tau\down0}\int_0^{T}
     \Big|g(x)-\int_0^1 g(x+\tau y)\,\d y\Big|\,\d x
     &\le \limsup_{\tau\down0}
     \int_0^{T}\int_0^1 
     \Big|g(x)-g(x+\tau y)\Big|\,\d y\,\d x
     \\& = \limsup_{\tau\down0}
     \int_0^1I(\tau y)\,\d y 
     =0 .
   \end{align*}
   If we denote by $N(\tau)$ the unique integer such that $T\in [\tau (N(\tau)+2),\tau(N(\tau)+3))$, it follows that
   \begin{displaymath}
     \lim_{\tau\down0}
     \tau \sum_{\n=0}^{N(\tau)} \int_0^1 \Big|g(\tau (x+\n))-
     \int_0^1 g(\tau (x+y+\n))\,\d y\Big|\,\d x=0;
   \end{displaymath}
   in particular, there exists a vanishing subsequence $\tau_{\h}$
   such that the integrand
   \begin{displaymath}
     \sum_{\n=0}^{N(\tau_{\h})} \tau_k\Big| g(\tau_{\h} (x+\n))-
     \int_0^1 g(\tau_{\h} (x+y+\n))\,\d y\Big|
   \end{displaymath}
   converges to $0$ for $\Leb 1$-a.e.\ $x\in (0,1)$.
 \end{proof}

 \section{``Ekeland relaxation'' of the
   Minimizing Movement method and uniform error estimates}
\label{sec:uniform_estimate}

A general variational method to approximate gradient flows (and often prove their existence) for the system $\System$ is 
provided by the so called 
\emph{Minimizing Movement} variational scheme.
In his original formulation (see e.g.\ \cite{DeGiorgi93}), the method consists in finding
a discrete approximation $\Pc U\tau$ of the continuous
gradient flow $u$ by solving a recursive
variational scheme.
In fact $\Pc U\tau$ is a piecewise constant
function on 
the partition
${\mathcal P}_\tau:=\big\{0,\tau,2\tau,\cdots,n\tau,\cdots\big\}$
induced by the time step $\tau>0$; in each interval $((n-1)\tau,n\tau]$ of the partition,
$\Pc U\tau$ takes the value $U^n_\tau$ which
minimizes the functional
\begin{equation}
  \label{eq:cap4:30}
  U\mapsto \frac 1{2\tau}\DistSquare{U}{U^{n-1}_\tau}+\phi(U).
\end{equation}
In order to carry out the iteration, one has to assign the starting value  $ U^0_\tau = \Pc U\tau(0) $, which is supposed to be a suitable approximation of $u_0$. 

The existence of a minimizing sequence $\{U^n_\tau\}_{n\in\N}$
is usually obtained by invoking the direct method of the
Calculus of Variations,
thus requiring that the functional
\eqref{eq:cap4:30}
has compact
sublevels with respect to some
Hausdorff topology $\sigma$ on $X$
(see e.g.\ the setting of \cite[Section 2.1]{Ambrosio-Gigli-Savare08}).
Another possibility, still considered in
\cite{Ambrosio-Gigli-Savare08},
is to suppose that the functional
\eqref{eq:cap4:30} satisfies a strong convexity assumption.

Here we try to avoid these restrictions by applying Ekeland's Variational Principle to
the functional \eqref{eq:cap4:30}, as we did in the Definition 
\ref{def:MYE-resolvent} of the Moreau-Yosida-Ekeland resolvent.
This approach only requires the completeness of the sublevels
of $\phi$; we will show that the sole existence of an \EVIshort\lambda-gradient flow for
$\phi$ provides an explicit error estimate between the
solution to \eqref{eq:EVI} and any discrete approximation obtained by
the variational method, so that we can drop any compactness assumptions.

% We are assuming that the functional $\phi$ has \emph{complete}
% sublevels and it is quadratically bounded from below, according to \eqref{eq:cap1:13}.
\begin{definition}[The Ekeland relaxation of the Minimizing Movement scheme]
  \label{def:MMS}
  Let us consider a ti\-me step $\tau>0$, a relaxation parameter $\eta\ge0$,
  and a discrete initial datum $U^0_\taueta\in \Dom\phi $.
  A $(\tau,\eta)$-discrete Minimizing Movement starting from $U^0_\taueta$
  is any sequence $(U^n_\taueta)_{n\in \N}$ in $\Dom\phi$ s.t.
  \begin{equation*}
    \label{eq:74}
    U^n_\taueta\in \Resolvent\TauEta {U^{n-1}_\taueta}\quad
    \text{for every }n\in \N,
  \end{equation*}
  i.e.\ $U^n_\taueta$ is a solution of
  the family of variational problems
  \begin{subequations}
    \label{problemb}
    \begin{equation}
      \label{eq:cap1:67a}
      \begin{aligned}
        \frac 1{2\tau}\DistSquare{U^{n}_\taueta}{U^{n-1}_\taueta}+
        \phi(U^n_\taueta)&\le \frac 1{2\tau}\DistSquare
        V{U^{n-1}_\taueta}+\phi(V)\\&\qquad+
        \frac \eta2\,\Dist{U^n_\taueta}{U^{n-1}_\taueta}\,\Dist
        V{U^n_\taueta}\quad \text{for every }V\in \Dom\phi ,
      \end{aligned}
    \end{equation}
    satisfying the further condition
    \begin{equation}
      \label{eq:cap4:22a}
      \frac1{2\tau}\DistSquare{U^{n}_\taueta}{U^{n-1}_\taueta}+
      \phi(U^n_\taueta)\le
      \phi(U^{n-1}_\taueta),
    \end{equation}
  \end{subequations}
  for every $n\in \N$. We denote by $\discrMM\TauEta{U^0_\taueta}\subset \AmbientSpace^\N$ the corresponding
  collection of all the discrete minimizing movements that start from $U^0_\taueta$.
\end{definition}
Note that if we pick $\eta=0$ in the previous definition
then we have the usual minimizing movements: in this case,
\eqref{eq:cap4:22a} is a direct consequence of
\eqref{eq:cap1:67a}, since it can easily be obtained by taking
$V:=U^{n-1}_\tau$.
The particular scaling choice of the parameter $\eta$ in
\eqref{eq:cap1:67a}
is motivated by the simpler form of the next estimates,
where $\eta$ can be considered as a mild perturbation of the parameter $\lambda$
and therefore it does not affect the stability on finite intervals and the order of
convergence of the method. In particular, $\eta$ can be taken fixed
and positive as $\tau$ vanishes.

Theorem \ref{le:slope_by_Ekeland} 
ensures that if $\phi$ has complete sublevels and it is quadratically
bounded from below
according to \eqref{eq:cap1:13}, then a $(\tau,\eta)$-discrete Minimizing Movement
$(U^n_\taueta)_{n\in \N}\in \discrMM\TauEta{U^0_\taueta}$
always exists for every $\tau\in (0,\tau_o)$,
$\eta>0$, and $U^0_\taueta\in \Dom\phi$, where $\tau_o$ is given by \eqref{eq:1-bis}. The piecewise constant interpolant $\Pc U\taueta$ is defined as (for any sequence actually, not necessarily a minimizing movement) 
\begin{equation}
  \label{eq:cap1:66}
  \Pc U\taueta(t):= U^n_\taueta
  %\quad
  %\Pcb U\tau(t)\equiv U^{n-1}_\taueta
  \quad
  \text{if }t\in \big((n-1)\tau,n\tau\big], \qquad \Pc U\taueta(0) := U^0_\taueta .
\end{equation}

We can now provide the definition of (continuous) Minimizing Movement.

\begin{definition}[Minimizing Movements]
  \label{def:MM}
  We say that a curve $u:[0,+\infty)\to X$
  is a Minimizing Movement and 
  belongs to $\contMM{\Syst;u_0
  }$ 
  if $ u(0)=u_0 $ and there exist
  $\eta\ge0$ and piecewise constant curves $\Pc U\taueta$
  associated with sequences $(U^n_\taueta)_{n\in \N}\in
  \discrMM\TauEta{u_0}$ for sufficiently small $\tau>0$
  such that
  \begin{equation*}
    \label{eq:37}
    \lim_{\tau\down0}\Pc U\taueta(t)=u(t)\quad\text{for every }t\ge0.
  \end{equation*}
  We say that $u$ is a Generalized Minimizing Movement
  in $\GMM{\Syst;u_0}$
  if $ u(0)=u_0 $ and there exist
  $\eta\ge 0$, a vanishing sequence $k\mapsto \tau(k) \downarrow0$
  and piecewise constant curves $\Pc U k = \Pc U{\tau(k),\eta} $
  associated with sequences $(U^n_{\tau(k),\eta})_{n\in \N}\in
  \discrMM{\tau(k),\eta}{u_0}$ 
  such that
  \begin{equation*}
    \label{eq:37bis}
    \lim_{k\to\infty}\Pc U k(t)=u(t)\quad\text{for every }t\ge0.
  \end{equation*}
\end{definition}

As is typical in numerical analysis of differential equations, uniform
error estimates will result from the combination of the discrete
uniform  stability (as $ \tau \downarrow 0 $) of the method and a
local error estimate, an approach known in the literature as
\emph{Lady Windermere's Fan} (see e.g.~\cite[Page 39]{Hairer}). The purpose of the next two
lemmas is to somehow reproduce such a strategy in our setting.
First of all, we will show that the discrete Minimizing Movements satisfy
an approximate version of the Energy-Dissipation Inequality; from the latter, stability follows (see Proposition \ref{lem:cont-stab}).
% \begin{remark}[Discrete solvability]
%   \label{ass:discrete_solvability}
%   \upshape
%   The ``direct method'' of the Calculus of Variations
%   shows that the variational scheme is always solvable 
%   even for $\eta=0$ if, e.g., the sublevels
%   of $\phi$ are locally relatively compact.
%   More generally, it is sufficient that the functional
%   $V\mapsto (2\tau)^{-1}\DistSquare UV+\phi(V)$ has compact
%   sublevels with respect to some
%   Hausdorff topology $\sigma$ on $X$
%   (see e.g.\ the setting of \cite[\S\, 2.1]{Ambrosio-Gigli-Savare05}).
% \end{remark}
\begin{lemma}%[Discrete \GR{energy dissipation}]
\label{lem:disc-stab} 
  Let the sequence $(U^n_\taueta)_{n\in \N}\in \discrMM\TauEta{U^0_\taueta}$ be a solution of
  the Minimizing Movement scheme of Definition \ref{def:MMS},
  for some $\tau>0$, $\eta\ge0$.
  Then for every $n\in \N$
  \begin{subequations}
    \begin{align}
      \label{eq:5}
      (1-\tfrac12\eta\tau)\MetricSlope\phi{U^n_\taueta}&\le
      \frac{\Dist{U^n_\taueta}{U^{n-1}_\taueta}}{\tau}.
    %\end{align}
      \intertext{Moreover, if $ \phi $ is approximately $ \lambda
                                                         $-convex,
                                                         then } 
  %\begin{align}
      \label{eq:6}
      (1+\tfrac12 (\lambda-\eta) \tau)\frac{\DistSquare{U^n_\taueta}{U^{n-1}_\taueta}}\tau
      &\le
      \phi(U^{n-1}_\taueta)-
      \phi(U^n_\taueta),\\
       \label{eq:cap4:2a}
      (1+(\lambda-\eta)\tau)\frac{\Dist{U^n_\taueta}{U^{n-1}_\taueta}}{\tau}
    &\le \MetricSlope\phi{U^{n-1}_\taueta},\qquad
      (1-\lambda'\tau ) \MetricSlope\phi{U^n_\taueta}
    \le \MetricSlope\phi{U^{n-1}_\taueta} ,
    \end{align}
      \end{subequations}
      where $\lambda'\ge \eta(1+\tfrac 12 \tau \lambda_+)-\lambda$.
    %   In particular,
    % the sequence $(U^n_\taueta)_{n\in \N}$ satisfies the discrete
    % Energy-Dissipation inequality
    % \begin{equation}
    %   \label{eq:39}
    %   (1+\tfrac 12\betag\tau)\Big(\frac
    %   \tau2\frac{\DistSquare{U^n_\taueta}{U^{n-1}_\taueta}}{\tau^2}
    %   +\frac \tau2 \MetricSlope\phi{U^n_\taueta}^2\Big)\le \phi(U^{n-1}_\taueta)-
    %   \phi(U^n_\taueta)
    % \end{equation}
    % for every $\betag\le \lambda$ such that
    % \begin{equation}
    %   \label{eq:40}
    %   \eta(3+\tau\lambda_+)\le \lambda-\betag.
    % \end{equation}
\end{lemma}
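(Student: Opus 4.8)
The plan is to read each inequality as a direct consequence of the two defining conditions of the Moreau-Yosida-Ekeland resolvent, namely \eqref{eq:cap1:67a} and \eqref{eq:cap4:22a}, applied with the natural test points. The slope bound \eqref{eq:5} is essentially a reindexing of Theorem \ref{le:slope_by_Ekeland}: since $U^n_\taueta\in \Resolvent\TauEta{U^{n-1}_\taueta}$, estimate \eqref{eq:cap1:4} reads
\begin{displaymath}
  \MetricSlope\phi{U^n_\taueta}\le (1+\tfrac12\eta\tau)\frac{\Dist{U^n_\taueta}{U^{n-1}_\taueta}}\tau,
\end{displaymath}
which I would rearrange into the stated form $(1-\tfrac12\eta\tau)\MetricSlope\phi{U^n_\taueta}\le \Dist{U^n_\taueta}{U^{n-1}_\taueta}/\tau$ (the two versions agree to the relevant order; the factor $(1-\tfrac12\eta\tau)$ on the left is the cleaner one to propagate, and one just has to check $1-\tfrac12\eta\tau\le (1+\tfrac12\eta\tau)^{-1}$). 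So \eqref{eq:5} requires no convexity and is already available.

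For the energy-type drop \eqref{eq:6}, the idea is to return to the variational inequality \eqref{eq:cap1:67a} but now test it against an $(\vartheta,\eps)$-intermediate point $V=\Gdc_{\vartheta,\eps}$ between $U^{n}_\taueta$ and $U^{n-1}_\taueta$ furnished by the approximate $\lambda$-convexity \eqref{eq:cap1:60-bis}. Writing $U:=U^n_\taueta$, $W:=U^{n-1}_\taueta$, $\ell:=\Dist UW$, I would expand $\DistSquare V W$ and $\DistSquare V U$ using the geometry of the intermediate point (the bounds in \eqref{eq:88}--\eqref{eq:89bis}), substitute the convexity bound for $\phi(V)$, let $\eps\downarrow 0$, and optimize in $\vartheta$. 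The terms of order $\ell^2/\tau$ combine so that the $\frac1{2\tau}$ from the resolvent and the $-\frac\lambda2$ from convexity (plus the $\frac\eta2\ell\,\Dist VU$ relaxation error, which is $O(\vartheta\ell^2)$) produce exactly the coefficient $(1+\tfrac12(\lambda-\eta)\tau)$ on the left; comparing with \eqref{eq:cap4:22a} controls the right-hand side by $\phi(W)-\phi(U)$. This is the computational heart of the lemma and the step I expect to be the main obstacle, since one must track the $\eps$- and $\vartheta$-dependent error terms carefully to land on the clean linear-in-$\tau$ coefficient rather than a weaker one.

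The two inequalities in \eqref{eq:cap4:2a} then follow by combining what precedes. For the first, I would chain \eqref{eq:6} with the slope-as-global-lower-bound characterization: approximate $\lambda$-convexity gives $\MetricSlope\phi{U^{n-1}_\taueta}=\GSlope\lambda\phi{U^{n-1}_\taueta}$ by Proposition \ref{lem:app-conv-slope}, and \eqref{eq:cap1:89} applied with $x=U^{n-1}_\taueta$, $y=U^n_\taueta$ bounds $\phi(W)-\phi(U)$ from above by $\GSlope\lambda\phi W\,\ell-\tfrac\lambda2\ell^2$; inserting this into \eqref{eq:6} and dividing by $\ell$ yields $(1+(\lambda-\eta)\tau)\ell/\tau\le \MetricSlope\phi{U^{n-1}_\taueta}$ after the elementary cancellation. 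For the second inequality of \eqref{eq:cap4:2a}, I would feed the just-proved first inequality into the slope bound \eqref{eq:5}: from \eqref{eq:5},
\begin{displaymath}
  (1-\tfrac12\eta\tau)\MetricSlope\phi{U^n_\taueta}\le \frac{\ell}\tau\le \frac{1}{1+(\lambda-\eta)\tau}\MetricSlope\phi{U^{n-1}_\taueta},
\end{displaymath}
and then a single estimate of the product $(1-\tfrac12\eta\tau)(1+(\lambda-\eta)\tau)\ge 1-\lambda'\tau$ with $\lambda'\ge \eta(1+\tfrac12\tau\lambda_+)-\lambda$ gives the claimed monotonicity of the slope along the scheme. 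The only real work here is the scalar inequality identifying the admissible $\lambda'$, which is routine once the two preceding displays are in place.
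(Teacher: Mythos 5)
Your architecture tracks the paper's proof of Lemma \ref{lem:disc-stab} almost step for step: \eqref{eq:5} is indeed just the resolvent slope bound \eqref{eq:cap1:4} rearranged via $(1-\tfrac12\eta\tau)(1+\tfrac12\eta\tau)\le 1$; the paper proves \eqref{eq:6} exactly as you propose, testing \eqref{eq:cap1:67a} with a $(\vartheta,\eps)$-intermediate point satisfying \eqref{eq:cap1:60-bis}, letting $\eps\downarrow0$, and extracting the endpoint ($\vartheta$-derivative) information from the resulting quadratic polynomial — your ``optimize in $\vartheta$'' is correct, since the minimum sits at the endpoint; and the first half of \eqref{eq:cap4:2a} comes, as you say, from Proposition \ref{lem:app-conv-slope} together with \eqref{eq:cap1:89} inserted into \eqref{eq:6}. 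Two small remarks here: \eqref{eq:cap4:22a} plays no role in \eqref{eq:6} (the right-hand side $\phi(U^{n-1}_{\tau,\eta})-\phi(U^n_{\tau,\eta})$ already comes out of the convexity substitution), and your ``elementary cancellation'' actually produces the sharper coefficient $1+\tfrac12(\lambda-\eta)\tau+\tfrac12\lambda\tau=1+(\lambda-\tfrac12\eta)\tau$ (the paper's \eqref{eq:8-bis}), which you then weaken to $1+(\lambda-\eta)\tau$ using $\eta\ge0$.

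That premature weakening is where your last step genuinely fails. The scalar inequality you call routine, namely $(1-\tfrac12\eta\tau)(1+(\lambda-\eta)\tau)\ge 1-\lambda'\tau$ for every $\lambda'\ge\eta(1+\tfrac12\tau\lambda_+)-\lambda$, is false: take $\lambda=0$, $\eta=1$, $\tau=1/10$ and the admissible $\lambda'=1$; then the left side is $0.95\cdot 0.9=0.855$ while the right side is $0.9$. Indeed the product expands to $1+(\lambda-\tfrac32\eta)\tau-\tfrac12\eta(\lambda-\eta)\tau^2$, and the $\tfrac32\eta$ cannot be absorbed into the stated $\lambda'$ threshold for small $\tau$. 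The paper instead chains \eqref{eq:5} with the un-weakened bound $(1+(\lambda-\tfrac12\eta)\tau)\,\Dist{U^n_{\tau,\eta}}{U^{n-1}_{\tau,\eta}}/\tau\le\MetricSlope\phi{U^{n-1}_{\tau,\eta}}$, so that the relevant product is $(1-\tfrac12\eta\tau)(1+(\lambda-\tfrac12\eta)\tau)=1+(\lambda-\eta)\tau-\tfrac12\eta(\lambda-\tfrac12\eta)\tau^2\ge 1-\lambda'\tau$, using $\lambda-\tfrac12\eta\le\lambda_+$ — exactly the claimed threshold. Since your own derivation of the first half of \eqref{eq:cap4:2a} yields this sharper bound before you discard the extra $\tfrac12\eta\tau$, the repair is purely local: chain with that version. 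Finally, multiplying the two inequalities requires the coefficients involved to be nonnegative; the paper notes (see the discussion after \eqref{eqw}) that when they are not, one has $1-\lambda'\tau\le 0$ and the second inequality of \eqref{eq:cap4:2a} is trivially true — a degenerate case your write-up passes over silently.
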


Note that when $\eta=0$ we can choose $\lambda'=-\lambda$ in the above
estimates. On the other hand, when $\lambda>0$ we can always pick $\eta$ sufficiently
small (independently of $ \tau $ ranging in a bounded interval) so that $\lambda-\eta\ge 0$ and $\lambda'=0$.

\begin{proof}
Inequality \eqref{eq:5} follows directly from \eqref{eq:cap1:4}. In order to show \eqref{eq:6}, for every $ \vartheta,\eps \in (0,1) $ we take a $ (\vartheta,\eps) $-intermediate point $ \sfU^{n-1,n}_{\vartheta,\eps} $ between $ U^{n-1}_\taueta $ and $ U^n_\taueta $ such that
\begin{equation}\label{eq:app-conv-1}
    \phi(\sfU^{n-1,n}_{\vartheta,\eps})\le (1-\vartheta)\phi(U^{n-1}_\taueta)+\vartheta\phi(U^n_\taueta)-\frac{\lambda-\eps}2
    \vartheta(1-\vartheta)\DistSquare{U^{n-1}_\taueta}{U^{n}_\taueta} . 
\end{equation} 
Now note that by picking $ V=\sfU^{n-1,n}_{\vartheta,\eps} $ in \eqref{eq:cap1:67a}, setting $\tilde\eta:=\tfrac \eta2\Dist{U^n_\taueta}{U^{n-1}_\taueta}$, exploiting \eqref{eq:app-conv-1} and 
$$ \Dist{\sfU^{n-1,n}_{\vartheta,\eps}}{U^{n-1}_\taueta} \le \vartheta (1+\eps) \Dist{U^{n-1}_\taueta}{U^{n}_\taueta} , \qquad \Dist{\sfU^{n-1,n}_{\vartheta,\eps}}{U^{n}_\taueta} \le (1-\vartheta) (1+\eps) \Dist{U^{n-1}_\taueta}{U^{n}_\taueta} ,  $$
we end up with  
  \begin{subequations}
    \begin{align}
      \notag\frac
      1{2\tau}\DistSquare{U^n_\taueta}{U^{n-1}_\taueta}+\phi(U^n_\taueta)&\le
      \frac
      1{2\tau}\DistSquare{\sfU^{n-1,n}_{\vartheta,\eps}}{U^{n-1}_\taueta}+\phi(\sfU^{n-1,n}_{\vartheta,\eps})+
      \tilde\eta \Dist{\sfU^{n-1,n}_{\vartheta,\eps}}{U^n_\taueta} 
      \\\label{eq:7}&\le
      \frac
      {\vartheta^2(1+\eps)^2}{2\tau}\DistSquare{U^{n-1}_\taueta}{U^{n-1}_\taueta}+
      (1-\vartheta)\phi(U^{n-1}_\taueta)+\vartheta\phi(U^n_\taueta)
      \\\label{eq:8}&\quad-\frac
      {\lambda-\eps}{2}\vartheta(1-\vartheta)\DistSquare{U^{n-1}_\taueta}{U^n_\taueta}
      +\tilde\eta (1-\vartheta)(1+\eps)\Dist{U^{n-1}_\taueta}{U^n_\taueta} .
    \end{align}
  \end{subequations} 
First one lets $ \varepsilon \downarrow 0 $ and then observes that the corresponding second-order polynomial in $\vartheta\in [0,1]$ defined by the right-hand side \eqref{eq:7}--\eqref{eq:8} evaluated at $ \eps=0 $ attains a minimum at $\vartheta=1$; inequality 
  \eqref{eq:6} then follows by taking the (left) derivative with
  respect to $\vartheta$ at $\vartheta=1$.

Thanks to Proposition \ref{lem:app-conv-slope}, we know in particular that $ \MetricSlope{\phi}{U^{n-1}_\taueta} = \GSlope{\lambda}{\phi}{U^{n-1}_\taueta} $, whence  
  \begin{displaymath}
    \phi(U^{n-1}_\taueta)-\phi(U^n_\taueta)\le \MetricSlope\phi{U^{n-1}_\taueta}\Dist{U^{n-1}_\taueta}{U^n_\taueta}-\frac\lambda 2\DistSquare{U^{n-1}_\taueta}{U^n_\taueta}
  \end{displaymath}
  and therefore, by exploiting \eqref{eq:6} 
  \begin{equation}\label{eq:8-bis}
    \left(1+(\lambda-\tfrac 12\eta)\tau\right) \frac{\DistSquare{U^n_\taueta}{U^{n-1}_\taueta}}\tau
      \le \MetricSlope\phi{U^{n-1}_\taueta}\Dist{U^{n-1}_\taueta}{U^n_\taueta};
  \end{equation}
we thus get the first inequality of \eqref{eq:cap4:2a} since $ \eta \ge 0 $. The second inequality is
implied by \eqref{eq:5} and \eqref{eq:8-bis}, which yield
\begin{equation}\label{eqw}
  \left(1+(\lambda-\tfrac12 \eta)\tau\right) \left(1-\tfrac12
    \eta\tau\right)
  \MetricSlope{\phi}{U^{n}_\taueta}\le
  \MetricSlope{\phi}{U^{n-1}_\taueta},
\end{equation}
upon observing that $\lambda-\eta- \tfrac 12\eta(\lambda-\tfrac12\eta)\tau\ge -\lambda'$. Actually \eqref{eqw} may not hold if the left-hand sides of both \eqref{eq:5} and \eqref{eq:8-bis} are negative; however, the second inequality of \eqref{eq:cap4:2a} is trivially true.
%
% \GGG 
% \eqref{eq:39} then follows by combining \eqref{eq:5} and \eqref{eq:6},
% observing that
% \begin{align*}
%   (1-\tfrac 12\eta\tau)^2(1+\tfrac 12(\lambda-\eta)\tau)
%   &\ge
%   1-\eta\tau+\tfrac 12 (\lambda-\eta)\tau
%   -\tfrac 12 \eta(\lambda-\eta)\tau^2
%     \ge 1+\tfrac 12\tau(\lambda-3\eta-\eta\lambda\tau)
%     \\&
%   \ge 1+\tfrac 12\beta\tau.
% \end{align*}
\end{proof} 
The second step consists in providing the fundamental \emph{local error estimate}
between $ (\tau,\eta) $-minimizing movements and the
$\EVIshort\lambda$-gradient flow, just upon assuming
a relaxed version of \eqref{eq:5} and \eqref{eq:6}. Here the refined
local asymptotic expansion \eqref{eq:3} turns out to be the crucial ingredient of
the estimate. From now on, for simplicity we will only consider the case $\lambda\le 0$: this is not
restrictive, since a solution of $\EVIshort\lambda$ for $\lambda>0$
also solves $\EVIshort0$. 
{\renewcommand{\taueta}{{}}
\begin{lemma}
%[Local error estimates]
  \label{le:local-estimate}
  Let $u\in {\rm Lip}([0,+\infty);\Dom\phi)$ be a solution of the
  Evolution Variational Inequality
  $\EVI\AmbientSpace\DistName\phi\lambda$, $\lambda\le 0$,
  with $u_0\in \DomainSlope\phi$
  and let $U_\taueta\in \Dom{|\partial\phi|}$ satisfy the estimates
  \begin{equation} \label{eq:10}
    \begin{aligned}
      % (1+\tfrac 12\betag\tau)\Big(\frac
      % \tau2\frac{\DistSquare{U_\taueta}{u_0}}{\tau^2}
      %   +\frac \tau2 \MetricSlope\phi{U_\taueta}^2\Big)\le \phi(u_0)-
      %   \phi(U_\taueta)+\eps
\tau (1-\tfrac 12\eta\tau)^2 \MetricSlopeSquare\phi{U_\taueta} \le
    \frac{\DistSquare{U_\taueta}{u_0} } \tau +\varepsilon  , \qquad 
    (1-\tfrac 12\beta \tau)\frac{\DistSquare{U_\taueta}{u_0}}\tau
    \le 
    \phi(u_0)-
    \phi(U_\taueta) +\varepsilon 
  \end{aligned}
  \end{equation}
  for some $ \tau>0 $, $\eps\ge 0$, $\beta,\eta\in [0,2/\tau)$.
  If $ (\eta+\beta-2\lambda)\tau<1$, then
  for every $\alpha \le 0 $ complying with $2\alpha \le \tau^{-1}\log(1-(\eta+\beta-2\lambda)\tau)$ there holds
  \begin{equation}
    \label{eq:11}
    \rme^{2\lambda \tau}\DistSquare{U_\taueta}{u_\tau}\le
    \tau^2\Big(\MetricSlopeSquare\phi{u_0}-\rme^{2\alpha \tau}\MetricSlopeSquare\phi{U_\taueta}\Big) +3\varepsilon\tau .
    \end{equation}
  \end{lemma}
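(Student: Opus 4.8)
The plan is to use the refined asymptotic expansion \eqref{eq:3} as the only structural input from the continuous flow and to reduce the rest to a single scalar inequality. First I would apply \eqref{eq:3} (legitimate since $u_0\in\DomainSlope\phi$ and $\lambda\le0$) with $t=\tau$ and $v=U_\taueta$, and multiply by $2$, to get
\begin{equation*}
\rme^{2\lambda\tau}\DistSquare{U_\taueta}{u_\tau}\le \DistSquare{U_\taueta}{u_0}+2\,\sfE{2\lambda}{\tau}\big(\phi(U_\taueta)-\phi(u_0)\big)+\tau^2\MetricSlopeSquare\phi{u_0}.
\end{equation*}
Into this I insert the second estimate of \eqref{eq:10}, namely $\phi(U_\taueta)-\phi(u_0)\le \varepsilon-(1-\tfrac12\beta\tau)\tau^{-1}\DistSquare{U_\taueta}{u_0}$; as $\sfE{2\lambda}{\tau}\ge0$ this produces
\begin{equation*}
\rme^{2\lambda\tau}\DistSquare{U_\taueta}{u_\tau}\le A\,\DistSquare{U_\taueta}{u_0}+2\,\sfE{2\lambda}{\tau}\,\varepsilon+\tau^2\MetricSlopeSquare\phi{u_0},\qquad A:=1-\tfrac{2}{\tau}\,\sfE{2\lambda}{\tau}\big(1-\tfrac12\beta\tau\big).
\end{equation*}

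Next I would verify that, under the running hypothesis $(\eta+\beta-2\lambda)\tau<1$, the coefficient $A$ is strictly negative, so that the \emph{lower} bound on $\DistSquare{U_\taueta}{u_0}$ coming from the first estimate of \eqref{eq:10} may be substituted. The one elementary fact needed is $\rme^{2\lambda r}\ge 1+2\lambda r$, giving $\sfE{2\lambda}{\tau}\ge \tau(1+\lambda\tau)$ and hence, since $1-\tfrac12\beta\tau>0$,
\begin{equation*}
-A\ge 2(1+\lambda\tau)\big(1-\tfrac12\beta\tau\big)-1=P,\qquad P:=1-\beta\tau+2\lambda\tau-\lambda\beta\tau^2 .
\end{equation*}
Writing $Q:=(1-\tfrac12\eta\tau)^2>0$ (recall $\eta\tau<2$), a direct expansion gives the identity
\begin{equation*}
PQ-\big(1-(\eta+\beta-2\lambda)\tau\big)=-\lambda\beta\tau^2\,Q+\eta\tau^2\Big[(\beta-2\lambda)\big(1-\tfrac14\eta\tau\big)+\tfrac14\eta\Big],
\end{equation*}
in which both summands on the right are nonnegative because $\lambda\le0$, $\beta,\eta\ge0$ and $\eta\tau<2$. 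Therefore $-A\,Q\ge PQ\ge 1-(\eta+\beta-2\lambda)\tau>0$, which both forces $A<0$ and, since by the choice of $\alpha$ we have $\rme^{2\alpha\tau}\le 1-(\eta+\beta-2\lambda)\tau$, yields the crucial bound $-A\,Q\ge \rme^{2\alpha\tau}$.

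With $A<0$ in hand, I would multiply the lower bound $\DistSquare{U_\taueta}{u_0}\ge \tau^2 Q\,\MetricSlopeSquare\phi{U_\taueta}-\varepsilon\tau$ by $A$ (reversing the inequality), obtaining $A\,\DistSquare{U_\taueta}{u_0}\le A\tau^2 Q\,\MetricSlopeSquare\phi{U_\taueta}-A\varepsilon\tau$, and then use $A\,Q\le-\rme^{2\alpha\tau}$ to replace the slope term by $-\rme^{2\alpha\tau}\tau^2\MetricSlopeSquare\phi{U_\taueta}$, exactly the negative contribution sought in \eqref{eq:11}. For the errors, from $-1\le A<0$ one has $-A\varepsilon\tau\le\varepsilon\tau$, and from $\sfE{2\lambda}{\tau}\le\tau$ one has $2\,\sfE{2\lambda}{\tau}\,\varepsilon\le 2\varepsilon\tau$, so the total error is at most $3\varepsilon\tau$. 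Assembling the three contributions gives precisely \eqref{eq:11}.

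The hard part will be the sign bookkeeping in the third paragraph: the inequality $PQ\ge 1-(\eta+\beta-2\lambda)\tau$ is \emph{not} termwise valid (its $\tau^3$ coefficient is negative), and one must rely on the displayed factorization—collecting the $\lambda\beta$ contributions into $-\lambda\beta\tau^2 Q$ and the remainder into a manifestly nonnegative multiple of $\eta\tau^2$—to see the good quadratic terms absorb the offending cubic one. Establishing $A<0$ rather than assuming it is what legitimizes using the lower slope bound, and since it follows from the same chain of estimates as $-A\,Q\ge\rme^{2\alpha\tau}$, the two delicate points are settled simultaneously.
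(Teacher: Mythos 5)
Your proof is correct and takes essentially the same route as the paper's: apply the asymptotic expansion \eqref{eq:3} with $v=U_\taueta$, absorb the second estimate of \eqref{eq:10} into the coefficient $A$ (which is the paper's $-c_1$, $c_1:=2\tau^{-1}(1-\tfrac12\beta\tau)\sfE{2\lambda}\tau-1$), and insert the first estimate after verifying $-A\,Q\ge \rme^{2\alpha\tau}$. The only divergence is in the sign bookkeeping: the paper sidesteps your factorization identity by weakening early, via $(1-\tfrac12\eta\tau)^2\ge 1-\eta\tau$ and $c_1\ge 1+(2\lambda-\beta)\tau$ (discarding the nonnegative term $-\lambda\beta\tau^2$ at once), after which the product bound reduces to the single manifestly nonnegative cross term $(\beta-2\lambda)\eta\tau^2$ and no delicate cancellation of a cubic coefficient arises.
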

Before proceeding to prove the lemma, let us point out that estimate \eqref{eq:11} looks particularly simple in the case $\lambda=\eta=\beta=0$:
\begin{equation*}
  \label{eq:12}
  \DistSquare{U_\taueta}{u_\tau}\le
  \tau^2\Big(\MetricSlopeSquare\phi{u_0}-\MetricSlopeSquare\phi{U_\taueta}\Big) + 3\varepsilon\tau .  
\end{equation*}  
\begin{proof}
  The local expansion \eqref{eq:3} with $v:=U_\taueta$ yields
  \begin{equation}
    \label{eq:13}
    \frac{\rme^{2\lambda\tau}}2\DistSquare{u_\tau}{U_\taueta}-
    \frac 12\DistSquare{u_0}{U_\taueta}+\sfE{2\lambda}\tau\Big(\phi(u_0)-\phi(U_\taueta)\Big)\le
    \frac{\tau^2}2\MetricSlopeSquare\phi{u_0} .
  \end{equation}
  Multiplying the second inequality of \eqref{eq:10} by $2\sfE{2\lambda}\tau$ and summing it to \eqref{eq:13} we obtain
  \begin{equation}
    \label{eq:14}
    \rme^{2\lambda\tau}\DistSquare{u_\tau}{U_\taueta}\le \tau^2\MetricSlopeSquare\phi{u_0}-c_1 \DistSquare{u_0}{U_\taueta}  + 2\sfE{2\lambda} \tau \, \varepsilon 
    %+2\eta\sfE{2\lambda}\tau\Dist{u_0}{U_\taueta}.
 \end{equation}
 where $c_1:=2\tau^{-1}(1-\tfrac 12\beta \tau)\sfE{2\lambda}\tau-1$.
 Using the elementary inequalities
 \begin{displaymath}
   \tau(1+\lambda\tau)\le \sfE{2\lambda}\tau\le \tau,\quad
    1 \ge c_1\ge 1 + (2\lambda-\beta)\tau,
   %2\eta\sfE{2\lambda}\tau\Dist{u_0}{U_\taueta}\le
   %\eta\tau\DistSquare{u_0}{U_\taueta}+\eta\tau
 \end{displaymath}
 % we get $$ and
%  \begin{equation}
%    \label{eq:15}
%     \rme^{2\lambda\tau}\DistSquare{u_\tau}{U_\taueta}\le \tau^2\MetricSlopeSquare\phi{u_0}-c_2 \DistSquare{u_0}{U_\taueta}
%     +\eta\tau,\quad
%     c_2:=1+(3\lambda-\eta)\tau.
%  \end{equation}
%  We then use 
and the first inequality of \eqref{eq:10}, which yields
 \begin{equation*}
   \label{eq:16}
   \DistSquare{U_\taueta}{u_0} \ge (1-\eta\tau)\tau^2\MetricSlopeSquare\phi{U_\taueta} -\varepsilon \tau ,
 \end{equation*}
 from \eqref{eq:14} we obtain \eqref{eq:11}.
\end{proof}
}
%\Comment{Probabilmente sara' molto comodo poi assumere $ 2\beta\tau \le 1-e^{-1} $, sfruttando anche il fatto che cosi' facendo $ \sftau{\tau}{-\beta}t \le 2t $, oppure $ \sftau{\tau}{-\beta}t \le 2 \sfE{-2\beta}{t} $. Utile anche la stima $ \sfE{-\beta}{t} \ge e^{-\beta t} \, t $. Forse puo' servire nel seguito usare il fatto che $ \sfE{-\beta}{t} $ decresce al crescere di $ \beta $, e uno puo' sempre assumere che $ -\lambda $ e $ \eta $ siano sufficientemente grandi, se serve.}

\subsection[Error estimates and convergence for initial data in
  $\DomainSlope\phi$]{Error estimates and convergence for initial data in
  $\boldsymbol{\DomainSlope\phi}$}\label{s1s}

In the next result we establish uniform error estimates between $
\varepsilon $-relaxed $ (\tau,\eta) $-minimizing movements (in the
sense that they are assumed to satisfy only \eqref{eq:10} at each
discrete time step) and the gradient flow.
Here we consider the case of \emph{regular} discrete initial data, namely we suppose that $U_{\taueta}^0\in \Dom{|\partial\phi|}$.

For convenience, let us introduce the following notations:
\begin{equation}
  \label{eq:46}
  \gamma:=2\eta-3\lambda,\qquad
  t_\tau:=\min\{k\tau:k\in \N,\ k\tau\ge t\}\quad\text{for every }t\ge0.
\end{equation}

\begin{theorem}[Uniform error estimate for regular data]\label{lem:Error-est-approx}
Let the following assumptions hold:
  \begin{enumerate}[\rm 1.]
  \item the \EVIshort\lambda-gradient flow $(\lambda\le 0)$ $\FlowName_t$ of $\phi$ exists in
    $D\subset \overline{\Dom\phi}$;
  \item for some $\tau>0$, $\eta\ge0$ and $ \varepsilon\ge0 $
    such that $ 4\gamma\tau \le  1 $,
    the sequence $(U^n_\taueta)_{n\in \N} \subset D \cap \DomainSlope{\phi}  $ satisfies for all $ n \in \mathbb{N} $ the estimates 
  \begin{equation} \label{eq:10-est}
  %\begin{aligned}
    \tau (1-\tfrac 12\eta\tau)^2 \MetricSlopeSquare\phi{U_\taueta^n} \! \le \!
    \frac{\DistSquare{U_\taueta^n}{U_\taueta^{n-1}} } \tau +\varepsilon  , \quad
    \big(1-\tfrac {\eta-\lambda}2 \tau\big)\frac{\DistSquare{U_\taueta^n}{U_\taueta^{n-1}}} \tau \!
    \le \!
    \phi(U_\taueta^{n-1})-
    \phi(U_\taueta^{n}) +\varepsilon  .
  %\end{aligned}
  \end{equation}
  \end{enumerate} 
Then, for all $ u_0 \in D $, the following estimate holds for every $ t>0 $:
   \begin{equation} \label{eq:24:epsilon}
\Dist{\FlowName_{t}(u_0)}{\Pc U\taueta(t)}
%     \le \! \rme^{-\lambda(t+\tau)} \left[ \tau \MetricSlope{\phi}{u_0} +  \rme^{2\beta(t+\tau)} \big( \Dist{u_0}{U^0_\taueta} + \sqrt {(t+\tau)\tau} \MetricSlope\phi{U^0_\taueta} \big) \right]  \quad \forevery t > 0 . \\
\le \rme^{-\lambda t} \Dist{u_0}{U^0_\taueta}  + \left( \sqrt{\tau
       t_\tau} + t_\tau - t \right) \rme^{\gamma t_\tau} \,
     \MetricSlope\phi{U^0_\taueta} +
     2\sqrt{\frac{\varepsilon}{\tau} t_\tau \, \sfE{2 \gamma}{t_\tau} } .
     % {[1-(2\eta-3\lambda)\tau]^{\frac{2\eta-3\lambda}{2}(t+\tau)}}  
   \end{equation}
\end{theorem}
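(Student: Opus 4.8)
The plan is to follow the \emph{Lady Windermere's Fan} strategy announced before the statement: combine the $\lambda$-contraction of the exact flow (continuous stability) with the local error estimate of Lemma~\ref{le:local-estimate}, and sum over the discrete steps by a telescoping argument. Throughout, let $n$ be the integer with $n\tau=t_\tau$, so that $\Pc U\taueta(t)=U^n$ (writing $U^k:=U^k_\taueta$), and note that $\gamma\ge-\lambda$ since $\gamma+\lambda=2(\eta-\lambda)\ge0$. First I would peel off the dependence on the initial datum. Writing $\bar u_s:=\Flow{s}{u_0}$ and $\tilde u_s:=\Flow{s}{U^0}$ for the two exact flows, the triangle inequality and \eqref{eq:Cap12:1bis} give $\Dist{\bar u_t}{U^n}\le\rme^{-\lambda t}\Dist{u_0}{U^0}+\Dist{\tilde u_t}{U^n}$, producing the first term of \eqref{eq:24:epsilon}. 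It then remains to compare the exact and discrete flows issued from the \emph{same} datum $U^0\in\DomainSlope\phi$. I split $\Dist{\tilde u_t}{U^n}\le\Dist{\tilde u_t}{\tilde u_{t_\tau}}+\Dist{\tilde u_{t_\tau}}{U^n}$; for the short interval $[t,t_\tau]$ of length $t_\tau-t<\tau$, the energy identity \eqref{eq:Cap12:24} (giving $|\dot{\tilde u}_s|=\MetricSlope\phi{\tilde u_s}$) together with the monotonicity of $s\mapsto\rme^{\lambda s}\MetricSlope\phi{\tilde u_s}$ from \eqref{eq:cap2:3} yields $\Dist{\tilde u_t}{\tilde u_{t_\tau}}\le\int_t^{t_\tau}\MetricSlope\phi{\tilde u_s}\,\d s\le(t_\tau-t)\rme^{-\lambda t_\tau}\MetricSlope\phi{U^0}\le(t_\tau-t)\rme^{\gamma t_\tau}\MetricSlope\phi{U^0}$, i.e.\ the $(t_\tau-t)$-piece of the slope term.

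The core is the telescoping control of $\Dist{\tilde u_{t_\tau}}{U^n}=\Dist{\tilde u^n}{U^n}$, where $\tilde u^k:=\Flow{k\tau}{U^0}$. Introducing the one-step exact flow $w^k:=\Flow{\tau}{U^{k-1}}$, the contraction gives $\Dist{\tilde u^k}{w^k}\le\rme^{-\lambda\tau}\Dist{\tilde u^{k-1}}{U^{k-1}}$, while Lemma~\ref{le:local-estimate}, applied with $U^{k-1}$ in the role of $u_0$ and $U^k$ in the role of $U_\taueta$ and with $\beta=\eta-\lambda$ (so that $\eta+\beta-2\lambda=\gamma$, and $\gamma\tau<1$ holds thanks to $4\gamma\tau\le1$), controls the local error. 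Indeed the hypotheses \eqref{eq:10} are precisely \eqref{eq:10-est}, and \eqref{eq:11} reads $\rme^{\lambda\tau}\Dist{w^k}{U^k}\le\big(\tau^2 S_k+3\varepsilon\tau\big)^{1/2}$ with $S_k:=\MetricSlopeSquare\phi{U^{k-1}}-\rme^{2\alpha\tau}\MetricSlopeSquare\phi{U^k}$. Setting $d_k:=\rme^{\lambda k\tau}\Dist{\tilde u^k}{U^k}$, the triangle inequality then gives the recursion $d_k\le d_{k-1}+\rme^{\lambda(k-1)\tau}\big(\tau^2 S_k+3\varepsilon\tau\big)^{1/2}$, and since $d_0=0$ a telescoping sum yields $d_n\le\sum_{k=1}^n\rme^{\lambda(k-1)\tau}\big(\tau^2 S_k+3\varepsilon\tau\big)^{1/2}$.

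To sum this series I would apply Cauchy--Schwarz with the weight $\rme^{-\alpha(k-1)\tau}$, chosen exactly so that, in the resulting factor $\sum_k\rme^{2\alpha(k-1)\tau}\tau^2 S_k$, the slope contributions \emph{telescope} to $\tau^2\big(\MetricSlopeSquare\phi{U^0}-\rme^{2\alpha n\tau}\MetricSlopeSquare\phi{U^n}\big)\le\tau^2\MetricSlopeSquare\phi{U^0}$. This gives $d_n\le A^{1/2}\big[\tau\MetricSlope\phi{U^0}+(3\varepsilon\tau\,B)^{1/2}\big]$ with $A:=\sum_{k=1}^n\rme^{2(\lambda-\alpha)(k-1)\tau}$ and $B:=\sum_{k=1}^n\rme^{2\alpha(k-1)\tau}$. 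I would then take $\alpha:=-\gamma$, which is admissible in Lemma~\ref{le:local-estimate} precisely because $\rme^{-2\gamma\tau}\le1-\gamma\tau$ for $\gamma\tau\le\tfrac14$. With this choice $A\le n\,\rme^{2(\lambda+\gamma)n\tau}$, whence $\rme^{-\lambda n\tau}A^{1/2}\le\sqrt n\,\rme^{\gamma n\tau}$ and $\tau\sqrt n=\sqrt{\tau t_\tau}$, so the slope part becomes $\rme^{-\lambda n\tau}A^{1/2}\tau\MetricSlope\phi{U^0}\le\sqrt{\tau t_\tau}\,\rme^{\gamma t_\tau}\MetricSlope\phi{U^0}$. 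Combined with the short-interval piece this reconstitutes the full coefficient $\big(\sqrt{\tau t_\tau}+t_\tau-t\big)\rme^{\gamma t_\tau}\MetricSlope\phi{U^0}$.

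I expect the genuinely delicate point to be tracking the numerical constant in the $\varepsilon$-term, i.e.\ turning the local factor $3$ of \eqref{eq:11} into the prefactor $2$ of \eqref{eq:24:epsilon}. Concretely, after the above reductions the $\varepsilon$-contribution $\rme^{-\lambda n\tau}A^{1/2}(3\varepsilon\tau\,B)^{1/2}$ must be bounded by $2\sqrt{\tfrac\varepsilon\tau t_\tau\,\sfE{2\gamma}{t_\tau}}$, which (using $\rme^{-\lambda n\tau}A^{1/2}\le\sqrt n\,\rme^{\gamma n\tau}$) amounts to the scalar inequality $3\tau\,\rme^{2\gamma n\tau}B\le4\,\sfE{2\gamma}{n\tau}$. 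Inserting $B=\tfrac{1-\rme^{-2\gamma n\tau}}{1-\rme^{-2\gamma\tau}}$ and $\sfE{2\gamma}{n\tau}=\tfrac{\rme^{2\gamma n\tau}-1}{2\gamma}$, this reduces to $3\gamma\tau\le2\big(1-\rme^{-2\gamma\tau}\big)$, which holds exactly on the range $\gamma\tau\le\tfrac14$ guaranteed by $4\gamma\tau\le1$ (and degenerates correctly as $\gamma\downarrow0$, where $B=n$ and $\sfE{2\gamma}{t_\tau}\to t_\tau$, so that $3\le4$). Assembling the three contributions — the contraction term, the short-interval slope term, and the telescoped term $\sqrt{\tau t_\tau}\,\rme^{\gamma t_\tau}\MetricSlope\phi{U^0}+2\sqrt{\tfrac\varepsilon\tau t_\tau\,\sfE{2\gamma}{t_\tau}}$ — reproduces \eqref{eq:24:epsilon} verbatim.
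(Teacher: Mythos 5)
Your proposal is correct and follows essentially the same route as the paper's proof: the same Lady Windermere decomposition, the same application of Lemma~\ref{le:local-estimate} with $\beta=\eta-\lambda$ (so $\eta+\beta-2\lambda=\gamma$), the same exponentially weighted telescoping of the slope terms, and the same Cauchy--Schwarz summation over the steps. The only deviations are cosmetic and all check out: you fix the constant value $\alpha=-\gamma$, which forces you to verify the scalar inequalities $\rme^{-2\gamma\tau}\le 1-\gamma\tau$ and $3\gamma\tau\le 2\left(1-\rme^{-2\gamma\tau}\right)$ on the range $\gamma\tau\le\tfrac14$ (both hold there), whereas the paper takes the borderline admissible value $2\alpha=\tau^{-1}\log(1-\gamma\tau)$; and you bound the short-interval piece $\Dist{\tilde u_t}{\tilde u_{t_\tau}}$ via the energy identity \eqref{eq:Cap12:24} and the monotonicity \eqref{eq:cap2:3} rather than via the contraction combined with the expansion \eqref{eq:3}, obtaining the identical coefficient $(t_\tau-t)\,\rme^{-\lambda t_\tau}\MetricSlope\phi{U^0_\taueta}$.
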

\begin{proof}
  Thanks to \eqref{eq:10-est}, by applying Lemma
  \ref{le:local-estimate} we obtain the validity of estimate
  \eqref{eq:11} with $ u_0 $ replaced by $ U^{n-1}_\taueta $, $
  U $ replaced by $ U^n_\taueta $, $ u_\tau $ replaced by $ \FlowName_\tau(U^{n-1}_\taueta) $  (for all $  n \in \N $) and
  $ \beta = \eta - \lambda $. Let us set $\gamma_\tau:=-\tau^{-1}\log(1-\gamma\tau)$. Upon multiplying \eqref{eq:11} by $
  \rme^{2\alpha(n-1)\tau} $ and noticing that there holds $ 2\alpha\le
  -\gamma_\tau\le- \gamma% \tau^{-1}\log (1+(3\lambda-2\eta)\tau)
  \le 2\lambda $, we end up with  
  \begin{equation*} 
  \begin{gathered}
    \rme^{2\alpha n\tau}\DistSquare{U^{\n}_\taueta}{\FlowName_{\tau}(U^{n-1}_\taueta)} \le
    \tau^2 ( A^{\n} + \varepsilon B^{\n} ) , \\
   \text{where} \quad  A^{\n} := 
    \rme^{2\alpha (n-1)\tau}\MetricSlopeSquare\phi{U^{\n-1}_\taueta}-
    \rme^{2\alpha n\tau}\MetricSlopeSquare\phi{U^{\n}_\taueta} \quad \text{and} \quad B^{\n} := \frac{3 \rme^{2\alpha(n-1)\tau}}{\tau} .
    \end{gathered}
  \end{equation*}
On the other hand, since $\alpha\le \lambda$ and it is not restrictive to assume $\alpha \le 0$; if we set $E^n:=\rme^{\alpha n\tau}\Dist{U^n_\taueta}{\FlowName_{n\tau}(U^{0}_\taueta)}$, we obtain: 
  \begin{align*} 
    E^n&\le
    \rme^{\alpha n\tau}\Dist{U^n_\taueta}{\FlowName_\tau(U^{n-1}_\taueta)}
    +\rme^{\alpha n\tau}\Dist{\FlowName_\tau(U^{n-1}_\taueta)}{\FlowName_{\tau}(\FlowName_{(n-1)\tau}(U^{0}_\taueta))}
    \\& \le
    \rme^{\alpha n\tau}\Dist{U^n_\taueta}{\FlowName_\tau(U^{n-1}_\taueta)}
    +\rme^{\alpha(n-1)\tau}\Dist{U^{n-1}_\taueta}{\FlowName_{(n-1)\tau}(U^{0}_\taueta)}
    \\& \le
    \rme^{\alpha n\tau}\Dist{U^n_\taueta}{\FlowName_\tau(U^{n-1}_\taueta)}
    +E^{n-1} ,
  \end{align*}
  so that if $  m \in \N $ is such that $ t \in ((m-1)\tau,m\tau] $, or equivalently $t_\tau=m\tau$, there holds (note that $ E^0 = 0 $)
  \begin{equation*}\label{est-E:epsilon}
  \begin{aligned}
    E^m \le & \sum_{n=1}^m \rme^{\alpha n\tau}\Dist{U^n_\taueta}{\FlowName_\tau(U^{n-1}_\taueta)}\le
    \tau \sum_{n=1}^m( A^n + \eps B^n )^{1/2}
    \le  \tau \sum_{n=1}^m ( A^n )^{1/2} +  \tau \sqrt{\varepsilon} \sum_{n=1}^m ( B^n )^{1/2}   \\
    \le &
    \tau \sqrt {m} \Big(\sum_{n=1}^m A^n  \Big)^{1/2} + \sqrt {3 m \varepsilon \tau}  \Big(\sum_{n=1}^m \rme^{2\alpha(n-1)\tau}  \Big)^{1/2} 
    \le
    \sqrt {\tau t_\tau}\MetricSlope\phi{U^0_\taueta} + \sqrt{3  \varepsilon t_\tau \, \frac{1-\rme^{2\alpha t_\tau}}{1-\rme^{2\alpha\tau}} } , 
    \end{aligned}
  \end{equation*}
whence recalling \eqref{eq:cap1:81} and \eqref{eq:3}, 
\begin{equation*}\label{est-E2:epsilon} 
\begin{aligned}
\Dist{ \Pc{U}{\taueta}(t)}{\FlowName_{t}(u_0)} \le & \Dist{\FlowName_t(U^0_\taueta)}{\FlowName_t(u_0)} + \rme^{-\lambda t} \Dist{\FlowName_{m\tau-t}(U^0_\taueta)}{U^0_\taueta} + \Dist{U^m_\taueta}{\FlowName_{m\tau}(U^0_\taueta)}  \\
\le & 
\rme^{-\lambda t} \Dist{u_0}{U^0_\taueta} + (t_\tau-t) \, \rme^{-\lambda
  t_\tau} \MetricSlope{\phi}{U^0_\taueta} + \rme^{-\alpha t_\tau}
E^m \\
\le & 
\rme^{-\lambda t} \Dist{u_0}{U^0_\taueta} + 
% \tau \rme^{-\lambda
  % \tau} \MetricSlope{\phi}{U^0_\taueta} 
 \left( \sqrt{\tau t_\tau} + t_\tau -t \right) \rme^{-\alpha t_\tau}
 \MetricSlope\phi{U^0_\taueta}
 + \sqrt{3\varepsilon t_\tau\, \frac{\rme^{-2\alpha t_\tau}-1}{1-\rme^{2\alpha\tau}}} ,
\end{aligned}
\end{equation*}
namely \eqref{eq:24:epsilon} upon choosing $ 2\alpha = -\gamma_\tau$,
exploiting the inequality
\begin{align*}
  \frac{1-\rme^{2\alpha\tau}}{2\alpha \tau}=
  \frac 1\tau\int_0^\tau \rme^{2\alpha r}\,\d r\ge
  \rme^{2\alpha \tau}=\rme^{-\tau \gamma_\tau}\ge
  (1-\gamma\tau)\ge 3/4
\end{align*}
and observing that (we use \eqref{eq:est-elm-new} below)
$$
\gamma_\tau = \log {(1-\gamma\tau)^{-\frac 1 \tau}}  \le \log \left[ \rme^\gamma (1-\gamma\tau)^{-\gamma} \right] \le 2\gamma
$$
since $ 1-\gamma \tau \ge \rme^{-1} $.
% \eqref{eq:est-elm} with $ n=1 $, $ x=(2\eta-3\lambda)\tau $ and using the assumption $ (2\eta-3\lambda)\tau \le 1-\rme^{-1} $.
\end{proof}

Let us make explicit two important consequences of the previous
result, keeping in mind the notations \eqref{eq:46} for $\gamma$ and $t_\tau$.

\begin{corollary}[Error estimate for the Minimizing Movement scheme]
  \label{cor:Error_Estimate}
  Let us suppose that $\Dom\phi$ is an approximate
  length subset of $X$, and 
  the \EVIshort\lambda-gradient flow $(\lambda\le 0)$ $\FlowName_t$ of $\phi$ exists in 
  $\overline{\Dom\phi}$.
If, for some $\tau>0$ and $\eta\ge0$ satisfying $ 4\gamma\tau\le 1$,
the sequence $(U^n_\taueta)_{n\in \N}\subset \Dom{\phi} $ is a
$(\tau,\eta)$-discrete Minimizing Movement, according to Definition
\ref{def:MMS}, then for all $ u_0 \in \overline{\Dom\phi} $ the following estimate holds:
  \begin{equation} \label{eq:24-1}
   \begin{aligned}
     \Dist{\FlowName_t(u_0)}{\Pc U\taueta(t)}
%     \le & \rme^{-\lambda(t+\tau)} \left[ \tau \MetricSlope{\phi}{u_0} +  \rme^{2\beta(t+\tau)} \big( \Dist{u_0}{U^0_\taueta} + \sqrt {(t+\tau)\tau} \MetricSlope\phi{U^0_\taueta} \big) \right]  \quad \forevery t > 0 \\
     \le \rme^{-\lambda t} \Dist{u_0}{U^0_\taueta}  + 
     \left( \sqrt{\tau t_\tau} + t_\tau - t \right) \rme^{\gamma t_\tau} \MetricSlope\phi{U^0_\taueta} \quad \forevery t > 0.
     % {[1-(2\eta-3\lambda)\tau]^{\frac{2\eta-3\lambda}{2}(t+\tau)}}  
   \end{aligned} 
 \end{equation}
\end{corollary}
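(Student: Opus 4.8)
The plan is to read off the statement as the limiting case $\varepsilon=0$ of Theorem~\ref{lem:Error-est-approx}, applied with $D=\overline{\Dom\phi}$ and with the genuine $(\tau,\eta)$-discrete Minimizing Movement $(U^n_\taueta)_{n\in\N}$ in place of the $\varepsilon$-relaxed sequence. Since the last summand $2\sqrt{(\varepsilon/\tau)\,t_\tau\,\sfE{2\gamma}{t_\tau}}$ in \eqref{eq:24:epsilon} vanishes when $\varepsilon=0$, the right-hand side collapses exactly to that of \eqref{eq:24-1}. Thus the whole task reduces to checking that the two hypotheses of Theorem~\ref{lem:Error-est-approx} hold with $\varepsilon=0$: the inclusion $(U^n_\taueta)\subset D\cap\DomainSlope\phi$ and the pair of discrete estimates \eqref{eq:10-est} at every step.

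First I would upgrade the structural assumptions into the approximate convexity that underlies Lemma~\ref{lem:disc-stab}. Because $\Dom\phi$ is an approximate length subset, so is its closure $\overline{\Dom\phi}$ by L\ref{L2} of Lemma~\ref{le:complete-length}; as the \EVIshort\lambda-gradient flow exists in $\overline{\Dom\phi}$, part~\eqref{e-b} of Theorem~\ref{thm:Daneri} then gives that $\phi$ is approximately $\lambda$-convex in $\overline{\Dom\phi}$. Any $(\vartheta,\eps)$-intermediate point supplied by \eqref{eq:cap1:60-bis} automatically lies in $\Dom\phi$, since the right-hand side of that inequality is finite; hence $\phi$ is approximately $\lambda$-convex in the precise sense needed in the proof of Lemma~\ref{lem:disc-stab}.

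Next I would feed the discrete sequence into Lemma~\ref{lem:disc-stab}. Its inequality \eqref{eq:5} holds unconditionally, while \eqref{eq:6} now holds thanks to the approximate $\lambda$-convexity just established. The standing bound $4\gamma\tau\le1$ with $\gamma=2\eta-3\lambda\ge2\eta$ (using $\lambda\le0$) yields $\tfrac12\eta\tau\le\tfrac1{16}<1$, so both sides of \eqref{eq:5} are nonnegative and may be squared; multiplying the result by $\tau$ reproduces verbatim the first inequality of \eqref{eq:10-est} with $\varepsilon=0$. For the second one it suffices to note the identity $1+\tfrac12(\lambda-\eta)\tau=1-\tfrac{\eta-\lambda}2\tau$, so that \eqref{eq:6} is exactly the second line of \eqref{eq:10-est} with $\varepsilon=0$. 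Finally, each $U^n_\taueta$ belongs to $\DomainSlope\phi$ by Theorem~\ref{le:slope_by_Ekeland} (equivalently, by the finiteness built into \eqref{eq:5}) and lies in $\Dom\phi\subset\overline{\Dom\phi}=D$, so the required inclusion $(U^n_\taueta)\subset D\cap\DomainSlope\phi$ is satisfied.

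With all hypotheses verified, invoking Theorem~\ref{lem:Error-est-approx} with $\varepsilon=0$ gives \eqref{eq:24-1} for every $u_0\in\overline{\Dom\phi}$ and $t>0$. I expect the only genuinely delicate point to be the second paragraph: confirming that the flow-plus-length hypotheses actually deliver approximate $\lambda$-convexity of $\phi$ and that the comparison points may be taken inside $\Dom\phi$ (so that they are admissible competitors $V$ in the Minimizing-Movement inequality). Once this is secured, the rest is a mechanical matching of the discrete stability estimates of Lemma~\ref{lem:disc-stab} with the assumptions \eqref{eq:10-est}, together with the elementary squaring and the coefficient identity.
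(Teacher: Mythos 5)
Your proposal is correct and follows exactly the paper's own argument: Theorem \ref{thm:Daneri} (via the approximate length property of $\Dom\phi$) yields approximate $\lambda$-convexity, Lemma \ref{lem:disc-stab} then supplies the \emph{a priori} estimates \eqref{eq:5}--\eqref{eq:6}, and Theorem \ref{lem:Error-est-approx} is invoked with $\varepsilon=0$. The verifications you spell out (nonnegativity before squaring, the coefficient identity $1+\tfrac12(\lambda-\eta)\tau=1-\tfrac{\eta-\lambda}2\tau$, $U^n_\taueta\in\DomainSlope\phi$ via Theorem \ref{le:slope_by_Ekeland}) are precisely the routine details the paper leaves implicit.
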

\begin{proof}
  Since $\Dom\phi$ is an approximate length subset, 
  we know by Theorem \ref{thm:Daneri} that
  $\phi$ is approximately $\lambda$-convex.
  The sequence $(U^n_\taueta)_{n\in \N}$ thus satisfies
  the \emph{a priori} estimates of Lemma \ref{lem:disc-stab}, so that we can apply 
  Theorem \ref{lem:Error-est-approx} with $ \varepsilon = 0 $.
\end{proof}

In order to appreciate the strength of \eqref{eq:24-1}, let us consider
the case where $\lambda=0$ and $\eta=0$, so that the sequence
$n\mapsto U^n_\tau=U^n_{\tau,0}$ is in
fact a solution of the usual Minimizing Movement scheme.
For a fixed final time $t$, we choose a uniform partition of step size
$\tau=t/n$ and as initial datum $U^0_\tau=u_0\in
\Dom{|\partial\phi|}$, obtaining
\begin{equation*}
  \label{eq:47}
  \Dist{\FlowName_t(u_0)}{ U^n_\tau}
  =\Dist{\FlowName_t(u_0)}{\Pc U\tau(t)}
  \le \frac{t}{\sqrt{n}} 
  \, \MetricSlope\phi{u_0},
\end{equation*}
which reproduces (with a better constant) the celebrated
Crandall-Liggett estimate for the generation of
contraction semigroups in Banach spaces governed by $m$-accretive
operators.

As a second consequence, we are able to compare the $\EVIshort\lambda$-formulation with the
notion of Minimizing Movements recalled in Definition \ref{def:MM}.

\begin{corollary}[Existence of Minimizing Movements]
  \label{cor:GMM=EVI}
  Let us suppose that $\phi$ has complete sublevels,
  $\Dom\phi$ is an approximate length subset of $X$, and 
  the \EVIshort\lambda-gradient flow $(\lambda\le 0)$ $\FlowName_t$ of $\phi$ exists in 
  $\overline{\Dom\phi}$.
  Then for every $u_0\in \Dom{|\partial\phi|}$ the sets
  $\GMM{\Syst;u_0}$ and $\contMM{\Syst;u_0}$ coincide and contain as a
  unique element the curve $(\FlowName_t (u_0))_{t\ge0}$.
\end{corollary}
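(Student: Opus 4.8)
The whole statement will follow by organizing the uniform error estimate of Corollary \ref{cor:Error_Estimate} around the two convergence classes, once the existence of the discrete schemes is secured. First I would record the structural facts that make the scheme well-posed: since $\Dom\phi$ is an approximate length subset and $\phi$ admits an $\EVIshort\lambda$-gradient flow in $\overline{\Dom\phi}$, Theorem \ref{thm:Daneri}\eqref{e-b} gives that $\phi$ is approximately $\lambda$-convex, whence by Theorem \ref{le:bfb} (using completeness of the sublevels) it is quadratically bounded from below, so that $\tau_o>0$ in \eqref{eq:1-bis}. I would then fix once and for all some $\eta>0$. By Theorem \ref{le:slope_by_Ekeland}, for every $\tau\in(0,\tau_o)$ and every starting point in $\Dom\phi$ a $(\tau,\eta)$-discrete Minimizing Movement exists; taking $U^0_\taueta=u_0\in\Dom{|\partial\phi|}\subset\Dom\phi\subset\overline{\Dom\phi}$ shows that the collections $\discrMM\TauEta{u_0}$ are nonempty for all small $\tau$, and that $t\mapsto\Flow t{u_0}$ is a legitimate candidate curve (it is continuous with $\Flow 0{u_0}=u_0$).

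Next I would show that the flow itself is a (continuous) Minimizing Movement, i.e.\ $(\Flow t{u_0})_{t\ge0}\in\contMM{\Syst;u_0}$. For the fixed $\eta$ and every $\tau$ small enough that $4\gamma\tau\le1$ (with $\gamma=2\eta-3\lambda$ as in \eqref{eq:46}), I pick any $(U^n_\taueta)_{n\in\N}\in\discrMM\TauEta{u_0}$, so that $U^0_\taueta=u_0$. Corollary \ref{cor:Error_Estimate} applied to this choice (hence with $\Dist{u_0}{U^0_\taueta}=0$) yields
\begin{equation*}
  \Dist{\Flow t{u_0}}{\Pc U\taueta(t)}\le
  \big(\sqrt{\tau t_\tau}+t_\tau-t\big)\,\rme^{\gamma t_\tau}\,\MetricSlope\phi{u_0}
  \qquad\forevery t>0.
\end{equation*}
Since $u_0\in\Dom{|\partial\phi|}$ the factor $\MetricSlope\phi{u_0}$ is finite, while $t_\tau\downarrow t$ and $\sqrt{\tau t_\tau}\to0$ as $\tau\downarrow0$; thus the right-hand side vanishes and $\Pc U\taueta(t)\to\Flow t{u_0}$ for every $t\ge0$. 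This is precisely the defining property of $\contMM{\Syst;u_0}$, so $(\Flow t{u_0})_{t\ge0}\in\contMM{\Syst;u_0}$.

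Finally I would prove that every generalized Minimizing Movement coincides with the flow, which gives uniqueness for both classes at once. Let $u\in\GMM{\Syst;u_0}$, associated with some $\eta'\ge0$, a vanishing sequence $\tau(k)\downarrow0$ and $(U^n_{\tau(k),\eta'})_{n}\in\discrMM{\tau(k),\eta'}{u_0}$. For $k$ large one has $4\gamma'\tau(k)\le1$ with $\gamma'=2\eta'-3\lambda$, so Corollary \ref{cor:Error_Estimate} again applies (with $U^0=u_0$) and forces $\Dist{\Flow t{u_0}}{\Pc U{\tau(k),\eta'}(t)}\to0$; comparing with $\Pc U{\tau(k),\eta'}(t)\to u(t)$ gives $u(t)=\Flow t{u_0}$ for all $t\ge0$, i.e.\ $\GMM{\Syst;u_0}\subseteq\{(\Flow t{u_0})_{t\ge0}\}$. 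Since a full limit as $\tau\downarrow0$ is in particular a limit along any fixed vanishing subsequence, $\contMM{\Syst;u_0}\subseteq\GMM{\Syst;u_0}$, and combining with the previous paragraph the chain
\begin{equation*}
  \{(\Flow t{u_0})_{t\ge0}\}\subseteq\contMM{\Syst;u_0}\subseteq\GMM{\Syst;u_0}\subseteq\{(\Flow t{u_0})_{t\ge0}\}
\end{equation*}
closes up to give $\contMM{\Syst;u_0}=\GMM{\Syst;u_0}=\{(\Flow t{u_0})_{t\ge0}\}$. The argument is essentially bookkeeping around Corollary \ref{cor:Error_Estimate}; the only points demanding care are the existence of the discrete schemes (resting on the chain Theorem \ref{thm:Daneri} $\Rightarrow$ Theorem \ref{le:bfb} $\Rightarrow$ Theorem \ref{le:slope_by_Ekeland}) and the observation that the error estimate holds for a fixed $\eta$ \emph{uniformly} over all sufficiently small $\tau$, not merely along a single sequence.
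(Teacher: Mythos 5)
Your proof is correct and follows essentially the same route as the paper's: you establish existence of the discrete schemes via the chain Theorem \ref{thm:Daneri} $\Rightarrow$ Theorem \ref{le:bfb} $\Rightarrow$ Theorem \ref{le:slope_by_Ekeland}, and then apply the uniform error estimate \eqref{eq:24-1} of Corollary \ref{cor:Error_Estimate} with $U^0_\taueta=u_0$ (so the slope factor is finite and the initial-distance term vanishes) to force convergence of every discrete minimizing movement, along the full limit or any vanishing sequence, to the flow. Your explicit inclusion chain $\{(\FlowName_t(u_0))_{t\ge0}\}\subseteq\contMM{\Syst;u_0}\subseteq\GMM{\Syst;u_0}\subseteq\{(\FlowName_t(u_0))_{t\ge0}\}$ merely spells out the bookkeeping that the paper compresses into its final sentence.
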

\begin{proof}
  As in the previous corollary, we know that $\phi$ is approximately
  $\lambda$-convex; since $\phi$ has complete sublevels,
  it is quadratically bounded from below by Theorem \ref{le:bfb}
  and the set of
  $(\tau,\eta)$-Minimizing Movements is surely not empty
  thanks to Theorem \ref{le:slope_by_Ekeland}, at east if $\eta>0$
  and $\tau$ is sufficiently small.
 Estimate \eqref{eq:24-1} then shows that $\lim_{\tau\down0}\Dist{\Pc
    U\taueta(t)}{\FlowName_t(u_0)}=0$
  for every $t\ge0$, so that the curve $t\mapsto \FlowName_t(u_0)$ is
  the unique element of $\contMM{\Syst;u_0}$ and $\GMM{\Syst;u_0}$.
\end{proof}
We point out that one could also drop the length assumption on $\Dom\phi$
in the previous Corollaries \ref{cor:Error_Estimate} and \ref{cor:GMM=EVI} (provided $ u_0 \in \overline{\Dom\phi}^{\lDistName} $), by replacing the minimizing movements generated by $\DistName$ 
with the corresponding ones generated by $\lDistName$ in $ X=\overline{\Dom\phi}^{\lDistName} $: it is enough to apply Theorems \ref{thm:self-improvement} and \ref{thm:Daneri}. Note that the sublevels of $ \phi $ stay complete also w.r.t.~$ \lDistName $ if they were w.r.t.~$ \DistName $.

\subsection[Error estimates and convergence for initial data in
  $\Dom\phi$]{Error estimates and convergence for initial data in
  $\boldsymbol{\Dom\phi}$}
\label{subsec:refined-estimates}
The error estimates we obtained in the previous Theorem \ref{lem:Error-est-approx}
involve the slope of the initial datum.
One can expect that for less regular initial data, namely belonging
only to $ \Dom \phi $,
a lower-order error estimate should still be available.
Usually, such weaker estimates can be derived by combining the stronger ones
with suitable contraction properties of the discrete scheme, through
an interpolation technique.
In our situation, however, only the continuous semigroup exhibits
contraction properties of the type of \eqref{eq:cap1:81}, but we do not
know if the Minimizing Movement scheme shares an analogous good
dependence on perturbations of the initial condition. In this regard, we are only able to prove estimate \eqref{mm-c-2} below.

To overcome this difficulty, we adopt a different approach, which will allow us to prove Theorem \ref{thm:Error_Estimate} below: because calculations are much more complicated with respect to Section \ref{s1s}, here we do not aim at finding optimal constants. First of all, we establish the following refined version of Lemma \ref{lem:disc-stab}, where the rate of approximation depends on the \emph{Moreau-Yosida} regularization \eqref{eq:1} of $\phi$ and the times at which the minimizing movements are considered are not necessarily consecutive (compare with \emph{a priori} estimates and asymptotic expansions of Theorem \ref{thm:main1}, see also Remark \ref{rem-apriori}). 

\begin{proposition}[Refined continuous stability estimates]\label{lem:cont-stab}
  Let the assumptions of Lemma \ref{lem:disc-stab} be fulfilled
  with $\lambda\le 0$ and $\beta:=\lambda'=\eta-\lambda$.
  Suppose in addition that $ \phi $ is quadratically bounded from
  below for all $ \kappa_o > \beta $. If $  4 \beta \tau \le 1 $ then for
  every $t,s>0$ the
  following estimates hold:
\begin{align}\label{mm-c-1}
\frac{1}{2} \DistSquare{ \Pc U\taueta(t)}{ U^0_\taueta } &\le \rme^{2\beta t_{\tau,\beta}} \, \sfE{-\beta}{t_{\tau,\beta}} \, \big[ \phi(U^0_\taueta) - \phi_{ \sfE{-\beta}{t_{\tau,\beta}} }(U^0_\taueta) \big],  \\
  \label{mm-c-2}
\frac{1}{2} \DistSquare{\Pcshift{U}{\taueta}{s}(t)}{ \Pc U\taueta(t) } 
&\le \rme^{2\beta(t_{\tau,\beta}+s_{\tau,\beta})+\eta t_{\tau,\beta}} \, \sfE{-\beta}{s_{\tau,\beta}}  \, \big[ \phi(U^0_\taueta) - \phi_{ \sfE{-\beta}{s_{\tau,\beta}} }(U^0_\taueta) \big], \\% \quad \forevery t,s > 0 ,
\label{mm-c-3}
\frac{1}{2} \MetricSlopeSquare{\phi}{\Pc U \taueta(t) } & \le  \left( 1+2\eta\tau \right) \frac{\rme^{2\beta t_{\tau,\beta}}}{\sfE{-\beta}{t_\tau}} \, \big[ \phi(U^0_\taueta) - \phi_{ \sfE{-\beta}{t_{\tau,\beta}} }(U^0_\taueta) \big], 
\end{align}
where by $ \Pcshift{U}{\taueta}{s}(\cdot) $ we denote the piecewise constant interpolant of the $(\tau,\eta)$-discrete Minimizing Movement $ \big(U_\taueta^{n+h}\big)_{n\in \N} $ starting from $ \Pc U\taueta(s) $ (let $ h := s_\tau/\tau $) and for every $ t>0 $ we set
\begin{equation*}\label{eq:def-sftau}
t_{\tau,\beta}:= (1+4\beta\tau) t_\tau .
\end{equation*}
%$$
%A_{\tau,\beta}(t) := \frac{\rme^{2\beta
%    \tau}}{(1-2\beta\tau)^{2\beta(t_\tau)}} , \qquad
%C_{\tau,\beta,\eta}(t):= \frac{\rme^{2\beta \tau}}{(1-2\beta\tau)^{2\beta(t_\tau)} \, (1-\tfrac 1 2 \eta \tau )^2} ,
%$$
%$$
%B_{\tau,\beta,\eta}(t,s) := \frac{e^{4\beta\tau+2\eta\tau}}{(1-\beta \tau)^{2\beta(t_\tau)} \, (1-\tfrac 1 2 \eta \tau )^{2\eta(t_\tau)} \, (1-2\beta\tau)^{2\beta(s_\tau)}} .
%$$
\end{proposition}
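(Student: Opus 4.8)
The three inequalities are the discrete counterparts of the a priori estimates and asymptotic expansions of Theorem \ref{thm:main1} — namely \eqref{eq:2}, \eqref{eq:Cap1:38} and \eqref{eq:3-lessreg} — with the differential energy identity replaced by the discrete stability estimates of Lemma \ref{lem:disc-stab}. The plan is to run, in discrete form, the same telescoping-plus-Moreau-Yosida scheme that produced those continuous bounds. Throughout I would use the inequalities of Lemma \ref{lem:disc-stab} rewritten with $\beta=\eta-\lambda$: the velocity bound $(1-\tfrac12\eta\tau)\MetricSlope\phi{U^n_\taueta}\le \Dist{U^n_\taueta}{U^{n-1}_\taueta}/\tau$, the discrete energy dissipation $(1-\tfrac12\beta\tau)\DistSquare{U^n_\taueta}{U^{n-1}_\taueta}/\tau\le \phi(U^{n-1}_\taueta)-\phi(U^n_\taueta)$, and the fact that $n\mapsto (1-\beta\tau)^n\MetricSlope\phi{U^n_\taueta}$ is nonincreasing (this is precisely the second inequality of \eqref{eq:cap4:2a} with $\lambda'=\beta$). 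The hypothesis $4\beta\tau\le1$ keeps all these factors bounded away from $0$ and lets me pass freely between the geometric weights $(1-\beta\tau)^{\pm n}$ and the exponentials $\rme^{\pm\beta n\tau}$ by elementary inequalities; the rescaled time $t_{\tau,\beta}=(1+4\beta\tau)t_\tau$ is exactly the slack needed to absorb these conversions. Finally, since $\phi$ is quadratically bounded from below for every $\kappa_o>\beta$, one has $\tau_o\ge1/\beta$, so the Moreau-Yosida values $\phi_{\sfE{-\beta}{\cdot}}(U^0_\taueta)$ on the right-hand sides are finite, because $\sfE{-\beta}{r}<1/\beta$ for all $r$.

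For \eqref{mm-c-1} I would telescope the energy-dissipation inequality to get $\sum_{n=1}^{N}\DistSquare{U^n_\taueta}{U^{n-1}_\taueta}\lesssim \tau\big(\phi(U^0_\taueta)-\phi(U^N_\taueta)\big)$ and then apply Cauchy-Schwarz in the form $\DistSquare{U^N_\taueta}{U^0_\taueta}\le N\sum_{n=1}^N\DistSquare{U^n_\taueta}{U^{n-1}_\taueta}$, inserting the weights $(1-\beta\tau)^{\pm n}$ to track $\beta$. The energy drop $\phi(U^0_\taueta)-\phi(U^N_\taueta)$ is then closed off through the very definition of the Moreau-Yosida regularization \eqref{eq:1}: choosing the scale $\sigma=\sfE{-\beta}{t_{\tau,\beta}}$ one has $\phi(U^N_\taueta)+\tfrac1{2\sigma}\DistSquare{U^N_\taueta}{U^0_\taueta}\ge \phi_\sigma(U^0_\taueta)$, so the displacement term is reabsorbed into the left-hand side, yielding \eqref{mm-c-1}. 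Estimate \eqref{mm-c-3} follows the same template but starts from the slope: I would bound $\MetricSlopeSquare\phi{U^N_\taueta}$ by the weighted monotonicity of $(1-\beta\tau)^n\MetricSlope\phi{U^n_\taueta}$, convert it into a discrete velocity via the velocity bound, telescope with the energy dissipation, and again close with \eqref{eq:1}. This mirrors exactly the chain of inequalities leading to \eqref{eq:2}--\eqref{eq:Cap1:38} in the continuous proof, the $(1+2\eta\tau)$ prefactor arising from the squared velocity-to-slope factor $(1-\tfrac12\eta\tau)^{-2}$.

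The genuinely delicate estimate is \eqref{mm-c-2}, which is meant to replace the missing discrete contraction. Observe first that $\tfrac12\DistSquare{\Pcshift{U}{\taueta}{s}(t)}{\Pc U\taueta(t)}=\tfrac12\DistSquare{U^{m+h}_\taueta}{U^m_\taueta}$ with $m\tau=t_\tau$, $h\tau=s_\tau$, i.e.\ it is the displacement over $h$ steps of the sequence shifted by $m$. Applying \eqref{mm-c-1} to the shifted sequence $(U^{n+m}_\taueta)_n$ bounds this displacement by $\rme^{2\beta s_{\tau,\beta}}\sfE{-\beta}{s_{\tau,\beta}}\big[\phi(U^m_\taueta)-\phi_{\sfE{-\beta}{s_{\tau,\beta}}}(U^m_\taueta)\big]$, at the base point $U^m_\taueta$ and scale $s$. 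The remaining and main obstacle is to transfer the Moreau-Yosida gap from $U^m_\taueta$ back to $U^0_\taueta$ at the same scale $\sigma=\sfE{-\beta}{s_{\tau,\beta}}$, up to the factor $\rme^{(2\beta+\eta)t_{\tau,\beta}}$. Since no contraction is available, I would obtain this by combining the duality bound $\phi(x)-\phi_\sigma(x)\le \tfrac{\sigma}{2(1+\lambda\sigma)}\MetricSlopeSquare\phi x$ — valid because $1+\lambda\sigma>0$ and a consequence of $\MetricSlope\phi x=\GSlope\lambda\phi x$ from Proposition \ref{lem:app-conv-slope} together with the computation behind \eqref{eq:duality-slope} — applied at $x=U^m_\taueta$ with the weighted slope monotonicity controlling $\MetricSlope\phi{U^m_\taueta}$ in terms of $\MetricSlope\phi{U^0_\taueta}$, and then reinserting a free-scale version of the energy drop $\phi(U^0_\taueta)-\phi(U^m_\taueta)$ and the displacement bound \eqref{mm-c-1} to return to the gap $\phi(U^0_\taueta)-\phi_\sigma(U^0_\taueta)$. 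The two exponential blocks $\rme^{2\beta s_{\tau,\beta}}$ and $\rme^{(2\beta+\eta)t_{\tau,\beta}}$ then multiply to the stated factor $\rme^{2\beta(t_{\tau,\beta}+s_{\tau,\beta})+\eta t_{\tau,\beta}}$. I expect the bookkeeping of the two competing scales $s$ and $t$, and the precise per-step gain $\rme^{(2\beta+\eta)\tau}$ in the gap transfer, to be the most laborious point; everything else is a faithful discretization of Theorem \ref{thm:main1}.
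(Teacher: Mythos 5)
Your plan for \eqref{mm-c-1} and \eqref{mm-c-3} is essentially the paper's: a weighted telescoping of the discrete energy--dissipation inequality \eqref{eq:6}, the weighted Cauchy--Schwarz bound $\DistSquare{U^n}{U^0}\le \frac{1-\beta\tau}{\beta\tau}\big[(1-\beta\tau)^{-n}-1\big]\sum_k(1-\beta\tau)^{k-1}\DistSquare{U^k}{U^{k-1}}$, and closure through the very definition of $\phi_\sigma$; for \eqref{mm-c-3}, the slope monotonicity from \eqref{eq:cap4:2a} together with \eqref{eq:5}, with $(1-\tfrac12\eta\tau)^{-2}\le 1+2\eta\tau$ producing the stated prefactor, exactly as in the paper. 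One point you only gesture at: for $\beta>0$ the telescoping leaves behind the weighted-average energy term $\frac{\beta\tau}{1-\beta\tau}\sum_k(1-\beta\tau)^k\phi(U^k_{\tau,\eta})$, which is not signed and must itself be bounded by a recursion in the Moreau--Yosida gap and then iterated (a discrete Gronwall step, \eqref{eq:est-disc-7}--\eqref{eq:est-disc-8}, the discrete analogue of the ODE integration in the proof of \eqref{eq:3-lessreg}); since you invoke the scheme behind \eqref{eq:3-lessreg} this is consistent with your plan, but it is the step where the conversion to the rescaled time $t_{\tau,\beta}$ actually happens.

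For \eqref{mm-c-2}, however, your route has a genuine gap, and it is not the paper's. You propose to apply \eqref{mm-c-1} to the shifted sequence (fine) and then transfer the gap $\phi(U^m_{\tau,\eta})-\phi_\sigma(U^m_{\tau,\eta})$ back to the base point via the duality bound $\phi(x)-\phi_\sigma(x)\le \frac{\sigma}{2(1+\lambda\sigma)}\MetricSlopeSquare\phi{x}$ ``with the weighted slope monotonicity controlling $\MetricSlope\phi{U^m_{\tau,\eta}}$ in terms of $\MetricSlope\phi{U^0_{\tau,\eta}}$''. This fails at the outset: the proposition only assumes $U^0_{\tau,\eta}\in\Dom\phi$, so $\MetricSlope\phi{U^0_{\tau,\eta}}$ may be $+\infty$ --- indeed, avoiding the slope at the initial datum is the entire purpose of these refined estimates (otherwise one is back in the setting of Theorem \ref{lem:Error-est-approx}). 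One could repair the transfer by bounding $\MetricSlope\phi{U^m_{\tau,\eta}}$ through \eqref{mm-c-3} instead, and the duality bound is legitimate (approximate $\lambda$-convexity is in force, so $\MetricSlope\phi{x}=\GSlope\lambda\phi{x}$ by Proposition \ref{lem:app-conv-slope}, and $1+\lambda\sigma\ge \rme^{-\beta s_{\tau,\beta}}>0$); but this produces the Moreau--Yosida gap at $U^0_{\tau,\eta}$ at the scale $\sfE{-\beta}{t_{\tau,\beta}}$, not at the stated scale $\sfE{-\beta}{s_{\tau,\beta}}$, and converting between the two scales would require a monotonicity of $\sigma\mapsto\big(\phi(x)-\phi_\sigma(x)\big)/\sigma$ that is neither established in the paper nor supplied by you --- your ``reinserting a free-scale version of the energy drop'' names no mechanism. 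The paper's proof never moves the base point of the gap at all: it compares per-step displacements of the shifted and unshifted sequences, iterating \eqref{eq:5} and the first inequality of \eqref{eq:cap4:2a} to get
\begin{equation*}
\Dist{U^{k+n}_{\tau,\eta}}{U^{k+n-1}_{\tau,\eta}}\le (1-\beta\tau)^{-n}\big(1-\tfrac12\eta\tau\big)^{-n}\,\Dist{U^{k}_{\tau,\eta}}{U^{k-1}_{\tau,\eta}}
\end{equation*}
(this is the discrete contraction surrogate, and the source of the extra $\eta t_{\tau,\beta}$ in the exponent), and then applies the refined form of \eqref{eq:est-disc-10} --- the one bounding the \emph{weighted sum of per-step squared displacements} rather than $\DistSquare{U^h}{U^0}$ --- directly at $U^0_{\tau,\eta}$ and at scale $s$, yielding \eqref{eq:est-disc-12}. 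Without this (or an equivalent) mechanism, your argument does not prove \eqref{mm-c-2} as stated.
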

\begin{proof}
%The key point is to estimate from above the quantity
%$$ -\sum_{k=1}^n (1-\beta \tau)^k \phi(U^k_\taueta) . $$
We can suppose with no loss of generality that $ \beta>0 $: the estimates corresponding to $ \beta=0 $ just follow by letting $ \beta \to 0 $ (i.e.\ $ \eta,\lambda \to 0 $) in \eqref{mm-c-1}--\eqref{mm-c-3}. So, to begin with, for all $ k \in \N $ let us write 
\begin{equation}\label{eq:est-disc-1}
 - (1-\beta \tau)^k \phi(U^k_\taueta) + (1-\beta \tau)^{k-1} \phi(U^{k-1}_\taueta) = \frac{\beta\tau}{1-\beta\tau} (1-\beta \tau)^k \phi(U^k_\taueta) + (1-\beta\tau)^{k-1} \big[ \phi(U^{k-1}_\taueta) - \phi(U^{k}_\taueta) \big] .
\end{equation}
Now note that estimate \eqref{eq:6} entails
\begin{equation}\label{eq:est-disc-2}
\begin{aligned}
& (1-\beta\tau)^{k-1} \big[ \phi(U^{k-1}_\taueta) - \phi(U^{k}_\taueta) \big] \\
\ge & \frac1{2\tau} (1-\tfrac12 \beta\tau) (1-\beta\tau)^{k-1} \DistSquare{U^k_\taueta}{U^{k-1}_\taueta} + \frac12 (1-\beta\tau)^{k-1} \big[ \phi(U^{k-1}_\taueta) - \phi(U^{k}_\taueta) \big] ,
\end{aligned}
\end{equation}
so that by combining \eqref{eq:est-disc-1}, \eqref{eq:est-disc-2} and summing up we obtain for all $ n \in \N $
\begin{equation}\label{eq:est-disc-3}
\begin{aligned}
& \phi(U^0_\taueta) - (1-\beta \tau)^n \phi(U^n_\taueta) \\
\ge & \frac{\beta\tau}{1-\beta\tau} \sum_{k=1}^n (1-\beta \tau)^k \phi(U^k_\taueta) \\
   & + \frac1{2\tau} (1-\tfrac12 \beta\tau) \sum_{k=1}^n (1-\beta\tau)^{k-1} \DistSquare{U^k_\taueta}{U^{k-1}_\taueta} + \frac12 \sum_{k=1}^n (1-\beta\tau)^{k-1} \big[ \phi(U^{k-1}_\taueta) - \phi(U^{k}_\taueta) \big] .
\end{aligned}
\end{equation}
It is then direct to check that there holds (discrete integration by parts)
\begin{equation}\label{eq:est-disc-4}
\sum_{k=1}^n (1-\beta\tau)^{k-1} \big[ \phi(U^{k-1}_\taueta) - \phi(U^{k}_\taueta) \big] = \phi(U^0_\taueta) - (1-\beta \tau)^n \phi(U^n_\taueta) - \frac{\beta\tau}{1-\beta\tau} \sum_{k=1}^n (1-\beta \tau)^k \phi(U^k_\taueta) .
\end{equation}
Moreover, 
\begin{equation}\label{eq:est-disc-5}
\DistSquare{U^n_\taueta}{U^0_\taueta} \le \left( \sum_{k=1}^n \Dist{U^k_\taueta}{U^{k-1}_\taueta} \right)^2 \le \frac{1-\beta\tau}{\beta\tau} \big[ (1-\beta\tau)^{-n}-1 \big] \sum_{k=1}^n (1-\beta\tau)^{k-1} \DistSquare{U^k_\taueta}{U^{k-1}_\taueta} .
\end{equation}
Hence \eqref{eq:est-disc-3}--\eqref{eq:est-disc-5} yield 
\begin{equation}\label{eq:est-disc-6}
\begin{aligned}
(1-\beta\tau)^n\left[ \frac{\beta}{2[1-(1-\beta\tau)^n]} \DistSquare{U^n_\taueta}{U^0_\taueta}  + \phi(U^n_{\taueta}) \right] & \\
- \frac{(1-\beta \tau)^n}{2} \phi(U^n_\taueta) + \frac{\beta\tau}{2(1-\beta\tau)} \sum_{k=1}^n (1-\beta \tau)^k \phi(U^k_\taueta) & \le \frac{1}{2} \phi(U^0_\taueta) ;
\end{aligned}
\end{equation}
recalling the definition of {Moreau-Yosida} approximation and setting
$$ \Phi_n:= - \sum_{k=1}^n (1-\beta \tau)^k \phi(U^k_\taueta) , \quad \Phi_0:=0 , $$
from \eqref{eq:est-disc-6} we can therefore deduce
\begin{equation}\label{eq:est-disc-7}
\Phi_n-\Phi_{n-1} - \frac{\beta\tau}{1-\beta\tau} \Phi_n \le \phi(U^0_\taueta)-2(1-\beta\tau)^n \phi_{\frac{1-(1-\beta\tau)^n}{\beta}}(U^0_\taueta) \quad \forevery n \in \N \setminus \{ 0 \} ;
\end{equation} 
by iterating \eqref{eq:est-disc-7} and using the fact that $ \tau \mapsto \phi_\tau(U^0_\taueta) $ is nonincreasing, we end up with
\begin{equation}\label{eq:est-disc-8}
\begin{aligned}
 \frac{\beta\tau}{1-\beta\tau} \Phi_n 
\le  \big[ (1-\beta\tau)^n-1 \big] \phi(U^0_\taueta) + \left[\left( \frac{1-\beta\tau}{1-2\beta\tau} \right)^n - (1-\beta\tau)^n \right] \left[ \phi(U^0_\taueta) - \phi_{\frac{1-(1-\beta\tau)^n}{\beta}}(U^0_\taueta) \right] .
\end{aligned}
\end{equation}
We point out that the r.h.s.~of \eqref{eq:est-disc-8} is finite thanks to the assumptions on $ \beta\tau $ and $ \phi $. Now we reconsider \eqref{eq:est-disc-3} and still apply \eqref{eq:6} to the last term in the r.h.s.\ (together with \eqref{eq:est-disc-5}), to get 
\begin{equation*}\label{eq:est-disc-9}
\begin{aligned}
& \frac{\beta (1-\beta\tau)^n}{2[1-(1-\beta\tau)^n]} \DistSquare{U^n_\taueta}{U^0_\taueta} \\
\le & \phi(U^0_\taueta) - (1-\beta\tau)^n\left[ \frac{\beta}{2[1-(1-\beta\tau)^n]} \DistSquare{U^n_\taueta}{U^0_\taueta}  + \phi(U^n_{\taueta}) \right] + \frac{\beta\tau}{1-\beta\tau} \Phi_n ;
 \end{aligned}
\end{equation*}
estimate \eqref{eq:est-disc-8} plus once again the definition {Moreau-Yosida} approximation then give rise to 
\begin{equation}\label{eq:est-disc-10}
\frac12 \DistSquare{U^n_\taueta}{U^0_\taueta} \le \frac{1-(1-\beta\tau)^n}{\beta (1-2\beta\tau)^n} \left[ \phi(U^0_\taueta) - \phi_{\frac{1-(1-\beta\tau)^n}{\beta}}(U^0_\taueta) \right] .
\end{equation}
The validity of \eqref{mm-c-1} is ensured upon recalling \eqref{eq:46}, the definition \eqref{eq:cap1:66} of piecewise constant interpolant $ \Pc U \taueta (t) $ and the elementary inequality
\begin{equation}\label{eq:est-elm-new}
(1-x)^{-1} \le \rme^{\frac{x}{1-x}} \quad \forevery  x \in (0,1) ,
\end{equation}
which implies 
\begin{equation}\label{eq:est-elm-new-bis}
(1-2\beta\tau)^{-n} \le \rme^{2\beta(1+4\beta \tau)t_\tau} \quad \text{and} \quad (1-\beta\tau)^{-n} \le \rme^{\beta\left(1+ \frac 4 3 \beta \tau\right)t_\tau} \qquad \text{provided }\,  4\beta\tau \le 1 .
\end{equation}
In order to prove \eqref{mm-c-2}, let $ n,h \in \N $ be such that $ t \in ((n-1)\tau,n\tau] $ and $ s \in ((h-1)\tau,h\tau] $, namely $ n = t_\tau/\tau $ and $ h = s_\tau/\tau $. Similarly to \eqref{eq:est-disc-5}, we have:
\begin{equation*}\label{eq:est-disc-5-bis}
\DistSquare{U^{n+h}_\taueta}{U^n_\taueta} \le \frac{1-\beta\tau}{\beta\tau} \big[ (1-\beta\tau)^{-h}-1 \big] \sum_{k=1}^h (1-\beta\tau)^{k-1} \DistSquare{U^{k+n}_\taueta}{U^{k+n-1}_\taueta} .
\end{equation*}
On the other hand, \eqref{eq:5} and the left-hand inequality in \eqref{eq:cap4:2a} yield (upon iteration)
\begin{equation*}\label{eq:est-disc-11}
\Dist{U^{k+n}_\taueta}{U^{k+n-1}_\taueta} \le (1-\beta\tau)^{-n} (1-\tfrac{1}{2}\eta\tau)^{-n} \Dist{U^k_\taueta}{U^{k-1}_\taueta} , 
\end{equation*}
whence 
\begin{equation}\label{eq:est-disc-5-ter}
\DistSquare{U^{n+h}_\taueta}{U^n_\taueta} \le \frac{1-\beta\tau}{\beta\tau} \frac{(1-\beta\tau)^{-h}-1}{(1-\beta\tau)^{2n} (1-\tfrac{1}{2}\eta\tau)^{2n}} \sum_{k=1}^h (1-\beta\tau)^{k-1} \DistSquare{U^{k}_\taueta}{U^{k-1}_\taueta} .
\end{equation} 
Now we observe that, by the above method of proof, estimate \eqref{eq:est-disc-10} still holds if we replace $ \DistSquare{U^n_\taueta}{U^0_\taueta} $ with the r.h.s.\ of \eqref{eq:est-disc-5}, so that this refined information plus \eqref{eq:est-disc-5-ter} entail
\begin{equation}\label{eq:est-disc-12}
\frac 12 \DistSquare{U^{n+h}_\taueta}{U^n_\taueta} \le \frac{1-(1-\beta\tau)^h}{\beta(1-2\beta\tau)^h (1-\beta\tau)^{2n} (1-\tfrac{1}{2}\eta\tau)^{2n}} \left[ \phi(U^0_\taueta) - \phi_{\frac{1-(1-\beta\tau)^h}{\beta}}(U^0_\taueta) \right] .
\end{equation}
Estimate \eqref{mm-c-2} is therefore a consequence of \eqref{eq:est-disc-12} up to exploiting again \eqref{eq:est-elm-new} as we did in \eqref{eq:est-elm-new-bis} (now with $ x=\beta\tau $, $ x=2\beta\tau $ and $ x=\eta\tau/2 $), noting that $ U^n_\taueta = \Pc U \taueta (t) $ and $ U^{n+h}_\taueta = \Pcshift{U}{\taueta}{s}(t) $ by the definition of piecewise constant interpolant. 

We are then left with establishing \eqref{mm-c-3}. To this aim, first of all note that by virtue of the right-hand inequality in \eqref{eq:cap4:2a} we know that the map $ n \mapsto (1-\beta\tau)^n \MetricSlope{\phi}{U^n_\taueta} $ is not increasing. Furthermore, \eqref{eq:5} and \eqref{eq:6} yield 
\begin{equation}\label{eq:est-disc-13}
\tau (1-\tfrac{1}{2}\eta\tau)^2 (1-\tfrac{1}{2}\beta\tau) \MetricSlopeSquare\phi{U^n_\taueta} \le \phi(U^{n-1}_\taueta)-\phi(U^n_\taueta) ; \end{equation}
if we plug \eqref{eq:est-disc-13} into \eqref{eq:est-disc-3} and proceed exactly as above to estimate the remaining terms, we obtain:
\begin{equation}\label{eq:est-disc-14}
\begin{aligned}
\frac{(1-\beta\tau)^n[1-(1-\beta\tau)^n]}{2\beta} \MetricSlopeSquare\phi{U^n_\taueta} = & \frac{\tau}{2} \sum_{k=1}^n (1-\beta\tau)^{-k} \big[ (1-\beta\tau)^{n} \MetricSlope\phi{U^n_\taueta} \big]^{2} \\
\le & \frac{\tau}{2} (1-\tfrac{1}{2}\beta\tau) \sum_{k=1}^n (1-\beta\tau)^{k-1} \MetricSlopeSquare\phi{U^k_\taueta} \\
\le & (1-\tfrac{1}{2}\eta\tau)^{-2} \left( \frac{1-\beta\tau}{1-2\beta\tau} \right)^n \left[ \phi(U^0_\taueta) - \phi_{\frac{1-(1-\beta\tau)^n}{\beta}}(U^0_\taueta) \right] .
\end{aligned}
\end{equation}
Estimate \eqref{mm-c-3} finally follows from \eqref{eq:est-disc-14}, by recalling \eqref{eq:46} along with the definition of piecewise constant interpolant and using \eqref{eq:est-elm-new-bis}, the inequality $ (1-x) \le e^{-x} $, valid for all $ x \in (0,1) $, applied to $ x=\beta \tau $ and the elementary estimate
$$
(1-\tfrac{1}{2}\eta\tau)^{-2} \le 1+2\eta \tau \qquad \text{ensured by } 4\eta\tau \le 4\beta\tau \le 1 .
$$
\end{proof}

We are now able to establish the analogue of Theorem \ref{lem:Error-est-approx} for initial (discrete) data that merely belong to $ \Dom\phi $. For simplicity, here we treat the case $ \eps=0 $ only.

\begin{theorem}[Uniform error estimate for data in $\Dom\phi$]
  \label{thm:Error_Estimate}
  Let the following assumptions hold:
  \begin{enumerate}[\rm 1.]
  \item the \EVIshort\lambda-gradient flow $(\lambda\le 0)$ $\FlowName_t$ of $\phi$ exists in
    $D\subset \overline{\Dom\phi}$;
  \item for some $\tau \in (0,1) $ and $\eta\ge0$ such that $ 4\gamma\tau \le 1 $ (let $ \gamma $ be as in \eqref{eq:46}), the sequence $(U^n_\taueta)_{n\in \N}\subset D \cap \Dom{\phi} $ is a $(\tau,\eta)$-discrete Minimizing Movement, according to Definition \ref{def:MMS}; 
  \item $\phi$ is approximately $ \lambda $-convex in $D$.
  \end{enumerate}
Then, for all $ u_0 \in D $, the following estimate holds for every $ t>0 $:
  \begin{equation} \label{eq:24-bis}  
 \Dist{\FlowName_t(u_0)}{\Pc U\taueta(t)} \le  \rme^{-\lambda t} \Dist{u_0}{U^0_\taueta} + 10 \, \sqrt[4]{\tau t_\tau} \, \rme^{2 \gamma t_\tau} \sqrt{\phi(U^0_\taueta)-\phi_{\sfE{\lambda-\eta}{3\sqrt{\tau t_\tau}}}(U^0_\taueta)} .
  \end{equation}
%where $ C $ is a positive numerical constant that can be taken equal to
%$$
%\GR{
%C= 10 \, \sqrt{1+\frac{\sqrt{\tau}+2\tau}{t+1}} \, \rme^{4(\eta-\lambda)(\sqrt{\tau}+2\tau)} .
%}
%$$ 
\end{theorem}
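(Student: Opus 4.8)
The plan is to exploit the \emph{regularizing effect} of the Minimizing Movement scheme in order to reduce this low-regularity estimate to the sharp one of Theorem~\ref{lem:Error-est-approx}, splitting the evolution at an intermediate time chosen as a geometric mean between the step $\tau$ and the final time $t$. First I would fix an auxiliary time $r=r_\tau$ of order $\sqrt{\tau t_\tau}$ (a genuine multiple of $\tau$, obtained by rounding $\sqrt{\tau t_\tau}$ up, and with $r_\tau\le t_\tau$) and set $w:=\Pc U\taueta(r)$. Since $(U^n_\taueta)_{n\in\N}$ is a discrete Minimizing Movement and $\phi$ is approximately $\lambda$-convex, Lemma~\ref{lem:disc-stab} and Proposition~\ref{lem:cont-stab} apply with $\beta=\eta-\lambda$; in particular the refined slope bound \eqref{mm-c-3} shows that $w\in\DomainSlope\phi$ with
\[
\MetricSlopeSquare\phi w\le (1+2\eta\tau)\,\frac{\rme^{2\beta r_{\tau,\beta}}}{\sfE{-\beta}{r_\tau}}\,\big[\phi(U^0_\taueta)-\phi_{\sfE{-\beta}{r_{\tau,\beta}}}(U^0_\taueta)\big],
\]
so that $w$ is a \emph{regular} datum, to which the sharp estimate becomes available.

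Next I would use that the tail of the discrete Minimizing Movement past time $r$ is itself a discrete Minimizing Movement issued from $w$, namely $\Pcshift U\taueta r$, and split by the triangle inequality
\[
\Dist{\Flow t{u_0}}{\Pc U\taueta(t)}\le \Dist{\Flow t{u_0}}{\Flow{t-r}{w}}+\Dist{\Flow{t-r}{w}}{\Pc U\taueta(t)}.
\]
The \emph{second} term is precisely an error between the continuous flow and the discrete scheme, both issued from $w$; since $w\in\DomainSlope\phi$ and the $a$~$priori$ estimates \eqref{eq:10-est} hold (with $\eps=0$) by Lemma~\ref{lem:disc-stab}, Theorem~\ref{lem:Error-est-approx} bounds it by $\sqrt{\tau t_\tau}\,\rme^{\gamma t_\tau}\MetricSlope\phi w$ up to lower-order contributions. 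Inserting the slope bound above produces a term of order $\sqrt{\tau t_\tau/r}\,\sqrt{\mathrm{gap}}$, where $\mathrm{gap}$ denotes a Moreau--Yosida gap of $\phi$ at $U^0_\taueta$.

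For the \emph{first} term I would invoke $\lambda$-contraction \eqref{eq:cap1:81},
\[
\Dist{\Flow t{u_0}}{\Flow{t-r}{w}}=\Dist{\Flow{t-r}{\Flow r{u_0}}}{\Flow{t-r}{w}}\le \rme^{-\lambda(t-r)}\Dist{\Flow r{u_0}}{w},
\]
and then estimate $\Dist{\Flow r{u_0}}{w}$ by three pieces: $\rme^{-\lambda r}\Dist{u_0}{U^0_\taueta}$ (contraction from $u_0$ to $U^0_\taueta$), the \emph{continuous} displacement $\Dist{\Flow r{U^0_\taueta}}{U^0_\taueta}$ controlled by the asymptotic expansion \eqref{eq:3-lessreg} (equivalently, by the elementary bound of Remark~\ref{rem-apriori} when $\lambda=0$), and the \emph{discrete} displacement $\Dist{U^0_\taueta}{w}$ controlled by \eqref{mm-c-1}. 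Both displacements are of order $\sqrt{r}\,\sqrt{\mathrm{gap}}$, the gaps being evaluated at arguments comparable to $r$.

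Balancing the two gap contributions — $\sqrt{r}$ from the first term against $\sqrt{\tau t_\tau/r}$ from the second — forces $r\approx\sqrt{\tau t_\tau}$, which is exactly what yields the factor $\sqrt[4]{\tau t_\tau}$ and pins the regularization parameter at order $\sqrt{\tau t_\tau}$. Using that $\tau\mapsto\phi_\tau(U^0_\taueta)$ is nonincreasing, all the distinct gaps (at the slightly different arguments $\sfE{-\beta}{r_{\tau,\beta}}$, $\sfE{\lambda}{r}$, etc.) can be dominated by the single gap $\phi(U^0_\taueta)-\phi_{\sfE{\lambda-\eta}{3\sqrt{\tau t_\tau}}}(U^0_\taueta)$ at the largest argument, giving \eqref{eq:24-bis}. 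The hard part will be the constant bookkeeping: collapsing the exponential prefactors $\rme^{\gamma t_\tau}$, $\rme^{2\beta r_{\tau,\beta}}$ and the perturbed times $t_{\tau,\beta}=(1+4\beta\tau)t_\tau$ into the clean factor $\rme^{2\gamma t_\tau}$ and the explicit constant $10$, and making the $r$-dependent arguments line up so that the regularization parameter comes out exactly as $\sfE{\lambda-\eta}{3\sqrt{\tau t_\tau}}$; finiteness of the gaps is guaranteed since $\phi$, being approximately $\lambda$-convex with complete sublevels, is quadratically bounded from below by Theorem~\ref{le:bfb}.
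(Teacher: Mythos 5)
Your decomposition is correct in outline and genuinely different from the paper's. The paper starts from the split $\Dist{\FlowName_t(U^0_\taueta)}{\Pc U\taueta(t)}\le \Dist{\FlowName_t(U^0_\taueta)}{\Pcshift{U}{\taueta}{s}(t)}+\Dist{\Pcshift{U}{\taueta}{s}(t)}{\Pc U\taueta(t)}$: it runs the continuous flow for the \emph{full} time $t$ from the rough datum $U^0_\taueta$, exploiting the asymmetry of Theorem \ref{lem:Error-est-approx} (only the \emph{discrete} initial point needs finite slope, the continuous one may be arbitrary in $D$), and then corrects the time shift on the \emph{discrete} side through the shifted-interpolant stability bound \eqref{mm-c-2} --- the most laborious item of Proposition \ref{lem:cont-stab}. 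You instead align the discrete tail exactly (for $r\in\tau\N$ the tail evaluated at elapsed time $t-r$ \emph{is} $\Pc U\taueta(t)$) and move the shift to the continuous side, paying with $\lambda$-contraction \eqref{eq:cap1:81} and the a priori displacement bound \eqref{eq:3-lessreg}; this dispenses with \eqref{mm-c-2} altogether and uses only \eqref{mm-c-1} and \eqref{mm-c-3}, which is a genuine simplification since \eqref{eq:3-lessreg} is already available from Theorem \ref{thm:main1}. Two small caveats: your split needs $t>r$, so times $t\le r$ must be handled by the direct displacement bounds (easy, with the same $\sqrt{r}$-order terms, but note the paper's split avoids this case distinction because its shifted scheme runs for the full time $t$); and landing exactly on the constant $10$ is bookkeeping on either route.

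There is, however, one step in your plan that fails as written, and it is not mere bookkeeping: the domination of the continuous-displacement gap by the gap appearing in \eqref{eq:24-bis}. Estimate \eqref{eq:3-lessreg} with $v=U^0_\taueta$ produces $\phi(U^0_\taueta)-\phi_{\sfE\lambda s}(U^0_\taueta)$ for $s\le r$, and since $\sigma\mapsto\phi_\sigma(U^0_\taueta)$ is nonincreasing you would need $\sfE{\lambda}{r}\le\sfE{\lambda-\eta}{3\sqrt{\tau t_\tau}}$. This can fail within the allowed parameter range: for $\lambda=0$, $\eta=1/(8\tau)$ (so that $4\gamma\tau=1$) and $t_\tau=100\tau$ one has $\sfE{0}{r}=r\ge 10\tau$, while $\sfE{-\eta}{3\sqrt{\tau t_\tau}}\le 1/\eta=8\tau$. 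Moreover no exponential prefactor can absorb the mismatch, because Moreau--Yosida gaps at different parameters admit no multiplicative comparison: if $\phi\equiv 0$ except for a single dip of depth $c$ at distance $d_0$ from $x$, then $\phi(x)-\phi_\sigma(x)=\left(c-d_0^2/(2\sigma)\right)^+$, which vanishes at the smaller parameter while being positive at the larger one. The repair is a one-line observation: since $\eta\ge 0$ and $\frac{\lambda-\eta}{2}\DistSquare{u_t}{v}\le\frac{\lambda}{2}\DistSquare{u_t}{v}$, every solution of $\EVIshort\lambda$ is also a solution of $\EVIshort{\lambda-\eta}$, so you may apply \eqref{eq:3-lessreg} at level $\lambda-\eta=-\beta$. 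The resulting gaps then sit at parameters $\sfE{\lambda-\eta}{s}$ with $s\le r\le 2\sqrt{\tau t_\tau}$ and are dominated by $\phi(U^0_\taueta)-\phi_{\sfE{\lambda-\eta}{3\sqrt{\tau t_\tau}}}(U^0_\taueta)$, exactly as the discrete gaps coming from \eqref{mm-c-1} and \eqref{mm-c-3} are in the paper's proof; their finiteness follows as there, since $\sfE{-\beta}{s}<1/\beta$ and $\phi$ is quadratically bounded from below for every $\kappa_o>-\lambda$ by Theorem \ref{thm:main1}.
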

Note that once condition 1.\ holds, then by virtue of Theorem \ref{thm:Daneri} condition 3.\ is also satisfied as long as $ D $ is an approximate length subset. 
% condition 3.\ is satisfied if \emph{e.g.}~$\GeoCon D{U^{n-1}_\taueta}{U^n_\taueta}\neq\emptyset$.
\begin{proof}

In order to establish \eqref{eq:24-bis}, the idea is to start from the trivial inequality 
\begin{equation}\label{eq: triv-ineq}
\Dist{\FlowName_t(U^0_\taueta)}{\Pc{U}{\taueta}(t)} \le \Dist{\FlowName_t(U^0_\taueta)}{\Pcshift{U}{\taueta}{s}(t)} + \Dist{\Pcshift{U}{\taueta}{s}(t)}{\Pc{U}{\taueta}(t)} ,
\end{equation}
valid for all $ s>0 $. If we apply \eqref{eq:24:epsilon} with $  \Pc{U}{\taueta} $ replaced by $ \Pcshift{U}{\taueta}{s} $, $ u_0=U^0_\taueta $ and $ \varepsilon=0 $, 
%together with the estimate 
%$$ \rme^{-\lambda t} \Dist{\FlowName_{m\tau-t}(U^0_\taueta)}{U^0_\taueta} \le 2\sqrt{\tau} \, \rme^{-\lambda(t+\tau)} \, \sqrt{\phi(U^0_\taueta)-\phi_{\sfE{\lambda}{\tau}}(U^0_\taueta)} \, , $$ 
%direct consequence of \eqref{eq:3-lessreg}, 
we obtain:
  \begin{equation} \label{eq:24-shift}
  \begin{aligned}
%     \Dist{\FlowName_{t}(U^0_\taueta)}{\Pcshift{U}{\taueta}{s}(t)}
%%     \le & \rme^{-\lambda(t+\tau)} \left[ \tau \MetricSlope{\phi}{u_0} +  \rme^{2\beta(t+\tau)} \big( \Dist{u_0}{U^0_\taueta} + \sqrt {(t+\tau)\tau} \MetricSlope\phi{U^0_\taueta} \big) \right]  \quad \forevery t > 0 . \\
%     \le & \rme^{(2\eta-3\lambda)(t+\tau)} \Dist{u_0}{U^0_\taueta} + \rme^{(2\eta-3\lambda)(t+\tau)} \Dist{\Pc{U}{\taueta}(s)}{U^0_\taueta} \\ & + 2\sqrt{\tau} \, \rme^{-\lambda(t+\tau)} \, \sqrt{\phi(u_0)-\phi_{\sfE{\lambda}{\tau}}(u_0)} +  \sqrt{(t+\tau)\tau} \, \rme^{(2\eta-3\lambda)(t+\tau)} \, \MetricSlope{\phi}{\Pc{U}{\taueta}(s)} .  \\
    \Dist{\FlowName_{t}(U^0_\taueta)}{\Pcshift{U}{\taueta}{s}(t)} 
    \le  \rme^{-\lambda t} \Dist{U^0_\taueta}{\Pc{U}{\taueta}(s)}  +
     \left( \sqrt{\tau
       t_\tau} + t_\tau - t \right) \rme^{\gamma t_\tau} \, \MetricSlope\phi{\Pc{U}{\taueta}(s)} \quad \forevery t > 0 . 
   \end{aligned}  
  \end{equation} 
Hence by exploiting \eqref{eq:cap1:81}, \eqref{eq: triv-ineq}, \eqref{eq:24-shift} and \eqref{mm-c-1}--\eqref{mm-c-3}, we deduce the estimate
\begin{equation}\label{eq:almost-final}
\begin{aligned}
\Dist{\FlowName_t(u_0)}{\Pc U\taueta(t)} \le \, &  \rme^{-\lambda t} \Dist{u_0}{U^0_\taueta} + \rme^{\beta(t_{\tau,\beta}+s_{\tau,\beta})+\frac \eta 2 t_{\tau,\beta}} \, \sqrt{2 \, \sfE{-\beta}{s_{\tau,\beta}}  \, \big[ \phi(U^0_\taueta) - \phi_{\sfE{-\beta}{s_{\tau,\beta}} }(U^0_\taueta) \big]} \\
& + \rme^{-\lambda t + \beta s_{\tau,\beta}} \, \sqrt{2 \, \sfE{-\beta}{s_{\tau,\beta}}  \, \big[ \phi(U^0_\taueta) - \phi_{\sfE{-\beta}{s_{\tau,\beta}} }(U^0_\taueta) \big]} \\
& + \left( \sqrt{\tau t_\tau} + t_\tau - t \right) \rme^{\gamma t_\tau + \beta s_{\tau,\beta} } \, \sqrt{\frac{2+4\eta\tau}{\sfE{-\beta}{s_\tau}} \, \big[ \phi(U^0_\taueta) - \phi_{ \sfE{-\beta}{s_{\tau,\beta}} }(U^0_\taueta) \big]} .
\end{aligned}
\end{equation}
Now we make the choice $ s=\sqrt{\tau t_\tau} $ and try to simplify \eqref{eq:almost-final} as much as possible by means of the following elementary inequalities, valid under the running assumptions on the parameters $ \tau $, $\eta$ and $ \lambda $:
\begin{equation}\label{simp-1}
s_{\tau,\beta} \le 3 \sqrt{\tau t_\tau}, \quad \sfE{-\beta}{s_{\tau,\beta}} \le 3 \sqrt{\tau t_\tau}, \quad \sfE{-\beta}{s_\tau} \ge \sqrt{\tau t_\tau} \, \rme^{-\frac{\beta}{2}t_\tau - \frac{1}{16}}, \quad \sqrt{\tau t_\tau} + t_\tau - t \le 2 \sqrt{\tau t_\tau}  , 
\end{equation}
and
\begin{equation}\label{simp-2}
\quad 2+4\eta\tau \le \frac{5}{2} , \quad \rme^{\beta(t_{\tau,\beta}+s_{\tau,\beta})+\frac \eta 2 t_{\tau,\beta}} \le \rme^{2\gamma t_\tau + \frac{3}{16}} , \quad \rme^{-\lambda t + \beta s_{\tau,\beta}} \le \rme^{\gamma t_\tau + \frac{3}{16}}, \quad  \rme^{\gamma t_\tau + \beta s_{\tau,\beta} } \le   \rme^{\gamma t_\tau + \frac{3}{2} \beta t_\tau + \frac{3}{16}} .
\end{equation}
% together with the elementary inequality $ \sfE{-\beta}{t} \ge \rme^{-\beta t}t $ and the assumption $ (2\eta-3\lambda)\tau \le 1-\rme^{-1}  $, which in particular implies $ \sftau{\tau}{-\beta}{t} \le 2 \sfE{-2\beta}{t} \le 2t $, we end up with  
%  \begin{equation} \label{eq:25-shift}
%  \begin{aligned}
%     \Dist{u_t}{\Pc{U}{\taueta}(t)}
%%     \le & \rme^{-\lambda(t+\tau)} \left[ \tau \MetricSlope{\phi}{u_0} +  \rme^{2\beta(t+\tau)} \big( \Dist{u_0}{U^0_\taueta} + \sqrt {(t+\tau)\tau} \MetricSlope\phi{U^0_\taueta} \big) \right]  \quad \forevery t > 0 . \\
%     \le &  \rme^{-\lambda t} \Dist{u_0}{U^0_\taueta} + 2 \sqrt{s+\tau} \, \rme^{2\beta(s+\tau)-\lambda t} \,  
%     \sqrt{\phi(U^0_\taueta)-\phi_{2\sfE{-2\beta}{s+\tau}}(U^0_\taueta)}   \\ 
%      & + 2 \sqrt{\frac{\tau(t+2\tau)}{s}} \, \rme^{2\beta(t+s+2\tau)-\lambda(t+\tau)+\beta s/2+1} \, \sqrt{\phi(U^0_\taueta)-\phi_{2\sfE{-2\beta}{s+\tau}}(U^0_\taueta)}                \\
%     & + 2 \sqrt{s+\tau} \, \rme^{2\beta(t+s+2\tau)+2\eta(t+\tau)} \, \sqrt{\phi(U^0_\taueta)-\phi_{2\sfE{-2\beta}{s+\tau}}(U^0_\taueta)}  . 
%   \end{aligned}
%  \end{equation} 
The validity of \eqref{eq:24-bis} is then ensured by \eqref{eq:almost-final}--\eqref{simp-2} up to some further trivial numerical inequalities. Note that the right-hand side is surely finite: indeed, by virtue of Theorem \ref{thm:main1} we know that $ \phi $ is quadratically bounded from below for all $ \kappa_o > -\lambda $, and in the case where $ \lambda<0 $ there holds $ \sfE{-\beta}{3\sqrt{\tau t_\tau}} < 1 / \beta \le -1/\lambda $.
\end{proof}

Let us point out that in the simplified case $\lambda=\eta=0$ with $\inf_\AmbientSpace \phi>-\infty$, if we set $n\mapsto U^n_\tau=U^n_{\tau,0}$, choose a uniform partition of step size $\tau=t/n$ and an initial datum $ U^0_\tau=u_0 \in \Dom \phi $, we obtain the following uniform error estimate of order $1/4$:
\begin{equation*}
  \label{eq:48}
  \Dist{\FlowName_t(u_0)}{U^n_\tau} = \Dist{\FlowName_t(u_0)}{\Pc U\tau(t)} \le 10 \, \frac{\sqrt t}{\sqrt[4]{n}} \, \sqrt{\phi(u_0)-\inf_\AmbientSpace \phi} .
\end{equation*}
Moreover, if $U^0_\taueta\in \Dom{|\partial\phi|}$ the last term in \eqref{eq:24-bis} involving the Moreau-Yosida regularization is of order $ \sqrt{\tau}$ (recall \eqref{eq:duality-slope}), so that the latter reproduces the same convergence rate as \eqref{eq:24-1}, up to different multiplicative constants (in order to end up with a much more readable estimate we have given up ``optimal'' constants in \eqref{eq:24-bis}).

\appendix
\section{Appendix: right Dini derivatives}\label{app}
% \subsection{Right Dini derivatives} 

For every real function $\zeta:[a,b)\to \R$ and $t\in [a,b)$
we consider the lower and upper right \emph{Dini derivatives}
\begin{equation*}
  \label{eq:cap1:8}
  \Lrd\zeta(t):=\liminf_{h\downarrow0}\frac
  {\zeta(t+h)-\zeta(t)}h,\quad
  \Urd\zeta(t):=\limsup_{h\downarrow0}\frac {\zeta(t+h)-\zeta(t)}h.
\end{equation*}
In the beginning of Section \ref{sec: gf} we take advantage of the following basic lemma (see e.g.~\cite{Gal57}).
\begin{lemma}
  \label{le:monotonicity}
  Let $\zeta,\eta: (a,b)\to \R$ be lower semicontinuous functions. If
  \begin{equation}
    \label{eq:cap1:9}
    \Lrd\zeta(t)+\eta(t)\le 0\quad \forevery t\in (a,b),
  \end{equation}
  then $\eta$ is locally integrable in $(a,b)$ and for every $ a_0 \in (a,b) $ the function
  \begin{equation*}
    \label{eq:34}
    \tilde \zeta(t):=\zeta(t)+\int_{a_0}^t \eta(s)\,\d s\quad
    \text{is nonincreasing and right continuous in $ (a,b) $.}
  \end{equation*}
In particular, \eqref{eq:cap1:9} is equivalent to 
  \begin{equation*}
    \label{eq:32}
    \Urd\zeta(t)+\eta(t)\le 0 \quad \forevery t\in (a,b)
 \end{equation*}
 and implies the distributional inequality $  \frac \d{\dt}\zeta+\eta\le 0 $, i.e.
 \begin{equation} \label{eq:33}
 \zeta,\eta\in L^1_{\rm loc}((a,b)) \quad \text{and} \quad
       \int_a^b \left(-\zeta\psi'+\eta\psi\right) \dt\le 0 
        \quad \text{for every nonnegative }\psi\in C^\infty_{\rmc}((a,b)),
 \end{equation}
which is in turn equivalent to \eqref{eq:cap1:9} under the additional assumption that $ \zeta $ is right continuous.
\end{lemma}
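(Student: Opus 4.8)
The plan is to reduce the whole statement to a single \emph{increasing function principle} for lower semicontinuous functions, and then to obtain the ``in particular'' assertions by elementary Dini calculus. Before anything else I would record two soft facts about $\eta$: since it is lower semicontinuous and real valued it is Borel measurable and attains a finite minimum on every compact $[c,d]\subset(a,b)$, hence is bounded below there, so that $\int_c^d\eta\,\d s$ makes sense in $(-\infty,+\infty]$; moreover on $[c,d]$ one can write $\eta=\sup_k\psi_k$ for an increasing sequence of Lipschitz minorants $\psi_k\le\eta$ (the usual inf-convolutions $\psi_k(x):=\inf_{y\in[c,d]}\bigl(\eta(y)+k|x-y|\bigr)$), with $\int_c^d\psi_k\uparrow\int_c^d\eta$ by monotone convergence.

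The core claim I would isolate is: \emph{if $g:(a,b)\to\R$ is lower semicontinuous and $\Lrd g(t)<0$ for every $t$, then $g$ is nonincreasing}. To prove it, fix $c<d$, put $S:=\{t\in[c,d]:g(t)\le g(c)\}$, and note that $S$ is closed, being a sublevel set of the lsc function $g$, and contains $c$; hence $t^\ast:=\max S$ exists. If one had $g(d)>g(c)$ then $t^\ast<d$, and by maximality $g(s)>g(c)\ge g(t^\ast)$ for all $s\in(t^\ast,d]$, so every right difference quotient of $g$ at $t^\ast$ is strictly positive, forcing $\Lrd g(t^\ast)\ge0$ and contradicting the hypothesis. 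I expect this to be the crux of the argument: the decisive use of lower semicontinuity is precisely that closedness of sublevels lets me pass to the \emph{maximal} good point, thereby converting the one-sided bound $\Lrd g<0$—which a priori only controls some sequence $h_k\downarrow0$—into a genuine contradiction, a step that a naive ``$\Lrd<0$ means $g$ decreases to the right'' argument cannot supply.

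Next I would deduce the key inequality $\zeta(d)+\int_c^d\eta\le\zeta(c)$ for all $c<d$. Applying the core claim to $g(t):=\zeta(t)+\int_c^t\psi_k\,\d s-\epsilon t$, and using that $t\mapsto\int_c^t\psi_k$ is $C^1$ with derivative $\psi_k$ (so that its difference quotients converge), I get $\Lrd g(t)=\Lrd\zeta(t)+\psi_k(t)-\epsilon\le-\eta(t)+\psi_k(t)-\epsilon\le-\epsilon<0$; hence $g$ is nonincreasing, which gives $\zeta(d)+\int_c^d\psi_k\le\zeta(c)+\epsilon(d-c)$. Letting $\epsilon\downarrow0$ and then $k\to\infty$ (monotone convergence) yields the claimed inequality. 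Since $\zeta(c),\zeta(d)\in\R$ this forces $\int_c^d\eta<+\infty$, so with the lower bound already noted $\eta\in L^1_{\mathrm{loc}}$; and the inequality is exactly $\tilde\zeta(d)\le\tilde\zeta(c)$, i.e.\ $\tilde\zeta$ is nonincreasing. Right continuity is then free: monotonicity gives $\tilde\zeta(t^+)\le\tilde\zeta(t)$, while lower semicontinuity of $\zeta$ together with continuity of the integral gives $\tilde\zeta(t)\le\liminf_{s\downarrow t}\tilde\zeta(s)=\tilde\zeta(t^+)$, whence equality.

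It remains to read off the three equivalences, for which I would repeatedly use that the absolutely continuous primitive $I(t):=\int_{a_0}^t\eta$ satisfies $\Lrd I(t)\ge\eta(t)$ for \emph{every} $t$ (again immediate from lower semicontinuity of $\eta$). The equivalence of \eqref{eq:cap1:9} with its $\Urd$-version: $(\Leftarrow)$ is trivial since $\Lrd\zeta\le\Urd\zeta$, and for $(\Rightarrow)$ the monotonicity of $\tilde\zeta$ gives $\Urd\tilde\zeta\le0$, so $\Urd\zeta=\Urd(\tilde\zeta-I)\le\Urd\tilde\zeta-\Lrd I\le-\eta$. For the distributional inequality \eqref{eq:33}: $\zeta=\tilde\zeta-I\in L^1_{\mathrm{loc}}$, and an integration by parts in the $I$-term turns $\int(-\zeta\psi'+\eta\psi)\,\dt$ into $-\int\tilde\zeta\psi'\,\dt$, which is $\le0$ for every $0\le\psi\in C^\infty_{\mathrm c}$ because the distributional derivative of the nonincreasing $\tilde\zeta$ is a nonpositive measure. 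Finally, for the converse under right continuity of $\zeta$: the distributional bound forces $\tilde\zeta$ to coincide a.e.\ with a nonincreasing right-continuous function, and since $\tilde\zeta$ is itself right continuous ($\zeta$ is and $I$ is continuous) it equals that representative, hence is nonincreasing; then $\Urd\tilde\zeta\le0$ and $\Lrd\zeta\le\Lrd\tilde\zeta-\Lrd I\le-\eta$ give back \eqref{eq:cap1:9}.
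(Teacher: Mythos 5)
Your proof is correct, and it takes a genuinely different route from the paper's. The paper first treats the case $\eta\equiv 0$ by a minimum-point contradiction (if $\zeta$ increased across $[t_0,t_0+\tau]$, a minimum $\bar t$ of the affine-corrected function $\zeta(t)-\zeta(t_0)-\delta(t-t_0)$, which exists by lower semicontinuity, would force $\lrd\zeta(\bar t)\ge\delta>0$), then handles general $\eta$ by freezing it at its minimum $m(c,d)$ on each compact subinterval and invoking the classical differentiation theory of monotone functions: $\zeta_{c,d}(t)=\zeta(t)+m(c,d)t$ is nonincreasing, hence differentiable $\Leb1$-a.e.\ with $\frac{\d}{\dt}\zeta_{c,d}\le\dot\zeta_{c,d}$ in $\DD'$, from which both $\eta\in L^1_{\rm loc}$ and the monotonicity of $\tilde\zeta$ (via its distributional derivative plus right continuity) are extracted. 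You instead prove a strict-inequality increasing-function principle via the maximal point of the closed sublevel set $S=\{t:g(t)\le g(c)\}$ — a cousin of the paper's argument, using lower semicontinuity through closedness of sublevels rather than existence of minima — and then, rather than a constant minorant, you use Lipschitz inf-convolution minorants $\psi_k\uparrow\eta$ with an $\epsilon t$ perturbation and pass to the limit by monotone convergence, obtaining the integral inequality $\zeta(d)+\int_c^d\eta\le\zeta(c)$ directly. This buys you two things: the main claim (monotonicity of $\tilde\zeta$ and $\eta\in L^1_{\rm loc}$) without any appeal to a.e.-differentiability of monotone functions or distributional calculus, and a quantitative inequality from which everything else is read off; the price is the extra approximation layer $\psi_k$, which the paper's single constant minorant avoids. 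You are also more complete than the paper on the tail assertions, which it dismisses as ``direct consequences'': your everywhere-pointwise bound $\Lrd I(t)\ge\eta(t)$ (a clean use of lower semicontinuity of $\eta$) correctly delivers the $\Urd$-equivalence and both directions of the distributional statement, including the genuinely nontrivial converse under right continuity via identification of $\tilde\zeta$ with the right-continuous nonincreasing representative.
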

\begin{proof}
  Let us first consider the case $\eta=0$.
  If $a < t_0<t_0+\tau<b $ existed with
  $\delta:=\tau^{-1}\big(\zeta(t_0+\tau)-\zeta(t_0)\big)>0$, then a
  minimum point $\bar t\in [t_0,t_0+\tau)$ of $ t\mapsto
  \zeta_\delta(t):=\zeta(t)-\zeta(t_0)-\delta(t-t_0)$ would satisfy
  \begin{displaymath}
    \liminf_{h\downarrow 0}
    \frac{\zeta_\delta(\bar t+h)-\zeta_\delta(\bar t)}h=
    \lrd\zeta(\bar t)-\delta\ge
    0,\qquad
    \text{which contradicts \eqref{eq:cap1:9}}.
  \end{displaymath}
The right continuity is then a trivial consequence of the lower semicontinuity. In the general case, since $\eta$ is lower semicontinuous it admits a minimum $m(c,d)$ in every interval
  $[c,d]\subset (a,b)$. It follows that the function
  $\zeta_{c,d}(t):=\zeta(t)+m(c,d)t$ satisfies
  \begin{displaymath}
    \lrd\zeta_{c,d}(t)\le -(\eta(t)-m(c,d))\le 0 \quad \forevery t\in (c,d),
  \end{displaymath} 
  whence $\zeta_{c,d}$ is nonincreasing in $(c,d)$ thanks to the first part of the proof. Thus $\zeta_{c,d}$ is right continuous in $ (c,d) $, is differentiable $\Leb 1$-a.e.~in $(c,d)$,
  its pointwise derivative $\dot\zeta_{c,d}$ coincides with $\lrd$ up to a $\Leb 1$-negligible set and $\dot\zeta_{c,d}\in L^1_{\rm loc}((c,d))$; since $\eta \le -\lrd \zeta_{c,d} + m(c,d) $ and $ \eta $ is locally bounded from below, it follows that also $\eta\in L^1_{\rm loc}((c,d))$. All of the just proved properties being independent of $ a<c<d<b $, we have established that $ \eta \in L^1_{\rm loc}((a,b)) $ and that $ \zeta $ is right continuous. 

We can finally introduce the primitive function $H(t):=\int_{a_0}^t\eta(s)\,\d s$. Because $\frac\d{\dt}\zeta_{c,d}\le \dot\zeta_{c,d}$ in
  $\DD'((c,d))$, there holds
  \begin{equation*}
    \label{eq:35}
    \frac\d{\dt}\tilde\zeta=\frac\d{\dt}(\zeta+H)=
    \frac\d{\dt}(\zeta_{c,d}+H-m(c,d)t)\le
    \dot\zeta_{c,d}+\eta-m(c,d)=\Lrd\zeta+\eta\le 0,
  \end{equation*}
still in the sense of distributions in $(c,d)$. On the other hand, since $ a<c<d<b $ are arbitrary and $\tilde\zeta=\zeta+H$ is right continuous, it follows that it is nonincreasing in $ (a,b) $. The last assertions concerning \eqref{eq:33} are direct consequences of the previous results.
\end{proof} 

\bibliographystyle{siam}
\bibliography{bibliografia2015}

\end{document}